\documentclass[reqno,a4paper]{amsart}

\usepackage[utf8]{inputenc}
\usepackage[T1]{fontenc}
\usepackage[british]{babel}
\usepackage{xparse}
\usepackage{etoolbox}
\usepackage{amsfonts,amssymb,amsthm,amsmath}
\usepackage{calrsfs}
\usepackage{mathbbol}
\usepackage[all]{xy}
\usepackage{graphicx}
\usepackage{xcolor}
\usepackage{tikz-cd}
\usetikzlibrary{calc}
\usetikzlibrary{arrows.meta}
\usepackage{mathtools}
\usepackage{enumitem}
\usepackage{booktabs}
\usepackage{xspace}
\usepackage[colorlinks,citecolor=blue]{hyperref}

\allowdisplaybreaks
\numberwithin{equation}{section}

\theoremstyle{plain}
\newtheorem{theorem}[subsection]{Theorem}
\newtheorem{proposition}[subsection]{Proposition}
\newtheorem{lemma}[subsection]{Lemma}
\newtheorem{corollary}[subsection]{Corollary}

\theoremstyle{definition}
\newtheorem{definition}[subsection]{Definition}
\newtheorem{example}[subsection]{Example}
\newtheorem{remark}[subsection]{Remark}

\newtheorem{convention}[subsection]{Convention}

\newcommand{\noproof}{\hfill \qed}

\newlist{tfae}{enumerate}{1}
\setlist[tfae]{label=(\roman*), ref=(\roman*)}
\newenvironment{enum}{\begin{enumerate}[label=\((\)\hspace{0.12ex}\roman*\hspace{0.075ex}\()\)]}{\end{enumerate}}
\newenvironment{enumT}{\begin{enumerate}[label=\((\)\hspace{-0.1ex}\roman*\hspace{0.13ex}\()\)]}{\end{enumerate}}

\def\nameit#1{\textrm{#1}~}
\def\thex{\nameit{Theorem}}
\def\prox{\nameit{Proposition}}
\def\corx{\nameit{Corollary}}
\def\lemx{\nameit{Lemma}}
\def\defx{\nameit{Definition}}
\def\remx{\nameit{Remark}}
\def\exax{\nameit{Example}}
\def\dfn#1{{\itshape #1}}

\tikzset{iso/.style={draw=none,every to/.append style={edge node={node [sloped, allow upside down, auto=false]{\(\cong\)}}}}}
\tikzset{simeq/.style={draw=none,every to/.append style={edge node={node [sloped, allow upside down, auto=false]{\(\simeq\)}}}}}
\tikzset{simeqS/.style={draw=none,every to/.append style={edge node={node [sloped, allow upside down, auto=false]{\(\raisebox{0.8em}{\)\simeq\(}\)}}}}}
\tikzset{nmono/.style={arrows={Triangle[reversed, open] ->}}}
\tikzset{nepi/.style={arrows={-Triangle[open]}}}

\NewDocumentEnvironment{cd}{s O{7} O{7} b}{%
	\IfBooleanF{#1}{\begin{equation*}}\begin{tikzcd}[row sep=#2ex,column sep=#3ex,ampersand replacement=\&]
			#4
		\end{tikzcd}\IfBooleanF{#1}{\end{equation*}}\ignorespacesafterend}{}
\newenvironment{eqD}[1]{\begin{equation}\label{#1}}{\end{equation}\ignorespacesafterend}
\newenvironment{eqD*}{\begin{equation*}}{\end{equation*}\ignorespacesafterend}

\newdir{>>}{{}*!/3.5pt/:(1,-.2)@^{>}*!/3.5pt/:(1,+.2)@_{>}*!/7pt/:(1,-.2)@^{>}*!/7pt/:(1,+.2)@_{>}}
\newdir{ >>}{{}*!/8pt/@{|}*!/3.5pt/:(1,-.2)@^{>}*!/3.5pt/:(1,+.2)@_{>}}
\newdir{ |>}{{}*!/-3.5pt/@{|}*!/-8pt/:(1,-.2)@^{>}*!/-8pt/:(1,+.2)@_{>}}
\newdir{ >}{{}*!/-8pt/@{>}}
\newdir{>}{{}*:(1,-.2)@^{>}*:(1,+.2)@_{>}}
\newdir{<}{{}*:(1,+.2)@^{<}*:(1,-.2)@_{<}}

\newlength\horspace
\newcommand{\h}[1][1.0]{\hspace*{#1\horspace}}
\def\:{\colon}

\newcommand{\comp}{\circ}
\newcommand{\iso}{\cong}
\newcommand{\ov}[1]{\overline{#1}}
\def\phi{\varphi}

\DeclareFontFamily{OT1}{pzc}{}
\DeclareFontShape{OT1}{pzc}{m}{it}{<->s*[1.19]pzcmi7t}{}
\DeclareMathAlphabet{\mathpzc}{OT1}{pzc}{m}{it}
\newcommand{\catfont}[1]{\ensuremath{\mathpzc{#1}}\xspace}
\newcommand{\onecat}[1]{\ensuremath{\mathsf{#1}}\xspace}

\def\L{\catfont{L}}

\newcommand{\AbCat}{\catfont{AbCat}}
\newcommand{\Triang}{\catfont{Triang}}
\newcommand{\Sat}{\catfont{Sat}}
\newcommand{\LAdj}{\catfont{LAdj}}
\newcommand{\Sup}{\catfont{Sup}}
\newcommand{\Supmod}{\ensuremath{\mathpzc{Sup}_{\mathrm{mod}}}}
\newcommand{\Suphil}{\ensuremath{\mathpzc{Sup}_{\mathrm{hil}}}}

\renewcommand{\1}{\onecat{1}}
\newcommand{\Grp}{\onecat{Grp}}
\newcommand{\Ab}{\onecat{Ab}}
\newcommand{\Perf}{\onecat{Perf}}
\newcommand{\HypAb}{\onecat{HypAb}}
\newcommand{\Nil}{\onecat{Nil}}
\newcommand{\CMon}{\onecat{CMon}}
\newcommand{\SES}{\onecat{SES}}
\newcommand{\VectF}{\ensuremath{\mathsf{Vect}_{F}}}
\newcommand{\Mlc}{\onecat{Mlc}}
\newcommand{\Coh}{\onecat{Coh}}

\newcommand{\Scl}{\ensuremath{\mathrm{Sc}}}
\newcommand{\Lat}{\ensuremath{\mathrm{L}}}
\newcommand{\dseg}[1]{\ensuremath{{\downarrow}#1}}
\newcommand{\useg}[1]{\ensuremath{{\uparrow}#1}}
\newcommand{\trunc}{\ensuremath{\tau_{1}}}

\DeclareMathOperator{\Hom}{Hom}
\DeclareMathOperator{\End}{End}
\DeclareMathOperator{\Ext}{Ext}
\DeclareMathOperator{\Spec}{Spec}
\DeclareMathOperator{\Supp}{Supp}
\DeclareMathOperator{\Ass}{Ass}
\DeclareMathOperator{\ASpec}{ASpec}
\DeclareMathOperator{\ASupp}{ASupp}
\DeclareMathOperator{\AAss}{AAss}
\newcommand{\AS}{\textup{(AS)}}
\newcommand{\DPN}{\textup{(DPN)}}
\newcommand{\NEC}{\textup{(NEC)}}

\newcommand{\Ker}{\ensuremath{\mathrm{Ker}}}
\newcommand{\Coker}{\ensuremath{\mathrm{Cok}}}
\newcommand{\coker}{\ensuremath{\mathrm{coker}}}
\newcommand{\TwoKer}{\ensuremath{2\text{-}\mathrm{Ker}}}
\newcommand{\TwoCoker}{\ensuremath{2\text{-}\mathrm{Cok}}}
\newcommand{\twoker}{\ensuremath{2\text{-}\mathrm{ker}}}
\newcommand{\twocoker}{\ensuremath{2\text{-}\mathrm{coker}}}

\newcommand{\TwoImg}{\ensuremath{2\text{-}\mathrm{Im}}}
\newcommand{\twoimg}{\ensuremath{2\text{-}\mathrm{im}}}
\newcommand{\TwoCoim}{\ensuremath{2\text{-}\mathrm{Coim}}}
\newcommand{\twocoim}{\ensuremath{2\text{-}\mathrm{coim}}}
\newcommand{\Hker}[2]{\ensuremath{\mathrm{H}^{\ker}_{#1}(#2)}}
\newcommand{\Hcoker}[2]{\ensuremath{\mathrm{H}^{\coker}_{#1}(#2)}}
\newcommand{\HomC}[3]{{#1}\left({#2},\h[1]{#3}\right)}
\newcommand{\id}[1]{\operatorname{id}_{#1}}
\newcommand{\op}{\ensuremath{^{\operatorname{op}}}}

\newcommand{\too}{\longrightarrow}

\newcommand{\toe}{\twoheadrightarrow}
\makeatletter
\newcommand{\aar}[2][]{\xrightarrow[#1]{#2}}
\newcommand{\aR}[2][]{%
	\ext@arrow 0055{\Rightarrowfill@}{#1}{#2}}
\newcommand{\eqq}{\DOTSB\protect\Relbar\protect\joinrel\Relbar}
\def\xeqqfill@{\arrowfill@\Relbar\Relbar\eqq}
\newcommand{\aeqq}[2][]{%
	\ext@arrow 0099\xeqqfill@{#1}{#2}}
\makeatother

\makeatletter
\newcommand*{\comment}[4][]{%
  \@ifundefined{c@#2}{\newcounter{#2}}{}%
  \begingroup
  \ifblank{#1}{%
    \newcommand*{\header}{\csname the#2\endcsname}%
  }{%
    \newcommand*{\header}{(#1\csname the#2\endcsname)}%
  }%
  \addtocounter{#2}{1}%
  \textnormal{\textcolor{#3}{\header.[#4\@]}}%
  \endgroup
}
\makeatother

\begin{document}

\title{A two-categorical Snake Lemma}

\author{E.~Caviglia}
\address[E.~Caviglia]{Department of Mathematical Sciences, Stellenbosch University, South Africa}
\address[E.~Caviglia]{National Institute for Theoretical and Computational Sciences (NITheCS), Stellenbosch, South Africa}
\email[E.~Caviglia]{elena.caviglia@outlook.com}

\author{L.~Mesiti}
\address[L.~Mesiti]{Department of Mathematical Sciences, Stellenbosch University, South Africa}
\email[L.~Mesiti]{luca.mesiti@outlook.com}

\author{T.~Van~der Linden}
\address[T.~Van~der Linden]{Institut de Recherche en Math\'ematique et Physique, Universit\'e catholique de Louvain, che\-min du cyclotron~2 bte~L7.01.02, B--1348 Louvain-la-Neuve, Belgium}
\address[T.~Van~der Linden]{Mathematics \& Data Science, Vrije Universiteit Brussel, Pleinlaan 2, B--1050 Brussel, Belgium}
\email[T.~Van~der Linden]{tim.vanderlinden@uclouvain.be}
\email[T.~Van~der Linden]{tim.van.der.linden@vub.be}

\thanks{The third author is a Senior Research Associate of the Fonds de la Recherche Scientifique--FNRS}

\date{}

\subjclass[2020]{18N10, 18E10, 18G50, 06C05}

\keywords{2-category, bizero object, 2-kernel, 2-cokernel, short 2-exact sequence, homological self-duality, snake lemma, modular lattice}

\begin{abstract}
	We prove a Snake Lemma for 2-categories. Working in a 2-category with a strong bizero object, we develop 2-kernels and 2-cokernels, 2-monomorphisms and 2-epimorphisms as fully faithful and cofully faithful 1-cells, short 2-exact sequences and normal image factorisations, and we prove a two-dimensional Normal Short Five Lemma. We then introduce the dinversion of an antinormal pair and the notion of a homologically self-dual 2-category, characterised equally by the self-duality of homology, by a Pure Snake Lemma and by a Third Isomorphism Property. Two-dimensional di-exactness implies homological self-duality. Our main result is the Snake Lemma in a 2-di-exact 2-category: a ladder of 2-exact rows with normal verticals induces a 2-exact six-term sequence, with a connecting 1-cell that is 2-natural in the ladder. Di-exactness can be traded for two hypotheses that are not self-dual: that dinversion preserve normality, and that normal 2-epimorphisms compose. Everything specialises, on passing to a locally discrete 2-category, to its classical counterpart. We close by exhibiting three models: a 2-di-exact 2-category of abelian categories containing \(\Coh(X)\) for every noetherian scheme \(X\); the locally ordered 2-category of complete modular lattices, in which 2-di-exactness amounts to Dedekind's transposition principle; and the 2-category of Hilbert lattices, which is not 2-di-exact but satisfies the non-self-dual hypotheses, by a theorem of Mackey on pairs of closed subspaces.
\end{abstract}

\maketitle

\section{Introduction}\label{S:Introduction}

The Snake Lemma is the engine of elementary homological algebra: from a ladder of short exact sequences it produces a six-term exact sequence relating the kernels and the cokernels of the three vertical maps, together with a connecting morphism that links the two halves. It holds in far greater generality than the abelian setting in which it is usually met---in Grandis's homological categories \cite{Grandis-HA1,Grandis-HA2}, and in di-exact categories \cite{Afsa-24,PVdL3}, where the hypotheses are weak enough to accommodate the non-additive examples of interest.

This paper lifts that theory one dimension. The motivation is that several categories which are not themselves abelian assemble into 2-categories in which the analogues of kernels and cokernels are available and thoroughly studied. In the 2-category \(\AbCat\) of abelian categories, exact functors and natural transformations, the 2-kernel of an exact functor is the Serre subcategory it annihilates and the 2-cokernel is the corresponding Serre quotient; in the 2-category \(\Triang\) of triangulated categories the same roles are played by thick subcategories and by Verdier localisations \cite{Verdier}. A Snake Lemma valid at this level therefore says something about localisation sequences of abelian and triangulated categories, and it does so uniformly.

Neither those examples nor the setting in which they are best seen originate here. Two of the present authors, together with Z.~Janelidze and \"U.~Reimaa, introduce and investigate \emph{two-dimensional Grandis homological categories} in \cite{CavJanMesRei26}. Both \(\AbCat\) and \(\Triang\) are worked out there as examples, with the 2-kernels and the 2-cokernels identified as we have just described them, as is the 2-category of pointed categories; and it is shown there that neither \(\AbCat\) nor \(\Triang\) is 2-Puppe exact. That paper also studies the \emph{1-truncation} of a 2-category, and which exactness properties are preserved and reflected by it. What comes out of that study is the expectation that the homological diagram lemmas should hold, in some form, in a two-dimensional Grandis-homological 2-category---but the theory there is still a long way from stating any of them in dimension two, let alone proving one. The present paper takes that step for the Snake Lemma. It takes it under hypotheses of a different kind---2-di-exactness in Section~\ref{S:Snake}, and a non-self-dual alternative in Section~\ref{S:NonSelfDual}---and it says something different about \(\AbCat\): \prox\ref{P:AbCatFails} shows that condition \textup{(DI2)} fails there as well, and Section~\ref{S:Model} cuts \(\AbCat\) down to a sub-2-category in which the Snake Lemma is available, of which there is nothing in \cite{CavJanMesRei26}. To the 1-truncation we return in \remx\ref{Rem Truncation}. Their axioms and ours are compared in \remx\ref{Rem Grandis}: a two-dimensional Grandis-homological 2-category is exactly a homologically self-dual 2-category with 2-kernels and 2-cokernels in which normal 2-monomorphisms and normal 2-epimorphisms are closed under composition, so it carries the Pure Snake Lemma, and one further hypothesis separates it from the Snake Lemma itself.

The obstacle is that the classical proof is a diagram chase, and diagram chases do not lift. Equalities between composites become invertible 2-cells; a factorisation through a universal property is determined only up to an invertible 2-cell, and one has to know that those 2-cells cohere; and a step as innocent as ``\(m\comp u=m\comp v\) and \(m\) is monic, so \(u=v\)'' has no two-dimensional counterpart unless \(m\) is asked to be \emph{fully} faithful rather than merely faithful. Our approach is to fix, once and for all, a dictionary between one-dimensional and two-dimensional notions, and then to check that the classical argument survives the translation. We assume familiarity with the basic language of 2-categories, for which \cite{Lac10} is a convenient reference. A 2-category is regarded as a generalised 1-category by way of \dfn{discretisation}: 1-categories are the locally discrete 2-categories, and each notion below is chosen so that its discretisation is the classical one. Every result therefore specialises to its classical counterpart, and the Snake Lemma of \thex\ref{Snake General 2D} specialises to the Snake Lemma.

\subsection{Main results}\label{SS:MainResults}

Throughout, the ambient 2-category has a strong bizero object: a bizero object between whose null 1-cells there is exactly one 2-cell. This condition is what makes the theory work. It forces 2-kernels to be fully faithful and 2-cokernels cofully faithful, which fixes the choice of 2-monomorphism and 2-epimorphism; and it makes every invertible 2-cell into a null 1-cell unique (\lemx\ref{L:UniqueNull}), which removes an entire class of coherence obligations---among them the compatibility condition one is tempted to impose on a morphism of short 2-exact sequences, which turns out to be automatic (\prox\ref{P:CoherenceFree}).

The results fall into three groups. The first is the elementary theory of Sections~\ref{S:Preliminaries} and~\ref{S:Normality}: 2-kernels and 2-cokernels and their calculus, normal 2-monomorphisms and normal 2-epimorphisms, short 2-exact sequences, the uniqueness of the normal image factorisation (\prox\ref{Image Factorisation of Normal Map is Unique}), and the Normal Short Five Lemma (\thex\ref{NSFL}).

The second group, in Section~\ref{S:Exact}, is about exactness and its self-duality. A pair of composable normal morphisms is exact when the 2-image of the first is the 2-kernel of the second, equivalently when the 2-cokernel of the first is the 2-coimage of the second, equivalently when their dinversion is null (\prox\ref{Exactness Self-Dual} and \corx\ref{Cor Exact Null Dinversion}). Asking that dinversion to be merely \emph{normal}, rather than null, and asking it of every antinormal decomposition of the zero map, is the definition of a \dfn{homologically self-dual} 2-category; exactness at a spot is thus the degenerate case of homological self-duality at that spot. We show that homological self-duality is equivalent to the self-duality of homology, and to the Pure Snake Lemma, and to a Third Isomorphism Property (\prox\ref{Criteria HSD} and \prox\ref{Third Iso}).

The third group is Section~\ref{S:Snake}. A 2-category is 2-di-exact when it has all 2-kernels and 2-cokernels and every antinormal morphism is normal. Such a 2-category is homologically self-dual (\prox\ref{P:DiExactHSD}), so the whole of Section~\ref{S:Exact} is available to it; and in it the Snake Lemma holds (\thex\ref{Snake General 2D}). Section~\ref{S:Naturality} then isolates the notion of a morphism of pure configurations and proves that the comparison of the Pure Snake Lemma is 2-natural in it (\thex\ref{T:NaturalComparison}), from which the 2-naturality of the whole snake sequence follows (\thex\ref{T:NaturalSnake}).

The fourth group is Section~\ref{S:Model}, which asks what the theory is about. The obvious candidate for a two-dimensional model, the 2-category \(\AbCat\) of abelian categories, exact functors and natural transformations, is \emph{not} 2-di-exact: condition (DI2) reduces there to the question whether the essential image of a Serre subcategory in a Serre quotient is again a Serre subcategory, and the answer is no (\prox\ref{P:AbCatFails}). Taking that question as a hypothesis costs almost nothing, since it is inherited by both constructions (\prox\ref{P:SATclosed}), and the abelian categories satisfying it form a 2-di-exact 2-category (\thex\ref{T:SatModel}). It is populated: a condition on subobjects which is a statement about supports in disguise implies the hypothesis (\prox\ref{P:ASimplies}), and it holds for the finitely generated modules over a commutative Noetherian ring and for the coherent sheaves on a noetherian scheme (\corx\ref{C:Populated}). The section closes with a model at the opposite end of the spectrum: the locally ordered 2-category of complete modular lattices is 2-di-exact (\thex\ref{T:LatticeModel}), condition \textup{(DI2)} being there exactly Dedekind's transposition principle; and the 2-category of all complete lattices repeats the profile of \(\AbCat\)---homologically self-dual, both composition properties, not \(\DPN\)---at five-element cost (\prox\ref{P:SupNotDPN}).

The fifth group is Section~\ref{S:NonSelfDual}, which asks how much of 2-di-exactness the Snake Lemma really consumes. The answer is: two applications of \textup{(DI2)}, and they are each other's dinversions (\prox\ref{P:TwoSites}). Replacing \textup{(DI2)} by the condition \(\DPN\) that dinversion preserve normality---which lies strictly between 2-di-exactness and homological self-duality---together with the closure of normal 2-epimorphisms under composition, we give a second construction of the connecting 1-cell (\thex\ref{T:SnakeNonSelfDual}), modelled on the classical proof of \cite{Borceux-Bourn}. These hypotheses are not a weakening of \textup{(DI2)} but a trade: the two sets are incomparable, and already in dimension one (\remx\ref{Rem NSD Discrete}). This is where the one genuine use of bipullbacks outside Section~\ref{S:Bipullbacks} occurs. The same section closes the account of \(\AbCat\): it is homologically self-dual, and normal 2-monomorphisms and normal 2-epimorphisms are closed under composition in it, but it does not satisfy \(\DPN\) either (\prox\ref{P:AbCatNotDPN}), so the second set of hypotheses does not rescue it either. What does satisfy them is the 2-category of Hilbert lattices---lattices of closed subspaces of complex Hilbert spaces---with which the section closes: condition \(\DPN\) holds there by Mackey's characterisation of the dual modular pairs of closed subspaces, while \textup{(DI2)} fails as soon as the dimension is infinite (\thex\ref{T:HilbertModel}), so that the two forms of the Snake Lemma come apart in dimension two; no lattice of finite length can separate them (\prox\ref{P:FiniteLength}).

\subsection{Structure of the paper}\label{SS:Structure}

Section~\ref{S:Preliminaries} sets up bizero objects, null morphisms, 2-monomorphisms and 2-epimorphisms, 2-kernels and 2-cokernels. Section~\ref{S:Normality} develops normality: the composition and cancellation properties, short 2-exact sequences, the normal image factorisation, morphisms of short 2-exact sequences, and the Normal Short Five Lemma. Section~\ref{S:Exact} treats exact sequences, dinversion, homology, homological self-duality and the Pure Snake Lemma.

Bipullbacks appear only in Section~\ref{S:Bipullbacks}. This placement is deliberate: nothing in Sections~\ref{S:Preliminaries} to~\ref{S:Exact} uses any bilimit theory whatever, and it seems worth making that visible in the structure of the paper rather than leaving it to be discovered. The reader meets bipullbacks at the point where they are first genuinely required, and \prox\ref{Kernel vs pullback}---a 2-kernel is a bipullback along a null morphism---reads as the bridge into that section rather than as an early aside.

Section~\ref{S:Snake} proves the Snake Lemma, first in the special case in which the outer horizontal morphisms are a 2-kernel and a 2-cokernel, then in general; Section~\ref{S:Naturality} proves the naturality statement. Section~\ref{S:Model} is independent of Sections~\ref{S:Bipullbacks} to~\ref{S:Naturality} and may be read directly after Section~\ref{SS:DiExact}; it is the only part of the paper that uses the theory of Serre subcategories and Serre quotients, and it is where the classical inputs from commutative algebra and algebraic geometry are drawn on. Section~\ref{S:NonSelfDual} gives the non-self-dual proof; it depends on Sections~\ref{S:Bipullbacks} and~\ref{S:Snake}, and its last two subsections, about \(\AbCat\) and about Hilbert lattices, on Section~\ref{S:Model}. A reader interested only in the main theorem may stop at Section~\ref{S:Naturality}.

\subsection{A formalisation}\label{SS:Formalisation}

Much of what follows has been machine-checked. A formalisation in Lean~4, built on the Mathlib library, accompanies this paper and is available at \url{https://github.com/tvdlinde/snake-lean}. It covers Sections~\ref{S:Preliminaries} to~\ref{S:Exact} in full, apart from the two examples of Section~\ref{SS:SES}, together with the results of Section~\ref{S:Bipullbacks} that the Snake Lemma consumes, \thex\ref{Snake General 2D} and \corx\ref{Snake General}, \thex\ref{T:SnakeNonSelfDual} in both of its forms, and, among the models, \thex\ref{T:LatticeModel} together with \prox\ref{P:SupModular}, \prox\ref{P:SupNotDPN} and \prox\ref{P:SupClass}, \prox\ref{P:HilbertSymmetric} and \prox\ref{P:FiniteLength}, and \prox\ref{P:ASimplies} with \prox\ref{P:ModAS}. Of \(\AbCat\) itself it checks \prox\ref{P:AbCatBizero} and \prox\ref{P:AbCatKernel}, the two results that precede Serre quotients. Of \prox\ref{P:SATclosed} it checks the half concerning Serre subcategories outright, and the half concerning Serre quotients relative to the correspondence between the Serre subcategories of a quotient \(\onecat{A}/S\) and those of \(\onecat{A}\) containing \(S\), which Mathlib does not yet provide; of \prox\ref{P:AbCatNotDPN} it checks the module-theoretic content, the two saturations of Section~\ref{SS:NSDAbCat}, but not the passage from them to \(\AbCat\). Sections~\ref{S:Naturality} and~\ref{S:Model} are otherwise covered in part, and the formalisation records, module by module, what it leaves out.

The formalisation is supplementary, and the reader who ignores it loses nothing: every result below is proved here, and no argument in this paper rests on the Lean development or on any claim about it. What it has been useful for is hypotheses. Stating each result at the weakest assumptions its proof survives, and then reading the linter's report of which assumptions went unused, is how several of the hypotheses below came to be as weak as they are.

\section{Preliminaries}\label{S:Preliminaries}

\subsection{Discretisation}\label{SS:Discretisation}

We say that a 2-category is \dfn{locally discrete} when all of its 2-cells are identities. We shall consider 1-categories as locally discrete 2-categories, so that concepts defined for 2-categories obtain a 1-categorical interpretation. This is what we mean when we say that we interpret a 2-categorical concept in a 1-category, and we call the concept thus obtained the \dfn{discretisation} of the original one.

The programme of this paper is to choose 2-dimensional versions of the concepts of a zero object, a kernel, a cokernel, a monomorphism and an epimorphism whose discretisations are the classical ones, and which are strong enough that a result such as the Snake Lemma stays essentially the same when it is generalised from 1-categories to 2-categories. Of course, potentially many different 2-categorical concepts have the same discretisation; part of our work lies in making a coherent choice that works.

\subsection{Bizero objects and null morphisms}\label{SS:Bizero}

\begin{definition}[bizero object]\label{Def Bizero Object}
	Let \(\L\) be a 2-category. A \dfn{bizero object} in \(\L\) is an object \(0\in\L\) such that for every \(A\in\L\) both the hom-categories \(\HomC{\L}{A}{0}\) and \(\HomC{\L}{0}{A}\) are equivalent to the singleton category \(\1\). This means in particular that there exists a morphism \(A\to 0\) which is unique up to a unique isomorphism, and similarly a morphism \(0\to A\).

	We call \(\L\) a \dfn{bipointed} 2-category if it has a bizero object.
\end{definition}

In a 1-category, bizero objects are ordinary zero objects, that is, objects which are both initial and terminal.

\begin{remark}\label{Remark Null}
	Every bipointed 2-category has an associated canonical 2-ideal of null morphisms and null 2-cells, as explained in \cite[Example~2.7]{CavJanMes25}. This is given by taking as \dfn{null morphisms} those that factor through \(0\), and as null 2-cells between two null morphisms only the one given by the universal property of the bizero object as depicted below:
	\begin{eqD}{nulltwocell}
		\begin{cd}*
			{} \& {0} \arrow[dd, equal] \arrow[rd,"i", bend left=20]\& {}\\[-5ex]
			{A}  \arrow[r,iso]\arrow[ru,"t", bend left=20] \arrow[rd,"t'"', bend right=20] \& {} \arrow[r,iso] \& {B.}\\[-5ex]
			{} \& {0} \arrow[ru,"i'"', bend right=20] \& {}
		\end{cd}
	\end{eqD}
	We shall sometimes write \(0\colon A\to B\) for a chosen null morphism \(A\to 0\to B\); one exists by the universal property of the bizero object.
\end{remark}

\begin{definition}[strong bizero object]\label{Def Strong}
	Let \(\L\) be a 2-category. A bizero object \(0\) in \(\L\) is called \dfn{strong} if given any two parallel morphisms that factor through \(0\) there is a unique 2-cell between them, which is then equal to the isomorphism null 2-cell described in \remx\ref{Remark Null}.
	\begin{cd}[0]
		\& 0 \& \\
		A \&\& B \\
		\& 0
		\arrow[bend left =12, from=1-2, to=2-3]
		\arrow["{\exists ! \theta}", Rightarrow, from=1-2, to=3-2]
		\arrow[bend left =12, from=2-1, to=1-2]
		\arrow[bend right =12, from=2-1, to=3-2]
		\arrow[bend right =12, from=3-2, to=2-3]
	\end{cd}
\end{definition}

Having a strong bizero object simplifies the notions of 2-kernel and 2-cokernel singled out in \defx\ref{Def 2-kernel} below, and makes several useful lemmas hold without further assumptions. In particular, 2-kernels become fully faithful arrows, and 2-cokernels satisfy the dual property. While some examples of bipointed 2-categories do not have a strong bizero object, such as the 2-category of 2-groups, the examples that concern us do. This motivates us to establish the theory presented in this paper in the case of a strong bizero object. There is no difference between a bizero object and a strong bizero object in a 1-category.

The following lemma is the precise form of the slogan that a strong bizero object guarantees that all 2-cells are null. Its point is that it does not ask \(x\) itself to be null.

\begin{lemma}\label{L:UniqueNull}
	Let \(\L\) be a 2-category with a strong bizero object, let \(n\colon A\to B\) be a null morphism and let \(x\colon A\to B\) be an arbitrary morphism. If some 2-cell \(\beta\colon x\aR{}n\) is invertible, then it is the only 2-cell \(x\aR{}n\).
\end{lemma}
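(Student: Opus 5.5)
The plan is to reduce the uniqueness of 2-cells \(x\aR{}n\) to the uniqueness of endo-2-cells of the null morphism \(n\), which the strong bizero object supplies. Let \(\gamma\colon x\aR{}n\) be any 2-cell. Since \(\beta\) is invertible, we may form the vertical composite \(\gamma\comp\beta^{-1}\colon n\aR{}n\). Both its source and its target are \(n\), which factors through \(0\), so \defx\ref{Def Strong} says there is a \emph{unique} 2-cell \(n\aR{}n\). The identity \(1_n\) is such a 2-cell, and therefore \(\gamma\comp\beta^{-1}=1_n\). Composing on the right with \(\beta\) then gives \(\gamma=\beta\), which is the claim.

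The steps, in order, are these. First, fix a factorisation \(n\cong i\comp t\) through \(0\); only the fact that \(n\) factors through \(0\) is used, so no further choices are needed. Second, note that the definition of strong bizero object applies to the pair \((n,n)\) of parallel morphisms factoring through \(0\), so that \(\HomC{\L}{A}{B}(n,n)\) is a singleton. Third, transport along \(\beta\): the assignment \(\gamma\mapsto\gamma\comp\beta^{-1}\) is a bijection between the set of 2-cells \(x\aR{}n\) and the set of 2-cells \(n\aR{}n\), with inverse \(\delta\mapsto\delta\comp\beta\). A set in bijection with a singleton is a singleton, and \(\beta\) is one of its elements.

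No step is a real obstacle. The only point that needs care, and the one the remark before the lemma emphasises, is that \(x\) need not be null. For that reason we cannot apply \defx\ref{Def Strong} to the pair \((x,n)\) directly. The invertibility of \(\beta\) is exactly what moves the question to the hom-set \(\HomC{\L}{A}{B}(n,n)\), where both endpoints are null. Once this is seen, the argument is the vertical-composition bookkeeping above. Incidentally, the same reasoning shows that the 2-cells \(n\aR{}x\) are also unique, by using \(\beta^{-1}\comp\gamma'\) instead.
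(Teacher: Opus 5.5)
Your argument is correct and is exactly the paper's proof: invertibility of \(\beta\) turns any \(\gamma\colon x\aR{}n\) into a 2-cell \(\gamma\cdot\beta^{-1}\colon n\aR{}n\), strongness forces this to be \(\id{n}\), and so \(\gamma=\beta\). One slip in your closing aside: for \(\gamma'\colon n\aR{}x\) the composite you want is \(\beta\cdot\gamma'\colon n\aR{}n\), since \(\beta^{-1}\) and \(\gamma'\) are not composable.
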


\begin{proof}
	Since the bizero object is strong, there is exactly one 2-cell between any two parallel morphisms that factor through \(0\). Applied to \(n\) and \(n\) themselves, this says that the only 2-cell \(n\aR{}n\) is \(\id{n}\). Let now \(\gamma\colon x\aR{}n\) be any 2-cell. Then \(\gamma\cdot\beta^{-1}\) is a 2-cell \(n\aR{}n\), so that \(\gamma\cdot\beta^{-1}=\id{n}\) and hence \(\gamma=\beta\).
\end{proof}

\begin{corollary}\label{C:UniqueNull}
	In a 2-category with a strong bizero object, a morphism admits at most one invertible 2-cell to a given null morphism. In particular, any two invertible 2-cells \(x\iso 0\) with the same codomain coincide.\noproof
\end{corollary}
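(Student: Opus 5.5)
The corollary is a direct restatement of \lemx\ref{L:UniqueNull}, so the plan is simply to unwind it.

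For the first sentence, fix a morphism \(x\colon A\to B\) and a null morphism \(n\colon A\to B\), and suppose \(\beta,\beta'\colon x\aR{}n\) are both invertible. Apply \lemx\ref{L:UniqueNull} to \(\beta\): since \(\beta\) is invertible, it is the only 2-cell \(x\aR{}n\). As \(\beta'\) is also such a 2-cell, \(\beta'=\beta\). Hence there is at most one invertible 2-cell \(x\aR{}n\). Note that the lemma is applied to only one of the two cells, so invertibility of \(\beta'\) is not even needed.

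For the ``in particular'', I read \(x\iso 0\) as an invertible 2-cell from \(x\) to a fixed null morphism \(0\colon A\to B\), chosen as in \remx\ref{Remark Null}; ``the same codomain'' means the same null target. Two such 2-cells are then invertible 2-cells \(x\aR{}0\) into the same null morphism, and the first sentence makes them equal. If the isomorphisms are written in the direction \(0\aR{}x\) instead, pass to inverses: two invertible cells \(0\aR{}x\) have inverses \(x\aR{}0\), these coincide, and therefore so do the original cells.

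There is no real obstacle here; the only care needed is this bookkeeping of direction and of what ``codomain'' means.
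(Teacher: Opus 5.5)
Your proposal is correct and is exactly the intended derivation: the paper states the corollary without proof as an immediate consequence of \lemx\ref{L:UniqueNull}, applied to one of the two invertible cells just as you do. Your reading of ``the same codomain'' as the same null target, and your handling of the direction of the 2-cells, are also accurate.
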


The structure 2-cells that occur throughout this paper---the invertible 2-cells \(g\comp f\iso 0\) expressing that a composite vanishes, and the \(\kappa\) and \(\eta\) of a 2-kernel and of a 2-cokernel---are invertible 2-cells into a null morphism out of a morphism that is not itself null. \lemx\ref{L:UniqueNull} says that each of them is uniquely determined by its codomain, and every claim below that a 2-cell of this shape is ``the'' one induced by a universal property rests on it.

\begin{remark}\label{Rem Bizero Unique}
	Any two bizero objects in a 2-category are equivalent: if \(0\) and \(0'\) are both bizero, the essentially unique morphisms \(0\to 0'\) and \(0'\to 0\) are mutually inverse equivalences, since all hom-categories to and from a bizero object are equivalent to \(\1\). In particular, a morphism factors through \(0\) if and only if it factors through \(0'\), so the class of null morphisms is independent of the choice of bizero object. It follows that if \(0\) is a strong bizero object then so is every bizero object: a 2-category either has all its bizero objects strong, or none of them.
\end{remark}

\begin{definition}[trivial object]\label{Def Trivial}
	An object \(N\) of a bipointed 2-category is \dfn{trivial} when its identity morphism \(\id{N}\) is isomorphic to a null morphism.
\end{definition}

\begin{remark}\label{Rem Trivial}
	An object is trivial if and only if it is equivalent to a bizero object. Indeed, if \(\id{N}\iso i\comp t\) with \(t\colon N\to 0\) and \(i\colon 0\to N\), then the parallel morphisms \(t\comp i\) and \(\id{0}\) both factor through \(0\), so they are isomorphic, and \(t\) and \(i\) are mutually inverse equivalences; the converse is immediate. We take the condition on \(\id{N}\) as the definition because it is the form in which triviality is used: it turns every question of triviality into a question about a single morphism being null, and those are settled by the reflection properties of Section~\ref{SS:Monos}.
\end{remark}

\begin{definition}\label{Def Reflects Null}
	Let \(h\colon B\to C\) be a morphism in a bipointed 2-category. We say that \(h\) \dfn{reflects null morphisms} if whenever a composite \(A\aar{f}B\aar{h}C\) is isomorphic to a null morphism, then also \(f\) is isomorphic to a null morphism. If \(h\) satisfies the dual condition, we say that \(h\) \dfn{coreflects null morphisms}.
\end{definition}

\subsection{2-monomorphisms and 2-epimorphisms}\label{SS:Monos}

It is not clear a priori what to consider as the right 2-dimensional generalisation of monomorphisms and epimorphisms. For example, in the 2-category of categories, both faithful functors and fully faithful functors could be chosen as a natural notion of 2-dimensional monomorphism. In \cite{DupVit03} and \cite[Definition~74]{Dup08}, 2-dimensional monomorphisms are defined to be the faithful arrows of \cite{Str82b}, so as to encompass more examples. But in the setting of 2-categories with a strong bizero object, 2-kernels are automatically fully faithful arrows and 2-cokernels are cofully faithful arrows, as \prox\ref{Prop Kernel Is Mono} below records. This motivates the following definition, and the fullness condition will be used constantly.

\begin{definition}\label{Def Mono}
	Let \(\L\) be a 2-category with a strong bizero object, and let \(f\) be a morphism in \(\L\). We call \(f\) a \dfn{2-monomorphism} if it is a fully faithful arrow, that is, if for every object \(Z\) the functor \(f\comp(-)\colon\HomC{\L}{Z}{A}\to\HomC{\L}{Z}{B}\) is fully faithful. Dually, we call \(f\) a \dfn{2-epimorphism} if it is a cofully faithful arrow.
\end{definition}

\begin{remark}\label{rem2monoismonouptoiso}
	A 2-monomorphism is, in particular, a monomorphism up to isomorphism 2-cells. More precisely, let \(m\colon A\to B\) be a 2-monomorphism and let \(u\), \(v\colon M\to A\) be such that \(m\comp u\) is isomorphic to \(m\comp v\). Then \(u\) is isomorphic to \(v\), since fully faithful arrows lift isomorphism 2-cells to isomorphism 2-cells. 2-epimorphisms satisfy the dual condition.

	Note that even if we knew that \(m\comp u\) is precisely equal to \(m\comp v\), we could still only conclude that \(u\) is isomorphic to \(v\). As a corollary of the above, the discretisation of a 2-monomorphism is a monomorphism, and dually for 2-epimorphisms.
\end{remark}

Being a 2-monomorphism is a condition of representable shape. So is being an equivalence, and it will be convenient to have this recorded.

\begin{lemma}\label{L:RepEquiv}
	For a morphism \(q\colon A\to Q\) in a 2-category \(\L\), the following conditions are equivalent:
	\begin{tfae}
		\item \(q\) is an equivalence;
		\item for every object \(Z\), the functor \((-)\comp q\colon\HomC{\L}{Q}{Z}\to\HomC{\L}{A}{Z}\) is an equivalence of categories;
		\item for every object \(Z\), the functor \(q\comp(-)\colon\HomC{\L}{Z}{A}\to\HomC{\L}{Z}{Q}\) is an equivalence of categories.
	\end{tfae}
\end{lemma}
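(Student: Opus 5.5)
The plan is the bicategorical Yoneda lemma, done by hand: prove (i)\(\Rightarrow\)(ii) and (i)\(\Rightarrow\)(iii) directly, then the converses, with the argument for (iii) being the dual of the one for (ii) (pass to \(\L^{\mathrm{co}}\) or \(\L^{\mathrm{op}}\)).

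\emph{Forward direction.} Suppose \(q\) is an equivalence, with pseudo-inverse \(p\colon Q\to A\) and invertible 2-cells \(\alpha\colon p\comp q\iso\id{A}\) and \(\beta\colon q\comp p\iso\id{Q}\). For each \(Z\), precomposition with \(p\) gives a functor \((-)\comp p\colon\HomC{\L}{A}{Z}\to\HomC{\L}{Q}{Z}\). The whiskerings \(h\comp\alpha\) and \(k\comp\beta\) are natural in \(h\) and \(k\) by the interchange law. They supply natural isomorphisms from the two composites \(((-)\comp p)\comp((-)\comp q)\) and \(((-)\comp q)\comp((-)\comp p)\) to the identities. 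Hence \((-)\comp q\) is an equivalence. The same argument with postcomposition gives (iii).

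\emph{(ii)\(\Rightarrow\)(i).} Apply (ii) with \(Z=A\). Essential surjectivity of \((-)\comp q\colon\HomC{\L}{Q}{A}\to\HomC{\L}{A}{A}\) yields \(p\colon Q\to A\) and an invertible 2-cell \(\alpha\colon p\comp q\iso\id{A}\). Now take \(Z=Q\) and consider the two morphisms \(q\comp p\) and \(\id{Q}\) in \(\HomC{\L}{Q}{Q}\). Their images under \((-)\comp q\) are \(q\comp p\comp q\) and \(q\). These are isomorphic via \(q\comp\alpha\). Since \((-)\comp q\) is fully faithful, it reflects isomorphisms, so there is an invertible 2-cell \(\beta\colon q\comp p\iso\id{Q}\) with \(\beta\comp q=q\comp\alpha\). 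Thus \(q\) is an equivalence.

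\emph{(iii)\(\Rightarrow\)(i).} This is the dual argument. Take \(Z=Q\) and use essential surjectivity of \(q\comp(-)\) to obtain \(p\) with \(q\comp p\iso\id{Q}\). Then take \(Z=A\) and use full faithfulness to lift \(q\comp p\comp q\iso q\) to \(p\comp q\iso\id{A}\).

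The only step needing any care is this last lift: full faithfulness is what lets an isomorphism between the images be lifted to an isomorphism, not merely to some 2-cell. Beyond that, nothing here uses the bizero object. A plain equivalence is all that is required, so no adjoint-equivalence coherence has to be checked.
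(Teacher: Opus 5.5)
Your proof is correct and follows the paper's argument essentially step for step. It whiskers with a quasi-inverse for the forward implications, uses essential surjectivity at \(Z=A\) to produce \(p\) with \(p\comp q\iso\id{A}\), and uses full faithfulness at \(Z=Q\) to lift \(q\star\alpha\) to an invertible 2-cell \(q\comp p\iso\id{Q}\). The only nit is that the duality exchanging (ii) and (iii) is passage to \(\L\op\), which reverses 1-cells, not to the 2-category with 2-cells reversed; since you write out (iii)\(\Rightarrow\)(i) explicitly, this is harmless.
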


\begin{proof}
	If \(q\) is an equivalence with quasi-inverse \(p\), then whiskering with \(p\) provides a quasi-inverse for each of the two functors, so that (i) implies both (ii) and (iii).

	Assume (ii). Taking \(Z=A\), essential surjectivity of \((-)\comp q\) applied to \(\id{A}\) yields a morphism \(p\colon Q\to A\) together with an invertible 2-cell \(p\comp q\iso\id{A}\). Whiskering it with \(q\) on the left gives an invertible 2-cell
	\[
		q\comp p\comp q\iso q\iso\id{Q}\comp q\text{.}
	\]
	Taking \(Z=Q\), the functor \((-)\comp q\) is fully faithful, and a fully faithful functor lifts invertible 2-cells to invertible 2-cells; the displayed 2-cell is therefore reflected to an invertible 2-cell \(q\comp p\iso\id{Q}\). Hence \(q\) is an equivalence, and (ii) implies (i). The implication from (iii) follows by duality.
\end{proof}

In a 2-category with a strong bizero object, all 2-monomorphisms reflect null morphisms---not just those that arise as 2-kernels.

\begin{proposition}\label{prop2monoreflectsnull}
	In a 2-category with a strong bizero object, every 2-monomorphism reflects null morphisms. Dually, every 2-epimorphism coreflects null morphisms.
\end{proposition}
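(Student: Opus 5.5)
The idea is to manufacture a null morphism \(n'\colon A\to B\) with \(m\comp n'\) equal to, or canonically isomorphic to, the given null morphism, and then use full faithfulness of \(m\) to lift the comparison 2-cell. Let \(m\colon B\to C\) be a 2-monomorphism and \(f\colon A\to B\) a morphism with an invertible 2-cell \(\beta\colon m\comp f\aR{}n\), where \(n\colon A\to C\) is null. Write a chosen null morphism \(0\colon A\to B\) as \(A\aar{t}0\aar{i}B\).

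The key observation is that \(m\comp 0 = (m\comp i)\comp t\) factors through \(0\), so it is a null morphism \(A\to C\). Hence \(n\) and \(m\comp 0\) are parallel morphisms that both factor through the bizero object. By \defx\ref{Def Strong} there is a unique 2-cell \(\theta\colon n\aR{}m\comp 0\), and also a unique 2-cell in the reverse direction. Both composites are then endo-2-cells of morphisms factoring through \(0\), so by uniqueness they are identities, and \(\theta\) is invertible. Composing, we obtain an invertible 2-cell
\[
\theta\cdot\beta\colon m\comp f\aR{} m\comp 0 .
\]

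Since \(m\comp(-)\colon\HomC{\L}{A}{B}\to\HomC{\L}{A}{C}\) is fully faithful, this 2-cell is the image of a unique 2-cell \(\alpha\colon f\aR{}0\). Its inverse lifts likewise, and fully faithful functors reflect isomorphisms, so \(\alpha\) is invertible. This is exactly the argument of \remx\ref{rem2monoismonouptoiso}. Thus \(f\) is isomorphic to a null morphism, and \(m\) reflects null morphisms.

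The dual statement follows by applying the above in \(\L\) with the direction of 1-cells reversed. A strong bizero object remains strong there, and 2-epimorphisms become 2-monomorphisms.

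The only step needing care is the invertibility of \(\theta\). It rests on the strong bizero hypothesis giving uniqueness of 2-cells in both directions, which is what lets us avoid any coherence bookkeeping. Everything else is the standard lifting property of fully faithful functors.
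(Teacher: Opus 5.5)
Your proof is correct and is essentially the paper's: both build a null morphism \(A\to 0\to B\), identify its composite with \(m\) with the given null morphism up to an invertible 2-cell, and lift that 2-cell along the fully faithful functor \(m\comp(-)\) as in \remx\ref{rem2monoismonouptoiso}. The one difference is that the paper reuses the given factorisation \(c\comp a\) and gets \(m\comp i\iso c\) from the universal property of the bizero object alone, whereas you invoke strongness to compare two arbitrary null morphisms---valid in this setting, though it uses a little more than the argument needs.
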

\begin{proof}
	We prove the first statement; the second one follows by duality. Let \(m\) be a 2-monomorphism and consider a composite \(A\aar{f}B\aar{m}C\) that is isomorphic to a null morphism, via a 2-cell
	\begin{cd}[5]
		\& B \& \\
		A \& 0 \& C
		\arrow["m", from=1-2, to=2-3]
		\arrow["f", from=2-1, to=1-2]
		\arrow[from=2-1, to=2-2, "a"']
		\arrow["\delta"{inner sep = 1ex}, shift left=5, iso, from=2-1, to=2-3]
		\arrow[from=2-2, to=2-3, "c"']
	\end{cd}
	We prove that then also \(f\) is isomorphic to a null morphism. Since \(0\) is a bizero object, there exists a morphism \(i\colon 0\to B\). Moreover, the composite \(0\aar{i}B\aar{m}C\) is isomorphic to \(0\aar{c}C\), by the universal property of the bizero object. Pasting the two isomorphism 2-cells that we have, we obtain an isomorphism 2-cell of the form
	\begin{cd}[4]
		\& B \&\& \\
		A \& 0 \&\& C \\
		\&\& B
		\arrow["m", from=1-2, to=2-4]
		\arrow["f", from=2-1, to=1-2]
		\arrow["a"', from=2-1, to=2-2]
		\arrow["\delta"{inner sep=1ex}, shift left=3.5,iso, from=2-1, to=2-4]
		\arrow["c"', from=2-2, to=2-4]
		\arrow[shift right=6, iso, from=2-2, to=2-4]
		\arrow["i"', from=2-2, to=3-3]
		\arrow["m"', from=3-3, to=2-4]
	\end{cd}
	Thus \(m\) equalises \(f\) and \(i\comp a\) up to an isomorphism 2-cell. By \remx\ref{rem2monoismonouptoiso}, we conclude that \(f\) is isomorphic to the null morphism \(i\comp a\).
\end{proof}

\subsection{2-kernels and 2-cokernels}\label{SS:Kernels}

\begin{definition}[2-kernel]\label{Def 2-kernel}
	Let \(\L\) be a 2-category with a strong bizero object \(0\). A \dfn{2-kernel} of a morphism \(A\aar{f}B\) in \(\L\) is a morphism \(K\aar{k}A\) together with an isomorphism 2-cell
	\begin{cd}*[4][4]
		K \arrow[rr,shift right= 1.7ex, iso, "\kappa"'{inner sep=-1ex, pos=0.65}] \& A \& B \\[-3ex]
		\& 0
		\arrow[from=1-1, to=1-2,"k"]
		\arrow[from=1-1, to=2-2, bend right=20]
		\arrow[from=1-2, to=1-3,"f"]
		\arrow[from=2-2, to=1-3, bend right=20]
	\end{cd}
	such that:
	\begin{itemize}
		\item[\((1)\)] for every morphism \(Z \aar{z} A\) and every isomorphism 2-cell
		      \begin{cd}[4][4]
			      Z \arrow[rr,shift right= 1.7ex, iso, "\beta"'{inner sep=-1ex, pos=0.65}] \& A \& B \\[-3ex]
			      \& 0
			      \arrow[from=1-1, to=1-2,"z"]
			      \arrow[from=1-1, to=2-2, bend right=20]
			      \arrow[from=1-2, to=1-3,"f"]
			      \arrow[from=2-2, to=1-3, bend right=20]
		      \end{cd}
		      there exist a morphism \(Z \aar{u} K\) and an isomorphism 2-cell
		      \begin{cd}[4][4]
			      Z \arrow[rr,shift right= 1.7ex, iso, "\gamma"'{inner sep=-1ex, pos=0.65}] \&  \& A \\[-3ex]
			      \& K
			      \arrow[from=1-1, to=1-3,"z"]
			      \arrow[from=1-1, to=2-2, bend right=20, "u"']
			      \arrow[from=2-2, to=1-3, bend right=20, "k"']
		      \end{cd}
		\item[\((2)\)] for all morphisms \(u\), \(v\colon Z \to K\) and every 2-cell
		      \begin{cd}*[4.5][4.5]
			      Z  \& K \arrow[d, Rightarrow, "\lambda", shorten <= -0.5ex, shorten <= -0.5ex] \& A \\[-3ex]
			      \& K
			      \arrow[from=1-1, to=1-2,"u"]
			      \arrow[from=1-1, to=2-2, bend right=20,"v"']
			      \arrow[from=1-2, to=1-3,"k"]
			      \arrow[from=2-2, to=1-3, bend right=20,"k"']
		      \end{cd}
		      there exists a unique 2-cell \(\mu\colon u \Rightarrow v\) such that \(k \star \mu = \lambda\).
	\end{itemize}
	Here \(\star\) denotes the whiskering of a morphism with a 2-cell.
\end{definition}

The definition of a 2-cokernel is dual. We record it for completeness.

\begin{definition}[2-cokernel]\label{Def 2-cokernel}
	Let \(\L\) be a 2-category with a strong bizero object \(0\). A \dfn{2-cokernel} of a morphism \(A\aar{f}B\) in \(\L\) is a morphism \(B\aar{q}Q\) together with an isomorphism 2-cell
	\begin{cd}*[4][4]
		A \arrow[rr,shift right= 1.7ex, iso, "\eta"'{inner sep=-1ex, pos=0.65}] \& B \& Q \\[-3ex]
		\& 0
		\arrow[from=1-1, to=1-2,"f"]
		\arrow[from=1-1, to=2-2, bend right=20]
		\arrow[from=1-2, to=1-3,"q"]
		\arrow[from=2-2, to=1-3, bend right=20]
	\end{cd}
	such that:
	\begin{itemize}
		\item[\((1)\)] for every morphism \(B \aar{z} Z\) and every isomorphism 2-cell
		      \begin{cd}[4][4]
			      A \arrow[rr,shift right= 1.7ex, iso, "\beta"'{inner sep=-1ex, pos=0.65}] \& B \& Z \\[-3ex]
			      \& 0
			      \arrow[from=1-1, to=1-2,"f"]
			      \arrow[from=1-1, to=2-2, bend right=20]
			      \arrow[from=1-2, to=1-3,"z"]
			      \arrow[from=2-2, to=1-3, bend right=20]
		      \end{cd}
		      there exist a morphism \(Q \aar{u} Z\) and an isomorphism 2-cell
		      \begin{cd}[4][4]
			      B \arrow[rr,shift right= 1.7ex, iso, "\gamma"'{inner sep=-1ex, pos=0.65}] \& \& Z \\[-3ex]
			      \& Q
			      \arrow[from=1-1, to=1-3,"z"]
			      \arrow[from=1-1, to=2-2, bend right=20, "q"']
			      \arrow[from=2-2, to=1-3, bend right=20, "u"']
		      \end{cd}
		\item[\((2)\)] for all morphisms \(u\), \(v\colon Q \to Z\) and every 2-cell \begin{cd}*[4.5][4.5]
			      B  \& Q \arrow[d, Rightarrow, "\lambda", shorten <= -0.5ex, shorten <= -0.5ex] \& Z \\[-3ex]
			      \& Q
			      \arrow[from=1-1, to=1-2,"q"]
			      \arrow[from=1-1, to=2-2, bend right=20,"q"']
			      \arrow[from=1-2, to=1-3,"u"]
			      \arrow[from=2-2, to=1-3, bend right=20,"v"']
		      \end{cd} there exists a unique 2-cell \(\mu\colon u \Rightarrow v\) such that \(\mu \star q = \lambda\).
	\end{itemize}
\end{definition}

\begin{remark}\label{Rem Kernel Property}
	A 2-kernel is presented above as a morphism \(k\) \emph{together with} an invertible 2-cell \(\kappa\colon f\comp k\iso 0\). By \lemx\ref{L:UniqueNull} there is at most one such 2-cell, so being a 2-kernel is a property of \(k\) rather than extra structure on it; dually for 2-cokernels. For the same reason, condition~(1) needs no clause relating \(\gamma\) to \(\beta\) and \(\kappa\): any compatibility one might think to impose is an equation between two invertible 2-cells \(f\comp z\aR{}0\), and \lemx\ref{L:UniqueNull} supplies it.
\end{remark}

\begin{proposition}\label{Prop Kernel Is Mono}
	In a 2-category with a strong bizero object, 2-kernels are 2-monomorphisms and 2-cokernels are 2-epimorphisms.
\end{proposition}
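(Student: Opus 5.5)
By duality it suffices to treat a 2-kernel \(k\colon K\to A\) of \(f\colon A\to B\), with structure 2-cell \(\kappa\colon f\comp k\iso 0\). We must show that for every object \(Z\) the functor \(k\comp(-)\colon\HomC{\L}{Z}{K}\to\HomC{\L}{Z}{A}\) is fully faithful. That is, for \(u,v\colon Z\to K\), every 2-cell \(\lambda\colon k\comp u\aR{}k\comp v\) must be of the form \(k\star\mu\) for a unique \(\mu\colon u\aR{}v\).

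Condition~(2) of \defx\ref{Def 2-kernel} asserts exactly this. For all \(u,v\colon Z\to K\) and every \(\lambda\colon k\comp u\aR{}k\comp v\), it gives a unique \(\mu\colon u\aR{}v\) with \(k\star\mu=\lambda\). Existence of \(\mu\) is fullness of \(k\comp(-)\) on the hom-set \(\HomC{\L}{Z}{K}(u,v)\). Uniqueness is faithfulness: if \(k\star\mu=k\star\mu'\), apply (2) to \(\lambda=k\star\mu\) and conclude \(\mu=\mu'\).

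Hence \(k\) is a fully faithful arrow, that is, a 2-monomorphism in the sense of \defx\ref{Def Mono}. The dual argument uses condition~(2) of \defx\ref{Def 2-cokernel}, where \(\mu\star q=\lambda\) with \(\mu\) unique. It shows that \((-)\comp q\colon\HomC{\L}{Q}{Z}\to\HomC{\L}{B}{Z}\) is fully faithful for every \(Z\), so a 2-cokernel is cofully faithful, i.e.\ a 2-epimorphism.

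I expect no real obstacle, since condition~(2) was built into the definition precisely for this. The only point needing care is confirming that (2) quantifies over \emph{all} 2-cells \(\lambda\), not just invertible ones, and over arbitrary \(u,v\), with no compatibility with \(\kappa\) required. This is where the strong bizero object enters implicitly. By \remx\ref{Rem Kernel Property} and \lemx\ref{L:UniqueNull}, the whiskered cells \(\kappa\star u\) and \(\kappa\star v\) are the unique invertible 2-cells into the null morphism, so the naive requirement that \(\lambda\) be compatible with them holds automatically. That is why (2) can be stated unrestrictedly and yields full faithfulness directly.
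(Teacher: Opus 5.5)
Your proof is correct and is the same as the paper's: condition~(2) of \defx\ref{Def 2-kernel} (respectively \defx\ref{Def 2-cokernel}) says exactly that whiskering with \(k\) (respectively \(q\)) is a bijection on 2-cells, which is full faithfulness (respectively cofull faithfulness). Your closing remark about the strong bizero object explains why the definition needs no compatibility clause, but it is not a step of the proof, which uses nothing beyond condition~(2).
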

\begin{proof}
	Let \(k\colon K\to A\) be a 2-kernel. To say that \(k\) is a 2-monomorphism is to say that for every object \(Z\) the functor \(k\comp(-)\colon\HomC{\L}{Z}{K}\to\HomC{\L}{Z}{A}\) is fully faithful; that is, that for all \(u\), \(v\colon Z\to K\) whiskering with \(k\) is a bijection from the 2-cells \(u\Rightarrow v\) to the 2-cells \(k\comp u\Rightarrow k\comp v\). This is exactly condition~(2) of \defx\ref{Def 2-kernel}. Dually, condition~(2) of \defx\ref{Def 2-cokernel} shows that every 2-cokernel is cofully faithful.
\end{proof}

\begin{corollary}\label{corollkeruniqueuptoequiv}
	2-kernels and 2-cokernels are unique up to equivalence. Moreover, if \(f\), \(g\colon A\to B\) and there is an isomorphism 2-cell \(f\iso g\), then a 2-kernel of \(f\) is a 2-kernel of \(g\), and dually.
\end{corollary}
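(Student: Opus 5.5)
We prove the statement in two parts, working with 2-kernels throughout; the assertions about 2-cokernels follow by duality, that is, by applying the argument in \(\L\op\) with the 2-cells kept in the same direction.

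\emph{Uniqueness up to equivalence.} Let \((k\colon K\to A,\kappa)\) and \((k'\colon K'\to A,\kappa')\) be two 2-kernels of \(f\).

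1. Apply condition~(1) for \(k\) to \(z=k'\) and \(\beta=\kappa'\). This gives \(u\colon K'\to K\) and an invertible \(\gamma\colon k\comp u\iso k'\).
2. Symmetrically, condition~(1) for \(k'\) gives \(v\colon K\to K'\) and an invertible \(\gamma'\colon k'\comp v\iso k\).
3. Pasting these yields an invertible 2-cell \(k\comp u\comp v\iso k'\comp v\iso k=k\comp\id{K}\).
4. By \prox\ref{Prop Kernel Is Mono}, \(k\) is a 2-monomorphism, so by \remx\ref{rem2monoismonouptoiso} this lifts to an invertible 2-cell \(u\comp v\iso\id{K}\). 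Here we use that a fully faithful functor reflects invertibility of a 2-cell between images: the unique preimage of an invertible 2-cell is invertible, with inverse the preimage of the inverse.
5. Symmetrically, \(v\comp u\iso\id{K'}\).

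Hence \(u\) is an equivalence, and it commutes with the kernel morphisms up to the invertible 2-cell \(\gamma\). The compatibility with \(\kappa\) and \(\kappa'\) requires no separate check: by \remx\ref{Rem Kernel Property} the relevant 2-cells are determined uniquely, via \lemx\ref{L:UniqueNull}.

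\emph{Invariance under \(f\iso g\).} Let \(\alpha\colon f\iso g\) be invertible and let \((k,\kappa)\) be a 2-kernel of \(f\). Consider the composite \(\kappa\cdot(\alpha^{-1}\star k)\colon g\comp k\iso f\comp k\iso 0\); it is invertible, into a null morphism. Now check the two conditions for \(g\).

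Condition~(2) is identical for \(f\) and \(g\), since it refers only to \(k\).

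For condition~(1), take \(z\colon Z\to A\) with an invertible \(\beta\colon g\comp z\iso 0\). Then \(\beta\cdot(\alpha\star z)\colon f\comp z\iso 0\) is an invertible 2-cell. Condition~(1) for \(f\) then provides \(u\) and \(\gamma\colon k\comp u\iso z\). As \remx\ref{Rem Kernel Property} notes, no compatibility clause on \(\gamma\) is required, so this is exactly condition~(1) for \(g\).

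The only step that needs any care is the lifting of invertible 2-cells through the fully faithful \(k\) in step~4. Once that is in hand, the rest is purely formal, because the strong bizero object makes every structural 2-cell unique.
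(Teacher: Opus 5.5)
Your proof is correct and is essentially the paper's own argument. You obtain comparison 1-cells in both directions from condition~(1), reflect the invertible 2-cells \(k\comp u\comp v\iso k\comp\id{K}\) and \(k'\comp v\comp u\iso k'\comp\id{K'}\) along the fully faithful 2-kernels, and prove the second assertion by pasting with the invertible 2-cell \(f\iso g\), which you spell out in more detail than the paper does.
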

\begin{proof}
	Let \(k\colon K\to A\) and \(k'\colon K'\to A\) both be 2-kernels of \(f\). Applying condition~(1) of \defx\ref{Def 2-kernel} for \(k'\) to the morphism \(k\), which carries an invertible 2-cell \(f\comp k\iso 0\), produces \(u\colon K\to K'\) with \(k'\comp u\iso k\); symmetrically we obtain \(v\colon K'\to K\) with \(k\comp v\iso k'\). Then \(k\comp v\comp u\iso k'\comp u\iso k\iso k\comp\id{K}\), and \(k\) is a 2-monomorphism by \prox\ref{Prop Kernel Is Mono}, so \(v\comp u\iso\id{K}\) by \remx\ref{rem2monoismonouptoiso}; symmetrically \(u\comp v\iso\id{K'}\). Hence \(u\) is an equivalence. The second assertion holds because the defining data of a 2-kernel of \(f\) and of a 2-kernel of \(g\) are carried into one another by pasting with the given invertible 2-cell \(f\iso g\). The statements for 2-cokernels are dual.
\end{proof}

We write \(\Ker(f)\), \(\Coker(f)\) for the kernel and cokernel objects of a morphism \(f\) in a 1-category and \(\ker(f)\), \(\coker(f)\) for the corresponding morphisms, and \(\TwoKer(f)\), \(\TwoCoker(f)\), \(\twoker(f)\), \(\twocoker(f)\) for their 2-dimensional counterparts. By \corx\ref{corollkeruniqueuptoequiv} this notation is unambiguous up to equivalence.

\begin{remark}\label{Rem Ideal Closed}
	As explained in \cite[Definition~2.20 and Proposition~4.11]{CavJanMes25}, the canonical 2-ideal associated to a 2-category with a strong bizero object is closed in a 2-dimensional sense: 2-kernels reflect null morphisms and 2-cokernels coreflect them, in the sense of \defx\ref{Def Reflects Null}. By \prox\ref{Prop Kernel Is Mono} and \prox\ref{prop2monoreflectsnull} this holds more generally for every 2-monomorphism and every 2-epimorphism.
\end{remark}

Conversely to the fact that a 2-kernel is a 2-monomorphism, a 2-monomorphism has trivial 2-kernel.

\begin{lemma}\label{2-Mono Trivial Kernel}
	In a 2-category with a strong bizero object, a 2-monomorphism has trivial 2-kernel. Dually, a 2-epimorphism has trivial 2-cokernel.
\end{lemma}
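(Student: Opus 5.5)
The claim is that whenever a 2-monomorphism \(m\colon A\to B\) has a 2-kernel \(k\colon K\to A\), the object \(K\) is trivial in the sense of \defx\ref{Def Trivial}; the 2-epimorphism statement then follows by duality. The plan is to reduce triviality of \(K\) to the question of whether a single morphism is null, and settle that with \prox\ref{prop2monoreflectsnull}.

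First we use the structure 2-cell \(\kappa\colon m\comp k\iso 0\) of the 2-kernel. Since \(m\) is a 2-monomorphism, \prox\ref{prop2monoreflectsnull} says that \(m\) reflects null morphisms. Hence \(k\) itself is isomorphic to a null morphism \(n=i\comp t\) with \(t\colon K\to 0\) and \(i\colon 0\to A\). It remains to transfer this to \(\id{K}\), and we would do so in either of two ways.

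The first way uses that \(k\) is a 2-monomorphism (\prox\ref{Prop Kernel Is Mono}). Choose any \(j\colon 0\to K\) and consider the null morphism \(j\comp t\colon K\to K\). Then \(k\comp j\comp t\) factors through \(0\), as does \(k\comp\id{K}=k\iso i\comp t\). By strongness of the bizero object there is a unique 2-cell between any two parallel morphisms factoring through \(0\), and it is invertible. This gives
\[
k\comp\id{K}\iso i\comp t\iso k\comp j\comp t.
\]
By \remx\ref{rem2monoismonouptoiso} this lifts to \(\id{K}\iso j\comp t\), so \(\id{K}\) is isomorphic to a null morphism and \(K\) is trivial.

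The second way is to invoke \prox\ref{prop2monoreflectsnull} directly for \(k\), applied to the composite \(k\comp\id{K}=k\), which we have just shown to be isomorphic to a null morphism.

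The only point needing care is the claim that two parallel null composites are isomorphic. This is exactly the universal property recorded in \remx\ref{Remark Null}, or the strongness of \(0\), so I expect no genuine obstacle. The dual statement, that a 2-epimorphism has trivial 2-cokernel, is obtained by running the same argument in \(\L\op\), using that 2-cokernels are 2-epimorphisms and that 2-epimorphisms coreflect null morphisms.
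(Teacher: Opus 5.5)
Your proposal is correct, and your second variant is exactly the paper's proof: reflect \(m\comp k\iso 0\) along the 2-monomorphism \(m\) to get \(k\iso 0\), then reflect \(k\comp\id{K}=k\iso 0\) along \(k\) itself, a 2-monomorphism by Proposition~\ref{Prop Kernel Is Mono}. Your first variant is also sound, but it only inlines the proof of Proposition~\ref{prop2monoreflectsnull} for the morphism \(k\).
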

\begin{proof}
	Let \(m\colon A\to B\) be a 2-monomorphism with 2-kernel \(n\colon N\to A\). The composite \(m\comp n\) is isomorphic to a null morphism, and \(m\) reflects null morphisms by \prox\ref{prop2monoreflectsnull}, so \(n\) is isomorphic to a null morphism. Then \(n\comp\id{N}=n\) is isomorphic to a null morphism, and \(n\), being a 2-kernel, is itself a 2-monomorphism and so reflects null morphisms as well; hence \(\id{N}\) is isomorphic to a null morphism, which is to say that \(N\) is trivial. The dual statement follows by duality.
\end{proof}

The proof is the same two steps twice over, and it uses neither hom-categories nor the bizero condition---only strongness of the bizero object. This is the pattern that \defx\ref{Def Trivial} is designed to expose.

\section{2-kernels, 2-cokernels and normality}\label{S:Normality}

\subsection{2-z-exactness and normality}\label{SS:ZExact}

From now on we work in a 2-category with a strong bizero object in which all 2-kernels and all 2-cokernels exist. This generalises the notion of a z-exact category to dimension two.

\begin{definition}\label{Def ZExact}
	A \dfn{2-z-exact 2-category} is a 2-category with a strong bizero object that has all 2-kernels and all 2-cokernels.
\end{definition}

\begin{definition}\label{Def Normal Mono}
	In a 2-z-exact 2-category, a morphism which occurs as a 2-kernel of some morphism is called a \dfn{normal 2-monomorphism}. Dually, a morphism which occurs as a 2-cokernel of some morphism is called a \dfn{normal 2-epimorphism}.
\end{definition}

\begin{definition}\label{Def Normal}
	A morphism \(f\) is \dfn{normal} when it is isomorphic to a composite \(m\comp e\) in which \(m\) is a normal 2-monomorphism and \(e\) is a normal 2-epimorphism. It is \dfn{antinormal} when it is isomorphic to a composite \(e\comp m\) in which \(m\) is a normal 2-monomorphism and \(e\) is a normal 2-epimorphism.
\end{definition}

We ask throughout for a factorisation up to an invertible 2-cell rather than on the nose. Nothing below is sensitive to the difference, since a 2-kernel and a 2-cokernel depend on their morphism only up to isomorphism, by \corx\ref{corollkeruniqueuptoequiv}.

\subsection{Composition and cancellation}\label{SS:Normal}

\begin{proposition}\label{Composites of Normal Monos}
	Let \(f\colon A\to B\), \(g\colon B\to C\) and \(h\colon A\to C\) be morphisms in a 2-z-exact 2-category, and assume that \(h\) is isomorphic to \(g\comp f\). Then:
	\begin{enumT}
		\item if \(h\) is a 2-monomorphism and \(g\) is faithful, then \(f\) is a 2-monomorphism;
		\item if \(g\) is a 2-monomorphism and \(h\) is a normal 2-monomorphism, then \(f\) is a normal 2-monomorphism.
	\end{enumT}
	Dually, if \(h\) is a 2-epimorphism and \(f\) is cofaithful then \(g\) is a 2-epimorphism; and if \(f\) is a 2-epimorphism and \(h\) is a normal 2-epimorphism then \(g\) is a normal 2-epimorphism.
\end{proposition}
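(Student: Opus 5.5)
For \textup{(i)}, we argue directly with whiskering functors and transport along a fixed invertible 2-cell \(\phi\colon h\iso g\comp f\). For a 2-cell \(\sigma\colon f\comp u\aR{}f\comp v\) between morphisms \(u,v\colon Z\to A\), write \(\widetilde{\sigma}\colon h\comp u\aR{}h\comp v\) for the conjugate \((\phi^{-1}\star v)\cdot(g\star\sigma)\cdot(\phi\star u)\). By the interchange law, \(\widetilde{f\star\alpha}=h\star\alpha\) for every \(\alpha\colon u\aR{}v\).

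Faithfulness of \(f\comp(-)\) follows at once. If \(f\star\alpha=f\star\beta\), then \(h\star\alpha=\widetilde{f\star\alpha}=\widetilde{f\star\beta}=h\star\beta\). Since \(h\) is faithful, \(\alpha=\beta\).

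For fullness, take \(\sigma\colon f\comp u\aR{}f\comp v\). Because \(h\) is full, there is \(\alpha\colon u\aR{}v\) with \(h\star\alpha=\widetilde{\sigma}\). Then \(\widetilde{f\star\alpha}=\widetilde{\sigma}\). Undoing the conjugation by \(\phi\) gives \(g\star(f\star\alpha)=g\star\sigma\), and faithfulness of \(g\) yields \(f\star\alpha=\sigma\). So \(f\) is a 2-monomorphism.

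For \textup{(ii)}, write \(h\) as a 2-kernel of some \(p\colon C\to D\), with \(\kappa\colon p\comp h\iso 0\). We show that \(f\) is a 2-kernel of \(p\comp g\colon B\to D\), so it is normal.

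The structure 2-cell is obtained by pasting \(p\star\phi^{-1}\) with \(\kappa\), giving \(p\comp g\comp f\iso p\comp h\iso 0\). By \remx\ref{Rem Kernel Property} no compatibility conditions need to be checked.

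For condition \((1)\), let \(z\colon Z\to B\) come with an invertible 2-cell \(p\comp g\comp z\iso 0\). The universal property of \(h\), applied to \(g\comp z\), gives \(u\colon Z\to A\) with \(h\comp u\iso g\comp z\). Hence \(g\comp f\comp u\iso g\comp z\). Since \(g\) is a 2-monomorphism, \remx\ref{rem2monoismonouptoiso} gives an invertible 2-cell \(f\comp u\iso z\).

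Condition \((2)\) says exactly that \(f\) is fully faithful, as in the proof of \prox\ref{Prop Kernel Is Mono}. This follows from \textup{(i)}, since \(h\) is a 2-monomorphism and \(g\), being fully faithful, is in particular faithful.

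The dual statements follow by applying \textup{(i)} and \textup{(ii)} in \(\L\op\) with 1-cells reversed. Cofaithful and cofully faithful morphisms, and 2-cokernels, are exactly the faithful and fully faithful morphisms, and the 2-kernels, of that dual 2-category.

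The only delicate point is the fullness argument in \textup{(i)}. One has to check that conjugating by \(\phi\) commutes with whiskering by \(f\) in the right way, i.e.\ that \(\widetilde{f\star\alpha}=h\star\alpha\), and this is a direct interchange-law computation. Everything in \textup{(ii)} reduces to \textup{(i)} together with \lemx\ref{L:UniqueNull}.
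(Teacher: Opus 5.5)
Your proof is correct and follows essentially the same route as the paper. In (i) you transport \(g\star\sigma\) along the invertible 2-cell \(h\iso g\comp f\), lift it along the fully faithful \(h\), and use faithfulness of \(g\) to recover \(\sigma\). In (ii) you exhibit \(f\) as a 2-kernel of \(p\comp g\) by lifting the factorisation through \(h\) along the fully faithful \(g\), with the 2-dimensional condition supplied by (i). The only cosmetic difference is that you apply the universal property of \(h\) directly, whereas the paper first transfers it to \(g\comp f\) via \corx\ref{corollkeruniqueuptoequiv}.
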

\begin{proof}
	We prove \((i)\). Consider a pair of morphisms \(u\), \(v\colon Q\to A\) and a 2-cell \(\lambda\colon f\comp u\aR{}f\comp v\). We must show that there is a unique 2-cell \(\xi\colon u\aR{}v\) with \(f\star\xi=\lambda\). Starting from \(\lambda\) we form the following pasting:
	\begin{cd}[2]
		\&[-2ex] A \&[-2ex]\& \\
		Q \&\& B \& C \\
		\& A
		\arrow["f", from=1-2, to=2-3]
		\arrow["h",bend left, from=1-2, to=2-4]
		\arrow["\lambda"', Rightarrow, from=1-2, to=3-2, shorten <=2ex, shorten >=2ex]
		\arrow["u", from=2-1, to=1-2]
		\arrow["v"', from=2-1, to=3-2]
		\arrow["g", from=2-3, to=2-4]
		\arrow[shift right=5,xshift=-2ex, iso, from=2-3, to=2-4]
		\arrow[shift left=5,xshift=-2ex, iso, from=2-3, to=2-4]
		\arrow["f"', from=3-2, to=2-3]
		\arrow["h"',bend right, from=3-2, to=2-4]
	\end{cd}
	Since \(h\) is a fully faithful arrow, there is a unique 2-cell \(\phi\colon u\aR{}v\) such that \(h\star\phi\) coincides with this pasting. Then \(f\star\phi=\lambda\): as \(g\) is faithful it suffices to check this after postcomposing with \(g\), and there it holds, by pasting with the given invertible 2-cell \(h\iso g\comp f\). Moreover \(\phi\) is the unique such 2-cell, since any two 2-cells \(\xi\) with \(f\star\xi=\lambda\) have equal whiskerings with \(h\), and \(h\) is faithful.

	We now prove \((ii)\). Say \(h\) is the 2-kernel of \(\ell\colon C\to D\); we prove that \(f\) is the 2-kernel of \(\ell\comp g\). The 2-dimensional universal property is \((i)\), a 2-monomorphism being in particular faithful. For the 1-dimensional one, note first that \(g\comp f\) is a 2-kernel of \(\ell\), by \corx\ref{corollkeruniqueuptoequiv}; in particular \(\ell\comp g\comp f\) is isomorphic to a null morphism. Let \(r\colon M\to B\) be such that \((\ell\comp g)\comp r\) is isomorphic to a null morphism. Since \(g\comp f\) is the 2-kernel of \(\ell\), there are \(s\colon M\to A\) and an invertible 2-cell \(\beta\colon g\comp f\comp s\iso g\comp r\); as \(g\) is fully faithful, \(\beta\) lifts to an invertible 2-cell \(f\comp s\iso r\). So \(f\) satisfies the universal property of the 2-kernel of \(\ell\comp g\).
\end{proof}

\begin{remark}\label{Rem Faithful Enough}
	Part \((i)\) asks of \(g\) only faithfulness, and its proof uses no more: fullness of \(g\) never enters. Part \((ii)\) admits no such weakening, since it lifts an invertible 2-cell \(\beta\) along \(g\), which is exactly fullness.
\end{remark}

\begin{corollary}\label{C:NormalTransport}
	In a 2-z-exact 2-category, the composite of a normal 2-monomorphism with an equivalence, on either side, is again a normal 2-monomorphism; dually for normal 2-epimorphisms. Consequently a morphism isomorphic to \(v\comp w\comp u\), with \(u\) and \(v\) equivalences, is normal if and only if \(w\) is, and antinormal if and only if \(w\) is.
\end{corollary}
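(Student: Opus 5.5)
The plan has two parts. First, I show directly that normal 2-monomorphisms are stable under composition with an equivalence on either side. Then I deduce the statement about normal and antinormal morphisms by moving the equivalences into the factors.

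\emph{Postcomposition.} Let \(m\colon K\to A\) be the 2-kernel of \(f\colon A\to B\), and let \(e\colon A\to A'\) be an equivalence with quasi-inverse \(e'\). I claim that \(e\comp m\) is a 2-kernel of \(f\comp e'\). There is an invertible 2-cell \(f\comp e'\comp e\comp m\iso f\comp m\iso 0\). For condition~(1), take \(z\colon Z\to A'\) with \(f\comp e'\comp z\iso 0\). Then \(e'\comp z\) factors as \(m\comp u\) up to isomorphism, so \(z\iso e\comp e'\comp z\iso e\comp m\comp u\). For condition~(2), \(e\comp m\) is fully faithful because \(m\) is fully faithful and \(e\comp(-)\) is an equivalence on hom-categories by \lemx\ref{L:RepEquiv}.

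\emph{Precomposition.} Let \(e\colon K'\to K\) be an equivalence. Then \(m\comp e\) is again a 2-kernel of \(f\). Indeed, \(f\comp m\comp e\iso 0\), and any factorisation \(z\iso m\comp u\) becomes \(z\iso m\comp e\comp(e'\comp u)\). Full faithfulness of \(m\comp e\) again follows from \lemx\ref{L:RepEquiv}(iii). Alternatively, precomposition follows from \prox\ref{Composites of Normal Monos}(ii): a normal 2-monomorphism \(h\) that is isomorphic to \(g\comp f\), with \(g\) a 2-monomorphism, forces \(f\) to be normal. This converts one case into the other if needed. The statements for normal 2-epimorphisms follow by duality.

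\emph{Consequence.} Suppose \(w\iso m\comp e\) is a normal factorisation. Then
\[
v\comp w\comp u\iso (v\comp m)\comp(e\comp u),
\]
and by the first part \(v\comp m\) is a normal 2-monomorphism and \(e\comp u\) is a normal 2-epimorphism. So \(v\comp w\comp u\) is normal. Conversely, if \(x\iso v\comp w\comp u\) is normal, let \(\bar u,\bar v\) be quasi-inverses of \(u,v\). Then \(w\iso\bar v\comp x\comp\bar u\), and the same argument applies with \(\bar u,\bar v\) in place of \(u,v\).

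For the antinormal case, \(w\iso e\comp m\) gives
\[
v\comp w\comp u\iso (v\comp e)\comp(m\comp u),
\]
which is antinormal by the same transport. The converse is identical.

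The only point needing care is the 1-dimensional universal property after postcomposing with \(e\). There one must use \(e'\comp e\iso\id{A}\) to pull \(z\) back, rather than expecting \(e\) to be invertible on the nose. This is routine.
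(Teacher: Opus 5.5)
Your proof is correct and follows the paper's own argument: the paper treats both sides at once by showing that \(w\comp k\comp u\) is a 2-kernel of \(f\comp w'\), whereas you handle postcomposition and precomposition separately; the deduction for normal and antinormal morphisms is the same. One small point about your aside: the appeal to part~(ii) of \prox\ref{Composites of Normal Monos} gives the \emph{post}composition case, from \(m\iso e'\comp(e\comp m)\) with \(e'\) a 2-monomorphism, not the precomposition case; this is harmless because your direct argument already covers both.
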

\begin{proof}
	Let \(k\colon K\to A\) be a 2-kernel of \(f\colon A\to B\), let \(u\colon K'\to K\) and \(w\colon A\to A'\) be equivalences and let \(w'\) be a quasi-inverse of \(w\). We claim that \(w\comp k\comp u\) is a 2-kernel of \(f\comp w'\). It is a 2-monomorphism, being a composite of 2-monomorphisms, so condition~\((2)\) of \defx\ref{Def 2-kernel} holds; and \(f\comp w'\comp w\comp k\comp u\iso f\comp k\comp u\) is null. If \(z\colon Z\to A'\) satisfies \(f\comp w'\comp z\iso 0\), then the universal property of \(k\) yields \(s\colon Z\to K\) with \(w'\comp z\iso k\comp s\), whence \(z\iso w\comp k\comp s\iso(w\comp k\comp u)\comp(u'\comp s)\) for a quasi-inverse \(u'\) of \(u\). This proves the first assertion, since every equivalence is invertible up to isomorphism on either side; the second is dual.

	For the consequence, let \(w\iso m\comp e\) be normal. Then \(v\comp w\comp u\iso(v\comp m)\comp(e\comp u)\) is a normal 2-monomorphism after a normal 2-epimorphism, hence normal; and the converse follows by composing with quasi-inverses. The same computation applies with the factors interchanged, which is antinormality.
\end{proof}

\begin{proposition}\label{CoKernel of Composite}
	In a 2-category with a strong bizero object, if a morphism \(f\) is isomorphic to a composite \(m\comp g\) with \(m\) a 2-monomorphism, then \(\twoker(f)\simeq\twoker(g)\). Dually, if \(f\) is isomorphic to a composite \(g'\comp e\) with \(e\) a 2-epimorphism, then \(\twocoker(f)\simeq\twocoker(g')\).

	In particular, if \(f\) is isomorphic to a composite \(m\comp e\) in which \(m\) is a 2-monomorphism and \(e\) is a 2-epimorphism, then \(\twocoker(f)\simeq\twocoker(m)\) and \(\twoker(f)\simeq\twoker(e)\).
\end{proposition}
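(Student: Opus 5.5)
By \corx\ref{corollkeruniqueuptoequiv} a 2-kernel of \(f\) is the same as a 2-kernel of \(m\comp g\), so we may assume \(f=m\comp g\) on the nose. Let \(k\colon K\to A\) be a 2-kernel of \(g\), with its invertible 2-cell \(\kappa\colon g\comp k\iso 0\). The plan is to show directly that \(k\) satisfies \defx\ref{Def 2-kernel} for \(m\comp g\). Then \corx\ref{corollkeruniqueuptoequiv} gives \(\twoker(f)\simeq\twoker(g)\).

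The first step is the structure 2-cell. Whiskering \(\kappa\) with \(m\) gives an invertible 2-cell \(m\comp g\comp k\iso m\comp 0\). Here \(m\comp 0\) factors through \(0\), so it is a null morphism, and by \remx\ref{Rem Kernel Property} no further coherence needs checking.

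The second step is condition~(2) of \defx\ref{Def 2-kernel}. It says only that \(k\) is fully faithful, which does not involve \(f\). It therefore holds because \(k\) is a 2-kernel of \(g\), by \prox\ref{Prop Kernel Is Mono}.

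The third step, and the only one with content, is condition~(1). Let \(z\colon Z\to A\) come with an invertible 2-cell \(\beta\colon m\comp g\comp z\iso 0\). Since \(m\) is a 2-monomorphism, \prox\ref{prop2monoreflectsnull} shows that \(m\) reflects null morphisms. Hence \(g\comp z\) is isomorphic to a null morphism. The universal property of \(k\) as a 2-kernel of \(g\) then yields \(u\colon Z\to K\) with an invertible 2-cell \(k\comp u\iso z\), which is what is required. No compatibility between the 2-cells needs to be checked, again by \remx\ref{Rem Kernel Property}. The only point needing care is that reflection of null morphisms rests on \(m\) being fully faithful and not merely faithful, and this is exactly what \prox\ref{prop2monoreflectsnull} supplies. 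The dual statement about 2-cokernels follows by duality, using that 2-epimorphisms coreflect null morphisms.

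For the final assertion, suppose \(f\iso m\comp e\) with \(m\) a 2-monomorphism and \(e\) a 2-epimorphism. Taking \(g=e\) in the first part gives \(\twoker(f)\simeq\twoker(e)\). Taking \(g'=m\) in the dual part gives \(\twocoker(f)\simeq\twocoker(m)\).
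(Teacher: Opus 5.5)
Your proof is correct and is essentially the first half of the paper's: a 2-kernel \(k\) of \(g\) is shown to be a 2-kernel of \(m\comp g\). The structure 2-cell comes from whiskering, condition~(1) from \prox\ref{prop2monoreflectsnull}, and condition~(2) is inherited unchanged. The paper also proves the converse, that a 2-kernel \(h\) of \(f\) is a 2-kernel of \(g\): reflection gives \(g\comp h\iso 0\), and anything killed by \(g\) is killed by \(f\). That direction matters because the proposition assumes no 2-kernels exist, whereas your argument presupposes that \(\twoker(g)\) does.
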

\begin{proof}
	Say \(f\colon A\to B\), \(g\colon A\to Q\) and \(m\colon Q\to B\). Consider the 2-kernel \(K\aar{k}A\) of \(g\). Certainly \(f\comp k\) is isomorphic to a null morphism, since \(g\comp k\) is. Let \(r\colon M\to A\) be a morphism such that \(f\comp r\) is isomorphic to a null morphism. Then so is \(m\comp g\comp r\), and \(m\) reflects null morphisms by \prox\ref{prop2monoreflectsnull}, so \(g\comp r\) is isomorphic to a null morphism as well. The universal property of the 2-kernel of \(g\) now yields \(s\colon M\to K\) and an invertible 2-cell \(r\iso k\comp s\). Since \(k=\twoker(g)\) is a 2-monomorphism, this exhibits \(k\) as a 2-kernel of \(f\).

	Conversely, consider the 2-kernel \(H\aar{h}A\) of \(f\). Since \(f\comp h\) is isomorphic to a null morphism, so is \(m\comp g\comp h\), and hence so is \(g\comp h\), again by \prox\ref{prop2monoreflectsnull}. That \(h\) is a 2-kernel of \(g\) now follows in the same way, the 2-dimensional universal property being automatic since \(h\) is a 2-kernel of \(f\). The second part of the statement is dual.
\end{proof}

Note that \prox\ref{CoKernel of Composite} asks of \(m\) and \(e\) only that they be a 2-monomorphism and a 2-epimorphism; neither has to be normal. This is what the general case of the Snake Lemma will need in Section~\ref{SS:GeneralCase}.

\subsection{Short 2-exact sequences}\label{SS:SES}

\begin{proposition}\label{kernel is kernel of its cokernel}
	In a 2-z-exact 2-category, any normal 2-epimorphism is a 2-cokernel of its 2-kernel; any normal 2-monomorphism is a 2-kernel of its 2-cokernel.
\end{proposition}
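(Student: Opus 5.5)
We treat the statement about normal 2-monomorphisms; the one about normal 2-epimorphisms is dual. Let \(m\colon M\to A\) be a 2-kernel of some \(f\colon A\to B\). Let \(q\colon A\to Q\) be a 2-cokernel of \(m\), which exists by 2-z-exactness, and let \(k\colon K\to A\) be a 2-kernel of \(q\). The goal is to show that \(m\) itself is a 2-kernel of \(q\). Condition~(2) of \defx\ref{Def 2-kernel} holds for \(m\) already, because \(m\) is a 2-monomorphism by \prox\ref{Prop Kernel Is Mono}. We also have \(q\comp m\iso 0\) from the 2-cokernel structure. So only condition~(1) needs proof: every \(z\colon Z\to A\) with \(q\comp z\iso 0\) should factor through \(m\) up to an invertible 2-cell.

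The key step is to factor \(f\) through \(q\). Because \(f\comp m\iso 0\), condition~(1) of \defx\ref{Def 2-cokernel} for \(q\) gives a morphism \(\bar f\colon Q\to B\) and an invertible 2-cell \(\bar f\comp q\iso f\). Now take \(z\) with an invertible 2-cell \(q\comp z\iso 0\). Then
\[
f\comp z\iso \bar f\comp q\comp z\iso \bar f\comp 0,
\]
and \(\bar f\comp 0\) is a null morphism, since it factors through \(0\). So \(f\comp z\) is isomorphic to a null morphism. The universal property of \(m\) as a 2-kernel of \(f\) then yields \(u\colon Z\to M\) with \(m\comp u\iso z\). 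This proves condition~(1).

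No further compatibility has to be checked. By \remx\ref{Rem Kernel Property} and \lemx\ref{L:UniqueNull}, the invertible 2-cell \(q\comp m\iso 0\) is unique, so being a 2-kernel is a property of \(m\) and not extra structure. Hence \(m\) is a 2-kernel of \(q=\twocoker(m)\). By \corx\ref{corollkeruniqueuptoequiv} this is the same as saying \(m\simeq k\) over \(A\). The dual argument applies to a normal 2-epimorphism \(e=\twocoker(g)\): factor \(g\) through \(\twoker(e)\) and use the universal property of \(e\). I expect no genuine obstacle. The only point needing care is that whiskering the null morphism \(0\colon Z\to Q\) by \(\bar f\) stays null, and this holds by the definition of null morphisms as those factoring through \(0\).
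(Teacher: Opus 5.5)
Your proof is correct and is the paper's argument read in the dual. The paper proves the 2-epimorphism half directly: it factors \(f\) through \(\twoker(q)\) and then applies the universal property of \(q=\twocoker(f)\). You prove the 2-monomorphism half directly: you factor \(f\) through \(\twocoker(m)\) and then apply the universal property of \(m=\twoker(f)\). In both versions condition~(2) is inherited for free, so nothing is missing.
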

\begin{proof}
	We prove the first statement; the second follows by duality. Let \(q\colon B\to Q\) be a 2-cokernel of a morphism \(f\colon A\to B\), with invertible 2-cell \(\eta\colon q\comp f\iso 0\), and let \((k\colon K\to B,\ \kappa\colon q\comp k\iso 0)\) be the 2-kernel of \(q\). We verify that \((q,\kappa)\) is a 2-cokernel of \(k\).

	Condition~(1) of \defx\ref{Def 2-kernel}, applied to the datum \((f,\eta)\), provides a morphism \(w\colon A\to K\) and an invertible 2-cell \(\gamma\colon k\comp w\iso f\). Let now \(z\colon B\to Z\) be a morphism with an invertible 2-cell \(\beta\colon z\comp k\iso 0\). Then \((\beta\star w)\cdot(z\star\gamma^{-1})\colon z\comp f\iso 0\) is invertible, so condition~(1) of \defx\ref{Def 2-cokernel} applied to \(q\) yields a morphism \(u\colon Q\to Z\) and an invertible 2-cell \(u\comp q\iso z\). This is condition~(1) for the 2-cokernel of \(k\). Condition~(2) for the 2-cokernel of \(k\) is literally condition~(2) of \(q=\twocoker(f)\). Hence \(q\) is a 2-cokernel of \(k\).
\end{proof}

The 2-cell \(\kappa\) that exhibits \(k\) as the 2-kernel of \(q\) is the very one that exhibits \(q\) as the 2-cokernel of \(k\). By \lemx\ref{L:UniqueNull} this needs no proof: there is only one invertible 2-cell \(q\comp k\iso 0\) to begin with. The following definition may therefore speak of the two universal properties separately.

\begin{definition}\label{Def:SES}
	A \dfn{short 2-exact sequence} is a pair of composable morphisms \(k\colon K\to A\) and \(q\colon A\to Q\) such that \(k\) is a 2-kernel of \(q\) and \(q\) is a 2-cokernel of \(k\). We write \(\kappa\colon q\comp k\iso 0\) for the invertible 2-cell exhibiting both, and picture the situation as
	\begin{equation}\label{SES}
		\xymatrix{0 \ar[r] & K \ar[r]^-{k} & A \ar[r]^-{q} & Q \ar[r] & 0\text{.}}
	\end{equation}
\end{definition}

\begin{example}\label{Ex SES}
	By \prox\ref{kernel is kernel of its cokernel}, if \(m\colon A\to B\) is a normal 2-monomorphism with 2-cokernel \(\twocoker(m)\colon B\to C\), then \(A\aar{m}B\aar{\twocoker(m)}C\) is a short 2-exact sequence; dually for a normal 2-epimorphism and its 2-kernel.
\end{example}

The two universal properties in \defx\ref{Def:SES} are independent demands, and neither follows from the other. The following group-theoretical example exhibits a 2-epimorphism which has exactly the 2-kernel one expects of it and is nonetheless not a 2-cokernel of that 2-kernel.

\begin{example}[hypoabelianisation]\label{Ex Hypoabelianisation}
	Let \(\LAdj\) be the 2-category whose objects are the pointed finitely cocomplete categories, whose 1-cells are the left adjoints between them and whose 2-cells are the natural transformations. Its trivial category is a strong bizero object: a null 1-cell is a functor constant at the zero object, and between two such there is exactly one natural transformation, the zero object having a single endomorphism. Every reflector \(r\colon\onecat{A}\to\onecat{B}\) onto a full replete subcategory is a 2-epimorphism of \(\LAdj\). Indeed, write \(i\) for its fully faithful right adjoint; the unit \(\eta_{A}\colon A\to irA\) is inverted by \(r\), so a natural transformation \(\theta\colon F\comp r\Rightarrow G\comp r\) is determined by its whiskering \(\theta\star i\), and precomposition with \(r\) is fully faithful.

	Take the abelianisation \(\mathrm{ab}\colon\Grp\to\Ab\colon G\mapsto G/[G,G]\), the reflector onto the abelian groups. Its 2-kernel is the category \(\Perf\) of \dfn{perfect} groups, those with \(G=[G,G]\), included by \(\kappa\colon\Perf\to\Grp\). That inclusion is a 1-cell of \(\LAdj\), being left adjoint to the \dfn{perfect radical} \(G\mapsto pG\), the largest perfect subgroup of \(G\); a homomorphic image of a perfect group is perfect, so every morphism from a perfect group into \(G\) lands in \(pG\). For the universal property, a 1-cell \(z\colon\onecat{Z}\to\Grp\) carries an invertible 2-cell \(\mathrm{ab}\comp z\iso 0\) precisely when every \(z(Z)\) is perfect, in which case \(z\) corestricts to \(w\colon\onecat{Z}\to\Perf\) with \(\kappa\comp w=z\); and \(w\) is again a left adjoint, its right adjoint being \(\kappa\) followed by the right adjoint of \(z\). As \(\kappa\) is fully faithful, this corestriction is unique up to a unique invertible 2-cell.

	Yet \(\mathrm{ab}\) is not a 2-cokernel of \(\kappa\). Let \(F\colon\Grp\to\Nil_{2}\colon G\mapsto G/\gamma_{3}(G)\), be the reflector onto the groups of nilpotency class at most two. A perfect group coincides with every term of its lower central series, so \(F\comp\kappa\iso 0\). Were \(\mathrm{ab}\) a 2-cokernel of \(\kappa\), the 1-cell \(F\) would factor as \(U\comp\mathrm{ab}\) for some \(U\colon\Ab\to\Nil_{2}\), and any two groups with isomorphic abelianisations would have isomorphic images under \(F\). They need not: the free group of rank two and the free abelian group of rank two both abelianise to \(\mathbb{Z}^{2}\), while the second is sent by \(F\) to \(\mathbb{Z}^{2}\) and the first to the free nilpotent group of class two and rank two, which is not abelian.

	A 2-cokernel of \(\kappa\) exists nonetheless, and it is the \dfn{hypoabelianisation}
	\[
		h\colon\Grp\to\HypAb\colon G\mapsto G/pG\text{.}
	\]
	Perfect groups are closed under extensions: if \(N\) contains \(pG\) with \(N/pG\) perfect, then \(N=[N,N]\cdot pG\), while \(pG=[pG,pG]\subseteq[N,N]\), so that \(N=[N,N]\). Hence \(G/pG\) has trivial perfect radical, and the groups with that property---those whose transfinite derived series reaches the trivial group---are the \dfn{hypoabelian} ones, onto which \(h\) is the reflector. It annihilates \(\kappa\), a perfect group being its own perfect radical. It is universal as well: the perfect radical is characteristic, so \(G\to G/pG\) is a cokernel of the inclusion \(pG\to G\), and a 1-cell \(g\) with \(g\comp\kappa\iso 0\) preserves cokernels and therefore sends every unit of \(h\) to an isomorphism, so that it factors through \(h\), uniquely up to a unique invertible 2-cell. Thus
	\[
		\xymatrix{0 \ar[r] & \Perf \ar[r]^-{\kappa} & \Grp \ar[r]^-{h} & \HypAb \ar[r] & 0}
	\]
	is a short 2-exact sequence, whereas the pair \((\kappa,\mathrm{ab})\) is not, although \(\kappa\) is a 2-kernel of \(\mathrm{ab}\) and \(\mathrm{ab}\) is a 2-epimorphism. Since \(pG\subseteq[G,G]\), the abelianisation factors through the hypoabelianisation, by a comparison \(\HypAb\to\Ab\) which is far from an equivalence: a free group is residually nilpotent, hence hypoabelian.

	What separates the two 2-epimorphisms is an iteration. The reflector \(\mathrm{ab}\) divides out the commutator subgroup once, and the 2-cokernel divides out the transfinite derived series, which is where that operation converges. The argument just given uses nothing about groups beyond the closure of \(\Perf\) under extensions, so it applies to any reflector whose annihilated objects are closed under extensions: such a reflector is a 2-cokernel of its own 2-kernel exactly when its radical is idempotent.
\end{example}

We do not claim that \(\LAdj\) is 2-z-exact; the example uses only the 2-kernel and the 2-cokernel exhibited above.

\begin{proposition}\label{Prop Equivalence CoKernel}
	For a short 2-exact sequence \eqref{SES} in a 2-z-exact 2-category, the object \(K\) is trivial if and only if \(q\) is an equivalence. Dually, \(Q\) is trivial if and only if \(k\) is an equivalence.
\end{proposition}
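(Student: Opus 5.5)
We prove the first statement; the second is its dual, obtained by interchanging the roles of \(k\) and \(q\).

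Suppose first that \(K\) is trivial, so that \(\id{K}\) is isomorphic to a null morphism and hence \(k=k\comp\id{K}\) is isomorphic to a null morphism \(0\colon K\to A\). The sequence is short 2-exact, so \(q\) is a 2-cokernel of \(k\), and by \corx\ref{corollkeruniqueuptoequiv} it is also a 2-cokernel of the null morphism \(0\). We then show that \(q\) is an equivalence through the representable criterion \lemx\ref{L:RepEquiv}(ii): for every \(Z\), the functor \((-)\comp q\colon\HomC{\L}{Q}{Z}\to\HomC{\L}{A}{Z}\) should be an equivalence.

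\begin{itemize}
\item It is fully faithful because \(q\) is a 2-epimorphism (\prox\ref{Prop Kernel Is Mono}).
\item For essential surjectivity, take any \(z\colon A\to Z\). The composite \(z\comp 0\) factors through \(0\), so it is isomorphic to a null morphism. Condition~(1) of \defx\ref{Def 2-cokernel} then produces \(u\colon Q\to Z\) with \(u\comp q\iso z\).
\end{itemize}

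Conversely, suppose \(q\) is an equivalence. Then \(q\) is in particular a 2-monomorphism: an equivalence is fully faithful on representables by \lemx\ref{L:RepEquiv}(iii). By \prox\ref{prop2monoreflectsnull} it therefore reflects null morphisms. Since \(q\comp k\iso 0\), we get that \(k\) is isomorphic to a null morphism. But \(k\) is a 2-kernel, hence a 2-monomorphism, and so also reflects null morphisms. As \(k\comp\id{K}=k\) is isomorphic to a null morphism, it follows that \(\id{K}\) is isomorphic to a null morphism, which says that \(K\) is trivial. This is exactly the two-step pattern of \lemx\ref{2-Mono Trivial Kernel}.

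We do not expect a serious obstacle. The only point needing care is the essential surjectivity step in the first direction: there we must use that \(q\) is the 2-cokernel of \(k\), and not merely that \(k\) is a 2-kernel of \(q\). This is precisely where short 2-exactness, rather than half of it, enters, as \exax\ref{Ex Hypoabelianisation} shows it must. The transport of the 2-cokernel property along \(k\iso 0\), via \corx\ref{corollkeruniqueuptoequiv}, together with the fact that any morphism precomposed with a null one is null, handles this step.
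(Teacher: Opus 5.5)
Your proof is correct and follows the paper's argument. For the first direction you use condition~(1) of the 2-cokernel to get essential surjectivity of \((-)\comp q\), condition~(2) to get full faithfulness, and then \lemx\ref{L:RepEquiv}; for the converse you show \(k\iso 0\) and reflect this to \(\id{K}\iso 0\) along the 2-monomorphism \(k\). The differences are cosmetic: the paper applies condition~(1) directly to \(z\comp k\iso 0\) instead of transporting the 2-cokernel along \(k\iso 0\), and it obtains \(k\iso 0\) by whiskering \(\kappa\) with a quasi-inverse of \(q\) rather than by letting \(q\) reflect null morphisms.
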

\begin{proof}
	We prove the first equivalence; the second is dual.

	Suppose \(K\) is trivial, so that \(\id{K}\iso 0\) and hence \(k=k\comp\id{K}\) is isomorphic to a null morphism. Then for every \(z\colon A\to Z\) the composite \(z\comp k\) is isomorphic to a null morphism, so condition~(1) of \defx\ref{Def 2-cokernel} yields \(u\colon Q\to Z\) with \(u\comp q\iso z\); this says that \((-)\comp q\colon\HomC{\L}{Q}{Z}\to\HomC{\L}{A}{Z}\) is essentially surjective. Condition~(2) says that it is fully faithful. As this holds for every \(Z\), \lemx\ref{L:RepEquiv} makes \(q\) an equivalence.

	Conversely, suppose \(q\) is an equivalence, with quasi-inverse \(p\). Whiskering \(\kappa\colon q\comp k\iso 0\) with \(p\) gives \(p\comp q\comp k\iso 0\), whence \(k\) is isomorphic to a null morphism. Now \(k\) is a 2-kernel, so a 2-monomorphism by \prox\ref{Prop Kernel Is Mono}, and therefore reflects null morphisms; applied to \(k\comp\id{K}=k\) this gives \(\id{K}\iso 0\), that is, \(K\) is trivial.
\end{proof}

\begin{corollary}\label{Trivial Kernel Normal Mono}
	Let \(f\colon A\to B\) be a normal morphism in a 2-z-exact 2-category, say \(f\iso m\comp e\) with \(e\) a normal 2-epimorphism and \(m\) a normal 2-monomorphism. If the 2-kernel of \(f\) is trivial, then \(e\) is an equivalence, and hence \(f\) is a normal 2-monomorphism. Dually, if the 2-cokernel of \(f\) is trivial, then \(m\) is an equivalence, and hence \(f\) is a normal 2-epimorphism.
\end{corollary}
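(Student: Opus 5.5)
We prove the first statement; the second is dual. Write \(f\iso m\comp e\) with \(e\colon A\to I\) a normal 2-epimorphism and \(m\colon I\to B\) a normal 2-monomorphism. The strategy is to move the triviality hypothesis from \(f\) to \(e\), and then to apply \prox\ref{Prop Equivalence CoKernel} to the short 2-exact sequence that \(e\) determines.

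First, \(m\) is a 2-monomorphism, so \prox\ref{CoKernel of Composite} gives \(\twoker(f)\simeq\twoker(e)\). Triviality is invariant under equivalence, by \remx\ref{Rem Trivial}. Hence the 2-kernel object \(N\) of \(e\) is trivial.

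Next, \(e\) is a normal 2-epimorphism. By \prox\ref{kernel is kernel of its cokernel} and \exax\ref{Ex SES}, the pair \(N\aar{\twoker(e)}A\aar{e}I\) is a short 2-exact sequence. Its kernel object \(N\) is trivial, so \prox\ref{Prop Equivalence CoKernel} makes \(e\) an equivalence.

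Finally, \(f\iso m\comp e\) is a normal 2-monomorphism composed with an equivalence. By \corx\ref{C:NormalTransport} it is therefore a normal 2-monomorphism; note that being a 2-kernel is invariant under invertible 2-cells by \corx\ref{corollkeruniqueuptoequiv}, so replacing \(m\comp e\) by \(f\) is harmless.

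The only point that needs care is the first step: checking that the equivalence \(\twoker(f)\simeq\twoker(e)\) given by \prox\ref{CoKernel of Composite} is an equivalence of the kernel objects, so that triviality actually transfers. This follows from the uniqueness of 2-kernels up to equivalence in \corx\ref{corollkeruniqueuptoequiv}.
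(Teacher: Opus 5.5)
Your proof is correct and follows the paper's argument step for step: you transfer triviality from \(\twoker(f)\) to \(\twoker(e)\) via \prox\ref{CoKernel of Composite}, recognise \((\twoker(e),e)\) as a short 2-exact sequence by \prox\ref{kernel is kernel of its cokernel}, and conclude with \prox\ref{Prop Equivalence CoKernel}. Your appeal to \corx\ref{C:NormalTransport} for the last step only spells out what the paper abbreviates as ``\(f\iso m\)''.
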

\begin{proof}
	We prove the first statement; the second is dual. By \prox\ref{CoKernel of Composite} we have \(\twoker(f)\simeq\twoker(e)\), so the 2-kernel of \(e\) is trivial. As \(e\) is a normal 2-epimorphism it is the 2-cokernel of its 2-kernel, by \prox\ref{kernel is kernel of its cokernel}, so that \(\twoker(e)\) and \(e\) form a short 2-exact sequence. \prox\ref{Prop Equivalence CoKernel} applied to that sequence shows \(e\) to be an equivalence, whence \(f\iso m\) is a normal 2-monomorphism.
\end{proof}

\begin{proposition}\label{Normal Epi Mono Equivalence}
	In a 2-z-exact 2-category, a normal 2-epimorphism that is a 2-monomorphism is an equivalence. Dually, a normal 2-monomorphism that is a 2-epimorphism is an equivalence.
\end{proposition}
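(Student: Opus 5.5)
Let \(q\colon A\to Q\) be a normal 2-epimorphism that is also a 2-monomorphism. The plan is to put \(q\) into a short 2-exact sequence and then read off the conclusion from \prox\ref{Prop Equivalence CoKernel}. Let \(k\colon K\to A\) be a 2-kernel of \(q\). By \prox\ref{kernel is kernel of its cokernel}, \(q\) is a 2-cokernel of \(k\). Hence \((k,q)\) is a short 2-exact sequence, as in \exax\ref{Ex SES}.

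Next I would show that \(K\) is trivial. Since \(q\) is a 2-monomorphism, \lemx\ref{2-Mono Trivial Kernel} says that its 2-kernel is trivial. Concretely, \(q\comp k\iso 0\) and \(q\) reflects null morphisms by \prox\ref{prop2monoreflectsnull}, so \(k\iso 0\). Then \(k=k\comp\id{K}\iso 0\), and \(k\), being a 2-monomorphism, reflects null morphisms as well, which gives \(\id{K}\iso 0\).

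Now \prox\ref{Prop Equivalence CoKernel}, applied to the short 2-exact sequence \((k,q)\) with \(K\) trivial, shows that \(q\) is an equivalence. The dual statement, for a normal 2-monomorphism that is a 2-epimorphism, follows by duality. The argument uses the 2-kernel of \(q\) and the self-dual results \prox\ref{kernel is kernel of its cokernel} and \prox\ref{Prop Equivalence CoKernel}, all of which have dual forms.

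There is no real obstacle here; the work was done in the earlier results. The one point to check is that \lemx\ref{2-Mono Trivial Kernel} applies to whichever 2-kernel was chosen. This is harmless, because 2-kernels are unique up to equivalence (\corx\ref{corollkeruniqueuptoequiv}) and triviality is invariant under equivalence (\remx\ref{Rem Trivial}).
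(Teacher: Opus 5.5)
Your proof is correct, but it takes a different route from the paper's. You pass through the 2-kernel of \(q\). \prox\ref{kernel is kernel of its cokernel} makes \((\twoker(q),q)\) a short 2-exact sequence, \lemx\ref{2-Mono Trivial Kernel} makes its kernel object trivial, and \prox\ref{Prop Equivalence CoKernel} (and through it the representable criterion \lemx\ref{L:RepEquiv}) turns this into an equivalence. In effect this is the argument of \corx\ref{Trivial Kernel Normal Mono}, specialised.

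The paper instead works with a morphism \(g\) of which \(q\) is a 2-cokernel. Since \(q\comp g\iso 0\) and \(q\) reflects null morphisms, \(g\iso 0\). So \(\id{A}\) factors through \(q\) by condition~(1) of \defx\ref{Def 2-cokernel}, giving \(u\) with \(u\comp q\iso\id{A}\). Then \(q\comp u\comp q\iso\id{Q}\comp q\), and cofull faithfulness of \(q\) yields \(q\comp u\iso\id{Q}\).

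The paper's argument is more economical and more general. It builds the quasi-inverse explicitly, never forms a 2-kernel, and never invokes \prox\ref{kernel is kernel of its cokernel} or \prox\ref{Prop Equivalence CoKernel}. It therefore holds for any 2-cokernel that is a 2-monomorphism, without appeal to 2-z-exactness. Your version genuinely uses the existence of \(\twoker(q)\), but in exchange it is a short assembly of results already in place.

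Your closing worry is unnecessary: \lemx\ref{2-Mono Trivial Kernel} is proved for an arbitrary 2-kernel, so you do not need uniqueness up to equivalence.
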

\begin{proof}
	Let \(q\colon A\to Q\) be a 2-cokernel of \(g\colon G\to A\), with \(\eta\colon q\comp g\iso 0\), and suppose \(q\) is a 2-monomorphism. Then \(q\) reflects null morphisms by \prox\ref{prop2monoreflectsnull}, so \(g\) is isomorphic to a null morphism. Hence \(\id{A}\comp g\) is isomorphic to a null morphism, and condition~(1) of \defx\ref{Def 2-cokernel} yields \(u\colon Q\to A\) with \(u\comp q\iso\id{A}\). Whiskering with \(q\) gives \(q\comp u\comp q\iso q\iso\id{Q}\comp q\), and \(q\), being a 2-cokernel, is a 2-epimorphism by \prox\ref{Prop Kernel Is Mono}; so \(q\comp u\iso\id{Q}\) by the dual of \remx\ref{rem2monoismonouptoiso}. Thus \(q\) is an equivalence. The second statement is dual.
\end{proof}

This argument uses neither a 2-kernel nor a 2-cokernel of the morphism in question, and in particular does not need \prox\ref{Prop Equivalence CoKernel}.

\begin{corollary}\label{Equivalence Is Mono Plus Normal Epi}
	For a morphism \(f\colon A\to B\) in a 2-z-exact 2-category, the following are equivalent:
	\begin{tfae}
		\item \(f\) is an equivalence;
		\item \(f\) is both a normal 2-monomorphism and a normal 2-epimorphism;
		\item \(f\) is both a normal 2-monomorphism and a 2-epimorphism;
		\item \(f\) is both a 2-monomorphism and a normal 2-epimorphism.
	\end{tfae}
\end{corollary}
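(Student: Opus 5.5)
We establish the cycle of implications (i)\(\Rightarrow\)(ii)\(\Rightarrow\)(iii)\(\Rightarrow\)(i), together with (ii)\(\Rightarrow\)(iv)\(\Rightarrow\)(i). The implications (ii)\(\Rightarrow\)(iii) and (ii)\(\Rightarrow\)(iv) are immediate, because a normal 2-monomorphism is a 2-kernel and hence a 2-monomorphism by \prox\ref{Prop Kernel Is Mono}; dually a normal 2-epimorphism is a 2-epimorphism. The implications (iii)\(\Rightarrow\)(i) and (iv)\(\Rightarrow\)(i) are exactly the two halves of \prox\ref{Normal Epi Mono Equivalence}.

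The only step that needs an argument is (i)\(\Rightarrow\)(ii): an equivalence \(f\colon A\to B\) is a normal 2-monomorphism and a normal 2-epimorphism. The most economical route is \corx\ref{C:NormalTransport}. An identity \(\id{A}\) is a normal 2-monomorphism, since it is a 2-kernel of the null morphism \(A\to 0\). Indeed:
\begin{itemize}
	\item \(\id{A}\) is fully faithful, which gives condition~(2);
	\item the composite \(A\to 0\) is null;
	\item any \(z\colon Z\to A\) factors as \(\id{A}\comp z\), which gives condition~(1).
\end{itemize}
Then \(f\iso f\comp\id{A}\) is the composite of a normal 2-monomorphism with an equivalence, so it is a normal 2-monomorphism by \corx\ref{C:NormalTransport}. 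Dually, \(\id{B}\) is a 2-cokernel of \(0\to B\), and \(f\iso\id{B}\comp f\) is a normal 2-epimorphism.

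Alternatively, one can verify directly that \(f\) is a 2-kernel of \(B\to 0\). It is fully faithful by \lemx\ref{L:RepEquiv}(iii), and condition~(1) holds by the essential surjectivity of \(f\comp(-)\).

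No serious obstacle is expected. The only point needing care is that the null 2-cell datum in the definition of a 2-kernel of \(A\to 0\) is unproblematic. Here \lemx\ref{L:UniqueNull} and \remx\ref{Rem Kernel Property} guarantee that the invertible 2-cell \(A\to 0\to 0\iso 0\) is unique, so no coherence condition needs checking.
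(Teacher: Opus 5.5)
Your proof is correct and follows the paper's route: show that an identity is a 2-kernel of the morphism to \(0\) (dually a 2-cokernel of the morphism from \(0\)), transport this along the equivalence, and close the cycle with \prox\ref{Normal Epi Mono Equivalence}. Citing \corx\ref{C:NormalTransport} for the transport step is, if anything, more precise than the paper's citation of \corx\ref{corollkeruniqueuptoequiv}, which literally states only that any two 2-kernels differ by an equivalence; your direct verification via \lemx\ref{L:RepEquiv} is also sound.
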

\begin{proof}
	We prove \((i)\Rightarrow(ii)\). The identity \(\id{B}\) is a 2-kernel of any chosen null morphism \(t\colon B\to 0\): the composite \(t\comp\id{B}\) is null, every morphism factors through the identity, and \(\id{B}\) is fully faithful. By \corx\ref{corollkeruniqueuptoequiv} the equivalence \(f\) is therefore a 2-kernel of \(t\) as well, and dually a 2-cokernel of any chosen null morphism \(0\to A\). Clearly \((ii)\) implies both \((iii)\) and \((iv)\), and \prox\ref{Normal Epi Mono Equivalence} shows that either of these implies \((i)\).
\end{proof}

\begin{corollary}\label{Normal Mono Criterion}
	For a morphism \(f\) in a 2-z-exact 2-category, the following are equivalent:
	\begin{tfae}
		\item \(f\) is a normal 2-monomorphism;
		\item \(f\) is a 2-monomorphism and it is normal.
	\end{tfae}
	Dually, \(f\) is a normal 2-epimorphism if and only if it is a 2-epimorphism and it is normal.
\end{corollary}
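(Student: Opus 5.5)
The forward implication \((i)\Rightarrow(ii)\) is immediate from what is already available. A normal 2-monomorphism is a 2-kernel, hence a 2-monomorphism by \prox\ref{Prop Kernel Is Mono}. It is also normal, since \(f\iso f\comp\id{A}\), and \(\id{A}\) is a normal 2-epimorphism: it is an equivalence, so \corx\ref{Equivalence Is Mono Plus Normal Epi} applies.

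For \((ii)\Rightarrow(i)\), I would write \(f\iso m\comp e\) with \(m\) a normal 2-monomorphism and \(e\) a normal 2-epimorphism, and aim to show that \(e\) is an equivalence. Then \(f\) is isomorphic to \(m\comp e\) with \(e\) an equivalence, so it is a normal 2-monomorphism by \corx\ref{C:NormalTransport}. There are two routes to the equivalence, and I would use the first, keeping the second as a cross-check.

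The first route uses triviality of the 2-kernel. Since \(f\) is a 2-monomorphism, its 2-kernel is trivial by \lemx\ref{2-Mono Trivial Kernel}. \corx\ref{Trivial Kernel Normal Mono} then says directly that \(e\) is an equivalence and \(f\) is a normal 2-monomorphism, which finishes the argument.

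The second route is a direct check. The composite \(m\comp e\iso f\) is a 2-monomorphism and \(m\) is faithful, so \prox\ref{Composites of Normal Monos}\((i)\) makes \(e\) a 2-monomorphism. A normal 2-epimorphism that is a 2-monomorphism is an equivalence by \prox\ref{Normal Epi Mono Equivalence}.

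The dual statement, for normal 2-epimorphisms, follows by duality. I expect no real obstacle here. The only care needed is that every factorisation is up to invertible 2-cells, so each appeal to normality must go through \corx\ref{corollkeruniqueuptoequiv} or \corx\ref{C:NormalTransport} rather than a strict equality.
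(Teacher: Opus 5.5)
Your proposal is correct, and your primary route is exactly the paper's argument: triviality of the 2-kernel by Lemma~\ref{2-Mono Trivial Kernel}, then Corollary~\ref{Trivial Kernel Normal Mono}. The forward direction is also the paper's, with the extra justification (via Corollary~\ref{Equivalence Is Mono Plus Normal Epi}) that the identity is a normal 2-epimorphism. Your cross-check through part~\textup{(i)} of Proposition~\ref{Composites of Normal Monos} and Proposition~\ref{Normal Epi Mono Equivalence} is also valid, and it has the minor advantage of never forming the 2-kernel of \(f\).
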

\begin{proof}
	We prove the first part; the second is dual. A normal 2-monomorphism is a 2-monomorphism, and it is normal, being isomorphic to itself composed with the identity. Conversely, if \(f\) is a 2-monomorphism then its 2-kernel is trivial by \lemx\ref{2-Mono Trivial Kernel}, so \corx\ref{Trivial Kernel Normal Mono} makes it a normal 2-monomorphism.
\end{proof}

\begin{proposition}\label{Normal Equivalence Trivial}
	A normal morphism is an equivalence if and only if its 2-kernel and its 2-cokernel are trivial.
\end{proposition}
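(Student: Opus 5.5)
The plan is to prove the two implications separately, reducing each to results already established for 2-monomorphisms, 2-epimorphisms and short 2-exact sequences.

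For the forward direction, let \(f\) be an equivalence. By \corx\ref{Equivalence Is Mono Plus Normal Epi}, \(f\) is both a normal 2-monomorphism and a normal 2-epimorphism. In particular it is a 2-monomorphism, so its 2-kernel is trivial by \lemx\ref{2-Mono Trivial Kernel}. Dually, it is a 2-epimorphism, so its 2-cokernel is trivial. Normality of \(f\) is not even needed in this direction.

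For the converse, suppose \(f\iso m\comp e\), with \(m\) a normal 2-monomorphism and \(e\) a normal 2-epimorphism, and suppose that \(\twoker(f)\) and \(\twocoker(f)\) are trivial. Since \(\twoker(f)\) is trivial, \corx\ref{Trivial Kernel Normal Mono} shows that \(e\) is an equivalence, and hence that \(f\iso m\) is a normal 2-monomorphism. Since \(\twocoker(f)\) is trivial, the dual part of the same corollary, applied to the same factorisation, shows that \(m\) is an equivalence. Then \(f\iso m\comp e\) is a composite of equivalences, and therefore an equivalence.

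There is one point to check: that a morphism isomorphic to an equivalence is itself an equivalence. This follows immediately by whiskering the quasi-inverse with the invertible 2-cell, or alternatively from \lemx\ref{L:RepEquiv}, since isomorphic morphisms induce isomorphic whiskering functors. Another route to the same conclusion is to note that \(f\) is a normal 2-monomorphism and also a normal 2-epimorphism, and then apply \corx\ref{Equivalence Is Mono Plus Normal Epi}. There is no real obstacle here; the whole argument is bookkeeping over the earlier corollaries.
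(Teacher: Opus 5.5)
Your proof is correct and follows the paper's argument. The forward direction is \lemx\ref{2-Mono Trivial Kernel} and its dual; your detour through \corx\ref{Equivalence Is Mono Plus Normal Epi} is harmless, though unnecessary, since an equivalence is already fully faithful and cofully faithful. The converse applies \corx\ref{Trivial Kernel Normal Mono} and its dual to the single factorisation \(f\iso m\comp e\), exactly as the paper does.
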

\begin{proof}
	An equivalence is both a 2-monomorphism and a 2-epimorphism, so its 2-kernel and 2-cokernel are trivial by \lemx\ref{2-Mono Trivial Kernel}. Conversely, let \(f\iso m\comp e\) be normal with trivial 2-kernel and trivial 2-cokernel. By \corx\ref{Trivial Kernel Normal Mono}, triviality of the 2-kernel makes \(e\) an equivalence and triviality of the 2-cokernel makes \(m\) an equivalence; hence so is \(f\).
\end{proof}

\subsection{The normal image factorisation}\label{SS:ImageFactorisation}

\begin{proposition}\label{Image Factorisation of Normal Map is Unique}
	If, in a 2-z-exact 2-category, a morphism \(f\colon A\to B\) factors, up to an invertible 2-cell, as a normal 2-epimorphism \(e\colon A\to I\) followed by a normal 2-monomorphism \(m\colon I\to B\), then this factorisation is unique up to equivalence. We call it the \dfn{normal image factorisation} of \(f\) and write \(I=\TwoImg(f)\), \(m=\twoimg(f)\), and dually \(I=\TwoCoim(f)\), \(e=\twocoim(f)\).

	If \(f\) further factors, up to an invertible 2-cell, as a 2-epimorphism \(e'\colon A\to I'\) followed by a 2-monomorphism \(m'\colon I'\to B\), then necessarily \(e'\) is a normal 2-epimorphism and \(m'\) is a normal 2-monomorphism.
\end{proposition}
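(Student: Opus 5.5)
I will prove the second assertion first and deduce uniqueness from it. Suppose \(f\iso m\comp e\) with \(e\) a normal 2-epimorphism and \(m\) a normal 2-monomorphism, and also \(f\iso m'\comp e'\) with \(e'\) a 2-epimorphism and \(m'\) a 2-monomorphism. By \prox\ref{CoKernel of Composite}, \(\twoker(f)\simeq\twoker(e)\) and \(\twoker(f)\simeq\twoker(e')\); more precisely, a chosen 2-kernel \(k\colon K\to A\) of \(f\) is simultaneously a 2-kernel of \(e\) and of \(e'\). By \prox\ref{kernel is kernel of its cokernel}, \(e\) is a 2-cokernel of \(k\). Since \(e'\comp k\iso 0\), the universal property of \(e=\twocoker(k)\) yields \(u\colon I\to I'\) with an invertible 2-cell \(u\comp e\iso e'\). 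Dually, \(\twocoker(f)\) is a 2-cokernel of both \(m\) and \(m'\); since \(m\) is the 2-kernel of its 2-cokernel, we get \(v\colon I'\to I\) with \(m\comp v\iso m'\).

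Next I would show that \(u\) and \(v\) are mutually quasi-inverse. We have \(m\comp v\comp u\comp e\iso m'\comp e'\iso f\iso m\comp e\). Since \(m\) is a 2-monomorphism, \remx\ref{rem2monoismonouptoiso} gives \(v\comp u\comp e\iso e\). Since \(e\) is a 2-epimorphism, its dual gives \(v\comp u\iso\id{I}\). For the other composite, we have \(m'\comp u\comp v\comp e'\iso m'\comp u\comp e\iso m'\comp e' \) only after rewriting: \(u\comp v\comp e'\iso u\comp v\comp u\comp e\iso u\comp e\iso e'\). Since \(e'\) is a 2-epimorphism, \(u\comp v\iso\id{I'}\). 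Hence \(u\) is an equivalence, with \(e'\iso u\comp e\) and \(m'\iso m\comp v\).

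By \corx\ref{C:NormalTransport}, composing a normal 2-epimorphism with an equivalence gives a normal 2-epimorphism, so \(e'\) is normal; likewise \(m'\). This proves the second assertion. Uniqueness up to equivalence is the special case where \(e'\), \(m'\) are themselves normal: the equivalence \(u\colon I\to I'\) is compatible with both factors up to invertible 2-cells.

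The main obstacle is ensuring that both universal properties are actually available. This requires the 2-kernel of \(f\) to be a 2-kernel of \(e'\), which holds because \(m'\) is only assumed a 2-monomorphism; \prox\ref{CoKernel of Composite} allows exactly this, with no normality needed. It also requires that \(e\), not \(e'\), is the one known to be a 2-cokernel of that kernel, via \prox\ref{kernel is kernel of its cokernel}. The remaining cancellations are routine applications of \remx\ref{rem2monoismonouptoiso}.
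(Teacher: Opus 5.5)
Your proof is correct, but it takes a different route from the paper's.

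The paper builds only one comparison. It obtains \(t\colon I\to I'\) with \(e'\iso t\comp e\) from the universal property of \(e=\twocoker(k)\), and transfers the remaining 2-cell to \(m'\comp t\iso m\) using that \(e\) is cofully faithful. It then identifies \(t\) as an equivalence through the cancellation calculus:
\begin{itemize}
\item part~(ii) of \prox\ref{Composites of Normal Monos} makes \(t\) a normal 2-monomorphism;
\item the dual of part~(i) makes \(t\) a 2-epimorphism;
\item \corx\ref{Equivalence Is Mono Plus Normal Epi}, and through it \prox\ref{Normal Epi Mono Equivalence}, concludes.
\end{itemize}

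You instead use two dual universal properties, of \(e\) as a 2-cokernel of \(k\) and of \(m\) as a 2-kernel of \(\twocoker(f)\), to build comparisons in both directions. You then check \(v\comp u\iso\id{I}\) and \(u\comp v\iso\id{I'}\) using nothing more than the cancellations of \remx\ref{rem2monoismonouptoiso} against \(m\), \(e\) and \(e'\).

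What each approach buys: yours is self-dual and more elementary, and bypasses both the normal-cancellation results and the characterisation of equivalences. The paper's needs only one universal property and reads as a direct application of the cancellation machinery it has just set up.

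Two small remarks on your write-up. First, the sentence about \(m'\comp u\comp v\comp e'\) in your second paragraph is muddled; the rewriting \(u\comp v\comp e'\iso u\comp v\comp u\comp e\iso u\comp e\iso e'\) that follows it is the actual argument, and it is fine. Second, your appeal to \(\twocoker(f)\) is inessential, though harmless in a 2-z-exact 2-category. If \(m\) is a 2-kernel of some \(\ell\), then \(\ell\comp m'\comp e'\iso\ell\comp m\comp e\iso 0\), and \(e'\) coreflects null morphisms, so \(m'\) factors through \(m\) directly.
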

\begin{proof}
	Suppose \(f\iso m\comp e\) and \(f\iso m'\comp e'\) as stated. Consider the 2-kernel \(K\aar{k}A\) of \(f\), which is a 2-kernel of \(e\) by \prox\ref{CoKernel of Composite}; so \(e\) is a 2-cokernel of \(k\), by \prox\ref{kernel is kernel of its cokernel}. Since \(m'\comp e'\comp k\) is isomorphic to a null morphism and \(m'\) is a 2-monomorphism, \prox\ref{prop2monoreflectsnull} gives that \(e'\comp k\) is isomorphic to a null morphism too. The universal property of the 2-cokernel \(e\) of \(k\) now yields a morphism \(t\colon I\to I'\) and an invertible 2-cell \(\gamma\colon e'\iso t\comp e\). Moreover, since \(e\) is cofully faithful, the pasting
	\begin{cd}
		\& I \& {I'} \& \\
		A \& I \&\& B \text{,}
		\arrow["t", from=1-2, to=1-3]
		\arrow["{\gamma^{-1}}"{inner sep=1ex}, xshift = -2ex,shift right=6, iso, from=1-2, to=1-3]
		\arrow["{m'}", from=1-3, to=2-4]
		\arrow["e", from=2-1, to=1-2]
		\arrow["{e'}"{description},bend right=15, from=2-1, to=1-3]
		\arrow["e"', from=2-1, to=2-2]
		\arrow["m"', from=2-2, to=2-4]
		\arrow[shift left=5, iso, from=2-2, to=2-4]
	\end{cd}
	in which the invertible 2-cell on the right is the composite \(m'\comp e'\iso f\iso m\comp e\), induces an invertible 2-cell \(\sigma\colon m'\comp t\iso m\). The situation is this:
	\begin{cd}[4]
		\&\& {I'} \& \\
		K \& A \&\& B \\
		\&\& I
		\arrow["{m'}",tail, from=1-3, to=2-4]
		\arrow["k", from=2-1, to=2-2,nmono]
		\arrow["{e'}", two heads,from=2-2, to=1-3]
		\arrow["\gamma"'{inner sep=1ex}, iso,from=2-2, to=2-4,xshift =-4ex]
		\arrow["\sigma"'{inner sep=1ex}, iso, from=2-2, to=2-4,xshift =4ex]
		\arrow["e"', from=2-2, to=3-3,nepi]
		\arrow["t"{description}, from=3-3, to=1-3]
		\arrow["m"', from=3-3, to=2-4,nmono]
	\end{cd}
	By part~\textup{(ii)} of \prox\ref{Composites of Normal Monos}, since \(m\) is a normal 2-monomorphism and \(m'\) is a 2-monomorphism, \(t\) is a normal 2-monomorphism; and by the dual of part~\textup{(i)}, since \(e\) and \(e'\) are 2-epimorphisms, \(t\) is a 2-epimorphism. So \(t\) is an equivalence by \corx\ref{Equivalence Is Mono Plus Normal Epi}, and the two factorisations agree up to equivalence.

	In the course of this argument we assumed of \(m'\) and \(e'\) only that they are a 2-monomorphism and a 2-epimorphism. Since \(t\) is an equivalence, \corx\ref{corollkeruniqueuptoequiv} makes \(m'\) a normal 2-monomorphism and \(e'\) a normal 2-epimorphism.
\end{proof}

\begin{proposition}\label{P:NormalCancel}
	Let \(u\colon A\to B\) be a morphism of a 2-z-exact 2-category and let \(m\colon B\to C\) be a 2-monomorphism. If \(m\comp u\) is normal, then \(u\) is normal. Dually, if \(e\colon A\to B\) is a 2-epimorphism and \(u\comp e\) is normal, then \(u\) is normal.
\end{proposition}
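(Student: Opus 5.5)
We prove the first statement; the second is dual. Write \(m\comp u\iso n\comp e\), with \(e\colon A\to I\) a normal 2-epimorphism and \(n\colon I\to C\) a normal 2-monomorphism. The plan is to show that \(n\) factors through \(m\), that is, \(n\iso m\comp n'\) for some \(n'\colon I\to B\), and then that \(u\iso n'\comp e\) with \(n'\) a normal 2-monomorphism. That is exactly normality of \(u\).

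To produce \(n'\), use the universal property of \(e\). By \prox\ref{kernel is kernel of its cokernel}, \(e\) is a 2-cokernel of its 2-kernel \(k\colon K\to A\). First, \(m\comp u\comp k\iso n\comp e\comp k\) is isomorphic to a null morphism. Since \(m\) is a 2-monomorphism, it reflects null morphisms by \prox\ref{prop2monoreflectsnull}, so \(u\comp k\) is isomorphic to a null morphism. Condition~(1) of \defx\ref{Def 2-cokernel} then gives \(n'\colon I\to B\) with an invertible 2-cell \(\gamma\colon n'\comp e\iso u\). Whiskering \(\gamma\) with \(m\) and composing with the given isomorphism gives \(m\comp n'\comp e\iso m\comp u\iso n\comp e\). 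Since \(e\) is cofully faithful (\prox\ref{Prop Kernel Is Mono}), this lifts to an invertible 2-cell \(m\comp n'\iso n\), by the dual of \remx\ref{rem2monoismonouptoiso}.

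It remains to show that \(n'\) is a normal 2-monomorphism. Here \(n\iso m\comp n'\) with \(m\) a 2-monomorphism and \(n\) a normal 2-monomorphism, so part~\textup{(ii)} of \prox\ref{Composites of Normal Monos} applies directly: taking \(g=m\), \(h=n\) and \(f=n'\), it makes \(n'\) a normal 2-monomorphism. Hence \(u\iso n'\comp e\) is normal. The dual statement follows the same way, using the 2-kernel property of the normal 2-monomorphism factor, coreflection of null morphisms by \(e\), and the dual of \prox\ref{Composites of Normal Monos}\textup{(ii)}.

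The only step needing care is the factorisation of \(u\) through \(e\). It needs \(e\) to be a 2-cokernel of a morphism annihilated by \(u\), and this is why we pass to \(\twoker(e)\) via \prox\ref{kernel is kernel of its cokernel} and use the reflection property of \(m\). The coherence of the 2-cells is harmless: we only ever need invertible 2-cells to exist, and never need a particular choice of them.
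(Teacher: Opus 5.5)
Your proof is correct and follows the paper's argument. You factor \(u\) through the normal 2-epimorphic part of \(m\comp u\), viewed as a 2-cokernel of its 2-kernel; you cancel that 2-epimorphism to get \(m\comp n'\iso n\); and you conclude with part~(ii) of Proposition~\ref{Composites of Normal Monos}. The only difference is cosmetic: you get \(u\comp k\iso 0\) directly from the reflection of null morphisms by \(m\), whereas the paper passes through Proposition~\ref{CoKernel of Composite} to note that \(u\), \(m\comp u\) and the epimorphic factor share one 2-kernel, and that proposition is itself proved by the same reflection.
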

\begin{proof}
	Let \(m\comp u\iso n\comp p\) be a normal image factorisation. By \prox\ref{CoKernel of Composite}, applied to \(m\comp u\) with the 2-monomorphism \(m\) and to \(n\comp p\) with the 2-monomorphism \(n\), the morphisms \(u\), \(m\comp u\) and \(p\) all have the same 2-kernel \(k\); and \(p\), being a normal 2-epimorphism, is a 2-cokernel of \(k\), by \prox\ref{kernel is kernel of its cokernel}. Since \(u\comp k\iso 0\), the universal property of \(p\) yields \(u'\) with \(u'\comp p\iso u\). Then \(m\comp u'\comp p\iso m\comp u\iso n\comp p\), and \(p\) is a 2-epimorphism, so \(m\comp u'\iso n\); as \(n\) is a normal 2-monomorphism and \(m\) is a 2-monomorphism, part~\textup{(ii)} of \prox\ref{Composites of Normal Monos} makes \(u'\) a normal 2-monomorphism. Hence \(u\iso u'\comp p\) is a normal 2-epimorphism followed by a normal 2-monomorphism, so normal.
\end{proof}

\subsection{Morphisms of short 2-exact sequences}\label{SS:MorphismSES}

\begin{definition}[morphism of short 2-exact sequences]\label{Def:morphism of SES}
	Let \((k',q')\) and \((k,q)\) be short 2-exact sequences, with structure 2-cells \(\kappa'\colon q'\comp k'\iso 0\) and \(\kappa\colon q\comp k\iso 0\). A \dfn{morphism of short 2-exact sequences} between them, drawn as
	\begin{cd}[5][5]
		0 \& {K'} \& {A'} \& {Q'} \& 0 \\
		0 \& K \& A \& Q \& 0
		\arrow[from=1-1, to=1-2]
		\arrow["{k'}", from=1-2, to=1-3]
		\arrow[""{name=0, anchor=center, inner sep=0}, "{f}"', from=1-2, to=2-2]
		\arrow["{q'}", from=1-3, to=1-4]
		\arrow[""{name=1, anchor=center, inner sep=0}, "{g}", from=1-3, to=2-3]
		\arrow[from=1-4, to=1-5]
		\arrow[""{name=2, anchor=center, inner sep=0}, "{h}", from=1-4, to=2-4]
		\arrow[from=2-1, to=2-2]
		\arrow["k"', from=2-2, to=2-3]
		\arrow["q"', from=2-3, to=2-4]
		\arrow[from=2-4, to=2-5]
		\arrow["\phi_K"{inner sep=1.1ex},shift right=1, iso, from=0, to=1]
		\arrow["\phi_Q"{inner sep=1.1ex},shift right=1, iso, from=1, to=2]
	\end{cd}
	consists of morphisms \(f\colon K'\to K\), \(g\colon A'\to A\) and \(h\colon Q'\to Q\) together with invertible 2-cells
	\[
		\phi_K\colon g\comp k'\iso k\comp f
		\qquad\text{and}\qquad
		\phi_Q\colon h\comp q'\iso q\comp g
	\]
	filling the two squares. In the 1-categorical, that is locally discrete, interpretation the 2-cells \(\phi_K\) and \(\phi_Q\) are identities, so the two squares commute on the nose.
\end{definition}

One might expect the definition to impose in addition a coherence condition, namely that the two invertible 2-cells \(q\comp g\comp k'\iso 0\)---one built from \(\phi_K\) and the structure 2-cell \(\kappa\) of the lower sequence, the other from \(\phi_Q\) and the structure 2-cell \(\kappa'\) of the upper one---agree. It is no condition at all.

\begin{proposition}\label{P:CoherenceFree}
	Let \(\L\) be a 2-category with a strong bizero object. In the situation of \defx\ref{Def:morphism of SES},
	\begin{equation}\label{E:Coherence}
		(\kappa\star f)\cdot(q\star\phi_K)=(h\star\kappa')\cdot(\phi_Q^{-1}\star k')
	\end{equation}
	holds for every choice of \(f\), \(g\), \(h\), \(\phi_K\) and \(\phi_Q\). Here \(\star\) denotes whiskering and \(\cdot\) vertical composition, and we identify \(h\comp 0\) with \(0\).
\end{proposition}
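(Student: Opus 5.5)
Both sides of \eqref{E:Coherence} are 2-cells with the same domain and codomain. The plan is to check that they are invertible 2-cells from a single morphism into a null morphism, and then appeal to \lemx\ref{L:UniqueNull}, or equivalently to \corx\ref{C:UniqueNull}. No diagram manipulation beyond typing the two sides should be needed.

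First, determine the types. The left-hand side starts at \(q\comp g\comp k'\). The factor \(q\star\phi_K\colon q\comp g\comp k'\aR{}q\comp k\comp f\) is followed by \(\kappa\star f\colon q\comp k\comp f\aR{}0\comp f\), and \(0\comp f\) is a morphism \(K'\to Q\) that factors through the bizero object. The right-hand side also starts at \(q\comp g\comp k'\). The factor \(\phi_Q^{-1}\star k'\colon q\comp g\comp k'\aR{}h\comp q'\comp k'\) is followed by \(h\star\kappa'\colon h\comp q'\comp k'\aR{}h\comp 0\), and \(h\comp 0\) again factors through \(0\).

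Second, identify the two codomains. We need to confirm that \(0\comp f\) and \(h\comp 0\) are the same null morphism \(n\colon K'\to Q\), or at least that they are connected by the unique null 2-cell. This is the point where the statement's convention ``we identify \(h\comp 0\) with \(0\)'', and the analogous identification of \(0\comp f\) with \(0\), is used. If one prefers not to identify them on the nose, the strong bizero object (\defx\ref{Def Strong}) provides a unique 2-cell \(\nu\colon 0\comp f\aR{}h\comp 0\). This \(\nu\) is invertible, because its inverse is the unique 2-cell in the opposite direction and the composites are forced to be identities. Equation \eqref{E:Coherence} should then be read with \(\nu\) inserted, and this is exactly the content of the identification.

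Third, conclude. Each side is a vertical composite of whiskerings of invertible 2-cells, so each side is invertible. Both sides are therefore invertible 2-cells \(q\comp g\comp k'\aR{}n\) with \(n\) null. By \lemx\ref{L:UniqueNull} there is at most one such 2-cell, so the two sides coincide. This holds for every \(f\), \(g\), \(h\), \(\phi_K\) and \(\phi_Q\), since none of their properties is used beyond invertibility of \(\phi_K\), \(\phi_Q\), \(\kappa\) and \(\kappa'\).

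The main obstacle is the bookkeeping in the second step: making sure that the identification of \(0\comp f\) and \(h\comp 0\) is legitimate, rather than merely assumed. The strong bizero object settles this. If the null 2-cell \(\nu\) between them is made explicit, the equation holds on the nose after pasting with \(\nu\), again by \lemx\ref{L:UniqueNull}.
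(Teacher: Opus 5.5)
Your proof is correct and is the paper's argument: both sides are invertible 2-cells out of \(q\comp g\comp k'\) into a null morphism, so \lemx\ref{L:UniqueNull} forces them to coincide. Your extra step is to make explicit the unique, hence invertible, 2-cell between the two null codomains \(0\comp f\) and \(h\comp 0\). The statement takes that identification as a convention, and your step simply spells it out.
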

\begin{proof}
	Whiskering an invertible 2-cell with a morphism yields an invertible 2-cell, and a vertical composite of invertible 2-cells is invertible. Both sides of \eqref{E:Coherence} are therefore invertible 2-cells \(q\comp g\comp k'\aR{}0\) with the same domain and the same null codomain. By \lemx\ref{L:UniqueNull} they coincide.
\end{proof}

Strongness of the bizero object is doing real work here. In a bipointed 2-category whose bizero object is not strong---the 2-category of 2-groups, for instance---the 2-cells between null morphisms need not be unique, \lemx\ref{L:UniqueNull} fails, and \eqref{E:Coherence} is a genuine restriction. Since we work throughout with a strong bizero object, it is free throughout.

\subsection{The Normal Short Five Lemma}\label{SS:NSFL}

\begin{theorem}[The Normal Short Five Lemma]\label{NSFL}
	Consider a morphism of short 2-exact sequences
	\begin{cd}[5][5]
		0 \& {K'} \& {A'} \& {Q'} \& 0 \\
		0 \& K \& A \& Q \& 0
		\arrow[from=1-1, to=1-2]
		\arrow["{k'}", from=1-2, to=1-3]
		\arrow[""{name=0, anchor=center, inner sep=0}, "{f}"', from=1-2, to=2-2]
		\arrow["{q'}", from=1-3, to=1-4]
		\arrow[""{name=1, anchor=center, inner sep=0}, "{g}", from=1-3, to=2-3]
		\arrow[from=1-4, to=1-5]
		\arrow[""{name=2, anchor=center, inner sep=0}, "{h}", from=1-4, to=2-4]
		\arrow[from=2-1, to=2-2]
		\arrow["k"', from=2-2, to=2-3]
		\arrow["q"', from=2-3, to=2-4]
		\arrow[from=2-4, to=2-5]
		\arrow["\phi_K"{inner sep=1.1ex},shift right=1, iso, from=0, to=1]
		\arrow["\phi_Q"{inner sep=1.1ex},shift right=1, iso, from=1, to=2]
	\end{cd}
	in a 2-z-exact 2-category, and assume that \(g\) is normal.
	\begin{enumerate}
		\item If \(f\) and \(h\) are 2-monomorphisms, then \(g\) is a normal 2-monomorphism.
		\item If \(f\) and \(h\) are 2-epimorphisms, then \(g\) is a normal 2-epimorphism.
		\item If \(f\) and \(h\) are equivalences, then \(g\) is an equivalence.
	\end{enumerate}
\end{theorem}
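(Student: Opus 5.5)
The plan is to reduce everything to triviality of the 2-kernel or 2-cokernel of \(g\). Since \(g\) is assumed normal, \corx\ref{Trivial Kernel Normal Mono} then does the rest. No 2-cell coherence needs checking: every step only asks that some composite be \emph{isomorphic} to a null morphism. The only tools are \prox\ref{prop2monoreflectsnull}, that 2-monomorphisms reflect null morphisms, and the universal property of the 2-kernel \(k'\).

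For (1), let \(n\colon N\to A'\) be a 2-kernel of \(g\), with \(g\comp n\iso 0\). The proof proceeds in four steps.
\begin{enumerate}
\item Pasting with \(\phi_Q\) gives \(h\comp q'\comp n\iso q\comp g\comp n\iso 0\).
\item Since \(h\) is a 2-monomorphism, \prox\ref{prop2monoreflectsnull} gives \(q'\comp n\iso 0\).
\item Because \(k'\) is a 2-kernel of \(q'\), condition~(1) of \defx\ref{Def 2-kernel} yields \(s\colon N\to K'\) with \(n\iso k'\comp s\).
\item Using \(\phi_K\), we get \(k\comp f\comp s\iso g\comp k'\comp s\iso g\comp n\iso 0\). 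Now \(k\) is a 2-monomorphism by \prox\ref{Prop Kernel Is Mono} and \(f\) is one by hypothesis, so reflecting twice gives \(s\iso 0\). Hence \(n\iso k'\comp s\iso 0\).
\end{enumerate}
Finally, \(n\comp\id{N}\iso 0\) and \(n\) is a 2-monomorphism, so \(\id{N}\iso 0\); that is, \(N\) is trivial. This is the same two-step pattern as in \lemx\ref{2-Mono Trivial Kernel}. As \(g\) is normal, \corx\ref{Trivial Kernel Normal Mono} makes \(g\) a normal 2-monomorphism.

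Part (2) is the exact dual. Take the 2-cokernel \(c\) of \(g\) and use that 2-epimorphisms coreflect null morphisms. Then use that \(q\) is a 2-cokernel of \(k\) (part of the definition of a short 2-exact sequence) to factor \(c\) through \(q\). This shows \(\TwoCoker(g)\) is trivial, and the dual half of \corx\ref{Trivial Kernel Normal Mono} applies.

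For (3), an equivalence is both a 2-monomorphism and a 2-epimorphism, for instance by \lemx\ref{L:RepEquiv}. Parts (1) and (2) then make \(g\) a normal 2-monomorphism and a normal 2-epimorphism, so \corx\ref{Equivalence Is Mono Plus Normal Epi} shows \(g\) is an equivalence.

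The only delicate point is step 3 of (1): factoring \(n\) through \(k'\). It uses precisely that \(k'\) is a 2-kernel of \(q'\), and in (2) precisely that \(q\) is a 2-cokernel of \(k\). This is why the rows must be genuinely short 2-exact and not merely have \(k'\) a 2-kernel of \(q'\), as \exax\ref{Ex Hypoabelianisation} warns.
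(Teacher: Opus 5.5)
Your proof is correct and is essentially the paper's own argument. The steps match: whisker \(g\comp n\iso 0\) through \(\phi_Q\), reflect along \(h\), factor \(n\) through \(k'=\twoker(q')\), then reflect along \(k\) and then \(f\) to get \(n\iso 0\) and hence \(\id{N}\iso 0\), and conclude with \corx\ref{Trivial Kernel Normal Mono}; part (2) is the dual, and part (3) combines the two via \corx\ref{Equivalence Is Mono Plus Normal Epi}.
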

\begin{proof}
	We prove~(1). Write \(n\colon N\to A'\) for \(\twoker(g)\), so that \(g\comp n\iso 0\). Postcomposing with \(q\) and using \(\phi_Q\colon h\comp q'\iso q\comp g\) gives \(h\comp q'\comp n\iso 0\); as \(h\) is a 2-monomorphism it reflects null morphisms, so \(q'\comp n\iso 0\). Since \(k'=\twoker(q')\), the morphism \(n\) factors as \(n\iso k'\comp m\) for some \(m\colon N\to K'\). Precomposing \(\phi_K\colon g\comp k'\iso k\comp f\) with \(m\) now gives
	\[
		k\comp f\comp m\iso g\comp k'\comp m\iso g\comp n\iso 0\text{,}
	\]
	so that \(f\comp m\iso 0\) because \(k\) is a 2-monomorphism, and then \(m\iso 0\) because \(f\) is one. Hence \(n\iso k'\comp m\iso 0\), and \(n\), being a 2-kernel and so itself a 2-monomorphism, reflects this to \(\id{N}\iso 0\). Thus \(\TwoKer(g)\) is trivial, and since \(g\) is normal, \corx\ref{Trivial Kernel Normal Mono} makes it a normal 2-monomorphism.

	For~(2), write \(p\colon A\to P\) for \(\twocoker(g)\), so that \(p\comp g\iso 0\). Precomposing with \(k'\) and using \(\phi_K\) gives \(p\comp k\comp f\iso 0\); as \(f\) is a 2-epimorphism it coreflects null morphisms, so \(p\comp k\iso 0\). Since \(q=\twocoker(k)\), the morphism \(p\) factors as \(p\iso u\comp q\) for some \(u\colon Q\to P\). Postcomposing \(\phi_Q\) with \(u\) gives
	\[
		u\comp h\comp q'\iso u\comp q\comp g\iso p\comp g\iso 0\text{,}
	\]
	so that \(u\comp h\iso 0\) because \(q'\) is a 2-epimorphism, and then \(u\iso 0\) because \(h\) is one. Hence \(p\iso u\comp q\iso 0\) and \(\id{P}\iso 0\), so \(\TwoCoker(g)\) is trivial and \(g\) is a normal 2-epimorphism.

	Statement~(3) follows from~(1) and~(2): an equivalence is both a 2-monomorphism and a 2-epimorphism, so \(g\) is at once a normal 2-monomorphism and a normal 2-epimorphism, hence an equivalence by \corx\ref{Equivalence Is Mono Plus Normal Epi}.
\end{proof}

\begin{remark}\label{Rem NSFL Hypotheses}
	The proof consumes less than the statement offers. Part~(1) uses only that \(k'\) is a 2-kernel of \(q'\) and that \(k\), \(f\) and \(h\) are 2-monomorphisms: no property of \(q\) is used at all, \(q'\) need not be a 2-cokernel, and \(k\) need only be a 2-monomorphism rather than a 2-kernel. Part~(2) uses the mirror halves. In particular neither row need be a full short 2-exact sequence, 2-z-exactness is not needed beyond the existence of the 2-kernel or the 2-cokernel of \(g\) itself, and no bilimit is involved anywhere.
\end{remark}

\section{Exact sequences and the Pure Snake Lemma}\label{S:Exact}

\subsection{The canonical comparison}\label{SS:Comparison}

\begin{lemma}[Normality and the canonical comparison]\label{Normal Iff Comparison Iso}
	Let \(f\colon A\to B\) be a morphism in a 2-z-exact 2-category. Writing \(\twocoim(f)=\twocoker(\twoker(f))\) and \(\twoimg(f)=\twoker(\twocoker(f))\), there is a \dfn{comparison} \(j_f\colon\TwoCoim(f)\to\TwoImg(f)\), unique up to a unique invertible 2-cell, with \(\twoimg(f)\comp j_f\comp\twocoim(f)\iso f\). The morphism \(f\) is normal if and only if \(j_f\) is an equivalence.
\end{lemma}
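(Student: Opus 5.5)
We construct \(j_f\) in two stages from the universal properties. Write \(k=\twoker(f)\colon K\to A\), \(c=\twocoim(f)=\twocoker(k)\colon A\to C\), \(q=\twocoker(f)\colon B\to Q\) and \(i=\twoimg(f)=\twoker(q)\colon I\to B\). Since \(q\comp f\iso 0\), condition~(1) of \defx\ref{Def 2-kernel} for \(i\) gives \(f'\colon A\to I\) with \(i\comp f'\iso f\). Then \(i\comp f'\comp k\iso f\comp k\iso 0\). As \(i\) is a 2-monomorphism it reflects null morphisms by \prox\ref{prop2monoreflectsnull}, so \(f'\comp k\iso 0\). Condition~(1) of \defx\ref{Def 2-cokernel} for \(c\) then gives \(j_f\colon C\to I\) with \(j_f\comp c\iso f'\), and hence \(i\comp j_f\comp c\iso f\).

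For uniqueness, suppose \(j,j'\) both satisfy \(i\comp j\comp c\iso f\iso i\comp j'\comp c\). Since \(i\) is fully faithful and \(c\) is cofully faithful, whiskering by \(i\) on the left and by \(c\) on the right is a bijection on 2-cells \(j\Rightarrow j'\). So the given isomorphism lifts to a unique 2-cell \(j\iso j'\), which is invertible because its inverse lifts likewise. Here ``unique'' should be read as: unique among 2-cells compatible with the chosen 2-cells to \(f\). I will state it that way, because this bookkeeping is the only delicate point of the construction.

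For the equivalence, first suppose \(j_f\) is an equivalence. Then \(f\iso i\comp j_f\comp c\), with \(i\) a normal 2-monomorphism and \(c\) a normal 2-epimorphism. Absorbing \(j_f\) into \(i\) via \corx\ref{C:NormalTransport}, \(i\comp j_f\) is a normal 2-monomorphism, so \(f\) is normal.

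Conversely, suppose \(f\iso m\comp e\) with \(m\) a normal 2-monomorphism and \(e\) a normal 2-epimorphism. By \prox\ref{CoKernel of Composite}, \(\twoker(f)\simeq\twoker(e)\). Since \(e\) is the 2-cokernel of its 2-kernel (\prox\ref{kernel is kernel of its cokernel}), \(e\) is a 2-cokernel of \(k\), and \corx\ref{corollkeruniqueuptoequiv} gives an equivalence \(u\) with \(e\iso u\comp c\). Dually, \(\twocoker(f)\simeq\twocoker(m)\) and \(m\) is a 2-kernel of \(q\), so \(m\iso i\comp v\) for an equivalence \(v\). Then
\[
i\comp(v\comp u)\comp c\iso m\comp e\iso f\iso i\comp j_f\comp c,
\]
and the uniqueness of the comparison gives \(j_f\iso v\comp u\). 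Hence \(j_f\) is an equivalence.

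The main obstacle is the uniqueness step, specifically getting the lifted 2-cell to be invertible and unique simultaneously. It rests entirely on \(i\) being fully faithful and \(c\) cofully faithful, which holds by \prox\ref{Prop Kernel Is Mono}.
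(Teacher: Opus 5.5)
Your proof is correct and follows the paper's own argument. You build \(j_f\) from the universal properties of \(\twoimg(f)\) and \(\twocoim(f)\), obtain uniqueness from \(\twoimg(f)\) being fully faithful and \(\twocoim(f)\) cofully faithful, and in the converse identify \(\twocoim(f)\simeq e\) and \(\twoimg(f)\simeq m\) via \prox\ref{CoKernel of Composite} and \prox\ref{kernel is kernel of its cokernel} before cancelling. Two steps the paper leaves implicit are spelled out in your version: the reflection step giving \(f'\comp k\iso 0\), and the reading of ``unique invertible 2-cell'' as compatibility with the chosen 2-cells into \(f\).
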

\begin{proof}
	Existence and uniqueness of \(j_f\) are the universal properties of \(\twocoim(f)\) and \(\twoimg(f)\): the composite \(\twocoker(f)\comp f\) is isomorphic to a null morphism, so \(f\) factors through \(\twoimg(f)=\twoker(\twocoker(f))\), and dually the resulting morphism factors through \(\twocoim(f)\). Uniqueness holds because \(\twocoim(f)\) is a 2-epimorphism and \(\twoimg(f)\) a 2-monomorphism, so any two 1-cells filling the triangle agree, by \remx\ref{rem2monoismonouptoiso} and its dual.

	If \(j_f\) is an equivalence, then \(f\iso\twoimg(f)\comp\bigl(j_f\comp\twocoim(f)\bigr)\) exhibits \(f\) as a normal 2-monomorphism after a normal 2-epimorphism, the second factor because a 2-cokernel composed with an equivalence is again a 2-cokernel; so \(f\) is normal. Conversely, let \(f\iso m\comp e\) with \(m\) a normal 2-monomorphism and \(e\) a normal 2-epimorphism. By \prox\ref{CoKernel of Composite} we have \(\twoker(f)\simeq\twoker(e)\), and \(e\) is the 2-cokernel of its 2-kernel by \prox\ref{kernel is kernel of its cokernel}, so \(\twocoim(f)\simeq e\); dually \(\twoimg(f)\simeq m\). Under these equivalences the defining relation reads \(m\comp j_f\comp e\iso m\comp e\), and as \(m\) is fully faithful and \(e\) cofully faithful, \(j_f\) is isomorphic to an identity, hence an equivalence.
\end{proof}

\subsection{Exactness}\label{SS:Exactness}

\begin{proposition}\label{Exactness Self-Dual}
	For a pair \((f,g)\) of composable normal morphisms
	\begin{equation*}
		\xymatrix@!0@=4em{
		A \ar[rr]^-{f} \ar@{-{ >>}}[rd]_-{\twocoim(f)} &&
		B \ar[rr]^-{g} \ar@{-{ >>}}[rd]|-{\twocoim(g)} &&
		C \\
		& \TwoImg(f) \ar@{{ |>}->}[ru]|-{\twoimg(f)}
		&& \TwoCoim(g) \ar@{{ |>}->}[ru]_-{\twoimg(g)}}
	\end{equation*}
	with their respective normal image factorisations, the following conditions are equivalent:
	\begin{tfae}
		\item \(\twoimg(f)\simeq\twoker(g)\);
		\item \(\twocoker(f)\simeq\twocoim(g)\);
		\item the pair \((\twoimg(f),\twocoim(g))\) is a short 2-exact sequence.
	\end{tfae}
\end{proposition}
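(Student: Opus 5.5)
The plan is to establish (i)\(\Leftrightarrow\)(iii) and its dual (ii)\(\Leftrightarrow\)(iii), reading (i) and (ii) as equivalences of the underlying morphisms over \(B\).

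Write \(m=\twoimg(f)\colon I\to B\), a normal 2-monomorphism, and \(e=\twocoim(g)\colon B\to J\), a normal 2-epimorphism. By \prox\ref{CoKernel of Composite} we have \(\twoker(g)\simeq\twoker(e)\), since \(g\iso\twoimg(g)\comp e\) with \(\twoimg(g)\) a 2-monomorphism. Dually, \(\twocoker(f)\simeq\twocoker(m)\), since \(f\iso m\comp\twocoim(f)\) with \(\twocoim(f)\) a 2-epimorphism. So condition (i) says \(m\simeq\twoker(e)\), and condition (ii) says \(\twocoker(m)\simeq e\).

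For (i)\(\Rightarrow\)(iii), suppose \(m\) is a 2-kernel of \(e\). Then \(e\), being a normal 2-epimorphism, is a 2-cokernel of its own 2-kernel by \prox\ref{kernel is kernel of its cokernel}. By \corx\ref{corollkeruniqueuptoequiv} it is therefore a 2-cokernel of \(m\), because a 2-cokernel of \(\twoker(e)\) is also a 2-cokernel of any morphism equivalent to it over \(B\). Hence \((m,e)\) is a short 2-exact sequence. The one check needed is that 2-cokernels are invariant under precomposing the morphism with an equivalence, which is the dual argument to the proof of \corx\ref{C:NormalTransport}. Dually, (ii)\(\Rightarrow\)(iii) uses that \(m\) is a 2-kernel of its own 2-cokernel. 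Finally, (iii) gives \(m=\twoker(e)\simeq\twoker(g)\) and \(e=\twocoker(m)\simeq\twocoker(f)\), which yields both (i) and (ii).

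The main obstacle is bookkeeping rather than content. We must interpret ``\(\simeq\)'' consistently as equivalence of objects over \(B\), namely an equivalence \(t\) with an invertible 2-cell relating the two morphisms, and confirm that the 2-kernel and 2-cokernel universal properties transport along such \(t\). By \lemx\ref{L:UniqueNull}, no coherence of the structure 2-cells \(e\comp m\iso0\) needs checking, since any such invertible 2-cell is unique.
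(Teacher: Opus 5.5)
Your proposal is correct and follows essentially the same route as the paper. You first use \prox\ref{CoKernel of Composite} to replace \(\twoker(g)\) by \(\twoker(\twocoim(g))\) and \(\twocoker(f)\) by \(\twocoker(\twoimg(f))\), and then use \prox\ref{kernel is kernel of its cokernel} to see that (i) and (ii) are each equivalent to (iii); your explicit check that 2-cokernels transport along an equivalence over \(B\) is a step the paper leaves implicit.
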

\begin{proof}
	By \prox\ref{CoKernel of Composite}, since \(\twoimg(g)\) is a normal 2-monomorphism, \(\twoker(g)\simeq\twoker(\twocoim(g))\); dually, since \(\twocoim(f)\) is a normal 2-epimorphism, \(\twocoker(f)\simeq\twocoker(\twoimg(f))\). Now \(\twoimg(f)\simeq\twoker(g)\) as in~(i) if and only if (iii) holds, because \(\twocoim(g)\), being a normal 2-epimorphism, is the 2-cokernel of its 2-kernel by \prox\ref{kernel is kernel of its cokernel}; likewise \(\twocoker(f)\simeq\twocoim(g)\) as in (ii) if and only if (iii) holds, because \(\twoimg(f)\), being a normal 2-monomorphism, is the 2-kernel of its 2-cokernel.
\end{proof}

These conditions imply \(g\comp f\iso 0\).

\begin{definition}\label{Def:ExactSequence}
	A pair \((f,g)\) of composable normal morphisms is \dfn{exact in \(B\)} when the equivalent conditions of \prox\ref{Exactness Self-Dual} hold. A sequence of morphisms
	\begin{equation*}
		\cdots\to C_{n+1}\to C_n\to C_{n-1}\to\cdots
	\end{equation*}
	is \dfn{exact} if it is exact in each position where the condition makes sense. In particular, every morphism occurring in an exact sequence is required to be normal.
\end{definition}

Duality interchanges conditions (i) and (ii) and fixes (iii), so the notion just defined is self-dual. A fourth condition displays the symmetry rather than establishing it, and we shall meet it in \prox\ref{Cor Exact Null Dinversion} once the dinversion has been introduced.

\subsection{Dinversion}\label{SS:Dinversion}

\begin{definition}[dinversion]\label{Def:Dinversion}
	An \dfn{antinormal pair} \((m,e)\) consists of a normal 2-monomorphism \(m\colon K\to X\) and a normal 2-epimorphism \(e\colon X\to R\); its \dfn{antinormal composite} is \(e\comp m\). The \dfn{dinverse} of \((m,e)\) is the antinormal pair \((\twoker(e),\twocoker(m))\), and the \dfn{dinversion} of \((m,e)\) is the antinormal composite of its dinverse, namely
	\[
		w\coloneq\twocoker(m)\comp\twoker(e)\colon\TwoKer(e)\to\TwoCoker(m)\text{,}
	\]
	the diagonal of the cross
	\[
		\xymatrix@R=4ex@C=4.5em{
		& \TwoKer(e) \ar@{{ |>}->}[d]^-{\twoker(e)} \\
		K \ar@{{ |>}->}[r]^-{m} & X \ar@{-{ >>}}[r]^-{e} \ar@{-{ >>}}[d]_-{\twocoker(m)} & R \\
		& \TwoCoker(m)
		}
	\]
	Since \(m\) is the 2-kernel of its 2-cokernel and \(e\) the 2-cokernel of its 2-kernel, by \prox\ref{kernel is kernel of its cokernel}, dinversion is involutive: the dinverse of \((\twoker(e),\twocoker(m))\) is again \((m,e)\), up to equivalence.

	An \dfn{antinormal decomposition of the zero map} is an antinormal pair \((m,e)\) with \(e\comp m\iso 0\).
\end{definition}

\begin{proposition}\label{P:NullDinversion}
	Let \((m,e)\) be an antinormal decomposition of the zero map, let \(k\) be a 2-kernel of \(e\) and \(q\) a 2-cokernel of \(m\), and let \(w=q\comp k\) be its dinversion. Then \((m,e)\) is a short 2-exact sequence if and only if \(w\iso 0\).
\end{proposition}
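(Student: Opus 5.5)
Both directions go through the factorisation of \(m\) through \(k\), and the key fact is that the corestriction of \(m\) is a 2-monomorphism.

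\emph{Forward direction.} Suppose \((m,e)\) is a short 2-exact sequence. Then \(m\) is a 2-kernel of \(e\), so by \corx\ref{corollkeruniqueuptoequiv} there is an equivalence \(u\colon K\to\TwoKer(e)\) with \(k\comp u\iso m\). Using the structure 2-cell of \(q\), we get \(w\comp u=q\comp k\comp u\iso q\comp m\iso 0\). Whiskering with a quasi-inverse of \(u\) gives \(w\iso 0\).

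\emph{Backward direction.} Suppose \(w\iso 0\).
\begin{enumerate}
\item Since \(e\comp m\iso 0\), condition~(1) for the 2-kernel \(k\) gives \(u\colon K\to\TwoKer(e)\) with \(k\comp u\iso m\).
\item By part~(i) of \prox\ref{Composites of Normal Monos}, \(u\) is a 2-monomorphism: \(m\) is a 2-monomorphism and \(k\) is faithful.
\item I claim \(k\) factors through \(m\). The composite \(q\comp k=w\) is null, and \(m\) is the 2-kernel of its 2-cokernel \(q\) by \prox\ref{kernel is kernel of its cokernel}. So there is \(v\colon\TwoKer(e)\to K\) with \(m\comp v\iso k\).
\item Now \(k\comp u\comp v\iso m\comp v\iso k\). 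Since \(k\) is a 2-monomorphism, \remx\ref{rem2monoismonouptoiso} gives \(u\comp v\iso\id{}\).
\item Similarly \(m\comp v\comp u\iso k\comp u\iso m\), so \(v\comp u\iso\id{K}\) because \(m\) is a 2-monomorphism.
\end{enumerate}
Hence \(u\) is an equivalence, and \(m\iso k\comp u\) is a 2-kernel of \(e\) by \corx\ref{C:NormalTransport} (together with \corx\ref{corollkeruniqueuptoequiv} to pass along the invertible 2-cell).

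For the other half of short 2-exactness, \(e\) is the 2-cokernel of its 2-kernel \(k\) by \prox\ref{kernel is kernel of its cokernel}. Since \(k\simeq m\) via \(u\), \(e\) is also a 2-cokernel of \(m\): a morphism kills \(m\) if and only if it kills \(k\comp u\), if and only if it kills \(k\), because \(u\) is an equivalence. Alternatively, one can run the dual of the argument above. The structure 2-cell \(e\comp m\iso 0\) needs no separate check, since by \lemx\ref{L:UniqueNull} there is only one such invertible 2-cell.

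The only step that needs care is the factorisation of \(k\) through \(m\) in step~3. It uses that \(m\) is \emph{normal}, so that \prox\ref{kernel is kernel of its cokernel} applies, and this is exactly where the hypothesis \(w\iso 0\) is used. Everything else is routine cancellation of 2-monomorphisms up to isomorphism.
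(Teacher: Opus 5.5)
Your proof is correct. Its key input is the same as the paper's: since \(q\comp k=w\iso 0\) and \(m\) is a 2-kernel of \(q\) by \prox\ref{kernel is kernel of its cokernel}, anything that factors through \(k\) also factors through \(m\). The difference is in how this is packaged.

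The paper applies it to an arbitrary test morphism \(t\) with \(e\comp t\iso 0\): factor \(t\) through \(k\), note \(q\comp t\iso 0\), then factor \(t\) through \(m\). This verifies condition~(1) of \defx\ref{Def 2-kernel} for \(m\) directly. Condition~(2) is just that \(m\) is a 2-monomorphism, so no cancellation is needed. The 2-cokernel half is the same three steps read in the dual.

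You apply it once, to the universal test morphism \(k\) itself. You then cancel against the 2-monomorphisms \(k\) and \(m\) to show that the comparison \(u\colon K\to\TwoKer(e)\) is an equivalence. This costs two cancellation steps, but it identifies \(m\) with \(k\) over \(X\). After that, both halves of short 2-exactness are transport, and the dual run is unnecessary. In either proof that second half is in fact immediate: once \(m\) is a 2-kernel of \(e\), the normal 2-epimorphism \(e\) is a 2-cokernel of it by \prox\ref{kernel is kernel of its cokernel}, as in \exax\ref{Ex SES}.

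Two small points:
\begin{itemize}
\item Your step~2 is never used, since steps~4 and~5 cancel \(k\) and \(m\), not \(u\).
\item The fact that a 1-cell isomorphic to a 2-kernel of \(e\) is again a 2-kernel of \(e\) is not the second clause of \corx\ref{corollkeruniqueuptoequiv}. That clause concerns replacing the morphism being kernelled by an isomorphic one. The fact you need is nonetheless immediate from \defx\ref{Def 2-kernel}, so this is not a gap.
\end{itemize}
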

\begin{proof}
	Suppose \(m=\twoker(e)\). From \(e\comp k\iso 0\) the morphism \(k\) factors through \(m\), say \(k\iso m\comp u\), and then \(w\iso q\comp m\comp u\iso 0\) because \(q\comp m\iso 0\). Conversely suppose \(w\iso 0\), and let \(t\) satisfy \(e\comp t\iso 0\). Then \(t\) factors through \(k\), say \(t\iso k\comp u\), so that \(q\comp t\iso w\comp u\iso 0\); and \(m\), being a normal 2-monomorphism, is a 2-kernel of its own 2-cokernel \(q\) by \prox\ref{kernel is kernel of its cokernel}, so \(t\) factors through \(m\). Together with \(e\comp m\iso 0\) and the fact that \(m\) is a 2-monomorphism, this exhibits \(m\) as a 2-kernel of \(e\). That \(e\) is then a 2-cokernel of \(m\) is the same argument read in the dual.
\end{proof}

\begin{corollary}\label{Cor Exact Null Dinversion}
	Let \((f,g)\) be a pair of composable normal morphisms with \(g\comp f\iso 0\). Then \((f,g)\) is exact in \(B\) if and only if the dinversion
	\[
		w=\twocoker(f)\comp\twoker(g)\colon\TwoKer(g)\to\TwoCoker(f)
	\]
	of the antinormal decomposition \((\twoimg(f),\twocoim(g))\) is null.\noproof
\end{corollary}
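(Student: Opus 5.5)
The plan is to reduce the corollary to \prox\ref{P:NullDinversion}, applied to the antinormal pair \((\twoimg(f),\twocoim(g))\). By \prox\ref{Exactness Self-Dual}\textup{(iii)}, \((f,g)\) is exact in \(B\) exactly when this pair is a short 2-exact sequence. So two things must be checked. First, that the pair is an antinormal decomposition of the zero map. Second, that its dinversion, computed with \(\twoker(\twocoim(g))\) and \(\twocoker(\twoimg(f))\), agrees up to equivalence and invertible 2-cells with \(w=\twocoker(f)\comp\twoker(g)\).

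For the first point, normality of \(f\) and \(g\) provides the normal image factorisations \(f\iso\twoimg(f)\comp\twocoim(f)\) and \(g\iso\twoimg(g)\comp\twocoim(g)\) (\prox\ref{Image Factorisation of Normal Map is Unique}). The pair \((\twoimg(f),\twocoim(g))\) is therefore antinormal. From \(g\comp f\iso 0\) we get
\[
\twoimg(g)\comp\twocoim(g)\comp\twoimg(f)\comp\twocoim(f)\iso 0.
\]
Since \(\twoimg(g)\) is a 2-monomorphism, it reflects null morphisms by \prox\ref{prop2monoreflectsnull}. Since \(\twocoim(f)\) is a 2-epimorphism, it coreflects them. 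Hence \(\twocoim(g)\comp\twoimg(f)\iso 0\).

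For the second point, \prox\ref{CoKernel of Composite} gives \(\twoker(g)\simeq\twoker(\twocoim(g))\), using the 2-monomorphism \(\twoimg(g)\). Dually it gives \(\twocoker(f)\simeq\twocoker(\twoimg(f))\), using the 2-epimorphism \(\twocoim(f)\). These equivalences are compatible with the maps into and out of \(B\). Consequently the dinversion of the pair is isomorphic to \(v\comp w\comp u\) for equivalences \(u\) and \(v\). Such a composite is null if and only if \(w\) is, because whiskering with quasi-inverses preserves the existence of an invertible 2-cell to a null morphism.

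\prox\ref{P:NullDinversion} then says the pair is short 2-exact iff its dinversion is null, hence iff \(w\iso 0\). The only mildly delicate step is the transport along equivalences in the second point. There one must check that the equivalences given by uniqueness of 2-kernels (\corx\ref{corollkeruniqueuptoequiv}) commute up to isomorphism with the structural maps \(\twoker(g)\to B\) and \(B\to\TwoCoker(f)\). This holds because those equivalences come from the universal property, which supplies exactly such triangles.
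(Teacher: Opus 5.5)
Your proposal is correct and follows the route the paper intends for this unproved corollary: exactness is condition~(iii) of \prox\ref{Exactness Self-Dual}, you check that \((\twoimg(f),\twocoim(g))\) is an antinormal decomposition of the zero map, and \prox\ref{P:NullDinversion} turns its short 2-exactness into nullity of its dinversion. The transport along equivalences is in fact unnecessary: \prox\ref{P:NullDinversion} is stated for an arbitrary 2-kernel of \(e\) and an arbitrary 2-cokernel of \(m\), and the proof of \prox\ref{CoKernel of Composite} shows that \(\twoker(g)\) is itself a 2-kernel of \(\twocoim(g)\), and dually that \(\twocoker(f)\) is a 2-cokernel of \(\twoimg(f)\).
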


Duality carries this condition to itself, which none of the three conditions of \prox\ref{Exactness Self-Dual} does on its own, and both directions of the proof are the same three-step factorisation argument read in the two possible orders. Note that the symmetry of exactness is thereby established before homological self-duality is so much as defined, and that neither \prox\ref{Exactness Self-Dual} nor \prox\ref{P:NullDinversion} uses 2-z-exactness.

\begin{lemma}\label{L:DinversionCoker}
	Let \((m,e)\) be an antinormal pair in a 2-z-exact 2-category, with dinversion \(w=\twocoker(m)\comp\twoker(e)\). Then \(\TwoCoker(e\comp m)\simeq\TwoCoker(w)\), and dually \(\TwoKer(e\comp m)\simeq\TwoKer(w)\).
\end{lemma}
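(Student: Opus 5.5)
The plan is to build a morphism out of \(R\) that is a 2-cokernel of \(e\comp m\) and whose codomain is \(\TwoCoker(w)\). This gives the equivalence of objects directly, by \corx\ref{corollkeruniqueuptoequiv}. The kernel statement is the formal dual, so only the cokernel half needs an argument.

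Write \(q=\twocoker(m)\colon X\to\TwoCoker(m)\), \(k=\twoker(e)\colon\TwoKer(e)\to X\) and \(c=\twocoker(w)\colon\TwoCoker(m)\to C\).

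\emph{Construction of \(d\).} The composite \(c\comp q\comp k=c\comp w\) is isomorphic to a null morphism. Since \(e\) is a normal 2-epimorphism, it is a 2-cokernel of its 2-kernel \(k\) by \prox\ref{kernel is kernel of its cokernel}. Condition~(1) of \defx\ref{Def 2-cokernel} therefore gives \(d\colon R\to C\) with an invertible 2-cell \(d\comp e\iso c\comp q\). Moreover \(d\comp e\comp m\iso c\comp q\comp m\iso 0\), because \(q\comp m\iso 0\). All the 2-cells into null morphisms produced along the way are unique by \lemx\ref{L:UniqueNull}, so no coherence needs checking.

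\emph{One-dimensional universal property.} Let \(z\colon R\to Z\) satisfy \(z\comp e\comp m\iso 0\). Then \(z\comp e\) annihilates \(m\), so it factors through \(q=\twocoker(m)\) as \(z\comp e\iso y\comp q\). Next, \(y\comp w=y\comp q\comp k\iso z\comp e\comp k\iso 0\), so \(y\) factors through \(c=\twocoker(w)\) as \(y\iso u\comp c\). Combining these,
\[
z\comp e\iso u\comp c\comp q\iso u\comp d\comp e .
\]
Since \(e\) is a 2-epimorphism, the dual of \remx\ref{rem2monoismonouptoiso} gives \(z\iso u\comp d\).

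\emph{Two-dimensional condition.} This is the step I expect to need the most care, but it is settled by an earlier result. The morphism \(d\comp e\iso c\comp q\) is a composite of two 2-cokernels, so it is cofully faithful. Also, \(e\) is in particular cofaithful. The dual of part~\textup{(i)} of \prox\ref{Composites of Normal Monos} then shows that \(d\) is a 2-epimorphism, which is exactly condition~(2) of \defx\ref{Def 2-cokernel}.

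Hence \(d\) is a 2-cokernel of \(e\comp m\), and \(\TwoCoker(e\comp m)\simeq C=\TwoCoker(w)\). The dual argument, which factors \(k'\comp \twoker(w)\) through \(m=\twoker(q)\) and uses that \(m\) is a 2-monomorphism, gives \(\TwoKer(e\comp m)\simeq\TwoKer(w)\).
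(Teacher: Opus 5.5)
Your proof is correct, and it uses the same two factorisation steps as the paper's, assembled differently. Your \(d\) is exactly the paper's \(q_2'\), obtained from \(e=\twocoker(\twoker(e))\). Your factorisation of \(z\comp e\) through \(\twocoker(m)\) is the paper's first construction.

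The paper, however, never checks that a candidate satisfies the universal property. It fixes a 2-cokernel \(q_1\) of \(e\comp m\) and a 2-cokernel \(q_2\) of \(w\). It uses the two constructions to produce comparison 1-cells \(x\colon Q_2\to Q_1\) and \(y\colon Q_1\to Q_2\). It then shows \(x\comp y\iso\id{Q_1}\) and \(y\comp x\iso\id{Q_2}\) by cancelling the 2-epimorphisms \(e\), \(q_1\) and \(q_2\).

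That route is symmetric and uses only cancellation up to isomorphism (\remx\ref{rem2monoismonouptoiso}). So it never has to establish condition~(2) of \defx\ref{Def 2-cokernel} for anything, and needs no appeal to \prox\ref{Composites of Normal Monos}. In exchange, it consumes the existence of both 2-cokernels.

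Your route verifies condition~(2) directly, via the dual of part~(i) of \prox\ref{Composites of Normal Monos}. In return it needs only \(\twocoker(w)\), besides the \(\twoker(e)\) and \(\twocoker(m)\) that the dinversion presupposes. It also proves slightly more: \(e\comp m\) has a 2-cokernel whenever \(w\) does, given explicitly by \(d\) with \(d\comp e\iso\twocoker(w)\comp\twocoker(m)\). This sharpens \remx\ref{Rem Dinversion Coker}, since not even the existence of a 2-cokernel of \(e\comp m\) has to be assumed.

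One small slip in your last paragraph: \(k'\) should be \(k=\twoker(e)\).
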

\begin{proof}
	Write \(m\colon K\to X\), \(e\colon X\to R\), \(k=\twoker(e)\) and \(q=\twocoker(m)\), so that \(w=q\comp k\). Being a normal 2-epimorphism, \(e\) is a 2-cokernel of \(k\), by \prox\ref{kernel is kernel of its cokernel}.

	Let \(u\colon R\to T\) satisfy \(u\comp e\comp m\iso 0\). Then \(q=\twocoker(m)\) yields \(u'\colon C\to T\) with \(u'\comp q\iso u\comp e\), and \(u'\comp w\iso u\comp e\comp k\iso 0\). Conversely, let \(v\colon C\to T\) satisfy \(v\comp w\iso 0\). Then \(v\comp q\comp k\iso 0\) and \(e=\twocoker(k)\) yields \(v'\colon R\to T\) with \(v'\comp e\iso v\comp q\), and \(v'\comp e\comp m\iso v\comp q\comp m\iso 0\). Each construction undoes the other, since \(e\) and \(q\) are 2-epimorphisms.

	Now let \(q_1\colon R\to Q_1\) be a 2-cokernel of \(e\comp m\) and \(q_2\colon C\to Q_2\) one of \(w\). Applying the first construction to \(q_1\) and the second to \(q_2\) gives \(q_1'\colon C\to Q_1\) with \(q_1'\comp q\iso q_1\comp e\) and \(q_1'\comp w\iso 0\), and \(q_2'\colon R\to Q_2\) with \(q_2'\comp e\iso q_2\comp q\) and \(q_2'\comp e\comp m\iso 0\); whence \(x\colon Q_2\to Q_1\) with \(x\comp q_2\iso q_1'\) and \(y\colon Q_1\to Q_2\) with \(y\comp q_1\iso q_2'\). Then \(x\comp q_2'\comp e\iso x\comp q_2\comp q\iso q_1'\comp q\iso q_1\comp e\), so \(x\comp q_2'\iso q_1\) as \(e\) is a 2-epimorphism; hence \(x\comp y\comp q_1\iso q_1\) and \(x\comp y\iso\id{Q_1}\), \(q_1\) being a 2-epimorphism. Symmetrically \(y\comp x\iso\id{Q_2}\). So \(x\) is an equivalence.
\end{proof}

\begin{remark}\label{Rem Dinversion Coker}
	The proof uses of \((m,e)\) only that \(q\) is a 2-cokernel of \(m\) and that \(e\) is a 2-cokernel of \(\twoker(e)\); nothing is asked of the composite \(e\comp m\), which need be neither null nor normal. In the abelian case the lemma reads \(R/e(K)\iso X/(K+N)\iso C/q(N)\) for \(N=\Ker(e)\), which is the identification the classical Snake Lemma makes when it computes the two ends of the connecting morphism.
\end{remark}

\subsection{Normal chain complexes and homology}\label{SS:Homology}

\begin{proposition}\label{P:NullNormal}
	In a 2-z-exact 2-category, every null morphism is normal.
\end{proposition}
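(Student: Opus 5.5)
The plan is to exhibit an explicit normal factorisation of a null morphism \(n\colon A\to B\) through a bizero object. Since \(n\) is null, it is isomorphic to a composite \(A\aar{t}0\aar{i}B\). I intend to show that \(t\) is a normal 2-epimorphism and \(i\) a normal 2-monomorphism. Then \(n\iso i\comp t\) is a normal 2-monomorphism after a normal 2-epimorphism, which is normal by \defx\ref{Def Normal}.

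For \(i\colon 0\to B\), the claim is that \(i\) is a 2-kernel of \(\id{B}\). The composite \(\id{B}\comp i=i\) is null, so there is an invertible 2-cell into a null morphism.

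\begin{itemize}
\item Condition~(1) of \defx\ref{Def 2-kernel}: let \(z\colon Z\to B\) satisfy \(\id{B}\comp z\iso 0\). Then \(z\) is isomorphic to some \(Z\to 0\to B\). Using the universal property of the bizero object, this is isomorphic to \(i\comp u\) for the essentially unique \(u\colon Z\to 0\).
\item Condition~(2): for \(u,v\colon Z\to 0\), both \(\HomC{\L}{Z}{0}\simeq\1\) and the hom of 2-cells \(i\comp u\aR{}i\comp v\) between null morphisms are singletons. The latter holds by strongness (\defx\ref{Def Strong}). So whiskering with \(i\) is a bijection between one-element sets.
\end{itemize}

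Hence \(i\) is a normal 2-monomorphism. The dual argument shows that \(t\colon A\to0\) is a 2-cokernel of \(\id{A}\), hence a normal 2-epimorphism.

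I expect no real obstacle. The only point requiring care is condition~(2), where the hom-categories into and out of \(0\) are only equivalent to \(\1\), not isomorphic. There are two facts to check. First, there is exactly one 2-cell \(u\aR{}v\), since \(\HomC{\L}{Z}{0}\simeq\1\) makes it contractible. Second, there is exactly one 2-cell \(i\comp u\aR{}i\comp v\), which is precisely strongness of the bizero object.

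Finally, normality is stated only up to an invertible 2-cell. So it does not matter which factorisation of \(n\) through \(0\) is chosen. If desired, \corx\ref{C:NormalTransport} handles any replacement of \(0\) by an equivalent bizero object, via \remx\ref{Rem Bizero Unique}.
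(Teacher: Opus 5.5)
Your proof is correct, and it takes a more direct route than the paper's.

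The paper argues through the normal image factorisation. Since \(\id{A}\) is a 2-kernel of the null morphism \(0\colon A\to B\), its 2-coimage is \(\TwoCoker(\id{A})\), which is trivial by the dual of \lemx\ref{2-Mono Trivial Kernel}; dually its 2-image \(\TwoKer(\id{B})\) is trivial. Any 1-cell \(t\) between trivial objects is an equivalence, so \(\twoker(\id{B})\comp t\comp\twocoker(\id{A})\) is a null morphism that is a normal 2-monomorphism after a normal 2-epimorphism.

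You instead identify these two objects concretely, checking by hand that \(0\to B\) is a 2-kernel of \(\id{B}\) and \(A\to 0\) a 2-cokernel of \(\id{A}\). This is the mirror image of the argument the paper uses in \corx\ref{Equivalence Is Mono Plus Normal Epi}, where \(\id{B}\) is a 2-kernel of \(B\to 0\). Your verification of condition~(2) rests on exactly the right two facts: \(\HomC{\L}{Z}{0}\simeq\1\) for the 2-cells \(u\Rightarrow v\), and strongness for the 2-cells \(i\comp u\Rightarrow i\comp v\).

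Your route is more economical and more general. It never needs a 2-kernel or 2-cokernel to exist, and it needs neither the triviality lemma nor the step about 1-cells between trivial objects. So it proves the statement in any 2-category with a strong bizero object, normality being defined by the same formula. In particular, the padding of \remx\ref{Rem Padding} does not actually consume 2-z-exactness, which is sharper than what that remark and \remx\ref{Rem HSD Cost} claim.

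The paper's route, in exchange, presents the result in the form of \lemx\ref{Normal Iff Comparison Iso}. It computes the 2-coimage and 2-image of a null morphism and shows that the comparison between them is an equivalence.
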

\begin{proof}
	Let \(0\colon A\to B\) be null. Since \(\id{A}\) is a 2-kernel of it, \(\TwoCoim(0)=\TwoCoker(\id{A})\), which is trivial because a 2-epimorphism has trivial 2-cokernel, by the dual of \lemx\ref{2-Mono Trivial Kernel}; dually \(\TwoImg(0)=\TwoKer(\id{B})\) is trivial. Now any morphism \(t\colon P\to I\) between trivial objects is an equivalence: taking \(s\colon I\to P\) to be a null morphism, both \(t\comp s\) and \(\id{I}\) are null, hence isomorphic by \lemx\ref{L:UniqueNull}, and dually for \(s\comp t\). The composite \(\twoker(\id{B})\comp t\comp\twocoker(\id{A})\) is therefore a normal 2-monomorphism after a normal 2-epimorphism---a 2-cokernel composed with an equivalence being again a 2-cokernel---and it is null, hence isomorphic to \(0\).
\end{proof}

\begin{definition}[normal chain complex and its homology]\label{Def:Homology}
	A \dfn{normal chain complex} \((C,d)\) is a sequence of normal morphisms \(d_n\colon C_n\to C_{n-1}\), for \(n\in\mathbb{Z}\), with \(d_n\comp d_{n+1}\iso 0\). This relation makes \(\twoimg(d_{n+1})\) factor through \(\twoker(d_n)\) and, dually, \(\twocoim(d_n)\) factor through \(\twocoker(d_{n+1})\). The \dfn{cokernel homology} and \dfn{kernel homology} of \((C,d)\) at position \(n\) are
	\[
		\Hcoker{n}{C}\coloneq\TwoCoker\bigl(\TwoImg(d_{n+1})\to\TwoKer(d_n)\bigr)
	\]
	and
	\[
		\Hker{n}{C}\coloneq\TwoKer\bigl(\TwoCoker(d_{n+1})\to\TwoCoim(d_n)\bigr)\text{.}
	\]
	Equivalently, \(\Hcoker{n}{C}=\TwoCoim(w_n)\) and \(\Hker{n}{C}=\TwoImg(w_n)\), where
	\[
		w_n=\twocoker(d_{n+1})\comp\twoker(d_n)
	\]
	is the dinversion of the antinormal pair \((\twoimg(d_{n+1}),\twocoim(d_n))\); we write
	\[
		j_n\coloneq j_{w_n}\colon\Hcoker{n}{C}\to\Hker{n}{C}
	\]
	for the comparison of \lemx\ref{Normal Iff Comparison Iso}.
\end{definition}

\begin{remark}\label{Rem Padding}
	A normal chain complex is indexed by \(\mathbb{Z}\), so a finite sequence of normal morphisms has to be padded with bizero objects and null morphisms before it becomes one. \prox\ref{P:NullNormal} is what makes this legitimate, and it is the only place in this section where 2-z-exactness does anything beyond making a statement expressible.
\end{remark}

\subsection{Homological self-duality}\label{SS:HSD}

\begin{definition}[homologically self-dual 2-category]\label{Def:HSD}
	A 2-z-exact 2-category is \dfn{homologically self-dual} when the dinversion of every antinormal decomposition of the zero map is a normal morphism.
\end{definition}

By \prox\ref{P:NullDinversion} and \prox\ref{P:NullNormal}, homological self-duality and exactness are conditions on one and the same morphism, one far weaker than the other: exactness asks a dinversion to be \emph{null}, homological self-duality asks every dinversion to be \emph{normal}, and a null morphism is normal. Exactness at a spot is thus the degenerate case of homological self-duality at that spot, and condition (ii) below is what survives when the dinversion does not vanish.

\begin{proposition}[criteria for homological self-duality]\label{Criteria HSD}
	For a 2-z-exact 2-category the following conditions are equivalent.
	\begin{tfae}
		\item The 2-category is homologically self-dual; that is, the dinversion of every antinormal decomposition of the zero map is normal.
		\item \emph{Homology is self-dual:} for every normal chain complex \((C,d)\) and every \(n\), the comparison \(j_n\colon\Hcoker{n}{C}\to\Hker{n}{C}\) is an equivalence.
		\item \emph{The Pure Snake condition holds:} in every morphism of short 2-exact sequences with identity middle component, as in \lemx\ref{Pure Snake Lemma} below, the comparison \(\TwoCoker(f)\to\TwoKer(h)\) is an equivalence.
	\end{tfae}
\end{proposition}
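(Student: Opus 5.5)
The plan is to pass every condition through the single statement ``the dinversion of an antinormal decomposition of the zero map is normal'', using \lemx\ref{Normal Iff Comparison Iso} as the translation device: a morphism is normal exactly when its comparison \(j\) is an equivalence.

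\emph{(i)\(\Leftrightarrow\)(ii).} For a normal chain complex \((C,d)\), \defx\ref{Def:Homology} identifies \(\Hcoker{n}{C}=\TwoCoim(w_n)\) and \(\Hker{n}{C}=\TwoImg(w_n)\), and \(j_n=j_{w_n}\). Here \(w_n\) is the dinversion of the antinormal pair \((\twoimg(d_{n+1}),\twocoim(d_n))\). This pair is a decomposition of the zero map. Indeed, \(\twocoim(d_n)\comp\twoimg(d_{n+1})\iso 0\), because \(\twoimg(d_n)\) is a 2-monomorphism and reflects null morphisms (\prox\ref{prop2monoreflectsnull}), and \(\twocoim(d_{n+1})\) is a 2-epimorphism and coreflects them; together these turn \(d_n\comp d_{n+1}\iso0\) into the required relation. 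So (i) gives that \(w_n\) is normal, hence \(j_n\) is an equivalence by \lemx\ref{Normal Iff Comparison Iso}.

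Conversely, let \((m,e)\) be any antinormal decomposition of the zero map. Pad the two-term sequence \(K\aar{m}X\aar{e}R\) with bizero objects to obtain a normal chain complex; this is legitimate by \prox\ref{P:NullNormal} and \remx\ref{Rem Padding}. Its differentials \(m\) and \(e\) are normal, with \(\twoimg(m)\simeq m\) and \(\twocoim(e)\simeq e\) by \prox\ref{Image Factorisation of Normal Map is Unique}. The \(w_n\) at \(X\) is therefore, up to equivalence, the dinversion of \((m,e)\). Condition (ii) makes \(j_{w_n}\) an equivalence, so this dinversion is normal, again by \lemx\ref{Normal Iff Comparison Iso}. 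I also need to check that normality is invariant under replacing \(m,e\) by equivalent representatives; this follows from \corx\ref{C:NormalTransport}.

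\emph{(i)\(\Leftrightarrow\)(iii).} Take a morphism of short 2-exact sequences \((f,\id{A},h)\) from \((k',q')\) to \((k,q)\). Here \(f\) is forced to be a 2-monomorphism by the dual of \prox\ref{Composites of Normal Monos}\,(i) style cancellation, and \(h\) a 2-epimorphism. I expect \(f\) to be a normal 2-monomorphism, by \prox\ref{Composites of Normal Monos}\,(ii), since \(k\comp f\iso k'\) is one; dually \(h\) is a normal 2-epimorphism. The relevant antinormal pair is \((k',q)\), and \(q\comp k'\iso q\comp k\comp f\iso 0\). Its dinverse is \((k,q')\), whose composite \(q'\comp k\) is the dinversion \(w\).

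I would then compute both ends of \(w\):
\[
\twoker(w)\simeq\twoker(q\comp k')\quad\text{and}\quad\twocoker(w)\simeq\twocoker(q\comp k')
\]
by \lemx\ref{L:DinversionCoker}. More to the point, I would identify \(\TwoCoim(w)\simeq\TwoCoker(f)\) and \(\TwoImg(w)\simeq\TwoKer(h)\). For the first, \(\twoker(w)\) is computed through \(k\): \(q'\comp k\comp u\iso0\) iff \(k\comp u\) factors through \(k'\iso k\comp f\), iff \(u\) factors through \(f\), using fullness of \(k\). So \(\twoker(w)\simeq f\), and \(\TwoCoim(w)=\TwoCoker(f)\). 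The second is dual, with \(h\). Then the comparison \(\TwoCoker(f)\to\TwoKer(h)\) of (iii) is \(j_w\), and (i)\(\Rightarrow\)(iii) follows from \lemx\ref{Normal Iff Comparison Iso}.

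For (iii)\(\Rightarrow\)(i), start from an antinormal decomposition \((m,e)\) of zero, with \(k=\twoker(e)\) and \(q=\twocoker(m)\). Form the two short 2-exact sequences \((m,q)\) and \((k,e)\) on \(X\), using \exax\ref{Ex SES}. Since \(e\comp m\iso0\), \(m\) factors as \(k\comp f\) and \(e\) factors as \(h\comp q\). This gives a morphism of short 2-exact sequences \((f,\id{X},h)\) whose dinversion is \(q\comp k=w\), and (iii) makes \(j_w\) an equivalence.

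The main obstacle is bookkeeping rather than ideas. It lies in verifying the identifications \(\twoker(w)\simeq f\) and \(\twocoker(w)\simeq h\) carefully, and in checking that the comparison named in \lemx\ref{Pure Snake Lemma} coincides, up to equivalence, with \(j_w\). By the uniqueness clause of \lemx\ref{Normal Iff Comparison Iso}, it suffices that both comparisons fill the same triangle with \(w\).
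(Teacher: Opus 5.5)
Your proposal is correct and is essentially the paper's proof. Both reduce every condition, through \lemx\ref{Normal Iff Comparison Iso}, to normality of the dinversion of an antinormal decomposition of the zero map; pad \((m,e)\) into a normal chain complex for (ii)\(\Rightarrow\)(i); identify \(\twoker(w)\simeq f\) and \(\twocoker(w)\simeq h\) by factoring through the top row's 2-kernel and using full faithfulness of the bottom one (and dually); and realise an arbitrary \((m,e)\) as the configuration with rows \((m,\twocoker(m))\) and \((\twoker(e),e)\) for (iii)\(\Rightarrow\)(i). The only inaccuracy is your passing remark citing \lemx\ref{L:DinversionCoker}: it gives \(\TwoKer(w)\simeq\TwoKer(q\comp k')\) and \(\TwoCoker(w)\simeq\TwoCoker(q\comp k')\) as objects but not as 1-cells, since \(\twoker(w)\) and \(\twoker(q\comp k')\) have different codomains, and your argument never uses that remark anyway.
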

\begin{proof}
	By \lemx\ref{Normal Iff Comparison Iso} a morphism is normal exactly when its comparison is an equivalence. We show that each of the three conditions asserts precisely this for the dinversion of an arbitrary antinormal decomposition of the zero map. Condition~(i) is that assertion verbatim.

	For~(ii), a normal chain complex and a position \(n\) provide the antinormal decomposition \((\twoimg(d_{n+1}),\twocoim(d_n))\) of the zero map, whose antinormal composite is null since \(d_n\comp d_{n+1}\iso 0\); its dinversion is \(w_n\), with comparison \(j_n\). Conversely an antinormal decomposition \((m,e)\) of the zero map, with \(m\colon K\to X\) and \(e\colon X\to R\), sits at position \(0\) of the normal chain complex with \(K\), \(X\) and \(R\) in degrees \(2\), \(1\) and \(0\), a bizero object in every other degree, \(m\) and \(e\) as the two remaining differentials and null morphisms elsewhere; the padding is normal by \prox\ref{P:NullNormal}, and the dinversion at position \(0\) is \(w\). Hence~(i) and~(ii) coincide.

	For~(iii), take a morphism of short 2-exact sequences with identity middle component, with top row \(A\aar{a}B\aar{b}C\), bottom row \(X\aar{c}B\aar{d}Z\) and verticals \(f\), \(\id{B}\), \(h\). Then \((a,d)\) is an antinormal decomposition of the zero map, since \(d\comp a\iso d\comp c\comp f\iso 0\), with dinversion \(w=\twocoker(a)\comp\twoker(d)\iso b\comp c\). We claim \(\twoker(w)\simeq f\). Indeed \(w\comp f\iso b\comp a\iso 0\), so \(f\) factors through \(\twoker(w)\); and if \(t\) carries an invertible 2-cell \(w\comp t\iso 0\), then \(b\comp(c\comp t)\iso 0\), so the universal property of \(a=\twoker(b)\) gives \(s\) with \(a\comp s\iso c\comp t\), whence \(c\comp f\comp s\iso c\comp t\) and, \(c\) being fully faithful, \(f\comp s\iso t\) essentially uniquely. Dually \(\twocoker(w)\simeq h\). Thus \(\TwoCoim(w)\simeq\TwoCoker(f)\) and \(\TwoImg(w)\simeq\TwoKer(h)\), with \(j_w\) the comparison \(\TwoCoker(f)\to\TwoKer(h)\).

	Conversely, let \((m,e)\) be an antinormal decomposition of the zero map. Take the top row \(K\aar{m}X\aar{\twocoker(m)}\TwoCoker(m)\) and the bottom row \(\TwoKer(e)\aar{\twoker(e)}X\aar{e}R\), both short 2-exact by \prox\ref{kernel is kernel of its cokernel}, and the identity of \(X\) as middle component. The invertible 2-cell \(e\comp m\iso 0\) factors \(m\) through \(\twoker(e)\) by some \(t\colon K\to\TwoKer(e)\) and factors \(e\) through \(\twocoker(m)\) by some \(r\colon\TwoCoker(m)\to R\); these are the two outer verticals, and no coherence condition need be checked, by \prox\ref{P:CoherenceFree}. The dinversion of the resulting configuration is \(\twocoker(m)\comp\twoker(e)\), which is \(w\) itself. Hence~(i) and~(iii) coincide.
\end{proof}

\begin{remark}\label{Rem HSD Cost}
	The two directions of each equivalence are not paid for alike. The implications out of homological self-duality use no 2-z-exactness at all: every 2-kernel and 2-cokernel they need is either named in the target condition or produced by the normal image factorisation that self-duality itself supplies. The converses do use it---\mbox{(iii)~\(\Rightarrow\)~(i)} has to form the 2-cokernel of \(t\) and the 2-kernel of \(r\) before condition~(iii) can be applied, and \mbox{(ii)~\(\Rightarrow\)~(i)} needs it twice, once to pad the complex and once to form the comparison.
\end{remark}

\subsection{The Third Isomorphism Property}\label{SS:ThirdIso}

The Third Isomorphism Property is a further equivalent form of homological self-duality. We record it separately, as it concerns not a single dinversion but the short 2-exactness of the sequence comparing successive quotients, respectively subobjects, of a tower of normal 2-monomorphisms, respectively 2-epimorphisms.

\begin{definition}[totally normal sequence]\label{Def:TotallyNormal}
	A sequence of 2-monomorphisms \(A\aar{a}X\aar{c}B\) is \dfn{totally normal} when \(a\), \(c\) and the composite \(c\comp a\) are all normal 2-monomorphisms. Dually, a sequence of 2-epimorphisms \(X\aar{p}Y\aar{q}Z\) is \dfn{totally normal} when \(p\), \(q\) and \(q\comp p\) are all normal 2-epimorphisms.
\end{definition}

\begin{proposition}[Third Isomorphism Property]\label{Third Iso}
	For a 2-z-exact 2-category the following conditions are equivalent.
	\begin{tfae}
		\item The 2-category is homologically self-dual.
		\item For every totally normal sequence of 2-monomorphisms \(A\aar{a}X\aar{c}B\), the induced sequence
		\[
			\TwoCoker(a)\longrightarrow\TwoCoker(c\comp a)\longrightarrow\TwoCoker(c)
		\]
		is short 2-exact.
		\item For every totally normal sequence of 2-epimorphisms \(X\aar{p}Y\aar{q}Z\), the induced sequence
		\[
			\TwoKer(p)\longrightarrow\TwoKer(q\comp p)\longrightarrow\TwoKer(q)
		\]
		is short 2-exact.
	\end{tfae}
\end{proposition}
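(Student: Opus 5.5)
We reduce everything to condition~(iii) of \prox\ref{Criteria HSD}, the Pure Snake condition, and prove (i)\(\Leftrightarrow\)(ii); the equivalence (i)\(\Leftrightarrow\)(iii) is the formal dual, since homological self-duality is a self-dual notion.

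For (i)\(\Rightarrow\)(ii), take a totally normal sequence \(A\aar{a}X\aar{c}B\) and form the morphism of short 2-exact sequences with identity middle component. The top row is \(A\aar{c\comp a}B\aar{\twocoker(c\comp a)}\TwoCoker(c\comp a)\). The bottom row is \(X\aar{c}B\aar{\twocoker(c)}\TwoCoker(c)\). Both rows are short 2-exact by \exax\ref{Ex SES}, since \(c\comp a\) and \(c\) are normal 2-monomorphisms. The left vertical is \(a\), with \(c\comp a\) filling the left square on the nose. The right vertical \(r\colon\TwoCoker(c\comp a)\to\TwoCoker(c)\) is induced because \(\twocoker(c)\comp c\comp a\iso 0\), and \(r\) is the second map of the sequence in (ii).

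By the proof of \prox\ref{Criteria HSD}, (i)\(\Rightarrow\)(iii), the comparison \(\TwoCoker(a)\to\TwoKer(r)\) is an equivalence, and the first map of the sequence in (ii) is identified with \(\twoker(r)\). So \(\TwoCoker(a)\to\TwoCoker(c\comp a)\) is a 2-kernel of \(r\). It remains to show that \(r\) is a 2-cokernel of this map. Since \(r\comp\twocoker(c\comp a)\iso\twocoker(c)\) is a normal 2-epimorphism and \(\twocoker(c\comp a)\) is a 2-epimorphism, the dual of \prox\ref{Composites of Normal Monos}(ii) makes \(r\) a normal 2-epimorphism. By \prox\ref{kernel is kernel of its cokernel}, \(r\) is then the 2-cokernel of its 2-kernel, and the sequence is short 2-exact.

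For (ii)\(\Rightarrow\)(i), we verify the Pure Snake condition. Take a morphism of short 2-exact sequences with middle component \(\id{B}\), top row \(A\aar{a}B\aar{b}C\), bottom row \(X\aar{c}B\aar{d}Z\), and verticals \(f\), \(h\), so that \(a\iso c\comp f\). Since \(c\) is a 2-monomorphism and \(a\) a normal 2-monomorphism, \prox\ref{Composites of Normal Monos}(ii) makes \(f\) a normal 2-monomorphism. Hence \(A\aar{f}X\aar{c}B\) is totally normal. Applying (ii), the sequence \(\TwoCoker(f)\to C\aar{h}Z\) is short 2-exact, with \(C=\TwoCoker(a)\) and \(Z=\TwoCoker(c)\), and the second map is \(h\). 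So \(\TwoCoker(f)\) is a 2-kernel of \(h\), and the canonical comparison \(\TwoCoker(f)\to\TwoKer(h)\) is an equivalence by \corx\ref{corollkeruniqueuptoequiv}.

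The main obstacle is the bookkeeping that identifies the induced maps with those of the Pure Snake configuration, together with the comparison. In particular we must check that the map \(\TwoCoker(f)\to C\) produced by (ii) is the factorisation of \(\twocoker(a)\) through \(\twocoker(f)\), so that it agrees with \(b\comp c\) followed by the kernel factorisation used in \prox\ref{Criteria HSD}. We would handle this with \lemx\ref{L:UniqueNull} and the 2-epimorphism property of the cokernels, which make all such factorisations unique up to isomorphism, so no coherence conditions arise.
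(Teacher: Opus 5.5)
Your proof is correct and follows the paper's route. Both arguments reduce to the Pure Snake characterisation of \prox\ref{Criteria HSD}, and your configuration for (i)\(\Rightarrow\)(ii) is exactly the paper's. The one difference is in (ii)\(\Rightarrow\)(i): you verify the Pure Snake condition and then invoke \prox\ref{Criteria HSD}, whereas the paper starts from an antinormal decomposition \((m,e)\) of the zero map, applies (ii) directly to the totally normal sequence \(K\aar{t}\TwoKer(e)\aar{\twoker(e)}X\), and reads off that the dinversion is a normal 2-epimorphism followed by a normal 2-monomorphism. Composed with the proof of \prox\ref{Criteria HSD}, your argument comes to the same thing.
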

\begin{proof}
	Conditions~(ii) and~(iii) are dual, so we prove that~(i) and~(ii) are equivalent, using the Pure Snake characterisation \prox\ref{Criteria HSD}\,\textup{(iii)}.

	Given a totally normal sequence of 2-monomorphisms \(A\aar{a}X\aar{c}B\), put \(b=c\comp a\) and form the morphism of short 2-exact sequences with identity middle component whose top row is \(A\aar{b}B\aar{\twocoker(b)}\TwoCoker(b)\), bottom row \(X\aar{c}B\aar{\twocoker(c)}\TwoCoker(c)\), and right vertical \(r\) induced by \(\twocoker(c)\comp b\iso 0\). Both rows are short 2-exact by \prox\ref{kernel is kernel of its cokernel}, as \(b\) and \(c\) are normal 2-monomorphisms, and no coherence condition arises, by \prox\ref{P:CoherenceFree}. As in the proof of \prox\ref{Criteria HSD}, the dinversion is \(w=\twocoker(b)\comp c\), with \(\twoker(w)\simeq a\) and \(\twocoker(w)\simeq r\); hence the induced 1-cell \(\TwoCoker(a)\simeq\TwoCoim(w)\to\TwoCoker(b)\) is \(\twoimg(w)\comp j_w\), with 2-cokernel \(r\). The sequence \(\TwoCoker(a)\to\TwoCoker(b)\to\TwoCoker(c)\) is therefore short 2-exact if and only if \(j_w\) is an equivalence, that is, if and only if \(w\) is normal; and by \prox\ref{Criteria HSD} this holds for all such \(w\) precisely when the 2-category is homologically self-dual.

	Conversely, let \((m,e)\) be an antinormal decomposition of the zero map, with dinversion \(w=\twocoker(m)\comp\twoker(e)\). The invertible 2-cell \(e\comp m\iso 0\) corestricts \(m\) to a morphism \(t\colon K\to\TwoKer(e)\) with \(m\iso\twoker(e)\comp t\), and \(t\) is a normal 2-monomorphism by part~\textup{(ii)} of \prox\ref{Composites of Normal Monos}, the 2-monomorphism \(\twoker(e)\) cancelling off the normal 2-monomorphism \(m\). So \(K\aar{t}\TwoKer(e)\aar{\twoker(e)}X\) is a totally normal sequence of 2-monomorphisms. Applying~(ii) to it returns a short 2-exact sequence whose 2-monomorphism part is the induced 1-cell \(\TwoCoker(t)\to\TwoCoker(m)\); composed with \(\twocoker(t)\), that 1-cell is \(w\), by the construction of the induced morphism. Hence \(w\) is a normal 2-monomorphism after a normal 2-epimorphism, so normal.
\end{proof}

Note that the converse needs nothing about \(\twoker(w)\): applying~(ii) already exhibits \(w\) as a 2-cokernel followed by a 2-kernel.

\subsection{Exactness via homology}\label{SS:ExactnessHomology}

\begin{proposition}[exactness via homology]\label{Exactness via Homology}
	In a homologically self-dual 2-category, let \((f,g)\) be a pair of composable normal morphisms with \(g\comp f\iso 0\). Taking the 2-kernel of \(g\) and the 2-cokernel of \(f\), we obtain induced morphisms \(e\) and \(m\) and the 2-image \(H\) of the composite \(\twocoker(f)\comp\twoker(g)\).
	\begin{equation*}
		\xymatrix@!0@=4em{
		A \ar[rr]^-{f} \ar@{-->}[rd]_-{e} &&
		B \ar[rr]^-{g} \ar@{-{ >>}}[rd]|-{\twocoker(f)} &&
		C \\
		& \TwoKer(g) \ar@{--{ >>}}[rd]_-{r} \ar@{{ |>}->}[ru]|-{\twoker(g)}
		&& \TwoCoker(f) \ar@{-->}[ru]_-{m}\\
		&& H \ar@{{ |>}-->}[ru]_-{s}}
	\end{equation*}
	The following conditions are equivalent:
	\begin{tfae}
		\item \(e\) is a 2-epimorphism;
		\item \(m\) is a 2-monomorphism;
		\item \((f,g)\) is exact in \(B\);
		\item \(H\) is trivial.
	\end{tfae}
\end{proposition}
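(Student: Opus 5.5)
The plan is to route everything through the dinversion \(w=\twocoker(f)\comp\twoker(g)\) and Corollary~\ref{Cor Exact Null Dinversion}. First we set up the data. The induced morphism \(e\) comes from \(g\comp f\iso 0\) via the universal property of \(\twoker(g)\), so \(f\iso\twoker(g)\comp e\). Dually \(g\iso m\comp\twocoker(f)\). Since \(f\) and \(g\) are normal, Proposition~\ref{CoKernel of Composite} gives \(\twocoker(f)\simeq\twocoker(\twoimg(f))\) and \(\twoker(g)\simeq\twoker(\twocoim(g))\). Hence \(w\) is the dinversion of the antinormal decomposition of the zero map \((\twoimg(f),\twocoim(g))\). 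By homological self-duality \(w\) is normal, so it has a normal image factorisation \(w\iso s\comp r\) through \(H=\TwoImg(w)\), with \(r\) a normal 2-epimorphism and \(s\) a normal 2-monomorphism.

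For (iii)\(\Leftrightarrow\)(iv): Corollary~\ref{Cor Exact Null Dinversion} says (iii) holds iff \(w\iso 0\). If \(w\iso s\comp r\iso 0\), then \(s\) reflects null morphisms (Proposition~\ref{prop2monoreflectsnull}), so \(r\iso 0\). Then \(r\) coreflects null morphisms, so \(\id{H}\iso 0\) and \(H\) is trivial. Conversely, if \(H\) is trivial then \(\id{H}\iso 0\), and \(w\iso s\comp\id{H}\comp r\) is null.

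For (i)\(\Leftrightarrow\)(iv), the key fact is that \(r\) is a 2-cokernel of \(e\). By Lemma~\ref{L:DinversionCoker}, applied to the antinormal pair \((\twoimg(f),\twocoim(g))\), \(\TwoKer(w)\simeq\TwoKer(\twocoim(g)\comp\twoimg(f))\), which is all of \(\TwoImg(f)\) because that composite is null. I would rather prove \(\twoker(w)\simeq\twoimg(\)the corestriction of \(\twoimg(f)\) into \(\TwoKer(g)\)\()\) directly, as in the proof of Proposition~\ref{Criteria HSD}\,(iii). That is the Pure Snake configuration with top row \(\TwoImg(f)\to B\to\TwoCoker(f)\) and bottom row \(\TwoKer(g)\to B\to\TwoCoim(g)\). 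There \(\twoker(w)\) is the left vertical \(t\colon\TwoImg(f)\to\TwoKer(g)\), and \(e\iso t\comp\twocoim(f)\). Since \(r=\twocoim(w)=\twocoker(\twoker(w))\), and \(\twocoim(f)\) is a 2-epimorphism, the dual of Proposition~\ref{CoKernel of Composite} gives \(\twocoker(e)\simeq\twocoker(t)\simeq r\).

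With that in hand, if \(e\) is a 2-epimorphism then so is \(t\) (dual of Proposition~\ref{Composites of Normal Monos}\,(i)). Since \(t\) is a normal 2-monomorphism, Corollary~\ref{Equivalence Is Mono Plus Normal Epi} makes it an equivalence. Its 2-cokernel \(r\) then has trivial codomain, so \(H\) is trivial. Conversely, if \(H\) is trivial, \(t\) is a normal 2-monomorphism with trivial 2-cokernel, hence an equivalence by Corollary~\ref{Trivial Kernel Normal Mono}. Then \(e\iso t\comp\twocoim(f)\) is a 2-epimorphism. The equivalence (ii)\(\Leftrightarrow\)(iv) is the exact dual: \(s\) is a 2-kernel of \(m\), by the dual argument identifying \(\twocoker(w)\) with the right vertical \(\TwoCoker(f)\to\TwoCoim(g)\).

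The main obstacle is the identification of \(r\) with \(\twocoker(e)\). This requires checking that the left vertical \(t\) of the Pure Snake configuration is indeed \(\twoker(w)\); the argument of Proposition~\ref{Criteria HSD} supplies this. It also requires that \(e\) and \(t\) differ by the 2-epimorphism \(\twocoim(f)\), which follows by cancelling the 2-monomorphism \(\twoker(g)\) from \(\twoker(g)\comp e\iso f\iso\twoker(g)\comp t\comp\twocoim(f)\).
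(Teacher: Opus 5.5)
Your proof is correct and takes essentially the paper's route: both hinge on identifying \(\twoker(w)\) with the corestriction \(t\) of \(\twoimg(f)\) into \(\TwoKer(g)\) (the paper's \(\twoimg(e)\)), which gives \(H\simeq\TwoCoker(e)\) and, dually, \(H\simeq\TwoKer(m)\). The difference is only in how the equivalences are chained: you get (iii)\(\Leftrightarrow\)(iv) from the null-dinversion criterion of \corx\ref{Cor Exact Null Dinversion} and prove (iv)\(\Rightarrow\)(i) directly by showing that \(t\) is an equivalence, whereas the paper reads exactness as \(\twoimg(e)\) being an equivalence and closes the cycle via (iii)\(\Rightarrow\)(i),(ii)\(\Rightarrow\)(iv).
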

\begin{proof}
	Write \(p=\twocoker(f)\comp\twoker(g)\); then \(H\) is its 2-image, with \(r=\twocoim(p)\) and \(s=\twoimg(p)\). For a morphism \(x\) into \(\TwoKer(g)\), an invertible 2-cell \(p\comp x\iso 0\) amounts to \(\twoker(g)\comp x\) factoring through \(\twoimg(f)=\twoker(\twocoker(f))\); since \(\twoimg(f)\simeq\twoker(g)\comp\twoimg(e)\) and \(\twoker(g)\) is fully faithful, this amounts to \(x\) factoring through \(\twoimg(e)\). Hence \(\twoker(p)\simeq\twoimg(e)\) and \(H=\TwoCoim(p)\simeq\TwoCoker(e)\); dually \(H=\TwoImg(p)\simeq\TwoKer(m)\).

	The pair \((f,g)\) is exact in \(B\) precisely when \(\twoimg(f)\simeq\twoker(g)\), that is, when \(\twoimg(e)\simeq\TwoKer(g)\), that is, when \(\TwoCoim(p)=H\) is trivial; this is (iii)\(\Leftrightarrow\)(iv). If \((f,g)\) is exact, then \(\twoker(g)\comp e\iso f\iso\twoimg(f)\comp\twocoim(f)\) with \(\twoimg(f)\simeq\twoker(g)\), so \(e\simeq\twocoim(f)\) is a normal 2-epimorphism and dually \(m\simeq\twoimg(g)\) is a normal 2-monomorphism; thus (iii) implies (i) and (ii). Conversely, a 2-epimorphism has trivial 2-cokernel and a 2-monomorphism has trivial 2-kernel by \lemx\ref{2-Mono Trivial Kernel}, so \(e\) a 2-epimorphism gives \(H\simeq\TwoCoker(e)\) trivial and \(m\) a 2-monomorphism gives \(H\simeq\TwoKer(m)\) trivial; thus (i) and (ii) each imply (iv).
\end{proof}

\begin{remark}\label{Rem Exactness Homology Cost}
	Homological self-duality is used for one thing only: to know that \(p\) is normal, so that its 2-image and its 2-coimage agree and the object \(H\) exists at all. Once \(H\) is given, together with the factorisation of \(p\) as \(\twoimg(p)\comp\twocoim(p)\), the equivalence of the four conditions holds in any 2-category with a strong bizero object. This matters in Section~\ref{S:Snake}, where the criterion is applied repeatedly.
\end{remark}

\subsection{The Pure Snake Lemma}\label{SS:PureSnake}

\begin{lemma}[The Pure Snake Lemma]\label{Pure Snake Lemma}
	In a homologically self-dual 2-category, a morphism of short 2-exact sequences with identity middle component
	\begin{equation*}
		\xymatrix@!0@=4em{
		0 \ar[r] & A \ar[d]_{f} \ar[r]^-{a} &
		B \ar@{=}[d] \ar[r]^-{b} &
		C \ar[d]^{h} \ar[r] &
		0 \\
		0 \ar[r] &
		X \ar[r]_-{c} &
		B \ar[r]_-{d} &
		Z \ar[r] & 0}
	\end{equation*}
	induces the 2-exact sequence
	\begin{equation*}
		\xymatrix{0 \ar[r] & A \ar[r]^-f & X \ar[r]^-{b\comp c} & C \ar[r]^-h & Z \ar[r] & 0\text{.}}
	\end{equation*}
	In particular \(f\) is a normal 2-monomorphism and \(h\) is a normal 2-epimorphism, and the \dfn{comparison}
	\[
		j\colon\TwoCoker(f)\longrightarrow\TwoKer(h)\text{,}
		\qquad\text{characterised by}\qquad
		\twoker(h)\comp j\comp\twocoker(f)\iso b\comp c\text{,}
	\]
	is an equivalence, unique up to a unique invertible 2-cell with that property.
\end{lemma}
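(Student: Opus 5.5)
The whole argument turns on the dinversion \(w\iso b\comp c\) of the antinormal decomposition \((a,d)\) of the zero map, exactly as in the proof of \prox\ref{Criteria HSD}\,(iii). First, \(d\comp a\iso d\comp c\comp f\iso 0\), and \(a\), \(d\) are a normal 2-monomorphism and a normal 2-epimorphism, being parts of short 2-exact sequences. Their dinverse is \((\twoker(d),\twocoker(a))\simeq(c,b)\), so the dinversion is \(w\iso b\comp c\). The argument given there shows \(\twoker(w)\simeq f\) and, dually, \(\twocoker(w)\simeq h\). Homological self-duality then makes \(w\) normal, so \lemx\ref{Normal Iff Comparison Iso} makes \(j_w\colon\TwoCoim(w)\to\TwoImg(w)\) an equivalence.

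Next I read off the normality claims. Since \(f\simeq\twoker(w)\), it is a 2-kernel, hence a normal 2-monomorphism. Dually \(h\simeq\twocoker(w)\) is a normal 2-epimorphism. Transporting along these identifications gives \(\TwoCoim(w)=\TwoCoker(\twoker(w))\simeq\TwoCoker(f)\) and \(\TwoImg(w)\simeq\TwoKer(h)\). So \(j_w\) becomes a 1-cell \(j\colon\TwoCoker(f)\to\TwoKer(h)\) with \(\twoker(h)\comp j\comp\twocoker(f)\iso w\iso b\comp c\). Uniqueness up to a unique invertible 2-cell holds because \(\twoker(h)\) is fully faithful and \(\twocoker(f)\) is cofully faithful: any two such 1-cells are isomorphic by \remx\ref{rem2monoismonouptoiso} and its dual, and the isomorphism is unique by fullness and faithfulness.

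Finally I check exactness of \(0\to A\to X\to C\to Z\to 0\) position by position, all morphisms being normal (with \(w\) normal as above).
\begin{itemize}
\item At \(A\): \(f\) is a 2-monomorphism, so \(\TwoKer(f)\) is trivial by \lemx\ref{2-Mono Trivial Kernel}, which is exactness with the null morphism from \(0\).
\item At \(Z\): dually, \(h\) is a 2-epimorphism.
\item At \(X\): \(\twoimg(f)\simeq f\simeq\twoker(w)\), which is condition (i) of \prox\ref{Exactness Self-Dual}.
\item At \(C\): \(\twocoker(w)\simeq h\simeq\twocoim(h)\), which is condition (ii) of the same proposition.
\end{itemize}
Here \(\twoimg(f)\simeq f\) and \(\twocoim(h)\simeq h\) because the normal image factorisation of a normal 2-monomorphism is trivial (\prox\ref{Image Factorisation of Normal Map is Unique}).

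The only delicate step is the identification \(\twoker(w)\simeq f\), \(\twocoker(w)\simeq h\) together with the transport of the comparison along these equivalences. The first was already done in \prox\ref{Criteria HSD}. For the transport I would invoke \corx\ref{C:NormalTransport}, with \lemx\ref{L:UniqueNull} to dispose of any coherence between the structure 2-cells.
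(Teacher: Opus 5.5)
Your proposal is correct and is essentially the paper's proof: both use the argument of \prox\ref{Criteria HSD} to identify \(f\) and \(h\) with a 2-kernel and a 2-cokernel of the dinversion \(w\iso b\comp c\) of \((a,d)\), and both use homological self-duality to make \(w\) normal, so that \lemx\ref{Normal Iff Comparison Iso} makes its comparison the equivalence \(j\), with uniqueness coming from \(\twocoker(f)\) being a 2-epimorphism and \(\twoker(h)\) a 2-monomorphism. The only cosmetic difference is that you read 2-exactness at \(X\) and \(C\) directly off conditions (i) and (ii) of \prox\ref{Exactness Self-Dual} once \(w\) is normal, whereas the paper phrases it as a consequence of \(j\) being an equivalence.
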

\begin{proof}
	The proof of \prox\ref{Criteria HSD} identifies \(f\) with \(\twoker(w)\) and \(h\) with \(\twocoker(w)\) for the dinversion \(w\iso b\comp c\), so that \(\TwoCoim(w)\simeq\TwoCoker(f)\) and \(\TwoImg(w)\simeq\TwoKer(h)\) and the comparison \(j_w\) of \lemx\ref{Normal Iff Comparison Iso} is the displayed \(j\). Its uniqueness is that of \lemx\ref{Normal Iff Comparison Iso}: \(\twocoker(f)\) is a 2-epimorphism and \(\twoker(h)\) a 2-monomorphism, so any two 1-cells filling the triangle agree. Homological self-duality makes \(w\) normal, hence \(j\) an equivalence by \lemx\ref{Normal Iff Comparison Iso}, which is 2-exactness at \(X\) and at \(C\); exactness at \(A\) and at \(Z\) says that \(f\) is a 2-monomorphism and \(h\) a 2-epimorphism, which they are, being a 2-kernel and a 2-cokernel of \(w\).
\end{proof}

\begin{remark}\label{Rem Pure Snake Cost}
	The identifications \(f\simeq\twoker(w)\) and \(h\simeq\twocoker(w)\) use neither homological self-duality nor the strongness of the bizero object: they are factorisation arguments through the universal properties of the two rows. Self-duality enters at exactly one point, 2-exactness at \(C\), which is where the normality of \(w\) is required.
\end{remark}

That the comparison is named, and unique, is what makes it possible to speak of its naturality; this is taken up in Section~\ref{S:Naturality}.

\section{Bipullbacks and the squares of a morphism of short 2-exact sequences}\label{S:Bipullbacks}

Nothing so far has used any bilimit. 2-kernels and 2-cokernels, short 2-exact sequences, the normal image factorisation, the Normal Short Five Lemma, dinversion, homological self-duality and the Pure Snake Lemma all rest on the universal properties of \defx\ref{Def 2-kernel} and \defx\ref{Def 2-cokernel} and on the reflection of null morphisms, and on nothing else. We introduce bipullbacks here, where they are first needed, and record what the two squares of a morphism of short 2-exact sequences have to say about them. Only the Snake Lemma of Section~\ref{S:Snake} draws on this material, and then only through \prox\ref{Kernel vs pullback}.

\subsection{Bipullbacks}\label{SS:Bipullback}

\begin{definition}[bipullback, or bi-iso-comma object]\label{Def Bipullback}
	Let \(A\aar{f}C\) and \(B\aar{g}C\) be morphisms in a 2-category \(\L\). A \dfn{bipullback} of \(f\) and \(g\) is an object \(P\) together with morphisms \(P\aar{p_1}A\) and \(P\aar{p_2}B\) and an isomorphism 2-cell
	\begin{cd}[7][7]
		P \arrow[r,"p_1"] \arrow[d,"p_2"'] \& A \arrow[d,"f"] \\
		B \arrow[r,"g"'] \& C
		\arrow[from=2-1,to=1-2,iso,"\phi"'{pos=0.5,inner sep=1ex}]
	\end{cd}
	such that:
	\begin{itemize}
		\item[\((1)\)] for all morphisms \(Z\aar{a}A\) and \(Z\aar{b}B\) and every isomorphism 2-cell
		      \[
			      \begin{cd}*[7][7]
				      Z \arrow[r,"a"] \arrow[d,"b"'] \& A \arrow[d,"f"] \\
				      B \arrow[r,"g"'] \& C
				      \arrow[from=2-1,to=1-2,iso,"\psi"'{pos=0.5,inner sep=1ex}]
			      \end{cd}
		      \]
		      there exist a morphism \(Z\aar{u}P\) and isomorphism 2-cells
		      \[
			      \begin{cd}*[4][4]
				      Z \arrow[rr,shift right= 1.7ex, iso, "\gamma_1"'{inner sep=-1ex, pos=0.65}] \& \& A \\[-3ex]
				      \& P
				      \arrow[from=1-1, to=1-3,"a"]
				      \arrow[from=1-1, to=2-2, bend right=20, "u"']
				      \arrow[from=2-2, to=1-3, bend right=20, "p_1"']
			      \end{cd}
			      \qquad\text{and}\qquad
			      \begin{cd}*[4][4]
				      Z \arrow[rr,shift right= 1.7ex, iso, "\gamma_2"'{inner sep=-1ex, pos=0.65}] \& \& B \\[-3ex]
				      \& P
				      \arrow[from=1-1, to=1-3,"b"]
				      \arrow[from=1-1, to=2-2, bend right=20, "u"']
				      \arrow[from=2-2, to=1-3, bend right=20, "p_2"']
			      \end{cd}
		      \]
		      such that \(\psi\) is equal to the pasting
		      \begin{cd}
			      Z \&[-2ex]\& \\[-2ex]
			      \& P \& A \\
			      \& B \& C
			      \arrow["u", from=1-1, to=2-2]
			      \arrow["a", bend left = 30, from=1-1, to=2-3]
			      \arrow["b"', bend right=30, from=1-1, to=3-2]
			      \arrow["{p_1}", from=2-2, to=2-3]
			      \arrow[shift left=8, iso, xshift= -4.5ex, "{\gamma_1}"{pos= 0.5, inner sep=1ex}, from=2-2, to=2-3]
			      \arrow[shift right=3.5, xshift=-9ex,iso,"{\gamma_2}"{pos= 0.5, inner sep=1ex}, from=2-2, to=2-3]
			      \arrow[""{name=0, anchor=center, inner sep=0}, "{p_2}"', from=2-2, to=3-2]
			      \arrow[""{name=1, anchor=center, inner sep=0}, "f", from=2-3, to=3-3]
			      \arrow["g"', from=3-2, to=3-3]
			      \arrow[iso, "\phi"'{pos=0.5,inner sep=1ex}, from=0, to=1]
		      \end{cd}
		\item[\((2)\)] for all morphisms \(u\), \(v\colon Z\to P\) and all 2-cells
		      \[
			      \begin{cd}*[4.5][4.5]
				      Z  \& P \arrow[d, Rightarrow, "\lambda_1", shorten <= -0.5ex, shorten <= -0.5ex] \& A \\[-3ex]
				      \& P
				      \arrow[from=1-1, to=1-2,"u"]
				      \arrow[from=1-1, to=2-2, bend right=20,"v"']
				      \arrow[from=1-2, to=1-3,"p_1"]
				      \arrow[from=2-2, to=1-3, bend right=20,"p_1"']
			      \end{cd}
			      \qquad\text{and}\qquad
			      \begin{cd}*[4.5][4.5]
				      Z  \& P \arrow[d, Rightarrow, "\lambda_2", shorten <= -0.5ex, shorten <= -0.5ex] \& B \\[-3ex]
				      \& P
				      \arrow[from=1-1, to=1-2,"u"]
				      \arrow[from=1-1, to=2-2, bend right=20,"v"']
				      \arrow[from=1-2, to=1-3,"p_2"]
				      \arrow[from=2-2, to=1-3, bend right=20,"p_2"']
			      \end{cd}
		      \]
		      satisfying \((g\star\lambda_2)\cdot(\phi\star u)=(\phi\star v)\cdot(f\star\lambda_1)\), there exists a unique 2-cell \(\mu\colon u\Rightarrow v\) such that \(p_1\star\mu=\lambda_1\) and \(p_2\star\mu=\lambda_2\).
	\end{itemize}
	The dual notion is that of a \dfn{bipushout}.
\end{definition}

\begin{proposition}\label{Kernel vs pullback}
	Let \(\L\) be a 2-category with a strong bizero object and let \(f\colon A\to B\) be a morphism. A morphism \(k\colon K\to A\) is a 2-kernel of \(f\) if and only if it is, together with the essentially unique morphism \(K\to 0\), a bipullback of \(f\) along a chosen null morphism \(0\to B\). Dually for 2-cokernels and bipushouts.
\end{proposition}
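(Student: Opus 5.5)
The plan is to compare the two universal properties clause by clause. Fix a null morphism \(i\colon 0\to B\), and for an object \(K\) write \(t_K\colon K\to 0\) for the essentially unique morphism. A bipullback square of \(f\) and \(i\) with vertex \(K\) consists of \(k\colon K\to A\), a morphism \(K\to 0\) (necessarily isomorphic to \(t_K\)), and an invertible 2-cell \(\phi\colon i\comp t_K\iso f\comp k\). This is exactly the datum \(\kappa\) of \defx\ref{Def 2-kernel}, read in the other direction. By \lemx\ref{L:UniqueNull} it is unique whenever it exists, so on both sides the 2-cell is property rather than structure.

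For condition~(1), a cone \((a,b,\psi)\) over the cospan has \(b\colon Z\to 0\) and \(\psi\colon i\comp b\iso f\comp a\). Since \(i\comp b\) is null, this is precisely an invertible 2-cell \(f\comp a\iso 0\). The 2-kernel property then gives \(u\) and \(\gamma_1\colon k\comp u\iso a\). We take \(\gamma_2\colon t_K\comp u\iso b\) to be the unique 2-cell between morphisms into \(0\), using that \(\HomC{\L}{Z}{0}\simeq\1\). The pasting condition asks for equality of two invertible 2-cells from a null morphism to \(f\comp a\); inverting them, these become two invertible 2-cells \(f\comp a\aR{}0\), which coincide by \lemx\ref{L:UniqueNull}. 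Conversely, condition~(1) for the bipullback, applied to \(b=t_Z\), yields condition~(1) of the 2-kernel, simply by forgetting \(\gamma_2\).

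For condition~(2), 2-cells \(\lambda_2\colon t_K\comp u\aR{}t_K\comp v\) exist and are unique because hom-categories into \(0\) are equivalent to \(\1\). The compatibility equation \((i\star\lambda_2)\cdot(\phi\star u)=(\phi\star v)\cdot(f\star\lambda_1)\) is an equation between 2-cells \(i\comp t_K\comp u\aR{}f\comp k\comp v\). Whiskering with the invertible \(\phi\) turns it into an equation between 2-cells whose domain and codomain are both null morphisms. By strongness (\defx\ref{Def Strong}) such a 2-cell is unique, so the equation holds automatically for every \(\lambda_1\). Hence condition~(2) of the bipullback reduces to: for each \(\lambda_1\colon k\comp u\aR{}k\comp v\) there is a unique \(\mu\) with \(k\star\mu=\lambda_1\). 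The requirement \(t_K\star\mu=\lambda_2\) holds automatically by uniqueness in \(\HomC{\L}{Z}{0}\). This is condition~(2) of the 2-kernel verbatim.

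The main obstacle I expect is bookkeeping in this last step. I need to check carefully that the compatibility equation really transports to an equation between 2-cells from a null morphism to a null morphism. Concretely, I would rewrite both sides as 2-cells \(i\comp t_K\comp u\aR{}f\comp k\comp v\) and then precompose with \((\phi\star v)^{-1}\), which leaves 2-cells \(i\comp t_K\comp u\aR{}i\comp t_K\comp v\) between null morphisms. Strongness then forces the equality. Bipushouts and 2-cokernels follow by duality.
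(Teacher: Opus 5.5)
Your proposal is correct and follows the paper's proof: both compare the two universal properties clause by clause. Both observe that a cone over the cospan amounts to a morphism with an invertible 2-cell into a null morphism, that \(\gamma_2\) and \(\lambda_2\) are forced by \(\HomC{\L}{Z}{0}\simeq\1\), and that the pasting and compatibility equations hold automatically by \lemx\ref{L:UniqueNull}; you invoke strongness directly for the compatibility equation, which amounts to the same thing.

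There is one harmless orientation slip. The displayed compatibility equation forces \(\phi\colon f\comp k\aR{}i\comp t_K\), so both of its sides are 2-cells \(f\comp k\comp u\aR{}i\comp t_K\comp v\), and you should compose with \((\phi\star u)^{-1}\) rather than \((\phi\star v)^{-1}\) to reach 2-cells between the null morphisms \(i\comp t_K\comp u\) and \(i\comp t_K\comp v\). Nothing else in the argument changes.
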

\begin{proof}
	A square over the cospan formed by \(A\aar{f}B\) and a chosen null morphism \(0\to B\), with vertex \(Z\), consists of a morphism \(a\colon Z\to A\), a morphism \(Z\to 0\)---of which there is one up to a unique isomorphism---and an invertible 2-cell \(\psi\) whose codomain is a null morphism; so it amounts to a morphism \(a\) together with an invertible 2-cell \(f\comp a\iso 0\), which is exactly the datum to which condition~(1) of \defx\ref{Def 2-kernel} applies. The second factorisation 2-cell \(\gamma_2\) and the pasting condition carry no information, by \lemx\ref{L:UniqueNull}: any two 2-cells into a morphism isomorphic to a null one agree. Likewise a pair of 2-cells \((\lambda_1,\lambda_2)\) as in condition~(2) of \defx\ref{Def Bipullback} reduces to \(\lambda_1\), and the required equation holds automatically for the same reason. Conditions~(1) and~(2) of \defx\ref{Def Bipullback} therefore say precisely what conditions~(1) and~(2) of \defx\ref{Def 2-kernel} say.
\end{proof}

\begin{proposition}\label{P:NormalMonoBipullback}
	Normal 2-monomorphisms are stable under bipullback. Precisely, let
	\begin{cd}[7][7]
		P \arrow[r,"p_1"] \arrow[d,"p_2"'] \& A \arrow[d,"f"] \\
		B \arrow[r,"g"'] \& C
		\arrow[from=2-1,to=1-2,iso,"\phi"'{pos=0.5,inner sep=1ex}]
	\end{cd}
	be a bipullback in a 2-category with a strong bizero object, let \(g\) be a 2-kernel of \(w\colon C\to W\), and suppose that \(p_1\) is a 2-monomorphism. Then \(p_1\) is a 2-kernel of \(w\comp f\); in particular it is a normal 2-monomorphism.
\end{proposition}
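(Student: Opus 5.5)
We verify directly the two conditions of \defx\ref{Def 2-kernel} for \(p_1\) as a 2-kernel of \(w\comp f\). Condition~(2) is exactly the hypothesis that \(p_1\) is a 2-monomorphism, by the same observation as in the proof of \prox\ref{Prop Kernel Is Mono}. So everything rests on the structure 2-cell and on the one-dimensional universal property.

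First, \(w\comp f\comp p_1\) is isomorphic to a null morphism. Whiskering \(\phi\) with \(w\) gives \(w\comp f\comp p_1\iso w\comp g\comp p_2\). The structure 2-cell of \(g=\twoker(w)\), whiskered with \(p_2\), gives \(w\comp g\comp p_2\iso 0\). By \lemx\ref{L:UniqueNull} the resulting invertible 2-cell \(w\comp f\comp p_1\iso 0\) is the unique one, so no choice is involved.

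Next, let \(z\colon Z\to A\) carry an invertible 2-cell \(\beta\colon w\comp f\comp z\iso 0\). Applying condition~(1) of the 2-kernel \(g\) to \(f\comp z\) and \(\beta\) produces \(b\colon Z\to B\) with an invertible 2-cell \(\psi\colon g\comp b\iso f\comp z\), up to orientation. The pair \((z,b)\) with \(\psi\) is then a square over the cospan \((f,g)\). Condition~(1) of \defx\ref{Def Bipullback} yields \(u\colon Z\to P\) and invertible 2-cells \(\gamma_1\colon p_1\comp u\iso z\) (and \(\gamma_2\)). The 2-cell \(\gamma_1\) is precisely what condition~(1) of \defx\ref{Def 2-kernel} demands, and by \remx\ref{Rem Kernel Property} no compatibility with \(\beta\) needs checking.

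I expect no serious obstacle. The only step requiring care is orienting \(\psi\) correctly, so that it matches the bipullback square of \defx\ref{Def Bipullback}; inverting it if necessary settles this. The hypothesis that \(p_1\) is a 2-monomorphism is what supplies condition~(2). Without it, we would have to derive full faithfulness from condition~(2) of the bipullback together with the full faithfulness of \(g\), which is more work and is not needed here. The final clause follows from \defx\ref{Def Normal Mono}, since \(p_1\) is then a 2-kernel.
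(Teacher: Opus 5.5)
Your proposal is correct and follows the paper's proof essentially step by step. The structure 2-cell comes from \(\phi\) together with the 2-kernel cell of \(g\). The one-dimensional property comes from factoring \(f\comp z\) through \(g=\twoker(w)\) and then through the bipullback. The two-dimensional property is the hypothesis that \(p_1\) is a 2-monomorphism.
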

\begin{proof}
	First, \(w\comp f\comp p_1\iso w\comp g\comp p_2\iso 0\), using \(\phi\) and the structure 2-cell of the 2-kernel \(g\). For the universal property, let \(z\colon Z\to A\) satisfy \(w\comp f\comp z\iso 0\). Since \(g\) is a 2-kernel of \(w\), there are \(y\colon Z\to B\) and an invertible 2-cell \(g\comp y\iso f\comp z\); this is a square over the cospan formed by \(f\) and \(g\), so condition~\((1)\) of \defx\ref{Def Bipullback} provides \(u\colon Z\to P\) with \(p_1\comp u\iso z\). The 2-dimensional condition of \defx\ref{Def 2-kernel} is the hypothesis that \(p_1\) be a 2-monomorphism. Hence \(p_1\) is a 2-kernel of \(w\comp f\).
\end{proof}

\begin{remark}\label{Rem Bipullback Mono}
	The hypothesis that \(p_1\) be a 2-monomorphism is what one would obtain from the stability of 2-monomorphisms under bipullback; we have not needed that stability in general, and in the one application below (\prox\ref{P:NSDKappa}) the hypothesis is verified directly. Note also that no bilimit is constructed in the proof: as in \remx\ref{Rem Left Pullback Cost}, the bipullback is the one given.
\end{remark}


\subsection{The two squares}\label{SS:Squares}

Throughout this subsection we consider a morphism of short 2-exact sequences
\begin{cd}[5][5]
	0 \& {K'} \& {A'} \& {Q'} \& 0 \\
	0 \& K \& A \& Q \& 0
	\arrow[from=1-1, to=1-2]
	\arrow["{k'}", from=1-2, to=1-3]
	\arrow[""{name=0, anchor=center, inner sep=0}, "{f}"', from=1-2, to=2-2]
	\arrow["{q'}", from=1-3, to=1-4]
	\arrow[""{name=1, anchor=center, inner sep=0}, "{g}", from=1-3, to=2-3]
	\arrow[from=1-4, to=1-5]
	\arrow[""{name=2, anchor=center, inner sep=0}, "{h}", from=1-4, to=2-4]
	\arrow[from=2-1, to=2-2]
	\arrow["k"', from=2-2, to=2-3]
	\arrow["q"', from=2-3, to=2-4]
	\arrow[from=2-4, to=2-5]
	\arrow["\phi_K"{inner sep=1.1ex},shift right=1, iso, from=0, to=1]
	\arrow["\phi_Q"{inner sep=1.1ex},shift right=1, iso, from=1, to=2]
\end{cd}
as in \defx\ref{Def:morphism of SES}.

\begin{proposition}\label{Mono Implies Left Pullback}
	If the morphism \(h\) is a 2-monomorphism, then the square on the left is a bipullback.
\end{proposition}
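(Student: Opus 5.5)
We verify the two conditions of \defx\ref{Def Bipullback} directly for the left square, with vertex \(K'\), projections \(k'\colon K'\to A'\) and \(f\colon K'\to K\), and 2-cell \(\phi_K\colon g\comp k'\iso k\comp f\), over the cospan \(A'\aar{g}A\xleftarrow{k}K\).

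For condition~(1), suppose given \(a\colon Z\to A'\), \(b\colon Z\to K\) and an invertible \(\psi\colon g\comp a\iso k\comp b\). The first step is to show that \(q'\comp a\iso 0\). We have \(h\comp q'\comp a\iso q\comp g\comp a\iso q\comp k\comp b\iso 0\), using \(\phi_Q\), \(\psi\) and \(\kappa\). Since \(h\) is a 2-monomorphism, it reflects null morphisms by \prox\ref{prop2monoreflectsnull}, so \(q'\comp a\iso 0\). As \(k'=\twoker(q')\), we obtain \(u\colon Z\to K'\) and \(\gamma_1\colon k'\comp u\iso a\).

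Next we produce \(\gamma_2\). Consider the composite invertible 2-cell
\[
k\comp f\comp u\iso g\comp k'\comp u\iso g\comp a\iso k\comp b,
\]
built from \(\phi_K^{-1}\), \(\gamma_1\) and \(\psi\). Since \(k\) is fully faithful (\prox\ref{Prop Kernel Is Mono}), this 2-cell is \(k\star\gamma_2\) for a unique \(\gamma_2\colon f\comp u\iso b\), which is invertible. The pasting condition then holds by construction: it is exactly the defining equation of \(\gamma_2\), read after whiskering with \(k\) and rearranging.

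The step I expect to need the most care is condition~(2), specifically the handling of the second 2-cell \(\lambda_2\). Given \(u,v\colon Z\to K'\) and 2-cells \(\lambda_1\colon k'\comp u\aR{}k'\comp v\) and \(\lambda_2\colon f\comp u\aR{}f\comp v\) satisfying the compatibility with \(\phi_K\), full faithfulness of \(k'\) gives a unique \(\mu\colon u\aR{}v\) with \(k'\star\mu=\lambda_1\). It remains to check that \(f\star\mu=\lambda_2\). Because \(k\) is faithful, it suffices to compare \(k\star f\star\mu\) with \(k\star\lambda_2\). By naturality of whiskering, \(k\star f\star\mu\) equals \(\phi_K\) conjugating \(g\star k'\star\mu=g\star\lambda_1\). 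The compatibility equation says that \(k\star\lambda_2\) is the same conjugate of \(g\star\lambda_1\), so the two agree. Uniqueness of \(\mu\) is immediate from the faithfulness of \(k'\). Notice that this argument uses only that \(k\) and \(k'\) are 2-monomorphisms, that \(k'=\twoker(q')\), and that \(h\) reflects null morphisms; no cokernel property of either row is needed.
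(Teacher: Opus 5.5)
Your proof is correct and is essentially the paper's argument: you reflect \(h\comp q'\comp a\iso 0\) through \(h\), factor through \(k'=\twoker(q')\), obtain the other 2-cell (and with it the pasting condition) by lifting through the 2-monomorphism \(k\), and in condition~(2) you lift through \(k'\) and check the remaining whiskering after composing with \(k\). One small correction to your closing inventory: condition~(1) also uses the structure 2-cell \(\kappa\colon q\comp k\iso 0\) of the lower row and the 2-cell \(\phi_Q\), which is why the paper lists ``\(k\) is a 2-kernel of \(q\)'' among the ingredients rather than merely ``\(k\) is a 2-monomorphism''.
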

\begin{proof}
	We verify the universal property of \defx\ref{Def Bipullback} directly for the square with vertex \(K'\), projections \(f\colon K'\to K\) and \(k'\colon K'\to A'\) and structure 2-cell \(\phi_K\colon g\comp k'\iso k\comp f\).

	For condition~(1), let \(a\colon Z\to K\) and \(b\colon Z\to A'\) come with an invertible 2-cell \(\psi\colon g\comp b\iso k\comp a\). Postcomposing with \(q\) and using the structure 2-cell \(\kappa\colon q\comp k\iso 0\) of the lower row gives \(q\comp g\comp b\iso 0\), and \(\phi_Q\colon h\comp q'\iso q\comp g\) turns this into \(h\comp q'\comp b\iso 0\). As \(h\) is a 2-monomorphism it reflects null morphisms, by \prox\ref{prop2monoreflectsnull}, so \(q'\comp b\iso 0\); since \(k'=\twoker(q')\), the morphism \(b\) factors as \(b\iso k'\comp u\) for some \(u\colon Z\to K'\), and this is the invertible 2-cell \(\gamma_2\). For \(\gamma_1\), note that
	\[
		k\comp f\comp u\iso g\comp k'\comp u\iso g\comp b\iso k\comp a\text{,}
	\]
	so that \(f\comp u\iso a\) because \(k\) is a 2-monomorphism, by \remx\ref{rem2monoismonouptoiso}. The pasting condition is then an equation between two invertible 2-cells with codomain \(k\comp a\); it holds because \(k\) is a 2-monomorphism, whiskering with \(k\) being injective on 2-cells.

	For condition~(2), let \(u\), \(v\colon Z\to K'\) and let \(\lambda_1\colon f\comp u\aR{}f\comp v\) and \(\lambda_2\colon k'\comp u\aR{}k'\comp v\) satisfy the compatibility of \defx\ref{Def Bipullback}. Since \(k'\) is a 2-kernel, hence a 2-monomorphism, there is a unique 2-cell \(\mu\colon u\aR{}v\) with \(k'\star\mu=\lambda_2\); and \(f\star\mu=\lambda_1\) follows, because whiskering both sides with the 2-monomorphism \(k\) gives equal 2-cells, by the compatibility just assumed.
\end{proof}

\begin{remark}\label{Rem Left Pullback Cost}
	The proof forms no bipullback other than the one it asserts, so it needs no bilimit hypothesis beyond the statement. It also uses less than the statement offers: only that \(k'\) is a 2-kernel of \(q'\), that \(k\) is a 2-kernel of \(q\), and that \(h\) is a 2-monomorphism. That \(q'\) is a 2-cokernel of \(k'\), and that \(q\) is a 2-cokernel of \(k\), never enter.
\end{remark}

\begin{proposition}\label{Right Square Pullback}
	If the square on the right is a bipullback, then the morphism \(f\) is an equivalence.
\end{proposition}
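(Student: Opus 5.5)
We show that \(f\) is a 2-monomorphism and a normal 2-epimorphism, and then conclude by \corx\ref{Equivalence Is Mono Plus Normal Epi}. The right square has vertex \(A'\), projections \(q'\colon A'\to Q'\) and \(g\colon A'\to A\), and structure 2-cell \(\phi_Q\colon h\comp q'\iso q\comp g\). Bipullbacks of a cospan are unique up to equivalence, so the plan is to compare this square with a second bipullback that we can read off directly.

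The first step is to paste the right square with the bipullback supplied by \prox\ref{Kernel vs pullback}, namely \(k\colon K\to A\) together with \(K\to 0\) over \(q\) and a null morphism \(0\to Q\). Rather than proving a general pasting lemma, I would verify the needed universal property directly. We test against \(z\colon Z\to A'\) with \(q'\comp z\iso 0\). Then \(q\comp g\comp z\iso h\comp q'\comp z\iso 0\), so \(g\comp z\) factors through \(k\), say \(g\comp z\iso k\comp a\).

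The aim is to show that \(f\) is essentially surjective on hom-categories, that is, every \(a\colon Z\to K\) lifts through \(f\). Take any \(a\colon Z\to K\). The pair \((k\comp a,\,0\colon Z\to Q')\) carries an invertible 2-cell \(q\comp k\comp a\iso 0\iso h\comp 0\), by \(\kappa\) and \lemx\ref{L:UniqueNull}. This is a square over the cospan \((q,h)\), so condition~(1) of \defx\ref{Def Bipullback} gives \(b\colon Z\to A'\) with \(g\comp b\iso k\comp a\) and \(q'\comp b\iso 0\). Since \(k'=\twoker(q')\), we get \(b\iso k'\comp u\), and then \(k\comp f\comp u\iso g\comp k'\comp u\iso k\comp a\). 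Because \(k\) is a 2-monomorphism, \(f\comp u\iso a\). Thus \(f\comp(-)\) is essentially surjective for every \(Z\).

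For full faithfulness of \(f\comp(-)\), take \(u,v\colon Z\to K'\) and a 2-cell \(\lambda\colon f\comp u\aR{}f\comp v\). Whiskering by \(k\) and pasting with \(\phi_K\) gives a 2-cell \(g\comp k'\comp u\aR{}g\comp k'\comp v\). For the \(Q'\)-component we take the unique 2-cell \(q'\comp k'\comp u\aR{}q'\comp k'\comp v\); both sides are isomorphic to null morphisms, so this 2-cell exists and is unique by \lemx\ref{L:UniqueNull}. The compatibility equation of condition~(2) in \defx\ref{Def Bipullback} is then an equation between 2-cells \(h\comp q'\comp k'\comp u\aR{}q\comp g\comp k'\comp v\) whose target is isomorphic to a null morphism. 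I expect this to be the main obstacle: \lemx\ref{L:UniqueNull} only covers invertible 2-cells, while \(\lambda\) need not be invertible. To get around this, I would precompose with the inverse of the isomorphism from the target to a null morphism, so that both sides become 2-cells into a morphism \(n\) that is itself null. Strongness says the source is isomorphic to a null morphism as well, and any two 2-cells between morphisms isomorphic to null ones agree, by the defining property of a strong bizero object transported along isomorphisms. Condition~(2) then yields a unique \(\mu\) with \(k'\star\mu\) prescribed and \(g\star\mu\) prescribed. From \(k\star(f\star\mu)=k\star\lambda\), fullness of \(k\) gives \(f\star\mu=\lambda\). Uniqueness of \(\mu\) follows because \(k'\) is a 2-monomorphism.

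At this point \(f\comp(-)\) is an equivalence of hom-categories for every \(Z\). By \lemx\ref{L:RepEquiv}, (iii)\(\Rightarrow\)(i), \(f\) is an equivalence. This route bypasses normality entirely; the only delicate point is the compatibility check in the full-faithfulness step.
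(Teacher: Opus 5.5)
Your proof is correct, and its first half is the paper's argument. The paper applies condition~(1) of \defx\ref{Def Bipullback} once, to the square \((k,0)\) with vertex \(K\). This gives \(s\colon K\to A'\) and then \(t\colon K\to K'\) with \(k'\comp t\iso s\), and \(f\comp t\iso\id{K}\) is exactly your essential-surjectivity computation at \(Z=K\), \(a=\id{K}\).

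The routes part in the second half. The paper proves \(t\comp f\iso\id{K'}\) directly. The 1-cells \(s\comp f\) and \(k'\) carry invertible 2-cells on both projections, and their compatibility with \(\phi_Q\) is an equation between invertible 2-cells into a morphism isomorphic to a null one, which \lemx\ref{L:UniqueNull} supplies. Condition~(2) then gives \(s\comp f\iso k'\), and full faithfulness of \(k'\) finishes.

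You instead prove \(f\comp(-)\) fully faithful on every hom-category and invoke \lemx\ref{L:RepEquiv}. This obliges you to feed condition~(2) a possibly non-invertible 2-cell built from \(\lambda\). So you need strongness in its full form: between two morphisms each isomorphic to a null one there is exactly one 2-cell, by conjugating with the isomorphisms. You handle this correctly, so the obstacle you anticipated is not one.

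The trade-off is this. The paper's route is more economical: it needs the 2-dimensional clause only for invertible 2-cells and produces an explicit quasi-inverse. Yours is the uniform representable argument, checking the hom-functors directly against the definitions.

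The write-up needs tidying.
\begin{itemize}
\item Your opening sentence announces a proof via \corx\ref{Equivalence Is Mono Plus Normal Epi} that you never give, and then disown at the end. The paragraph testing \(z\colon Z\to A'\) against a pasted bipullback is abandoned. Both should go.
\item Condition~(2) yields a unique \(\mu'\colon k'\comp u\aR{}k'\comp v\) with \(g\star\mu'\) and \(q'\star\mu'\) prescribed; this then lifts uniquely along \(k'\) to \(\mu\).
\item The source \(h\comp q'\comp k'\comp u\) is isomorphic to a null morphism because of \(\kappa'\), not because of strongness.
\item \(f\star\mu=\lambda\) follows from \emph{faithfulness} of \(k\), not fullness, after translating through \(\phi_K\) by interchange.
\item Uniqueness of \(\mu\) uses the uniqueness half of condition~(2) together with faithfulness of \(k'\), not the latter alone.
\end{itemize}
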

\begin{proof}
	The right-hand square exhibits \(A'\) as the bipullback of \(q\) along \(h\), with projections \(g\colon A'\to A\) and \(q'\colon A'\to Q'\) and structure 2-cell \(\phi_Q\). Composing the structure 2-cell \(\kappa\colon q\comp k\iso 0\) of the lower row with the null isomorphism \(h\comp 0\iso 0\) yields an invertible 2-cell \(q\comp k\iso h\comp 0\), to which the universal property of the bipullback associates a morphism \(s\colon K\to A'\) together with invertible 2-cells \(g\comp s\iso k\) and \(q'\comp s\iso 0\). From the latter and \(k'=\twoker(q')\) we obtain \(t\colon K\to K'\) with an invertible 2-cell \(k'\comp t\iso s\). We claim that \(t\) is a quasi-inverse of \(f\).

	First, \(k\comp f\comp t\iso g\comp k'\comp t\iso g\comp s\iso k\), using \(\phi_K\); since \(k\) is fully faithful, this reflects to an invertible 2-cell \(f\comp t\iso\id{K}\). Next, the morphisms \(s\comp f\) and \(k'\) from \(K'\) to \(A'\) satisfy \(g\comp s\comp f\iso k\comp f\iso g\comp k'\), through \(\phi_K\), and \(q'\comp s\comp f\iso 0\iso q'\comp k'\), through \(\kappa'\). These two invertible 2-cells are compatible with the structure 2-cell \(\phi_Q\) of the bipullback: the compatibility required is an equation between two invertible 2-cells with the same domain and with codomain \(q\comp g\comp k'\), which is isomorphic to a null morphism, so \lemx\ref{L:UniqueNull} supplies it. The uniqueness clause of the bipullback then yields an invertible 2-cell \(s\comp f\iso k'\). Hence \(k'\comp t\comp f\iso s\comp f\iso k'\), and the full faithfulness of \(k'\) reflects this to \(t\comp f\iso\id{K'}\). Therefore \(f\) is an equivalence.
\end{proof}

The appeal to \lemx\ref{L:UniqueNull} in that proof replaces the coherence condition that \defx\ref{Def:morphism of SES} used to impose; this is the one place where that condition was ever used as a fact rather than assumed as a hypothesis, and \prox\ref{P:CoherenceFree} shows it to be no condition at all.

\begin{proposition}\label{Left Square Pushout}
	If the square on the left is a bipushout, then the morphism \(h\) is an equivalence.
\end{proposition}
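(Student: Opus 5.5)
This is the dual of \prox\ref{Right Square Pullback}, and I would prove it by dualising that argument step by step rather than by a formal appeal to duality. The reason is that the morphism of short 2-exact sequences in the statement, read in \(\L\op\), becomes a morphism of short 2-exact sequences with the rows reversed. In that reading the left square of the original becomes the right square, and the roles of \(f\) and \(h\) interchange. Still, I would write out the construction explicitly so that each use of \lemx\ref{L:UniqueNull} is visible.

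The left square exhibits \(A\) as the bipushout of \(k'\) along \(f\), with coprojections \(g\colon A'\to A\) and \(k\colon K\to A\) and structure 2-cell \(\phi_K\).

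\textbf{Construction of the candidate inverse.} Composing \(\kappa'\colon q'\comp k'\iso 0\) with the null isomorphism \(0\iso 0\comp f\) gives a cocone on the span \((k',f)\) with legs \(q'\colon A'\to Q'\) and \(0\colon K\to Q'\). The bipushout property then supplies \(s\colon A\to Q'\) with invertible 2-cells \(s\comp g\iso q'\) and \(s\comp k\iso 0\). Since \(q=\twocoker(k)\), the latter gives \(t\colon Q\to Q'\) with \(t\comp q\iso s\). This \(t\) is my candidate quasi-inverse of \(h\).

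\textbf{First composite.} Using \(\phi_Q\), we have
\[
	t\comp h\comp q'\iso t\comp q\comp g\iso s\comp g\iso q'\text{.}
\]
Since \(q'\) is a 2-epimorphism (cofully faithful), this reflects to \(t\comp h\iso\id{Q'}\).

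\textbf{Second composite.} Consider the two morphisms \(h\comp s\) and \(q\) from \(A\) to \(Q\). Precomposed with \(g\), they satisfy \(h\comp s\comp g\iso h\comp q'\iso q\comp g\). Precomposed with \(k\), they satisfy \(h\comp s\comp k\iso 0\iso q\comp k\). I then need these two 2-cells to be compatible with \(\phi_K\). The required equation is between two invertible 2-cells out of \(h\comp s\comp k\comp f\), or equivalently into a morphism isomorphic to \(q\comp k\comp f\), which is null. So \lemx\ref{L:UniqueNull}, applied after composing with inverses to land in a null codomain, gives the compatibility. The 2-dimensional uniqueness clause of the bipushout then yields an invertible 2-cell \(h\comp s\iso q\). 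Hence \(h\comp t\comp q\iso h\comp s\iso q\), and since \(q\) is cofully faithful we get \(h\comp t\iso\id{Q}\).

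\textbf{Main obstacle.} The only delicate point is that compatibility step. The bipushout's uniqueness clause is formulated for 2-cells, not merely for isomorphism classes, so one must genuinely produce a compatible pair. This is where strongness of the bizero object is essential, exactly as in \prox\ref{Right Square Pullback}. In applying \lemx\ref{L:UniqueNull}, I would arrange that both sides are invertible 2-cells with the same domain and a common null codomain, namely \(0\comp f\), obtained by whiskering \(\kappa\) and \(\kappa'\).
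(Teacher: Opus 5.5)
Your proposal is correct and is essentially the paper's proof. The paper simply invokes duality with \prox\ref{Right Square Pullback}, under which the left bipushout square and \(h\) correspond to the right bipullback square and \(f\); you have written out that dual argument explicitly, including the use of \lemx\ref{L:UniqueNull} to secure the compatibility condition needed for the bipushout's uniqueness clause.
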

\begin{proof}
	This is the statement dual to \prox\ref{Right Square Pullback}. Passing to the opposite 2-category interchanges 2-kernels with 2-cokernels, bipullbacks with bipushouts, and the short 2-exact sequence \(K\to A\to Q\) with \(Q\to A\to K\); under this duality the left-hand bipushout square and the morphism \(h\) correspond, respectively, to the right-hand bipullback square and the morphism \(f\) of \prox\ref{Right Square Pullback}.
\end{proof}

\section{The Snake Lemma in 2-di-exact 2-categories}\label{S:Snake}

\subsection{2-di-exact 2-categories}\label{SS:DiExact}

\begin{definition}[2-di-exact 2-category]\label{Def:DiExact}
	A 2-category \(\L\) with a strong bizero object is called \dfn{2-di-exact} if it satisfies the following two conditions:
	\begin{enum}
		\item[\textup{(DI1)}] \(\L\) has all 2-kernels and all 2-cokernels;
		\item[\textup{(DI2)}] every morphism that factorises, up to an invertible 2-cell, as a 2-kernel followed by a 2-cokernel also factorises, up to an invertible 2-cell, as a 2-cokernel followed by a 2-kernel.
		\begin{cd}[2.5]
			\& Q \& \\
			A \&\& B \\
			\& I
			\arrow["e", two heads, from=1-2, to=2-3]
			\arrow["m", tail, from=2-1, to=1-2]
			\arrow[iso, from=2-1, to=2-3]
			\arrow["{e'}"', dashed, two heads, from=2-1, to=3-2]
			\arrow["{m'}"', dashed, tail, from=3-2, to=2-3]
		\end{cd}
	\end{enum}
\end{definition}

In the language of Section~\ref{S:Normality}, condition (DI1) says that \(\L\) is 2-z-exact and condition (DI2) says that every antinormal morphism is normal. The Pure Snake Lemma is stated for a homologically self-dual 2-category, and the two hypotheses are connected by the following observation, whose proof is not the one the shape of the definitions suggests: specialising (DI2) to an antinormal composite that is null yields only \prox\ref{P:NullNormal}, a different statement. What works is that a dinversion is itself an antinormal composite.

\begin{proposition}\label{P:DiExactHSD}
	In a 2-category satisfying \textup{(DI2)}, the dinversion of every antinormal pair is normal. In particular, a 2-di-exact 2-category is homologically self-dual.
\end{proposition}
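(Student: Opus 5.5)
The plan is to observe that the dinversion of an antinormal pair \((m,e)\) is, by construction, an antinormal composite, and then to apply \textup{(DI2)} to it directly. Write \(k=\twoker(e)\colon\TwoKer(e)\to X\) and \(q=\twocoker(m)\colon X\to\TwoCoker(m)\). By \defx\ref{Def:Dinversion} the dinversion is \(w=q\comp k\). Here \(k\) is literally a 2-kernel (of \(e\)), and \(q\) is literally a 2-cokernel (of \(m\)). So \(w\) factorises, on the nose and hence up to an invertible 2-cell, as a 2-kernel followed by a 2-cokernel, which is exactly the hypothesis of \textup{(DI2)}.

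Applying \textup{(DI2)} then gives a morphism \(e'\) that is a 2-cokernel of something and a morphism \(m'\) that is a 2-kernel of something, with \(w\iso m'\comp e'\). By \defx\ref{Def Normal Mono} these are a normal 2-epimorphism and a normal 2-monomorphism respectively. By \defx\ref{Def Normal}, \(w\) is therefore normal.

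One point needs a little care: whether the 2-kernel and 2-cokernel used to form the dinversion exist. In the first assertion they are named in the definition of the dinverse, so I read the statement as concerning an antinormal pair whose dinverse exists; this holds automatically under \textup{(DI1)}. If a different choice of 2-kernel or 2-cokernel is made, \corx\ref{corollkeruniqueuptoequiv} shows that it differs by an equivalence. \corx\ref{C:NormalTransport} then shows that normality is unaffected, since the two resulting dinversions differ by composing with equivalences on either side.

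For the second assertion, a 2-di-exact 2-category is 2-z-exact by \textup{(DI1)}. An antinormal decomposition of the zero map is in particular an antinormal pair, so its dinversion is normal by the first part. This is precisely \defx\ref{Def:HSD}.

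I expect no real obstacle. The only step worth stating explicitly is the one the remark before the proposition warns about: \textup{(DI2)} is applied to \(w\), not to the null composite \(e\comp m\). The null composite would only recover \prox\ref{P:NullNormal}.
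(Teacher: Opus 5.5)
Your proof is correct and is the paper's own argument: the dinversion \(\twocoker(m)\comp\twoker(e)\) is itself an antinormal composite, so \textup{(DI2)} makes it normal, and homological self-duality is the special case \(e\comp m\iso 0\). Your paragraph on independence of choices is harmless but not needed. Every choice of \(\twoker(e)\) and \(\twocoker(m)\) already gives a 2-kernel followed by a 2-cokernel, so \textup{(DI2)} applies to each choice directly, without \corx\ref{C:NormalTransport}.
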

\begin{proof}
	Let \((m,e)\) be an antinormal pair, with 2-kernel \(\twoker(e)\) and 2-cokernel \(\twocoker(m)\). A 2-kernel is a normal 2-monomorphism and a 2-cokernel is a normal 2-epimorphism, so the dinversion \(w=\twocoker(m)\comp\twoker(e)\) is a normal 2-monomorphism followed by a normal 2-epimorphism, that is, an antinormal morphism. By \textup{(DI2)} it is normal. Homological self-duality is the special case in which \(e\comp m\iso 0\).
\end{proof}

\begin{remark}\label{Rem Bridge Cost}
	The argument never uses the hypothesis \(e\comp m\iso 0\), which is why the conclusion is stated for every antinormal pair and not only for an antinormal decomposition of the zero map. It does not use condition \textup{(DI1)} either, so no 2-kernel or 2-cokernel need exist beyond the two named in the statement; and it does not use that the bizero object is strong.
\end{remark}

Everything proved in Section~\ref{S:Exact} is therefore available under the standing hypothesis of this section: the Pure Snake Lemma, \prox\ref{Criteria HSD}, \prox\ref{Third Iso} and \prox\ref{Exactness via Homology}.

\begin{example}\label{Ex DiExact}
	Any abelian category \(\onecat{A}\), regarded as a locally discrete 2-category in the sense of Section~\ref{S:Preliminaries}, is 2-di-exact.

	Everything discretises. Two parallel null 1-cells are both the zero morphism, hence equal, hence carry exactly one 2-cell between them, so the zero object of \(\onecat{A}\) is a strong bizero object; in dimension one there is no difference between a bizero object and a strong one. Since the only invertible 2-cells are identities, a 1-cell is essentially null exactly when the morphism it names is zero, 2-kernels are kernels, 2-cokernels are cokernels, 2-monomorphisms are monomorphisms and 2-epimorphisms are epimorphisms. Condition \textup{(DI1)} is then the statement that every morphism of \(\onecat{A}\) has a kernel and a cokernel, and condition \textup{(DI2)} is the image factorisation: in an abelian category every monomorphism is the kernel of its cokernel and every epimorphism is the cokernel of its kernel, so every 1-cell is normal, and \emph{a fortiori} every antinormal one is.
\end{example}

\begin{remark}\label{Rem Model}
	\exax\ref{Ex DiExact} is one-dimensional, so it exhibits nothing that the classical theory does not already contain; its purpose is to show that the hypotheses of this section are consistent, and with them that homological self-duality and the Pure Snake Lemma are not vacuous. A genuinely two-dimensional model is the subject of Section~\ref{S:Model}. The candidate that suggests itself, the 2-category \(\AbCat\) of abelian categories, exact functors and natural transformations, satisfies \textup{(DI1)} but not \textup{(DI2)} (\prox\ref{P:AbCatFails}); what works instead is to cut out the abelian categories in which the obstruction vanishes, and the resulting 2-category (\thex\ref{T:SatModel}) contains the finitely generated modules over every commutative Noetherian ring and \(\Coh(X)\) for every noetherian scheme \(X\) (\corx\ref{C:Populated}). A second model, locally ordered rather than locally discrete, is the 2-category of complete modular lattices of \thex\ref{T:LatticeModel}, where condition \textup{(DI2)} is Dedekind's transposition principle.
\end{remark}

\begin{remark}[two-dimensional Grandis-homological 2-categories]\label{Rem Grandis}
	The 2-categories of \cite{CavJanMesRei26} are axiomatised differently, and dinversion says where they sit. The conditions there are that the 2-category have a strong bizero object and all 2-kernels and all 2-cokernels; that its normal 2-monomorphisms and its normal 2-epimorphisms each be closed under composition; and that every antinormal composite \(e\comp m\) for which \(\twoker(e)\) factors through \(m\) be normal. The first condition is 2-z-exactness and the second is \(\NEC\) together with its dual. The third is condition \textup{(DI2)} restricted to those antinormal pairs whose dinversion vanishes: a normal 2-monomorphism \(m\) is a 2-kernel of \(\twocoker(m)\) by \prox\ref{kernel is kernel of its cokernel}, so \(\twoker(e)\) factors through \(m\) exactly when the dinversion \(w=\twocoker(m)\comp\twoker(e)\) is null.

	What identifies the restriction is that dinversion interchanges that hypothesis with that conclusion. It carries an antinormal pair \((m,e)\) to \((\twoker(e),\twocoker(m))\), whose antinormal composite is \(w\) and whose own dinversion is \(e\comp m\) again, and it is involutive, so it permutes the antinormal pairs. Read on the dinverse pair, the third condition therefore says that the dinversion of every antinormal decomposition of the zero map is normal, and that is \defx\ref{Def:HSD}. A 2-Grandis-homological 2-category is precisely a homologically self-dual 2-z-exact 2-category satisfying \(\NEC\) and its dual.

	This locates the gap between the two settings, and it is a single hypothesis. Such a 2-category satisfies everything Section~\ref{S:Exact} asks for, so the Pure Snake Lemma holds in it; and it satisfies \(\NEC\), which is the second hypothesis of \thex\ref{T:SnakeNonSelfDual}. Only the first is missing, homological self-duality standing where \(\DPN\) is wanted. By \corx\ref{C:HSDstrict} that difference is real: the 2-category \(\AbCat\) is 2-Grandis-homological, by \prox\ref{P:AbCatComposition} and \prox\ref{P:AbCatHSD}, and it fails \(\DPN\) by \prox\ref{P:AbCatNotDPN}, so neither \thex\ref{Snake General 2D} nor \thex\ref{T:SnakeNonSelfDual} is available there. The 2-category \(\Sup\) of \prox\ref{P:SupNotDPN} is a second example of the same kind, with the pentagon as witness. In the other direction, condition \textup{(DI2)} does not imply the two closure conditions: \remx\ref{Rem NSD Discrete} separates it from \(\NEC\) already in dimension one. So neither package contains the other.
\end{remark}

\subsection{The statement, and the construction}\label{SS:Construction}

\begin{theorem}[The Snake Lemma]\label{Snake General 2D}
	In a 2-di-exact 2-category, consider a diagram
	\begin{equation}\label{Snake Ladder}
		\xymatrix@!0@=4em{
		& A \ar[d]_{f} \ar[r]^-{a} &
		B \ar[d]_{g} \ar[r]^-{b} &
		C \ar[d]^{h} \ar[r] &
		0 \\
		0 \ar[r] &
		X \ar[r]_-{c} &
		Y \ar[r]_-{d} &
		Z}
	\end{equation}
	whose two squares commute up to invertible 2-cells \(\phi\colon g\comp a\iso c\comp f\) and \(\psi\colon h\comp b\iso d\comp g\), and in which \(f\), \(g\) and \(h\) are normal. If the horizontal rows are 2-exact, then there exists a 1-cell \(\partial\) such that the sequence
	\begin{equation}\label{Snake Sequence}
		\xymatrix@R=5ex@C=2em{
		\TwoKer({f}) \ar[r]^-{\ov{a}} &
		\TwoKer({g}) \ar[r]^-{\ov{b}} &
		\TwoKer({h}) \ar `d[l] `[dll]|-{\partial} [dll] &\\
		\TwoCoker({f}) \ar[r]_-{\underline{c}} &
		\TwoCoker({g}) \ar[r]_-{\underline{d}} &
		\TwoCoker({h})\text{,}
		}
	\end{equation}
	in which \(\ov{a}\) and \(\ov{b}\) are induced by taking 2-kernels and \(\underline{c}\) and \(\underline{d}\) by taking 2-cokernels, is 2-exact.

	If, moreover, \(a=\twoker(b)\), then \(\ov{a}=\twoker(\ov{b})\); dually, if \(d=\twocoker(c)\), then \(\underline{d}=\twocoker(\underline{c})\).
\end{theorem}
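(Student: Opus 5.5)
We plan to follow the structure the introduction announces: first the special case in which \(a=\twoker(b)\) and \(d=\twocoker(c)\), so that both rows are short 2-exact sequences with \(b\) and \(c\) normal, and then to reduce the general case to it. In the special case we intend to realise the connecting 1-cell through the Pure Snake Lemma. We form the 2-kernel \(\twoker(h)\colon\TwoKer(h)\to C\) and the 2-cokernel \(\twocoker(f)\colon X\to\TwoCoker(f)\). Using \prox\ref{Kernel vs pullback}, we take the bipullback \(P\) of \(b\) along \(\twoker(h)\) (normal 2-monomorphisms being stable under it, \prox\ref{P:NormalMonoBipullback}) and, dually, the bipushout \(R\) of \(c\) along \(\twocoker(f)\). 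This produces a restricted ladder \(A\to P\to\TwoKer(h)\) over \(\TwoCoker(f)\to R\to Z\). The composite \(P\to B\to Y\to R\) is the antinormal-type middle map \(g'\), whose 2-kernel and 2-cokernel we identify with \(\TwoKer(g)\) and \(\TwoCoker(g)\) via \prox\ref{CoKernel of Composite}. Its composite with the outer maps is null, because \(h\comp\twoker(h)\iso0\) and \(\twocoker(f)\comp f\iso0\), and \lemx\ref{L:UniqueNull} supplies all coherence. Hence the image of \(P\to Y\to R\) lands in \(\twoker(R\to Z)\) and factors from \(\twocoker(A\to P)=\TwoKer(h)\). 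This gives \(\partial\colon\TwoKer(h)\to\TwoCoker(f)\), unique up to a unique invertible 2-cell by the 2-mono/2-epi cancellation of \remx\ref{rem2monoismonouptoiso}.

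For exactness we plan to verify each of the four spots with \prox\ref{Exactness via Homology}, using \remx\ref{Rem Exactness Homology Cost} so that homological self-duality only has to supply normality of the relevant dinversions. Exactness at \(\TwoKer(g)\) and at \(\TwoCoker(g)\) should be routine kernel calculations, by \prox\ref{Mono Implies Left Pullback} and its dual applied to the restricted ladders. The two spots adjacent to \(\partial\) are, we expect, exactly the two places where \textup{(DI2)} is consumed. There we will present the relevant composite as a dinversion of an antinormal pair built from \(\twoker(g)\), \(\twoker(h)\) and the bipullback projections, and invoke \prox\ref{P:DiExactHSD} to make it normal. Then the Pure Snake Lemma, applied to a morphism of short 2-exact sequences with identity middle component on \(P\) (respectively \(R\)), gives the 2-exact four-term piece \(\TwoKer(g)\to\TwoKer(h)\to\TwoCoker(f)\to\TwoCoker(g)\). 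Normality of \(\ov a,\ov b,\underline c,\underline d\) and of \(\partial\) must be checked along the way. For \(\ov b\) and \(\underline c\) this follows because they are comparisons of the Pure Snake type, and \prox\ref{P:NormalCancel} handles the cancellations.

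For the general case we factor \(a\) and \(d\) by normal image factorisations (the rows being exact, every arrow is normal). We replace \(A\) by \(\TwoKer(b)\) and \(Z\) by \(\TwoCoker(c)\), with induced verticals \(f'\) and \(h'\). These are normal because \(h\iso m\comp h'\) with \(m\) a 2-monomorphism, by \prox\ref{P:NormalCancel}. We apply the special case and then compare: \(\TwoCoker(f)\simeq\TwoCoker(f')\) since \(A\to\TwoKer(b)\) is a 2-epimorphism (\prox\ref{CoKernel of Composite}), dually for \(\TwoKer(h)\). The change at the end spots only affects \(\ov a\), whose 2-image is unchanged, so exactness at \(\TwoKer(g)\) persists. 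The final clause, \(\ov a=\twoker(\ov b)\) when \(a=\twoker(b)\), is the left-square bipullback argument of \prox\ref{Mono Implies Left Pullback} restricted to 2-kernels.

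We expect the main obstacle to be exactness at \(\TwoKer(h)\) and \(\TwoCoker(f)\), namely exhibiting the correct antinormal pair whose dinversion is \(\partial\) composed with its neighbours, and doing so coherently. Our first attempt will be to realise both spots as two Pure Snake configurations that are each other's dinversions, as \prox\ref{P:TwoSites} suggests.
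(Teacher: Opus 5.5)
There is a genuine gap, and it sits in your construction of \(\partial\). Factoring \(P\to Y\to R\) through \(\twocoker(A\to P)\) and \(\twoker(R\to Z)\) gives a 1-cell \(\TwoKer(h)\to\TwoCoker(f)\) only if the two restricted rows \(A\to P\to\TwoKer(h)\) and \(\TwoCoker(f)\to R\to Z\) are short 2-exact. Concretely, \(P\to\TwoKer(h)\) must be a 2-cokernel of \(A\to P\), and the bipushout leg \(c'\colon\TwoCoker(f)\to R\) must be a 2-kernel of \(R\to Z\). Nothing in the paper supplies this: \prox\ref{P:NormalMonoBipullback} concerns the other leg \(P\to B\), and its dual concerns the leg \(Y\to R\). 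Under 2-di-exactness the requirement is in fact false. It amounts to stability of normal 2-epimorphisms under bipullback along normal 2-monomorphisms, together with the dual statement. The dual already fails in \(\Grp\), which is di-exact and hence 2-di-exact as a locally discrete 2-category; the other half fails in \(\Grp\op\).

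Here is an explicit counterexample. In \(D_{4}=\langle r,s\mid r^{4}=s^{2}=1,\;srs=r^{-1}\rangle\) put \(V=\{1,r^{2},s,r^{2}s\}\). Take the rows \(\langle s\rangle\to V\to V/\langle s\rangle\) and \(V\to D_{4}\to D_{4}/V\), with \(f\) and \(g\) the inclusions and \(h=0\); all three verticals are normal. Since \(\ov{b}\) and \(\underline{c}\) are null, 2-exactness forces \(\partial\colon V/\langle s\rangle\to V/\langle s\rangle\) to be an isomorphism. But the normal closure of \(c(f(A))=\langle s\rangle\) in \(D_{4}\) is all of \(V\), so \(R=D_{4}/\langle\!\langle s\rangle\!\rangle=D_{4}/V\). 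Hence \(c'\) is null and \(R\to Z\) is an isomorphism. The 1-cell you construct therefore factors through the trivial object \(\TwoKer(R\to Z)\) and is null.

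What is missing is the paper's replacement for your restriction of \(b\). It uses not \(b\) restricted to \(P\) but the normal image of \(b\comp\twoker(g)\). With this comes the point that \textup{(DI2)} is consumed in building \(\partial\), not in proving exactness next to it.
\begin{itemize}
\item The composite \(b\comp\twoker(g)\) is antinormal, hence normal, and its normal image factorisation gives \(\ov{b}\iso\ov{\imath}\comp r\) (\lemx\ref{L:ImageOfBBar}).
\item The composite \(\twocoim(g)\comp a\) is antinormal, hence normal. This makes the induced \(\ell\colon\TwoImg(f)\to\TwoImg(g)\) a normal 2-monomorphism whose 2-cokernel is \(\pi\colon\TwoImg(g)\to\TwoCoker(i)\) (\lemx\ref{L:ImgMapNormal} and \prox\ref{P:RoutineVerification}).
\item Three Pure Snake configurations, with middle objects \(C\), \(\TwoImg(g)\) and \(X\), then give \(\TwoCoker(\ov{b})\simeq\TwoKer(t)\simeq\TwoCoker(s)\simeq\TwoKer(\underline{c})\).
\item \(\partial\) is \(\twoker(\underline{c})\) after this equivalence after \(\twocoker(\ov{b})\), and exactness at \(\TwoKer(h)\) and \(\TwoCoker(f)\) is then formal (\prox\ref{P:Shape}).
\end{itemize}

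Some smaller points:
\begin{itemize}
\item 2-di-exactness assumes no bipullbacks. The ones you use do exist, as \(\TwoKer(\twocoim(h)\comp b)\) and dually, but you need to say so.
\item \prox\ref{CoKernel of Composite} does not identify \(\TwoKer(g')\) with \(\TwoKer(g)\), because the 2-monomorphism \(P\to B\) is on the wrong side. Already for abelian groups \(\Ker(g')=\Ker(g)+a(A)\).
\item In the general case, normality of \(\ov{a}\) is not automatic. It needs the comparison \(\TwoKer(f)\to\TwoKer(f')\) to be a normal 2-epimorphism, which is a fourth application of the Pure Snake Lemma (\prox\ref{P:KerComparison}).
\item The clause \(\ov{a}=\twoker(\ov{b})\) does not follow from \prox\ref{Mono Implies Left Pullback}, which needs \(h\) to be a 2-monomorphism. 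It rests instead on \(c\) reflecting null morphisms (\prox\ref{P:KerBarA}).
\end{itemize}
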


We prove first the special case in which \(a=\twoker(b)\) and \(d=\twocoker(c)\), so that both rows are short 2-exact sequences, and reduce the general case to it in Section~\ref{SS:GeneralCase}. Throughout, 2-exactness of \eqref{Snake Sequence} asserts in particular that its six 1-cells are normal, as \defx\ref{Def:ExactSequence} requires; for \(\ov{b}\) and \(\underline{c}\) this comes out of the construction, and we point it out where it does.

\begin{figure}
	\begin{equation*}
		\xymatrix{&& \TwoKer({g}) \ar@{.{ >>}}[r]^-{r} \ar@{{ |>}->}[d] & I \ar@{{ |>}.>}[r] \ar@{{ |>}.>}[d]^-{i} & \TwoKer({h}) \ar@{{ |>}->}[d] \ar@{<-} `u[l] `[ll]_-{\ov{b}}\\
		A \ar@{=}[r] \ar@{-{ >>}}[d] & A \ar@{.>}[d] \ar@{{ |>}->}[r]^-{a} & B \ar@{-{ >>}}[r]^-{b} \ar@{-{ >>}}[d] & C \ar@{.{ >>}}[d] \ar@{=}[r] & C \ar@{-{ >>}}[d]\\
		\TwoImg({f}) \ar@{.>}[r]_-{s} \ar@{{ |>}->}[d] & K \ar@{{ |>}.>}[r] \ar@{{ |>}.>}[d] & \TwoImg({g}) \ar@{.{ >>}}[r] \ar@{{ |>}->}[d] & Q \ar@{.>}[r]^-{t} \ar@{.>}[d] & \TwoImg({h}) \ar@{{ |>}->}[d] \\
		X \ar@{<-} `l[u] `[uu]^-{f} \ar@{-{ >>}}[d] \ar@{=}[r] & X \ar@{.{ >>}}[d] \ar@{{ |>}->}[r]_-{c} & Y \ar@{<-} `l[u] `[uu]^(.2){g} \ar@{-{ >>}}[r]_-{d} \ar@{-{ >>}}[d] & Z \ar@{=}[r] & Z \ar@{<-} `r[u] `[uu]_-{h}\\
		\TwoCoker({f}) \ar@{.{ >>}}[r] & J \ar@{{ |>}.>}[r] & \TwoCoker({g}) \ar@{<-} `d[l] `[ll]^-{\underline{c}}}
	\end{equation*}
	\caption{Constructing the snake sequence}\label{Fig Constructing Snake}
\end{figure}

The construction proceeds as in Figure~\ref{Fig Constructing Snake}. The composite \(b\comp\twoker(g)\) is a normal 2-monomorphism followed by a normal 2-epimorphism, hence antinormal, hence normal by \textup{(DI2)}; we factor it as a normal 2-epimorphism \(r\colon\TwoKer(g)\to I\) followed by a normal 2-monomorphism \(i\colon I\to C\).

Since \(d\comp g\comp\twoker(g)\iso 0\) and \(\psi\colon h\comp b\iso d\comp g\), we have \(h\comp b\comp\twoker(g)\iso 0\), that is, \(h\comp i\comp r\iso 0\); as \(r\) is a 2-epimorphism, \(h\comp i\iso 0\). Hence \(i\) corestricts along \(\twoker(h)\) to a 1-cell \(\ov{\imath}\colon I\to\TwoKer(h)\) with \(\twoker(h)\comp\ov{\imath}\iso i\), and \(\ov{\imath}\) is a normal 2-monomorphism by part~\textup{(ii)} of \prox\ref{Composites of Normal Monos}, the 2-monomorphism \(\twoker(h)\) cancelling off the normal 2-monomorphism \(i\).

\begin{lemma}\label{L:ImageOfBBar}
	The pair \((r,\ov{\imath})\) is a normal image factorisation of \(\ov{b}\). In particular \(\ov{b}\) is normal, with \(\twoimg(\ov{b})\simeq\ov{\imath}\), and \(\TwoCoker(\ov{b})\simeq\TwoCoker(\ov{\imath})\).
\end{lemma}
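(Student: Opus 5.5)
The plan is to show that \(\ov{b}\iso\ov{\imath}\comp r\); once this is established, the statement follows from results already available. Here \(r\) is a normal 2-epimorphism and \(\ov{\imath}\) a normal 2-monomorphism, so \((r,\ov{\imath})\) is a normal image factorisation of \(\ov{b}\). By \prox\ref{Image Factorisation of Normal Map is Unique} that factorisation is unique up to equivalence, which gives \(\ov{b}\) normal with \(\twoimg(\ov{b})\simeq\ov{\imath}\). The last claim, \(\TwoCoker(\ov{b})\simeq\TwoCoker(\ov{\imath})\), is the dual half of \prox\ref{CoKernel of Composite}, applied to the factorisation \(\ov{b}\iso\ov{\imath}\comp r\) with the 2-epimorphism \(r\).

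To obtain \(\ov{b}\iso\ov{\imath}\comp r\), I will recall how \(\ov{b}\) is induced. Taking 2-kernels, \(\ov{b}\colon\TwoKer(g)\to\TwoKer(h)\) is the factorisation of \(b\comp\twoker(g)\) through \(\twoker(h)\). That factorisation exists because \(h\comp b\comp\twoker(g)\iso d\comp g\comp\twoker(g)\iso 0\) via \(\psi\). So the defining property of \(\ov{b}\) is an invertible 2-cell
\[
\twoker(h)\comp\ov{b}\iso b\comp\twoker(g)\text{.}
\]
On the other side,
\[
\twoker(h)\comp\ov{\imath}\comp r\iso i\comp r\iso b\comp\twoker(g)\text{,}
\]
using the defining 2-cell of the corestriction \(\ov{\imath}\) and the chosen factorisation \(i\comp r\iso b\comp\twoker(g)\). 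Hence \(\twoker(h)\comp\ov{b}\iso\twoker(h)\comp(\ov{\imath}\comp r)\). Since \(\twoker(h)\) is a 2-monomorphism (\prox\ref{Prop Kernel Is Mono}), \remx\ref{rem2monoismonouptoiso} lifts this to an invertible 2-cell \(\ov{b}\iso\ov{\imath}\comp r\).

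No coherence obligations arise here, because every step either produces a new invertible 2-cell or lifts one along a fully faithful arrow. The only point that needs attention is that the 2-cell used to define \(\ov{b}\) and the one underlying the factorisation \(b\comp\twoker(g)\iso i\comp r\) are only ever compared after whiskering with \(\twoker(h)\). The mono-up-to-iso property absorbs this, so I expect no genuine obstacle. The step carrying actual content is the uniqueness of the normal image factorisation; in particular, this is what makes the \(\twoimg(\ov{b})\) of \prox\ref{Image Factorisation of Normal Map is Unique} equivalent to the chosen \(\ov{\imath}\).
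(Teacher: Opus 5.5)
Your proof is correct and follows the paper's argument. You whisker both \(\ov{b}\) and \(\ov{\imath}\comp r\) with the 2-monomorphism \(\twoker(h)\) to reach \(b\comp\twoker(g)\), lift the resulting invertible 2-cell by \remx\ref{rem2monoismonouptoiso}, and conclude with \prox\ref{Image Factorisation of Normal Map is Unique} and the dual half of \prox\ref{CoKernel of Composite}, applied to the 2-epimorphism \(r\).
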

\begin{proof}
	Composing with the 2-monomorphism \(\twoker(h)\) turns both \(\ov{\imath}\comp r\) and \(\ov{b}\) into \(b\comp\twoker(g)\): for the first by the defining 2-cell of \(\ov{\imath}\) and the factorisation of \(b\comp\twoker(g)\), for the second by the defining 2-cell of \(\ov{b}\). By \remx\ref{rem2monoismonouptoiso} they agree up to an invertible 2-cell. A normal 2-epimorphism followed by a normal 2-monomorphism is a normal image factorisation, by \prox\ref{Image Factorisation of Normal Map is Unique}, and the last assertion is \prox\ref{CoKernel of Composite}.
\end{proof}

Write \(Q\coloneq\TwoCoker(i)\) and \(q\coloneq\twocoker(i)\). From \(h\comp i\iso 0\) we get \(\twoimg(h)\comp\twocoim(h)\comp i\iso 0\), and \(\twoimg(h)\) is a 2-monomorphism, so \(\twocoim(h)\comp i\iso 0\); the universal property of \(q\) therefore induces \(t\colon Q\to\TwoImg(h)\) with \(t\comp q\iso\twocoim(h)\), where we identify \(\TwoCoim(h)\) with \(\TwoImg(h)\) through the comparison of \lemx\ref{Normal Iff Comparison Iso}, an equivalence because \(h\) is normal.

The first application of the Pure Snake Lemma is to the rows \(I\aar{i}C\aar{q}Q\) and \(\TwoKer(h)\aar{\twoker(h)}C\aar{\twocoim(h)}\TwoImg(h)\) through the shared object \(C\), with left vertical \(\ov{\imath}\) and right vertical \(t\). Both rows are short 2-exact, by \prox\ref{kernel is kernel of its cokernel}, and \prox\ref{P:CoherenceFree} disposes of any coherence. The lemma yields an equivalence \(\TwoCoker(\ov{\imath})\simeq\TwoKer(t)\), and with \lemx\ref{L:ImageOfBBar} an equivalence
\[
	z_1\colon\TwoCoker(\ov{b})\longrightarrow\TwoKer(t)\text{.}
\]
Dually, writing \(s\colon\TwoImg(f)\to K\) for the 1-cell obtained by the mirror construction in the lower half of Figure~\ref{Fig Constructing Snake}, we obtain an equivalence \(z_3\colon\TwoCoker(s)\to\TwoKer(\underline{c})\), and \(\underline{c}\) is normal.

\subsection{The connecting 1-cell}\label{SS:Connecting}

The third application of the Pure Snake Lemma is to the two rows
\[
	\xymatrix{0 \ar[r] & \TwoImg({f}) \ar[d]_-{s} \ar@{{ |>}->}[r]^-{\ell} & \TwoImg({g}) \ar@{=}[d] \ar@{-{ >>}}[r]^-{\pi} & Q \ar[d]^{t} \ar[r]& 0\\
	0 \ar[r] & K \ar@{{ |>}->}[r] & \TwoImg({g}) \ar@{-{ >>}}[r] & \TwoImg({h}) \ar[r]&  0}
\]
through the shared object \(\TwoImg(g)\). Here \(\ell\) and \(\pi\) are the induced comparisons: writing \(p=\twocoim(g)\), \(e=\twocoim(f)\) and \(k=\twoker(g)\), the 1-cell \(\ell\) is characterised by \(\ell\comp e\iso p\comp a\) and \(\pi\) by \(\pi\comp p\iso q\comp b\). Both exist: \(g\comp a\comp\twoker(f)\iso c\comp f\comp\twoker(f)\iso 0\) makes \(a\comp\twoker(f)\) factor through \(k\), whence \(p\comp a\comp\twoker(f)\iso 0\) and \(e=\twocoker(\twoker(f))\) induces \(\ell\); and \(q\comp b\comp k\iso q\comp i\comp r\iso 0\) with \(p=\twocoker(k)\) induces \(\pi\). We identify \(\TwoCoim(f)\) with \(\TwoImg(f)\) and \(\TwoCoim(g)\) with \(\TwoImg(g)\), \(f\) and \(g\) being normal.

That the top row is short 2-exact is the step the paper of record calls a routine verification. It has two halves. The first is that \(\ell\) is a normal 2-monomorphism, and the second that \(\pi\) is a 2-cokernel of \(\ell\).

\begin{lemma}\label{L:ImgMapNormal}
	The 1-cell \(\ell\) is a normal 2-monomorphism, and \(e\) is a normal 2-epimorphism.
\end{lemma}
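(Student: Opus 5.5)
The second assertion needs no work: \(e=\twocoim(f)=\twocoker(\twoker(f))\) is a 2-cokernel by definition, hence a normal 2-epimorphism. The content is the first assertion. I will prove that \(\ell\) is a 2-monomorphism and that it is normal, and then conclude with \corx\ref{Normal Mono Criterion}.

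\emph{Normality of \(\ell\).} Since \(f\) is normal, \(\twoimg(f)\comp e\iso f\), after identifying \(\TwoCoim(f)\) with \(\TwoImg(f)\). Likewise \(\twoimg(g)\comp p\iso g\). Then
\[
	\twoimg(g)\comp\ell\comp e\iso\twoimg(g)\comp p\comp a\iso g\comp a\iso c\comp f\iso c\comp\twoimg(f)\comp e\text{.}
\]
Since \(e\) is a 2-epimorphism, the dual of \remx\ref{rem2monoismonouptoiso} gives \(\twoimg(g)\comp\ell\iso c\comp\twoimg(f)\). The right-hand side is a composite of two normal 2-monomorphisms. Here \(c\) is a normal 2-monomorphism because the bottom row is short 2-exact in the special case under treatment, so \(c=\twoker(d)\).

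I need this composite to be normal, and I will get that from \textup{(DI2)}. Set \(m'=c\comp\twoimg(f)\). It is a 2-monomorphism. Its normality is the one genuinely nontrivial point, since the paper does not assume that normal 2-monomorphisms compose.

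To handle this, I view \(m'\) as the composite \(\twoimg(f)\) followed by \(c\). I would try to express \(\TwoImg(f)\to Y\) as a 2-kernel directly, namely as \(\twoker\) of \(\twocoker(\twoimg(f))\comp\twocoker(c)\)-type data. The first attempt is to exhibit it as the 2-kernel of the composite \(Y\to\TwoCoker(c)\to\dots\), which fails without composition of normal epis.

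The second attempt avoids this. \(m'\iso c\comp f\) up to a normal 2-epimorphism \(e\), so \(m'\comp e\iso c\comp f\iso g\comp a\). Now \(g\comp a\) is normal: \(a\) is a 2-kernel, and \(g\iso\twoimg(g)\comp p\), so \(g\comp a\iso\twoimg(g)\comp(p\comp a)\). The composite \(p\comp a\) is a normal 2-monomorphism followed by a normal 2-epimorphism, hence antinormal, hence normal by \textup{(DI2)}. Then \(\twoimg(g)\comp(\text{normal})\) is normal by the same cancellation pattern as in \lemx\ref{L:ImageOfBBar}.

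It is cleaner to bypass \(g\comp a\) altogether. By \prox\ref{P:NormalCancel}, applied with the 2-monomorphism \(\twoimg(g)\), it suffices that \(\twoimg(g)\comp\ell\) be normal. Equally, \(\ell\) is normal as soon as \(\ell\comp e\iso p\comp a\) is normal, by the dual half of \prox\ref{P:NormalCancel} with the 2-epimorphism \(e\). And \(p\comp a\) is antinormal, hence normal by \textup{(DI2)}. So \(\ell\) is normal, and this is the step where di-exactness enters.

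\emph{\(\ell\) is a 2-monomorphism.} \(\twoimg(g)\comp\ell\iso c\comp\twoimg(f)\) is a composite of 2-monomorphisms, hence fully faithful. Part~\textup{(i)} of \prox\ref{Composites of Normal Monos}, with \(\twoimg(g)\) faithful, then makes \(\ell\) a 2-monomorphism. \corx\ref{Normal Mono Criterion} now concludes that \(\ell\) is a normal 2-monomorphism.

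The main obstacle is normality of \(\ell\). I expect it to be handled exactly by the cancellation \prox\ref{P:NormalCancel} along \(e\) together with \textup{(DI2)} applied to \(p\comp a\). I need to double-check that this direction of \prox\ref{P:NormalCancel} applies with \(e\) a 2-epimorphism precomposed.
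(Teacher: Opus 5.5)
Your final argument is correct. \textup{(DI2)} makes the antinormal composite \(p\comp a\iso\ell\comp e\) normal. The dual half of \prox\ref{P:NormalCancel} is exactly the version you were unsure about: it cancels a 2-epimorphism on the right, so \(\ell\) is normal. Part~\textup{(i)} of \prox\ref{Composites of Normal Monos} makes \(\ell\) a 2-monomorphism, and \corx\ref{Normal Mono Criterion} concludes. You are also right that \(e\) is a normal 2-epimorphism by definition.

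The paper uses the same two inputs: \textup{(DI2)} on \(p\comp a\), and the 2-monomorphism argument for \(\ell\). It finishes instead with the second assertion of \prox\ref{Image Factorisation of Normal Map is Unique}. That assertion says a normal morphism written as a 2-epimorphism followed by a 2-monomorphism has normal factors, which yields both halves of the lemma at once. Your ending splits that step into \prox\ref{P:NormalCancel} plus the criterion for normal 2-monomorphisms. Both are sound; the paper's is a step shorter. Yours has the merit of actually deriving \(\twoimg(g)\comp\ell\iso c\comp\twoimg(f)\) by cancelling \(e\), which the paper only asserts.

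You should delete the exploratory middle paragraphs, though. Normality of \(c\comp\twoimg(f)\) is never needed. The assertion that \(\twoimg(g)\comp(\text{normal})\) is normal ``by the same cancellation pattern as in \lemx\ref{L:ImageOfBBar}'' is false in general. That lemma \emph{cancels} a 2-monomorphism (if \(m\comp u\) is normal, so is \(u\)). Your step would instead \emph{compose} with one, which needs normal 2-monomorphisms to be closed under composition. 2-di-exactness does not supply this: the chain \(\langle s\rangle\subseteq\langle s,r^{2}\rangle\subseteq D_{4}\) of \remx\ref{Rem NSD Discrete} already refutes it in the di-exact category \(\Grp\).
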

\begin{proof}
	The composite \(p\comp a\) is a normal 2-monomorphism followed by a normal 2-epimorphism, hence antinormal, hence normal by \textup{(DI2)}. Now \(p\comp a\iso\ell\comp e\) exhibits it as a 2-epimorphism followed by a 2-monomorphism: \(e\) is a 2-epimorphism, and \(\ell\) is a 2-monomorphism because \(\twoimg(g)\comp\ell\iso c\comp\twoimg(f)\) is a composite of two 2-monomorphisms, so that part~\textup{(i)} of \prox\ref{Composites of Normal Monos} applies. The second assertion of \prox\ref{Image Factorisation of Normal Map is Unique} then makes \(e\) a normal 2-epimorphism and \(\ell\) a normal 2-monomorphism.
\end{proof}

\begin{proposition}\label{P:RoutineVerification}
	Let \(b\) be a 2-cokernel of \(a\), let \(b\comp k\iso i\comp r\) with \(r\) a 2-epimorphism, let \(q\) be a 2-cokernel of \(i\) and let \(p\) be a 2-cokernel of \(k\). If \(e\) is a 2-epimorphism, then \(\pi\) is a 2-cokernel of \(\ell\).
\end{proposition}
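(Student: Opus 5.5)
The plan is to verify the three ingredients of \defx\ref{Def 2-cokernel} for \(\pi\) against \(\ell\) directly: that \(\pi\comp\ell\) is null, condition~(1), and condition~(2). Every claimed compatibility of invertible 2-cells into a null morphism is then free by \lemx\ref{L:UniqueNull}, so only the existence of invertible 2-cells needs tracking, as in \remx\ref{Rem Kernel Property}. I will use only the characterising 2-cells \(\ell\comp e\iso p\comp a\) and \(\pi\comp p\iso q\comp b\), and the hypotheses listed in the statement.

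\emph{Nullity.} We have \(\pi\comp\ell\comp e\iso\pi\comp p\comp a\iso q\comp b\comp a\iso 0\), because \(b\comp a\iso 0\). Since \(e\) is a 2-epimorphism, it coreflects null morphisms by \prox\ref{prop2monoreflectsnull}, so \(\pi\comp\ell\iso 0\). This is the one place where the hypothesis on \(e\) enters.

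\emph{Condition~(2).} Here it suffices that \(\pi\) be a 2-epimorphism. The composite \(q\comp b\) is a composite of two 2-cokernels, so it is cofully faithful by \prox\ref{Prop Kernel Is Mono}. It is isomorphic to \(\pi\comp p\), and \(p\) is a 2-cokernel, hence cofaithful. The dual of part~\textup{(i)} of \prox\ref{Composites of Normal Monos} then makes \(\pi\) a 2-epimorphism.

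\emph{Condition~(1).} This is the main step, a two-stage descent. Let \(z\colon\TwoImg(g)\to Z\) satisfy \(z\comp\ell\iso 0\). Then \(z\comp p\comp a\iso z\comp\ell\comp e\iso 0\). Since \(b\) is a 2-cokernel of \(a\), we get \(y\colon C\to Z\) with \(y\comp b\iso z\comp p\). Next, \(y\comp i\comp r\iso y\comp b\comp k\iso z\comp p\comp k\iso 0\), using \(p\comp k\iso 0\). As \(r\) is a 2-epimorphism it coreflects null morphisms, so \(y\comp i\iso 0\). Because \(q=\twocoker(i)\), this yields \(u\colon Q\to Z\) with \(u\comp q\iso y\). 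Finally, \(u\comp\pi\comp p\iso u\comp q\comp b\iso y\comp b\iso z\comp p\), and \(p\) is a 2-epimorphism, so \(u\comp\pi\iso z\) by the dual of \remx\ref{rem2monoismonouptoiso}.

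The only delicate point is keeping the hypotheses minimal, since the statement is phrased to be reusable. I must use that \(b\) is a 2-cokernel of \(a\) rather than the full short 2-exactness of the top row. The 2-epimorphism hypotheses on \(r\), \(p\) and \(e\) are each used exactly once, to coreflect a null morphism or to cancel.
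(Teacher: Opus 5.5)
Your proof is correct and follows the paper's argument step for step. Nullity comes from coreflecting along the 2-epimorphism \(e\). Condition~(1) is the two-stage descent through \(b=\twocoker(a)\) and \(q=\twocoker(i)\), with \(r\) and then \(p\) cancelled. Condition~(2) comes from \(\pi\comp p\iso q\comp b\) being a 2-epimorphism, which you justify more explicitly than the paper does, via the dual of part~(i) of \prox\ref{Composites of Normal Monos}.

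One small slip in your closing remark: the 2-epimorphism property of \(p\) is used twice, not once. It serves for the cancellation in condition~(1) and for cofaithfulness in condition~(2).
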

\begin{proof}
	First, \(\pi\comp\ell\comp e\iso\pi\comp p\comp a\iso q\comp b\comp a\iso 0\), and \(e\) is a 2-epimorphism, so \(\pi\comp\ell\iso 0\).

	Now let \(w\) satisfy \(w\comp\ell\iso 0\). Then \(w\comp p\comp a\iso w\comp\ell\comp e\iso 0\), so that \(b=\twocoker(a)\) yields \(v\) with \(v\comp b\iso w\comp p\). Next, \(v\comp i\comp r\iso v\comp b\comp k\iso w\comp p\comp k\iso 0\), because \(p\comp k\) is the composite of a 2-kernel with its 2-cokernel; as \(r\) is a 2-epimorphism, \(v\comp i\iso 0\). Hence \(q=\twocoker(i)\) yields \(w'\) with \(w'\comp q\iso v\), and then \(w'\comp\pi\comp p\iso w'\comp q\comp b\iso v\comp b\iso w\comp p\), so that \(w'\comp\pi\iso w\) since \(p\) is a 2-epimorphism. Finally \(\pi\) is a 2-epimorphism, because \(\pi\comp p\iso q\comp b\) is one.
\end{proof}

By \lemx\ref{L:ImgMapNormal} the 1-cell \(\ell\) is a normal 2-monomorphism, hence the 2-kernel of its 2-cokernel, which \prox\ref{P:RoutineVerification} identifies with \(\pi\); so the top row is short 2-exact. The bottom row is short 2-exact by duality. The Pure Snake Lemma therefore supplies an equivalence
\[
	z_2\colon\TwoCoker(s)\longrightarrow\TwoKer(t)\text{,}
\]
and we write \(z\coloneq z_3\comp z_2^{-1}\comp z_1\colon\TwoCoker(\ov{b})\to\TwoKer(\underline{c})\) for the composite of the three, a quasi-inverse being chosen for \(z_2\). The \dfn{connecting 1-cell} is
\begin{equation}\label{Def Partial}
	\partial\coloneq\twoker(\underline{c})\comp z\comp\twocoker(\ov{b})\colon\TwoKer(h)\longrightarrow\TwoCoker(f)\text{.}
\end{equation}

\begin{proposition}\label{P:Shape}
	Let \(q'\) be a 2-cokernel of \(\ov{b}\), let \(z\) be an equivalence, let \(m\) be a 2-kernel of \(\underline{c}\), and put \(\partial=m\comp z\comp q'\). Then the pair \((\partial,\underline{c})\) is 2-exact at \(\TwoCoker(f)\); and if \(\ov{b}\) is normal, the pair \((\ov{b},\partial)\) is 2-exact at \(\TwoKer(h)\).
\end{proposition}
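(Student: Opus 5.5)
The plan is to read off the normal image factorisation of \(\partial\) directly from its definition, and then to verify one of the equivalent conditions of \prox\ref{Exactness Self-Dual} at each of the two spots. The hypotheses say that \(q'\) is a 2-cokernel, hence a normal 2-epimorphism, and that \(m\) is a 2-kernel, hence a normal 2-monomorphism.

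By \corx\ref{C:NormalTransport}, composing a normal 2-epimorphism with an equivalence gives a normal 2-epimorphism. So \(z\comp q'\) is a normal 2-epimorphism, and
\[
	\partial=m\comp(z\comp q')
\]
exhibits \(\partial\) as a normal morphism. By \prox\ref{Image Factorisation of Normal Map is Unique}, this is its normal image factorisation up to equivalence, so
\[
	\twoimg(\partial)\simeq m
	\qquad\text{and}\qquad
	\twocoim(\partial)\simeq z\comp q'.
\]
Grouping the other way, \(\partial=(m\comp z)\comp q'\), shows in the same way that \(\twocoim(\partial)\) may equally be taken to be \(q'\) itself, with \(m\comp z\) as the 2-image. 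Uniqueness in \prox\ref{Image Factorisation of Normal Map is Unique} says that these two choices differ only by the equivalence \(z\). Since every condition in \prox\ref{Exactness Self-Dual} is stated up to equivalence, that is harmless.

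For exactness at \(\TwoCoker(f)\), I use condition~(i) of \prox\ref{Exactness Self-Dual}. The requirement is \(\twoimg(\partial)\simeq\twoker(\underline{c})\), and this holds because \(\twoimg(\partial)\simeq m\) and \(m\) is a 2-kernel of \(\underline{c}\). To apply \defx\ref{Def:ExactSequence}, both members of the pair must be normal. For \(\partial\) this was shown above. For \(\underline{c}\), I would use the normality established in the mirror construction of Section~\ref{SS:Construction} (the dual of \lemx\ref{L:ImageOfBBar}), and state explicitly that this is where it enters. Only normality of \(\underline{c}\) is used, and the condition itself is independent of how \(\underline{c}\) factors.

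For exactness at \(\TwoKer(h)\), under the hypothesis that \(\ov{b}\) is normal, I use condition~(ii) of \prox\ref{Exactness Self-Dual}. The requirement is \(\twocoker(\ov{b})\simeq\twocoim(\partial)\), and this holds because \(\twocoim(\partial)\simeq q'\) and \(q'\) is a 2-cokernel of \(\ov{b}\). The argument is the exact dual of the previous one, which is why only normality of \(\ov{b}\) has to be assumed.

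The only real bookkeeping is making sure the equivalences are compatible. The step I expect to need the most care is the identification of the 2-image and 2-coimage of \(\partial\) when the equivalence \(z\) sits in the middle, since \(z\) may be absorbed on either side. I would settle this once, through \corx\ref{C:NormalTransport} and \prox\ref{Image Factorisation of Normal Map is Unique}, before doing the two exactness checks. No 2-cell coherence arises, because every comparison involved is either invertible into a null morphism or determined up to unique invertible 2-cell.
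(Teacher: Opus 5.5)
Your proof is correct. Its first half is exactly the paper's argument: absorb \(z\) into \(q'\), then read off condition~(i) of \prox\ref{Exactness Self-Dual} as \(\twoimg(\partial)\simeq m=\twoker(\underline{c})\).

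The second half takes a different route. The paper checks condition~(i) at \(\TwoKer(h)\) as well, in three steps. It takes a normal image factorisation \(\ov{b}\iso n\comp e'\), and notes that \(q'\) is a 2-cokernel of \(n\) because \(e'\) is a 2-epimorphism. By \prox\ref{kernel is kernel of its cokernel}, \(n\) is then the 2-kernel of \(q'\). By \prox\ref{CoKernel of Composite}, appending the 2-monomorphism \(m\comp z\) does not change that 2-kernel, so \(\twoimg(\ov{b})\simeq\twoker(\partial)\).

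You check condition~(ii) instead, reading \(\twocoim(\partial)\simeq q'\simeq\twocoker(\ov{b})\) straight off the regrouping \(\partial=(m\comp z)\comp q'\). Your version makes the two halves literally dual and needs no image factorisation of \(\ov{b}\). It also shows that normality of \(\ov{b}\) enters only through \defx\ref{Def:ExactSequence} and the equivalence of conditions in \prox\ref{Exactness Self-Dual}, just as normality of \(\underline{c}\) does in the first half. The paper's version buys the `image equals kernel' form at both positions, at the price of that three-step chain.

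Your explicit remark that the first half needs \(\underline{c}\) to be normal addresses a point the paper's proof passes over. The statement assumes nothing about \(\underline{c}\). Its normality comes only from the surrounding construction: the dual of \lemx\ref{L:ImageOfBBar} in Section~\ref{S:Snake}, and \prox\ref{P:NSDcbar} when the proposition is reused in Section~\ref{S:NonSelfDual}.
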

\begin{proof}
	A 2-cokernel followed by an equivalence is again a 2-cokernel, so \(z\comp q'\) is a normal 2-epimorphism and \(\partial=m\comp(z\comp q')\) already exhibits \(\partial\) as a normal 2-epimorphism followed by \(\twoker(\underline{c})\); this is 2-exactness at \(\TwoCoker(f)\), by \prox\ref{Exactness Self-Dual}. For the second assertion, take a normal image factorisation \(\ov{b}\iso n\comp e'\). Since \(e'\) is a 2-epimorphism, \(q'\) is a 2-cokernel of \(n\) as well; \(n\) being a normal 2-monomorphism, it is the 2-kernel of that 2-cokernel, by \prox\ref{kernel is kernel of its cokernel}; and appending the 2-monomorphism \(m\comp z\) does not change a 2-kernel, by \prox\ref{CoKernel of Composite}. So \(n\) is a 2-kernel of \(\partial\), which is 2-exactness at \(\TwoKer(h)\).
\end{proof}

Applied to \eqref{Def Partial}, this gives 2-exactness of the snake sequence at \(\TwoKer(h)\) and at \(\TwoCoker(f)\); the normality of \(\ov{b}\) that the second half needs is \lemx\ref{L:ImageOfBBar}, and it is used nowhere else. Note that the first half uses nothing about \(\ov{b}\) at all---not even that \(q'\) is its 2-cokernel rather than some other morphism's.

\begin{remark}\label{Rem Partial Choices}
	The 1-cell \(\partial\) depends on the choices made in the construction, but only up to an invertible 2-cell: each of the three comparisons is unique up to an invertible 2-cell over its own data, by \lemx\ref{Pure Snake Lemma}, and isomorphic equivalences have isomorphic quasi-inverses. Two runs of the construction therefore give connecting 1-cells that agree up to an invertible 2-cell.
\end{remark}

\subsection{Exactness at the outer positions}\label{SS:OuterExactness}

\begin{proposition}\label{P:KerBarA}
	Suppose \(a=\twoker(b)\) and let \(c\) be a 2-monomorphism. Then \(\ov{a}=\twoker(\ov{b})\). Dually, if \(d=\twocoker(c)\) and \(b\) is a 2-epimorphism, then \(\underline{d}=\twocoker(\underline{c})\).
\end{proposition}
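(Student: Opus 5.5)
We prove the first statement directly from the universal properties; the second follows by duality. Write \(\kappa_f=\twoker(f)\colon\TwoKer(f)\to A\), \(\kappa_g=\twoker(g)\) and \(\kappa_h=\twoker(h)\). The 1-cells \(\ov{a}\) and \(\ov{b}\) are characterised by \(\kappa_g\comp\ov{a}\iso a\comp\kappa_f\) and \(\kappa_h\comp\ov{b}\iso b\comp\kappa_g\). The proof has three parts: \(\ov{b}\comp\ov{a}\) is null, every 1-cell killed by \(\ov{b}\) factors through \(\ov{a}\), and \(\ov{a}\) is a 2-monomorphism. Together these verify \defx\ref{Def 2-kernel}; by \remx\ref{Rem Kernel Property}, no further coherence of 2-cells needs to be checked.

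For nullity, compute
\[
\kappa_h\comp\ov{b}\comp\ov{a}\iso b\comp\kappa_g\comp\ov{a}\iso b\comp a\comp\kappa_f\iso 0 .
\]
Since \(\kappa_h\) is a 2-monomorphism, it reflects null morphisms by \prox\ref{prop2monoreflectsnull}, so \(\ov{b}\comp\ov{a}\iso 0\). For the 2-dimensional condition, note that \(\kappa_g\comp\ov{a}\iso a\comp\kappa_f\) is a composite of 2-monomorphisms, because \(a=\twoker(b)\) is one by \prox\ref{Prop Kernel Is Mono}. Since \(\kappa_g\) is faithful, part~\textup{(i)} of \prox\ref{Composites of Normal Monos} makes \(\ov{a}\) a 2-monomorphism.

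For the one-dimensional universal property, let \(x\colon Z\to\TwoKer(g)\) satisfy \(\ov{b}\comp x\iso 0\). Then
\[
b\comp\kappa_g\comp x\iso\kappa_h\comp\ov{b}\comp x\iso 0,
\]
so \(a=\twoker(b)\) gives \(y\colon Z\to A\) with \(a\comp y\iso\kappa_g\comp x\). Next, using \(\phi\),
\[
c\comp f\comp y\iso g\comp a\comp y\iso g\comp\kappa_g\comp x\iso 0.
\]
Because \(c\) is a 2-monomorphism it reflects null morphisms, so \(f\comp y\iso 0\). Hence \(y\iso\kappa_f\comp u\) for some \(u\colon Z\to\TwoKer(f)\). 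It then follows that
\[
\kappa_g\comp\ov{a}\comp u\iso a\comp\kappa_f\comp u\iso a\comp y\iso\kappa_g\comp x,
\]
and since \(\kappa_g\) is fully faithful, \remx\ref{rem2monoismonouptoiso} yields \(\ov{a}\comp u\iso x\).

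The only delicate step is the use of \(c\) to descend from \(c\comp f\comp y\iso 0\) to \(f\comp y\iso 0\). This is exactly why the statement assumes that \(c\) is a 2-monomorphism, rather than using exactness of the lower row at \(X\), which would only give this on the image. Everything else is routine pasting of invertible 2-cells, and all of these are forced by \lemx\ref{L:UniqueNull}. The dual statement uses that \(b\) is a 2-epimorphism, and hence coreflects null morphisms, in the symmetric step, together with \(d=\twocoker(c)\).
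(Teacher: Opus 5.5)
Your proof is correct and is essentially the paper's own argument, step for step: nullity by reflecting along \(\twoker(h)\); the factorisation through \(a=\twoker(b)\), followed by reflecting \(c\comp f\comp y\iso 0\) along the 2-monomorphism \(c\); full faithfulness of \(\twoker(g)\) to get \(\ov{a}\comp u\iso x\); and part~(i) of \prox\ref{Composites of Normal Monos} for the 2-dimensional condition. Only your closing commentary is slightly off. In the ladder, 2-exactness of \(0\to X\to Y\) at \(X\) is exactly the statement that \(c\) is a (normal) 2-monomorphism. Also, 2-cells such as \(a\comp y\iso\twoker(g)\comp x\) do not land in null morphisms, so it is not \lemx\ref{L:UniqueNull} that disposes of them; rather, condition~(1) of \defx\ref{Def 2-kernel} only asks for the existence of some invertible 2-cell.
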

\begin{proof}
	From \(b\comp a\iso 0\) and the full faithfulness of \(\twoker(h)\) we get \(\ov{b}\comp\ov{a}\iso 0\). Suppose now that \(x\colon T\to\TwoKer(g)\) satisfies \(\ov{b}\comp x\iso 0\). Since \(\twoker(h)\comp\ov{b}\iso b\comp\twoker(g)\), the composite \(b\comp\twoker(g)\comp x\) is isomorphic to a null morphism, so \(a=\twoker(b)\) provides \(y\colon T\to A\) with \(a\comp y\iso\twoker(g)\comp x\). Then
	\[
		c\comp f\comp y\iso g\comp a\comp y\iso g\comp\twoker(g)\comp x\iso 0\text{,}
	\]
	and here the 2-dimensionality is essential: as \(c\) is fully faithful it reflects null morphisms, by \prox\ref{prop2monoreflectsnull}, so \(f\comp y\iso 0\); hence \(y\iso\twoker(f)\comp y'\) for an essentially unique \(y'\colon T\to\TwoKer(f)\). Now
	\[
		\twoker(g)\comp\ov{a}\comp y'\iso a\comp\twoker(f)\comp y'\iso a\comp y\iso\twoker(g)\comp x\text{,}
	\]
	and \(\twoker(g)\) being fully faithful forces \(\ov{a}\comp y'\iso x\). Finally \(\ov{a}\) is itself a 2-monomorphism: the composite \(\twoker(g)\comp\ov{a}\iso a\comp\twoker(f)\) is one, being a composite of two 2-monomorphisms, and \(\twoker(g)\) is one, so part~\textup{(i)} of \prox\ref{Composites of Normal Monos} applies. Hence the factorisation of \(x\) through \(\ov{a}\) is essentially unique, and \(\ov{a}=\twoker(\ov{b})\).
\end{proof}

\begin{remark}\label{Rem BarA Cost}
	This costs far less than the theorem states. It uses neither 2-di-exactness, nor 2-z-exactness, nor the Pure Snake Lemma, nor the normality of \(f\), \(g\) and \(h\). Of the lower row it uses only that \(c\) is a 2-monomorphism: \(c\) need not be a 2-kernel, and \(d\) does not occur in the proof at all. It does use \(a=\twoker(b)\), which is the hypothesis of the special case, and it uses that the bizero object is strong---at the reflection step, and nowhere else.
\end{remark}

In the special case, \(\ov{a}\) is thus a 2-kernel of \(\ov{b}\), so the pair \((\ov{a},\ov{b})\) is 2-exact at \(\TwoKer(g)\); dually \(\underline{d}\) is a 2-cokernel of \(\underline{c}\), and since \(\underline{c}\) is normal---this is where the dual of \lemx\ref{L:ImageOfBBar} is needed---the pair \((\underline{c},\underline{d})\) is 2-exact at \(\TwoCoker(g)\), by \prox\ref{Exactness Self-Dual}. Together with \prox\ref{P:Shape} this establishes \thex\ref{Snake General 2D} in the special case.

\subsection{The general case}\label{SS:GeneralCase}

\begin{figure}
	\begin{equation*}
		\xymatrix@!0@R=4em@C=7em{
		\TwoKer({f}) \ar@{--{ >>}}[r]^-{\ov{a}''} \ar@{{ |>}->}[d] & \TwoKer({f}') \ar@{{ |>}-->}[r]^-{\ov{a}'} \ar@{{ |>}->}[d] & \TwoKer({g}) \ar@{<--} `u[l] `[ll]_-{\ov{a}} \ar@{-->}[r]^-{\ov{b}} \ar@{{ |>}->}[d] & \TwoKer({h}') \ar@{-->}[dddll]^(.8){\partial} \ar@{{ |>}->}[d] \ar@{{ |>}->}[rd] \\
		A \ar[d]_{f} \ar@{-{ >>}}[r]^-{\twocoim({a})} & \TwoCoim({a}) \ar[d]_{{f}'} \ar@{{ |>}->}[r]^-{{a}'} &
		B \ar[d]_(.3){g} \ar@{-{ >>}}[r]^(.7){b} &
		C \ar[d]^{{h}'} \ar@{=}[r] &
		C \ar[d]^{h} \\
		X \ar@{-{ >>}}[rd] \ar@{=}[r] &
		X \ar@{-{ >>}}[d] \ar@{{ |>}->}[r]_(.3){c} &
		Y \ar@{-{ >>}}[d] \ar@{-{ >>}}[r]_-{{d}'} & \TwoImg({d}) \ar@{-{ >>}}[d] \ar@{{ |>}->}[r]_-{\twoimg({d})} &
		Z \ar@{-{ >>}}[d]\\
		& \TwoCoker({f}') \ar@{-->}[r]_-{\underline{c}} & \TwoCoker({g}) \ar@{--{ >>}}[r]_-{\underline{d}'} & \TwoCoker({h}') \ar@{{ |>}-->}[r]_-{\underline{d}''} & \TwoCoker({h}) \ar@{<--} `d[l] `[ll]^-{\underline{d}}}
	\end{equation*}
	\caption{Reducing the general case to the special one}\label{Fig Snake General}
\end{figure}

We take the normal image factorisations \(a\iso a'\comp\twocoim(a)\) and \(d\iso\twoimg(d)\comp d'\) of the normal morphisms \(a\) and \(d\), and apply the special case to the resulting diagram, as in Figure~\ref{Fig Snake General}. Write \(e\coloneq\twocoim(a)\).

The reduction needs two identifications that are worth stating, since the whole of Figure~\ref{Fig Snake General} rests on them. Since \(h\iso\twoimg(d)\comp h'\) with \(\twoimg(d)\) a 2-monomorphism, \prox\ref{CoKernel of Composite} gives \(\TwoKer(h')\simeq\TwoKer(h)\); dually \(\TwoCoker(f')\simeq\TwoCoker(f)\). Both are instances of that proposition in the form in which neither factor is required to be normal.

\begin{lemma}\label{L:Restriction}
	The morphism \(f\) restricts along \(e\), that is, there is a 1-cell \(f'\colon\TwoCoim(a)\to X\) with \(f'\comp e\iso f\); and \(f'\) is normal. Dually \(h\) corestricts along \(\twoimg(d)\) to a normal \(h'\).
\end{lemma}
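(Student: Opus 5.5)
Existence of \(f'\) comes straight from the universal property of \(e=\twocoim(a)=\twocoker(\twoker(a))\). The rows are 2-exact, so in particular \((a,b)\) is exact in \(B\); hence \(a\) is normal and \(e\) is the 2-cokernel of \(\twoker(a)\) by \prox\ref{kernel is kernel of its cokernel}. It therefore suffices to show \(f\comp\twoker(a)\iso 0\). We have
\[
	c\comp f\comp\twoker(a)\iso g\comp a\comp\twoker(a)\iso 0
\]
by \(\phi\). The lower row is 2-exact at \(X\), with \(0\to X\) on its left, so \(c\) has trivial 2-kernel. I would record that 2-exactness at \(X\) makes \(c\) a normal 2-monomorphism, since \(\twoimg\) of a null morphism is trivial and so \(\twoker(c)\) is trivial, and then use \corx\ref{Trivial Kernel Normal Mono}. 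Being a 2-monomorphism, \(c\) reflects null morphisms by \prox\ref{prop2monoreflectsnull}, which gives \(f\comp\twoker(a)\iso 0\). The universal property of \(e\) then yields \(f'\colon\TwoCoim(a)\to X\) with \(f'\comp e\iso f\).

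Normality of \(f'\) is exactly the dual half of \prox\ref{P:NormalCancel}. Here \(e\) is a 2-epimorphism, since it is a 2-cokernel (\prox\ref{Prop Kernel Is Mono}). Moreover \(f'\comp e\iso f\), and \(f\) is normal by hypothesis, so \(f'\) is normal. The statement for \(h'\) is the formal dual: exactness of the top row at \(C\), with \(C\to 0\), makes \(b\) a normal 2-epimorphism, which therefore coreflects null morphisms. From \(\psi\) we get
\[
	\twocoker(d)\comp h\comp b\iso\twocoker(d)\comp d\comp g\iso 0,
\]
hence \(\twocoker(d)\comp h\iso 0\). Since \(\twoimg(d)=\twoker(\twocoker(d))\), this gives \(h'\) with \(\twoimg(d)\comp h'\iso h\), and the first half of \prox\ref{P:NormalCancel} makes \(h'\) normal.

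The only step needing care is the first one: extracting from ``2-exact row beginning with \(0\)'' that \(c\) is a 2-monomorphism, and dually that \(b\) is a 2-epimorphism. This amounts to unwinding \defx\ref{Def:ExactSequence} at the end positions using \prox\ref{P:NullNormal}. Everything else is a direct appeal to universal properties and to \prox\ref{P:NormalCancel}. No coherence of 2-cells arises, because all 2-cells involved are invertible 2-cells into null morphisms, handled by \lemx\ref{L:UniqueNull}.
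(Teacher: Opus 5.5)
Your proof is correct and follows the paper's: \(f'\) exists because the 2-monomorphism \(c\) reflects \(c\comp f\comp\twoker(a)\iso 0\) and \(e\) has the universal property of a 2-cokernel, and the argument for \(h'\) is the dual one. The only difference is cosmetic. For normality of \(f'\) the paper argues inline: it factors \(\twocoim(f)\) through \(e\) as a normal 2-epimorphism \(w\), then cancels \(e\) to get \(f'\iso\twoimg(f)\comp w\). This is a special case of the dual of \prox\ref{P:NormalCancel}, which you cite directly.
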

\begin{proof}
	Write \(k\) for \(\twoker(e)\). Then \(c\comp f\comp k\iso g\comp a\comp k\iso 0\), since \(a\iso a'\comp e\) and \(e\comp k\iso 0\); and \(c\), being fully faithful, reflects this to \(f\comp k\iso 0\), by \prox\ref{prop2monoreflectsnull}. As \(e\) is a 2-cokernel of \(k\), this produces \(f'\) with \(f'\comp e\iso f\).

	For normality, let \(w\) be the 1-cell with \(w\comp e\iso\twocoim(f)\), induced in the same way. Then \(w\) is a normal 2-epimorphism, since \(w\comp e\) is one and \(e\) is a 2-epimorphism, by the dual of part~\textup{(ii)} of \prox\ref{Composites of Normal Monos}. Cancelling the 2-epimorphism \(e\) in \(f'\comp e\iso f\iso\twoimg(f)\comp w\comp e\) gives \(f'\iso\twoimg(f)\comp w\), a normal image factorisation. The dual statement follows by duality.
\end{proof}

It remains to see that the comparisons \(\ov{a}''\colon\TwoKer(f)\to\TwoKer(f')\) and \(\underline{d}''\colon\TwoCoker(h')\to\TwoCoker(h)\) are a normal 2-epimorphism and a normal 2-monomorphism, for that is what gives 2-exactness of the snake sequence at \(\TwoKer(g)\) and at \(\TwoCoker(g)\). This is a fourth application of the Pure Snake Lemma.

\begin{proposition}\label{P:KerComparison}
	Let \(e=\twocoim(a)\), let \(v\colon\TwoKer(e)\to\TwoKer(f)\) be the comparison induced by \(f\comp\twoker(e)\iso 0\), and let \(f'\) be the restriction of \(f\) along \(e\) of \lemx\ref{L:Restriction}. If \(v\) has a 2-cokernel, then the comparison \(\ov{a}''\colon\TwoKer(f)\to\TwoKer(f')\) is a normal 2-epimorphism. Dually for \(\underline{d}''\).
\end{proposition}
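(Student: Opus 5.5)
The plan is to obtain \(\ov{a}''\) from a fourth application of the Pure Snake Lemma (\lemx\ref{Pure Snake Lemma}), using \(A\) as the shared middle object. Homological self-duality is available here by \prox\ref{P:DiExactHSD}. Let \(w\colon\TwoCoim(a)\to\TwoImg(f)\) be the 1-cell from the proof of \lemx\ref{L:Restriction}, characterised by \(w\comp e\iso\twocoim(f)\); it is a normal 2-epimorphism, and \(f'\iso\twoimg(f)\comp w\).

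\textbf{The ladder.} I would form the morphism of short 2-exact sequences with identity middle component
\[
	\xymatrix@!0@=4em{
	0 \ar[r] & \TwoKer(e) \ar[d]_{v} \ar[r]^-{\twoker(e)} & A \ar@{=}[d] \ar[r]^-{e} & \TwoCoim(a) \ar[d]^{w} \ar[r] & 0 \\
	0 \ar[r] & \TwoKer(f) \ar[r]_-{\twoker(f)} & A \ar[r]_-{\twocoim(f)} & \TwoImg(f) \ar[r] & 0\text{.}}
\]
\begin{itemize}
	\item Both rows are short 2-exact by \prox\ref{kernel is kernel of its cokernel}, since \(e\) and \(\twocoim(f)\) are normal 2-epimorphisms.
	\item The left square commutes up to an invertible 2-cell by the definition of \(v\), and the right square by the definition of \(w\).
	\item No coherence needs checking, by \prox\ref{P:CoherenceFree}.
\end{itemize}
The Pure Snake Lemma then gives:
\begin{itemize}
	\item the connecting composite \(x\coloneq e\comp\twoker(f)\colon\TwoKer(f)\to\TwoCoim(a)\) is normal;
	\item \(\twoker(w)\) is a 2-image of \(x\);
	\item the comparison \(j\colon\TwoCoker(v)\to\TwoKer(w)\), with \(\twoker(w)\comp j\comp\twocoker(v)\iso x\), is an equivalence.
\end{itemize}
Here \(\twocoker(v)\) is the 2-cokernel the hypothesis supplies.

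\textbf{Transport to \(f'\).} Since \(f'\iso\twoimg(f)\comp w\) with \(\twoimg(f)\) a 2-monomorphism, \prox\ref{CoKernel of Composite} shows that \(\twoker(w)\) is a 2-kernel of \(f'\). I may therefore take \(\twoker(f')=\twoker(w)\), with \(\TwoKer(f')=\TwoKer(w)\).

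\textbf{Identifying \(\ov{a}''\).} By definition, \(\ov{a}''\) is characterised by \(\twoker(f')\comp\ov{a}''\iso e\comp\twoker(f)=x\). The 1-cell \(j\comp\twocoker(v)\) satisfies the same relation, by the Pure Snake Lemma. Since \(\twoker(f')\) is a 2-monomorphism, \remx\ref{rem2monoismonouptoiso} gives \(\ov{a}''\iso j\comp\twocoker(v)\). This is a 2-cokernel followed by an equivalence, hence a normal 2-epimorphism by \corx\ref{C:NormalTransport}. The statement for \(\underline{d}''\) is the formal dual.

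\textbf{Expected obstacle.} I expect the main difficulty to be bookkeeping rather than substance. It lies in checking that the relation characterising \(\ov{a}''\) is really \(\twoker(f')\comp\ov{a}''\iso e\comp\twoker(f)\), given that \(\ov{a}''\) is induced by the square \(f'\comp e\iso f\) from Figure~\ref{Fig Snake General}. It also requires that choosing \(\twoker(f')\) to be \(\twoker(w)\) is harmless, which holds up to equivalence by \corx\ref{corollkeruniqueuptoequiv} and \corx\ref{C:NormalTransport}.
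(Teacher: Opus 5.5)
Your proposal is correct and follows the paper's proof essentially step for step. It uses the same pure configuration through \(A\) with verticals \(v\) and \(w\), the same transport \(\TwoKer(w)\simeq\TwoKer(f')\) via \prox\ref{CoKernel of Composite}, and the same identification \(\ov{a}''\iso j\comp\twocoker(v)\) through the characterising triangle, which exhibits \(\ov{a}''\) as a 2-cokernel followed by an equivalence.
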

\begin{proof}
	The two rows
	\[
		\TwoKer(e)\rightarrowtail A\twoheadrightarrow\TwoCoim(a)
		\qquad\text{and}\qquad
		\TwoKer(f)\rightarrowtail A\twoheadrightarrow\TwoCoim(f)
	\]
	are short 2-exact and share the middle object \(A\); their verticals are \(v\) on the left and, on the right, the 1-cell \(w\) with \(w\comp e\iso\twocoim(f)\) of \lemx\ref{L:Restriction}. This is a configuration of the shape to which \lemx\ref{Pure Snake Lemma} applies, so it supplies an equivalence \(z\colon\TwoCoker(v)\to\TwoKer(w)\) with \(\twoker(w)\comp z\comp\twocoker(v)\iso e\comp\twoker(f)\). Now \(f'\iso\twoimg(f)\comp w\) by \lemx\ref{L:Restriction}, so \(\TwoKer(w)\simeq\TwoKer(f')\) by \prox\ref{CoKernel of Composite}; and the displayed triangle is exactly the one characterising \(\ov{a}''\). Hence \(\ov{a}''\iso z\comp\twocoker(v)\), a 2-cokernel followed by an equivalence, and therefore a normal 2-epimorphism.
\end{proof}

This completes the proof of \thex\ref{Snake General 2D}: the special case applies to the middle of Figure~\ref{Fig Snake General}, the two identifications transport its conclusions to \(\TwoKer(h)\) and \(\TwoCoker(f)\), and \prox\ref{P:KerComparison} supplies 2-exactness at the two remaining positions.

\begin{remark}\label{Rem General Cost}
	The reduction is 2-dimensional in exactly one place, and it is not the place the special case is: \prox\ref{prop2monoreflectsnull} is used three times, to produce \(f'\), the comparison on 2-kernels, and---dually---\(h'\). \prox\ref{P:KerComparison} uses no 2-di-exactness beyond what \prox\ref{P:DiExactHSD} needs to make the Pure Snake Lemma available, no 2-z-exactness, and no identification of a 2-image; its only existence assumption is a 2-cokernel of \(v\).
\end{remark}

\subsection{The classical Snake Lemma}\label{SS:Classical}

A locally discrete 2-category is a 1-category, and under the discretisation of Section~\ref{SS:Discretisation} every notion used above becomes its classical counterpart: a bizero object is a zero object, a strong bizero object is again a zero object, 2-kernels and 2-cokernels are kernels and cokernels, 2-monomorphisms and 2-epimorphisms are monomorphisms and epimorphisms, equivalences are isomorphisms, and 2-di-exactness is di-exactness. \thex\ref{Snake General 2D} therefore specialises to the classical statement.

\begin{corollary}[The Snake Lemma, 1-categorical version]\label{Snake General}
	In a di-exact category, consider a commutative diagram
	\begin{equation*}
		\xymatrix@!0@=4em{
		& A \ar[d]_{f} \ar[r]^-{a} &
		B \ar[d]_{g} \ar[r]^-{b} &
		C \ar[d]^{h} \ar[r] &
		0 \\
		0 \ar[r] &
		X \ar[r]_-{c} &
		Y \ar[r]_-{d} &
		Z}
	\end{equation*}
	in which \(f\), \(g\) and \(h\) are normal. If the horizontal rows are exact, then there is a morphism \(\partial\colon\Ker(h)\to\Coker(f)\) making the sequence
	\begin{equation*}
		\xymatrix@R=5ex@C=2em{
		\Ker({f}) \ar[r]^-{\ov{a}} &
		\Ker({g}) \ar[r]^-{\ov{b}} &
		\Ker({h}) \ar `d[l] `[dll]|-{\partial} [dll] &\\
		\Coker({f}) \ar[r]_-{\underline{c}} &
		\Coker({g}) \ar[r]_-{\underline{d}} &
		\Coker({h})
		}
	\end{equation*}
	exact. If moreover \(a=\ker(b)\), then \(\ov{a}=\ker(\ov{b})\); dually, if \(d=\coker(c)\), then \(\underline{d}=\coker(\underline{c})\).\noproof
\end{corollary}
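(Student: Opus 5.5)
The plan is to obtain \corx\ref{Snake General} as the discretisation of \thex\ref{Snake General 2D}: regard the di-exact category as a locally discrete 2-category and check that each hypothesis and conclusion of the theorem translates into the corresponding classical one. Only this bookkeeping is needed, since no new argument is required.

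First I check that a di-exact category \(\onecat{C}\), viewed as a locally discrete 2-category, is 2-di-exact. The zero object is a strong bizero object, because all 2-cells are identities and any two parallel null morphisms are equal to the zero morphism. An invertible 2-cell \(x\iso 0\) is then an equality \(x=0\). Consequently, in the argument of \exax\ref{Ex DiExact}, 2-kernels and 2-cokernels are exactly kernels and cokernels, 2-monomorphisms and 2-epimorphisms are monomorphisms and epimorphisms, and equivalences are isomorphisms.

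With this dictionary, (DI1) becomes the existence of kernels and cokernels, and (DI2) becomes the di-exactness axiom: every antinormal morphism is normal, i.e.\ a cokernel after a kernel factors as a kernel after a cokernel. Normal morphisms discretise to normal morphisms, and 2-exactness of a pair discretises to classical exactness (\(\operatorname{im} f=\ker g\)). This uses \prox\ref{Exactness Self-Dual}(i) together with the uniqueness of normal image factorisations from \prox\ref{Image Factorisation of Normal Map is Unique}.

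Next, the commutative ladder in \(\onecat{C}\) gives the data of \thex\ref{Snake General 2D}, with \(\phi\) and \(\psi\) identity 2-cells. The theorem then provides \(\partial\colon\TwoKer(h)=\Ker(h)\to\TwoCoker(f)=\Coker(f)\) and 2-exactness of \eqref{Snake Sequence}. This is exactness of the classical six-term sequence, because the induced maps \(\ov{a},\ov{b},\underline{c},\underline{d}\) are determined up to invertible 2-cells, which are now equalities. The final clause transfers directly, since \(a=\ker(b)\) means \(a=\twoker(b)\) and the conclusion \(\ov{a}=\twoker(\ov{b})\) reads \(\ov{a}=\ker(\ov{b})\).

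The only delicate point is terminological: the definition of a di-exact category used in \cite{PVdL3} must agree with (DI1) and (DI2) under discretisation. If that definition is phrased instead as "the dinversion of every antinormal pair is normal" or something similar, I would note that it is still the condition that antinormal composites are normal, which is precisely the discretised (DI2).
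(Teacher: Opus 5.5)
Your proposal is correct and follows the paper's own route. The corollary carries no proof because, as the paragraph opening Section~\ref{SS:Classical} explains, every notion in \thex\ref{Snake General 2D} discretises to its classical counterpart (strong bizero objects to zero objects, 2-kernels and 2-cokernels to kernels and cokernels, equivalences to isomorphisms, 2-di-exactness to di-exactness), so the theorem specialises directly; your terminological caveat is settled the same way the paper settles it, since (DI2) read in a locally discrete 2-category is literally the requirement that every antinormal composite be normal.
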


\begin{remark}\label{Rem Truncation}
	The corollary rests on the discretisation of Section~\ref{SS:Discretisation}, which reads a 1-category as a locally discrete 2-category. That reading is a section of a quotient running the other way: the \dfn{1-truncation} \(\trunc\L\) of a 2-category \(\L\) is the 1-category with the same objects in which a morphism \(A\to B\) is an isomorphism class \([f]\) of 1-cells \(f\colon A\to B\), all 2-cells being discarded. When \(\L\) has a bizero object, any two null morphisms \(A\to B\) are isomorphic, and their common class makes \(\trunc\L\) pointed. It turns out that the truncation preserves and reflects every assertion of \thex\ref{Snake General 2D}, so that the theorem---though not the construction that proves it, and not the naturality of Section~\ref{S:Naturality}---may alternatively be deduced from its 1-categorical counterpart. This remark records the argument, and where it stops. The 1-truncation is studied in its own right in \cite{CavJanMesRei26}, and it is there, not here, that the general account of the exactness properties it preserves and reflects belongs; what follows is the special case that the Snake Lemma needs.

	The truncation sends 2-kernels to kernels. This is not a general fact about bilimits: a bipullback in the sense of \defx\ref{Def Bipullback} yields, in general, only a \emph{weak} pullback in \(\trunc\L\), because condition~(1) there produces a factorisation for each choice of filling 2-cell \(\psi\), and factorisations for distinct choices need not be isomorphic. What saves the 2-kernel is its condition~(2). Given \([z]\) with \([f]\comp[z]=0\), condition~(1) of \defx\ref{Def 2-kernel} provides a factorisation \([z]=[\twoker(f)]\comp[u]\), and the class \([u]\) is unique, because \(\twoker(f)\comp(-)\) is fully faithful by \prox\ref{Prop Kernel Is Mono} and fully faithful functors reflect isomorphisms. Conversely, the truncation \emph{reflects} kernels wherever a 2-kernel exists: if \([k']\) is a kernel of \([f]\), then \([k']\) and \([\twoker(f)]\) differ by an isomorphism of \(\trunc\L\); the isomorphisms of \(\trunc\L\) are precisely the classes of the equivalences of \(\L\); so \(k'\) is, up to an invertible 2-cell, a 2-kernel of \(f\) composed with an equivalence, which \corx\ref{C:NormalTransport} makes a 2-kernel of \(f\). Under condition \textup{(DI1)} this argument and its dual identify each notion of this paper that is a property of 1-cells with its discretisation interpreted in \(\trunc\L\): normal 2-monomorphisms and normal 2-epimorphisms, normal and antinormal morphisms, dinversion, short 2-exact sequences and, through condition~(i) of \prox\ref{Exactness Self-Dual}, exactness of a pair of normal morphisms. Thus \(\trunc\L\) is z-exact, and each of the conditions \textup{(DI2)}, \DPN, \NEC\ and homological self-duality holds in \(\L\) if and only if its discretisation holds in \(\trunc\L\); in particular, a 2-category satisfying \textup{(DI1)} is 2-di-exact precisely when its 1-truncation is di-exact. Here lies the abstract reason why each of these axioms, evaluated in a model, will turn into a condition with no 2-dimensional content left in it: Serre saturation in \(\AbCat\) (\prox\ref{P:DIabcat}), Dedekind's transposition principle on complete lattices (\prox\ref{P:SupModular}), Mackey's symmetry on Hilbert lattices (\prox\ref{P:HilbertSymmetric}).

	The Snake Lemma therefore transports. For 2-di-exact \(\L\), the hypotheses of \thex\ref{Snake General 2D} truncate to those of \corx\ref{Snake General} in the di-exact category \(\trunc\L\): the invertible 2-cells \(\phi\) and \(\psi\) become commuting squares, 2-exact rows exact rows, normal verticals normal verticals. There the conclusion holds by the 1-categorical Snake Lemma, \cite[Theorem~2.2.2 and Corollary~2.2.4]{PVdL3}, and it reflects: the objects of the six-term exact sequence are \(\TwoKer(f)\), \dots, \(\TwoCoker(h)\), its induced morphisms are the classes of \(\ov{a}\), \(\ov{b}\), \(\underline{c}\) and \(\underline{d}\), all of its morphisms are normal, any representative of the connecting class serves as \(\partial\), and 2-exactness at the four positions of \thex\ref{Snake General 2D} follows by reflecting exactness at each of them. The Pure Snake Lemma corresponds in the same way to \cite[Lemma~2.2.1]{PVdL3}, proved there, as \lemx\ref{Pure Snake Lemma} is here, from homological self-duality alone. What the transport does not yield is the 2-cells. Its \(\partial\) is a bare isomorphism class, where the construction of Sections~\ref{SS:Construction} and~\ref{SS:Connecting} produces a specific 1-cell, well defined up to an invertible 2-cell by \remx\ref{Rem Partial Choices}; the 2-naturality established in Section~\ref{S:Naturality} is a statement about the 2-cells attached to that specific \(\partial\), of which \(\trunc\L\) retains nothing, the functoriality of the truncated sequence being its shadow rather than a substitute. Nor does the truncation reach the hypotheses: reflection is not creation, a kernel in \(\trunc\L\) does not produce a 2-kernel in \(\L\), so condition \textup{(DI1)}---in a model, \prox\ref{P:AbCatKernel} or \prox\ref{P:SupKernels}---must be established at the level of the hom-categories, which the truncation does not see.
\end{remark}

\section{Naturality}\label{S:Naturality}

The connecting 1-cell \(\partial\) of \thex\ref{Snake General 2D} is built from three applications of the Pure Snake Lemma, so its behaviour under a morphism of ladders is governed by the behaviour of the comparison of \lemx\ref{Pure Snake Lemma}. That comparison is named there, and unique up to a unique invertible 2-cell, which is precisely what allows one to ask whether it is 2-natural. This section supplies the notion of morphism it is 2-natural in, and proves that it is.

\subsection{Pure configurations and their morphisms}\label{SS:PureConfigurations}

\begin{definition}[pure configuration]\label{Def:PureConfiguration}
	A \dfn{pure configuration} is a morphism of short 2-exact sequences whose middle component is an identity: a pair of short 2-exact sequences
	\[
		A\aar{a}Y\aar{b}C
		\qquad\text{and}\qquad
		X\aar{c}Y\aar{d}Z
	\]
	through a common object \(Y\), together with 1-cells \(f\colon A\to X\) and \(h\colon C\to Z\) and invertible 2-cells
	\[
		\theta_f\colon a\iso c\comp f
		\qquad\text{and}\qquad
		\theta_h\colon h\comp b\iso d\text{.}
	\]
	Its \dfn{dinversion} is \(b\comp c\colon X\to C\).
\end{definition}

This is the shape to which \lemx\ref{Pure Snake Lemma} applies. We write \(\Pi\) for a pure configuration and \(j_\Pi\colon\TwoCoker(f)\to\TwoKer(h)\) for its comparison, characterised by \(\twoker(h)\comp j_\Pi\comp\twocoker(f)\iso b\comp c\).

\begin{definition}[morphism of pure configurations]\label{Def:MorphismPure}
	Let \(\Pi\) and \(\Pi'\) be pure configurations, with data written as above and primed. A \dfn{morphism of pure configurations} \(\pi\colon\Pi\to\Pi'\) consists of a morphism of short 2-exact sequences \((\pi_A,\pi_Y,\pi_C)\) from the top row of \(\Pi\) to the top row of \(\Pi'\) and a morphism of short 2-exact sequences \((\pi_X,\pi_Y,\pi_Z)\) from the bottom row of \(\Pi\) to the bottom row of \(\Pi'\), \emph{with the same middle component} \(\pi_Y\). We write
	\[
		\pi_a\colon\pi_Y\comp a\iso a'\comp\pi_A\text{,}\quad
		\pi_b\colon\pi_C\comp b\iso b'\comp\pi_Y\text{,}\quad
		\pi_c\colon\pi_Y\comp c\iso c'\comp\pi_X\text{,}\quad
		\pi_d\colon\pi_Z\comp d\iso d'\comp\pi_Y
	\]
	for the four filling 2-cells.
\end{definition}

The definition asks for no compatibility between the two morphisms of short 2-exact sequences beyond sharing \(\pi_Y\), and it imposes no comparison 2-cells on the verticals \(f\) and \(h\). None is needed: they come for free.

\begin{lemma}\label{L:InducedVerticals}
	A morphism of pure configurations \(\pi\colon\Pi\to\Pi'\) induces unique invertible 2-cells
	\[
		\pi_f\colon\pi_X\comp f\iso f'\comp\pi_A
		\qquad\text{and}\qquad
		\pi_h\colon\pi_Z\comp h\iso h'\comp\pi_C
	\]
	compatible with \(\theta_f\), \(\theta_h\) and the four filling 2-cells, in the sense that
	\[
		c'\star\pi_f=(\pi_c\star f)^{-1}\cdot(\pi_Y\star\theta_f)^{-1}\cdot\pi_a\cdot(\theta'_f\star\pi_A)
		\qquad\text{and dually.}
	\]
\end{lemma}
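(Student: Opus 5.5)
The plan is to obtain \(\pi_f\) by lifting a pasted invertible 2-cell along the 2-monomorphism \(c'\), and \(\pi_h\) dually by lifting along the 2-epimorphism \(b\). The displayed formula is exactly the lifting equation.

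For \(\pi_f\), consider the composite
\[
	(\pi_c\star f)^{-1}\cdot(\pi_Y\star\theta_f)^{-1}\cdot\pi_a\cdot(\theta'_f\star\pi_A)\colon c'\comp f'\comp\pi_A\aR{}c'\comp\pi_X\comp f\text{,}
\]
read as a vertical composite of invertible 2-cells
\[
	c'\comp f'\comp\pi_A\aR{}a'\comp\pi_A\aR{}\pi_Y\comp a\aR{}\pi_Y\comp c\comp f\aR{}c'\comp\pi_X\comp f\text{.}
\]
Its source and target are correct only after the arrow directions are matched; I will fix them so that the composite runs \(c'\comp f'\comp\pi_A\aR{}c'\comp\pi_X\comp f\), and take its inverse if needed. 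The bottom row of \(\Pi'\) is short 2-exact, so \(c'\) is a 2-kernel. By \prox\ref{Prop Kernel Is Mono}, \(c'\) is therefore a 2-monomorphism, and whiskering with \(c'\) is a bijection on 2-cells between \(f'\comp\pi_A\) and \(\pi_X\comp f\). Hence there is a unique 2-cell \(\pi_f\) with \(c'\star\pi_f\) equal to this composite. Because fully faithful functors reflect invertibility, \(\pi_f\) is invertible: its inverse is the lift of the inverse composite. This gives existence, uniqueness subject to the compatibility, and invertibility in one step.

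Dually, \(\pi_h\) is obtained from the top row of \(\Pi\). There \(b\) is a 2-cokernel, hence a 2-epimorphism, so precomposition with \(b\) is fully faithful on hom-categories. The 2-cell to lift is
\[
	\pi_Z\comp h\comp b\aR{}\pi_Z\comp d\aR{}d'\comp\pi_Y\aR{}h'\comp b'\comp\pi_Y\aR{}h'\comp\pi_C\comp b\text{,}
\]
built from \(\pi_Z\star\theta_h\), \(\pi_d\), \((\theta'_h)^{-1}\star\pi_Y\) and \(h'\star\pi_b^{-1}\). Its unique preimage under \((-)\star b\) is \(\pi_h\), and the ``dually'' clause is the corresponding equation.

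\lemx\ref{L:UniqueNull} is not needed here, since nothing lands in a null morphism. The only real obstacle is bookkeeping: orienting each of \(\theta_f\), \(\pi_a\), \(\pi_c\) correctly so that the pasted 2-cell has the stated source and target, and checking that the formula in the statement matches that orientation. If the statement's convention reads \(\pi_f\) in the opposite direction, the lift is taken of the inverse composite. The argument is otherwise insensitive to this.
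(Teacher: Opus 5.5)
Your proposal is correct and is essentially the paper's own argument: paste \(\pi_c\), \(\theta_f\), \(\pi_a\), \(\theta'_f\) into an invertible 2-cell between \(c'\comp\pi_X\comp f\) and \(c'\comp f'\comp\pi_A\) and reflect it along the 2-monomorphism \(c'\); dually, coreflect the pasting of \(\pi_b\), \(\theta_h\), \(\pi_d\), \(\theta'_h\) along the 2-epimorphism \(b\). The only slip is orientation. Your displayed chain runs \(c'\comp f'\comp\pi_A\aR{}c'\comp\pi_X\comp f\), which is \(c'\star\pi_f^{-1}\), so your ``take its inverse if needed'' clause is the one that applies. The statement's formula typechecks when read left to right, from \(c'\comp\pi_X\comp f\) to \(c'\comp f'\comp\pi_A\). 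Your chain for \(\pi_h\) is already correctly oriented.
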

\begin{proof}
	Pasting the available 2-cells gives
	\[
		c'\comp\pi_X\comp f\iso\pi_Y\comp c\comp f\iso\pi_Y\comp a\iso a'\comp\pi_A\iso c'\comp f'\comp\pi_A\text{,}
	\]
	using \(\pi_c\), \(\theta_f\), \(\pi_a\) and \(\theta'_f\) in that order. Since \(c'\) is a 2-monomorphism, being a 2-kernel, this invertible 2-cell is reflected, uniquely, to an invertible 2-cell \(\pi_f\colon\pi_X\comp f\iso f'\comp\pi_A\), by \remx\ref{rem2monoismonouptoiso}; the displayed equation is the statement that \(\pi_f\) is that reflection, and it determines \(\pi_f\).

	Dually, pasting \(\pi_b\), \(\theta_h\), \(\pi_d\) and \(\theta'_h\) gives
	\[
		h'\comp\pi_C\comp b\iso h'\comp b'\comp\pi_Y\iso d'\comp\pi_Y\iso\pi_Z\comp d\iso\pi_Z\comp h\comp b\text{,}
	\]
	and \(b\) is a 2-epimorphism, being a 2-cokernel, so this is coreflected uniquely to \(\pi_h\).
\end{proof}

Consequently \(\pi\) induces 1-cells on the 2-cokernel of \(f\) and the 2-kernel of \(h\): from \(\pi_f\) we obtain
\[
	\underline{\pi}_f\colon\TwoCoker(f)\to\TwoCoker(f')
	\qquad\text{with}\qquad
	\underline{\pi}_f\comp\twocoker(f)\iso\twocoker(f')\comp\pi_X\text{,}
\]
and from \(\pi_h\) we obtain \(\ov{\pi}_h\colon\TwoKer(h)\to\TwoKer(h')\) with \(\twoker(h')\comp\ov{\pi}_h\iso\pi_C\comp\twoker(h)\).

\subsection{2-naturality of the comparison}\label{SS:NaturalComparison}

\begin{theorem}\label{T:NaturalComparison}
	Let \(\pi\colon\Pi\to\Pi'\) be a morphism of pure configurations in a homologically self-dual 2-category. Then there is a unique invertible 2-cell
	\[
		\nu_\pi\colon j_{\Pi'}\comp\underline{\pi}_f\iso\ov{\pi}_h\comp j_\Pi
	\]
	compatible with the characterising 2-cells of \(j_\Pi\) and \(j_{\Pi'}\), in the sense that whiskering \(\nu_\pi\) by \(\twocoker(f)\) on the right and by \(\twoker(h')\) on the left produces the identity 2-cell of \(\pi_C\comp b\comp c\) under the two pastings displayed in the proof.
\end{theorem}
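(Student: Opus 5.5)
The plan is to reduce the existence and uniqueness of \(\nu_\pi\) to the universal properties of \(\twocoker(f)\) and \(\twoker(h')\), that is, to the full faithfulness of \(\twoker(h')\comp(-)\) and of \((-)\comp\twocoker(f)\). Both sides of the desired 2-cell are 1-cells \(\TwoCoker(f)\to\TwoKer(h')\). After whiskering by \(\twocoker(f)\) on the right and by \(\twoker(h')\) on the left, each becomes a 1-cell \(X\to C'\), and I first build an explicit invertible 2-cell between these two whiskered composites, each isomorphic to \(\pi_C\comp b\comp c\).

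For the left-hand side I paste four invertible 2-cells:
\[
	\twoker(h')\comp j_{\Pi'}\comp\underline{\pi}_f\comp\twocoker(f)\iso\twoker(h')\comp j_{\Pi'}\comp\twocoker(f')\comp\pi_X\iso b'\comp c'\comp\pi_X\iso b'\comp\pi_Y\comp c\iso\pi_C\comp b\comp c.
\]
These use, in order, the defining 2-cell of \(\underline{\pi}_f\), the characterising 2-cell of \(j_{\Pi'}\), \(\pi_c^{-1}\) and \(\pi_b^{-1}\). For the right-hand side I paste the defining 2-cell of \(\ov{\pi}_h\) with the characterising 2-cell of \(j_\Pi\):
\[
	\twoker(h')\comp\ov{\pi}_h\comp j_\Pi\comp\twocoker(f)\iso\pi_C\comp\twoker(h)\comp j_\Pi\comp\twocoker(f)\iso\pi_C\comp b\comp c.
\]
Composing the first pasting with the inverse of the second gives an invertible 2-cell \(\Lambda\) between the two whiskered composites. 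The compatibility clause in the statement then says exactly that \(\twoker(h')\star\nu_\pi\star\twocoker(f)=\Lambda\).

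Next I lift \(\Lambda\) in two stages. First, \(\twoker(h')\) is a 2-monomorphism by \prox\ref{Prop Kernel Is Mono}, so whiskering by it is bijective on 2-cells between any two 1-cells into \(\TwoKer(h')\). This yields a unique 2-cell \(\Lambda'\colon j_{\Pi'}\comp\underline{\pi}_f\comp\twocoker(f)\aR{}\ov{\pi}_h\comp j_\Pi\comp\twocoker(f)\) with \(\twoker(h')\star\Lambda'=\Lambda\). Second, \(\twocoker(f)\) is a 2-epimorphism, so \(\Lambda'\) lifts uniquely to \(\nu_\pi\) with \(\nu_\pi\star\twocoker(f)=\Lambda'\). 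Invertibility follows from the same uniqueness: lifting \(\Lambda^{-1}\) gives a 2-cell that is inverse to \(\nu_\pi\), because fully faithful functors preserve and reflect identities and composites. Uniqueness of \(\nu_\pi\) among 2-cells satisfying the compatibility is this same double faithfulness.

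Homological self-duality is used only to have \(j_\Pi\) and \(j_{\Pi'}\) available as named comparisons, via \lemx\ref{Pure Snake Lemma}; the argument never needs them to be equivalences. The hard part is bookkeeping rather than mathematics: the pastings must be written so that the ``identity 2-cell of \(\pi_C\comp b\comp c\)'' formulation of the statement matches \(\Lambda\) precisely. I would state that formulation as the equation \(\twoker(h')\star\nu_\pi\star\twocoker(f)=\Lambda\).

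One point needs checking. The 2-cell \(\pi_h\) of \lemx\ref{L:InducedVerticals} enters through \(\ov{\pi}_h\), and \(\pi_f\) enters through \(\underline{\pi}_f\), but neither is needed to build \(\Lambda\). I would still verify that the choice of the defining 2-cells of \(\underline{\pi}_f\) and \(\ov{\pi}_h\) does not matter. Those 2-cells are themselves determined uniquely, by the universal properties they are lifted through.
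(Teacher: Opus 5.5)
This is correct and is essentially the paper's own argument: the same two pastings through the defining 2-cells of \(\underline{\pi}_f\) and \(\ov{\pi}_h\) and the characterising 2-cells of \(j_{\Pi'}\) and \(j_\Pi\), followed by lifting along the full faithfulness of whiskering with \(\twoker(h')\) and with \(\twocoker(f)\), which you simply carry out in two explicit stages. One small correction to your closing remark: for a fixed 1-cell \(\underline{\pi}_f\), its defining 2-cell is unique only up to an automorphism of \(\underline{\pi}_f\), not outright, and likewise for \(\ov{\pi}_h\); this is harmless, because your argument produces a unique \(\nu_\pi\) for whichever choice is made.
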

\begin{proof}
	Whisker the two 1-cells \(j_{\Pi'}\comp\underline{\pi}_f\) and \(\ov{\pi}_h\comp j_\Pi\), both from \(\TwoCoker(f)\) to \(\TwoKer(h')\), by the 2-epimorphism \(\twocoker(f)\) on the right and the 2-monomorphism \(\twoker(h')\) on the left. Each of the two resulting 1-cells \(X\to C'\) is canonically isomorphic to \(\pi_C\comp b\comp c\). Indeed, on the one hand
	\begin{align*}
		\twoker(h')\comp j_{\Pi'}\comp\underline{\pi}_f\comp\twocoker(f)
		 & \iso\twoker(h')\comp j_{\Pi'}\comp\twocoker(f')\comp\pi_X \\
		 & \iso b'\comp c'\comp\pi_X
		\iso b'\comp\pi_Y\comp c
		\iso\pi_C\comp b\comp c\text{,}
	\end{align*}
	using in turn the defining 2-cell of \(\underline{\pi}_f\), the characterisation of \(j_{\Pi'}\), the 2-cell \(\pi_c\) and the 2-cell \(\pi_b\); and on the other hand
	\[
		\twoker(h')\comp\ov{\pi}_h\comp j_\Pi\comp\twocoker(f)
		\iso\pi_C\comp\twoker(h)\comp j_\Pi\comp\twocoker(f)
		\iso\pi_C\comp b\comp c\text{,}
	\]
	using the defining 2-cell of \(\ov{\pi}_h\) and the characterisation of \(j_\Pi\).

	Composing the first with the inverse of the second gives an invertible 2-cell between the two whiskered 1-cells. Now whiskering with the 2-monomorphism \(\twoker(h')\) on the left and with the 2-epimorphism \(\twocoker(f)\) on the right is fully faithful in each variable, by \defx\ref{Def Mono}, so the 2-cell just constructed is the image of a unique 2-cell \(\nu_\pi\colon j_{\Pi'}\comp\underline{\pi}_f\aR{}\ov{\pi}_h\comp j_\Pi\); and \(\nu_\pi\) is invertible because its image is. Uniqueness with the stated compatibility is the injectivity half of the same full faithfulness.
\end{proof}

\begin{remark}\label{Rem Natural Comparison Cost}
	The proof uses homological self-duality only to know that \(j_\Pi\) and \(j_{\Pi'}\) exist; that they are equivalences plays no part. What it does use, essentially, is that the comparison is \emph{characterised} by its triangle, which is the content of the uniqueness clause of \lemx\ref{Pure Snake Lemma}.
\end{remark}

\subsection{2-naturality of the snake sequence}\label{SS:NaturalSnake}

\begin{definition}[morphism of ladders]\label{Def:MorphismLadder}
	A \dfn{morphism of ladders} \(p\) between two diagrams of the shape \eqref{Snake Ladder}, as in Figure~\ref{Figure Morphism of diagrams}, consists of 1-cells \(p_A\), \(p_B\), \(p_C\), \(p_X\), \(p_Y\), \(p_Z\) together with invertible 2-cells filling the four horizontal squares and the three vertical ones, subject to the compatibility of \lemx\ref{L:InducedVerticals} on each of the two ends.
\end{definition}

\begin{figure}
	\begin{equation*}
		\xymatrix@!0@=3em{
		&& A \ar@{.>}[rd]|-{{p}_A} \ar[dd] \ar[rr] &&
		B \ar@{.>}[rd]|-{{p}_B} \ar[dd]|(.5){\hole} \ar[rr] &&
		C \ar@{.>}[rd]|-{{p}_C} \ar[dd]|(.5){\hole} \ar[rr] &&
		0 \\
		&&& A' \ar[dd] \ar[rr] &&
		B' \ar[dd] \ar[rr] &&
		C' \ar[dd] \ar[rr] &&
		0 \\
		0 \ar[rr] &&
		X \ar@{.>}[rd]|-{{p}_X} \ar[rr]|(.5){\hole} &&
		Y \ar@{.>}[rd]|-{{p}_Y} \ar[rr]|(.5){\hole} &&
		Z\ar@{.>}[rd]|-{{p}_Z}\\
		&0 \ar[rr] &&
		X' \ar[rr] &&
		Y' \ar[rr] &&
		Z'}
	\end{equation*}
	\caption{A morphism of ladders \({p}\)}\label{Figure Morphism of diagrams}
\end{figure}

\begin{theorem}\label{T:NaturalSnake}
	Let \(p\) be a morphism of ladders between two diagrams satisfying the hypotheses of \thex\ref{Snake General 2D}. Then \(p\) induces 1-cells
	\[
		\ov{p}_A,\ \ov{p}_B,\ \ov{p}_C
		\qquad\text{and}\qquad
		\underline{p}_X,\ \underline{p}_Y,\ \underline{p}_Z
	\]
	on the 2-kernels of \(f\), \(g\), \(h\) and the 2-cokernels of \(f\), \(g\), \(h\) respectively, and the resulting ladder
	\begin{equation*}
		\xymatrix@R=5ex@C=0.9em{
		\TwoKer({f}) \ar@{.>}[d]_-{\ov{p}_A} \ar[r]^-{\ov{a}} &
		\TwoKer({g}) \ar@{.>}[d]_-{\ov{p}_B} \ar[r]^-{\ov{b}} &
		\TwoKer({h}) \ar@{.>}[d]_-{\ov{p}_C} \ar[r]^-{\partial} &
		\TwoCoker({f})\ar@{.>}[d]^-{\underline{p}_X} \ar[r]^-{\underline{c}} &
		\TwoCoker({g}) \ar@{.>}[d]^-{\underline{p}_Y} \ar[r]^-{\underline{d}} &
		\TwoCoker({h}) \ar@{.>}[d]^-{\underline{p}_Z}\\
		\TwoKer({f}') \ar[r]_-{\ov{a}'} &
		\TwoKer({g}') \ar[r]_-{\ov{b}'} &
		\TwoKer({h}') \ar[r]_-{\partial'} &
		\TwoCoker({f}') \ar[r]_-{\underline{c}'} &
		\TwoCoker({g}') \ar[r]_-{\underline{d}'} &
		\TwoCoker({h}')
		}
	\end{equation*}
	commutes up to invertible 2-cells. In particular there is an invertible 2-cell
	\[
		\underline{p}_X\comp\partial\iso\partial'\comp\ov{p}_C\text{.}
	\]
\end{theorem}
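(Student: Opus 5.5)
The plan is to build the six vertical 1-cells, check the four outer squares directly, and reduce the connecting square to three applications of \thex\ref{T:NaturalComparison}.

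\emph{The verticals.} The compatibility clause of \defx\ref{Def:MorphismLadder}, together with the vertical filling 2-cells \(p_Y\comp g\iso g'\comp p_B\) and their analogues, lets \(p_B\) restrict along \(\twoker(g)\). Indeed \(g'\comp p_B\comp\twoker(g)\iso p_Y\comp g\comp\twoker(g)\iso 0\), so the universal property of \(\twoker(g')\) yields \(\ov{p}_B\) with \(\twoker(g')\comp\ov{p}_B\iso p_B\comp\twoker(g)\). The cells \(\ov{p}_A\) and \(\ov{p}_C\) are obtained in the same way, and \(\underline{p}_X\), \(\underline{p}_Y\), \(\underline{p}_Z\) dually.

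\emph{The four outer squares.} Each square not involving \(\partial\) follows from a cancellation. For \(\ov{a}\), whisker both candidate composites by the 2-monomorphism \(\twoker(g')\). Both become isomorphic to \(a'\comp p_A\comp\twoker(f)\), using the defining 2-cells of \(\ov{a}\), \(\ov{a}'\), \(\ov{p}_A\), \(\ov{p}_B\) and the horizontal filling 2-cell. By \remx\ref{rem2monoismonouptoiso}, \(\ov{p}_B\comp\ov{a}\iso\ov{a}'\comp\ov{p}_A\). The squares for \(\ov{b}\), \(\underline{c}\) and \(\underline{d}\) go the same way, the last two dually by cancelling the 2-epimorphisms \(\twocoker(g)\) and \(\twocoker(f)\).

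\emph{The connecting square.} Here I use the construction of Sections~\ref{SS:Construction} and~\ref{SS:Connecting}. First treat the special case, where both rows are short 2-exact. The morphism of ladders induces 1-cells between the auxiliary objects: \(I\to I'\), \(Q\to Q'\), \(K\to K'\), \(J\to J'\), \(\TwoImg(f)\to\TwoImg(f')\), and so on. Each is obtained from a normal image factorisation or a 2-cokernel by the same restrict-and-cancel argument as above, with uniqueness up to an invertible 2-cell coming from \prox\ref{Image Factorisation of Normal Map is Unique}.

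These data assemble into three morphisms of pure configurations, one for each application of the Pure Snake Lemma used to build \(z_1\), \(z_2\) and \(z_3\). In each case the middle components are a single shared 1-cell: \(p_C\) for \(z_1\), the induced \(\TwoImg(g)\to\TwoImg(g')\) for \(z_2\), and \(p_Y\) for \(z_3\). The definition of a morphism of pure configurations asks only for two morphisms of short 2-exact sequences with the same middle, and \prox\ref{P:CoherenceFree} and \lemx\ref{L:InducedVerticals} supply all remaining coherence for free.

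\thex\ref{T:NaturalComparison} then gives invertible 2-cells expressing naturality of \(z_1\), \(z_2\) and \(z_3\). An equivalence natural up to an invertible 2-cell has a quasi-inverse natural up to an invertible 2-cell, obtained by whiskering with units and counits, so \(z=z_3\comp z_2^{-1}\comp z_1\) is natural. Naturality of \(\twocoker(\ov{b})\) and of \(\twoker(\underline{c})\) with respect to the induced cells comes from their defining 2-cells. Pasting these 2-cells along \eqref{Def Partial} gives \(\underline{p}_X\comp\partial\iso\partial'\comp\ov{p}_C\).

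For the general case, the morphism of ladders induces a morphism between the reduced ladders of Figure~\ref{Fig Snake General}. The restrictions \(f'\), \(h'\) and the factorisations of \(a\) and \(d\) are transported by the same uniqueness arguments, and the result is transferred to \(\TwoKer(h)\) and \(\TwoCoker(f)\) along the equivalences of \prox\ref{CoKernel of Composite}.

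\emph{Expected obstacle.} The main difficulty is verifying that the induced maps on the auxiliary objects really do form morphisms of pure configurations. In particular, the verticals of the third configuration, \(s\) and \(t\), must be matched with their primed counterparts, which requires identifying the induced \(\ell\), \(\pi\) and their primes compatibly. I would handle this by defining every induced 1-cell through a universal property whose target is a 2-monomorphism or whose source is a 2-epimorphism, so that every needed square reduces, via \remx\ref{rem2monoismonouptoiso}, to an isomorphism after whiskering.
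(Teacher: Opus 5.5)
Your proposal follows the paper's proof. The four outer squares are handled by whiskering with a primed 2-kernel or an unprimed 2-cokernel and cancelling. The connecting square is handled by three morphisms of pure configurations fed into \thex\ref{T:NaturalComparison}, with the 2-cell for \(z_2\) inverted and the naturality of \(\twocoker(\ov{b})\) and \(\twoker(\underline{c})\) pasted on. Your extra paragraph on the general case, via Figure~\ref{Fig Snake General}, covers a reduction that the paper's proof leaves implicit.

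There is one slip to fix. The configuration producing \(z_3\) is the mirror of the first one, with shared object \(X\), so its middle component is \(p_X\), not \(p_Y\).

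The pasting of the three squares also relies on consecutive configurations inducing the same 1-cells on \(\TwoKer(t)\) and on \(\TwoCoker(s)\). The paper states this explicitly. In your setup it holds because each auxiliary 1-cell, such as \(Q\to Q'\), is built once, and the induced cells are determined by it up to an invertible 2-cell; you should say so.
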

\begin{proof}
	The four squares not involving \(\partial\) are immediate. For the first, both \(\ov{p}_B\comp\ov{a}\) and \(\ov{a}'\comp\ov{p}_A\) become the same 1-cell after composition with the 2-monomorphism \(\twoker(g')\): the defining 2-cells of the four induced 1-cells and the 2-cell filling the square \(p_B\comp a\iso a'\comp p_A\) paste to
	\begin{align*}
		\twoker(g')\comp\ov{p}_B\comp\ov{a}
		 & \iso p_B\comp\twoker(g)\comp\ov{a}
		\iso p_B\comp a\comp\twoker(f)        \\
		 & \iso a'\comp p_A\comp\twoker(f)
		\iso a'\comp\twoker(f')\comp\ov{p}_A
		\iso\twoker(g')\comp\ov{a}'\comp\ov{p}_A\text{,}
	\end{align*}
	and \remx\ref{rem2monoismonouptoiso} reflects this to an invertible 2-cell \(\ov{p}_B\comp\ov{a}\iso\ov{a}'\comp\ov{p}_A\). The second square is the same argument with \(b\) in place of \(a\), and the last two are dual.

	For the square involving \(\partial\), recall from \eqref{Def Partial} that \(\partial=\twoker(\underline{c})\comp z\comp\twocoker(\ov{b})\), with \(z=z_3\comp z_2^{-1}\comp z_1\) the composite of the comparisons of the three pure configurations of Section~\ref{SS:Construction}. The morphism \(p\) induces a morphism of pure configurations between each of those three and its primed counterpart. For the first, the four filling 2-cells are: on the top row, the 1-cell \(I\to I'\) obtained from the uniqueness of the normal image factorisation of \(b\comp\twoker(g)\), by \prox\ref{Image Factorisation of Normal Map is Unique}, together with the induced 1-cell \(Q\to Q'\) on 2-cokernels; on the bottom row, \(\ov{p}_C\) and the induced 1-cell \(\TwoImg(h)\to\TwoImg(h')\); the middle component is \(p_C\) in both. The other two configurations are handled in the same way, the third using in addition the induced 1-cells on \(\TwoImg(f)\) and \(\TwoImg(g)\).

	\thex\ref{T:NaturalComparison} applied to each of the three yields invertible 2-cells relating \(z_1\), \(z_2\), \(z_3\) to their primed counterparts, and hence, inverting the one for \(z_2\), an invertible 2-cell relating \(z\) to \(z'\). What makes the three paste is that consecutive configurations induce the same 1-cell where they meet: the first and the second both induce a 1-cell on \(\TwoKer(t)\), the second and the third both induce one on \(\TwoCoker(s)\), and in each case the two agree, being induced by the same universal property. Since \(\twocoker(\ov{b})\) and \(\twoker(\underline{c})\) are likewise 2-natural---being the 1-cells induced on a 2-cokernel and on a 2-kernel, so that the argument of the first paragraph applies---pasting the three squares gives the invertible 2-cell \(\underline{p}_X\comp\partial\iso\partial'\comp\ov{p}_C\).
\end{proof}

\begin{remark}\label{Rem Naturality Choices}
	By \remx\ref{Rem Partial Choices} the 1-cell \(\partial\) is determined only up to an invertible 2-cell, so \thex\ref{T:NaturalSnake} is a statement about any choice of \(\partial\) and \(\partial'\): transporting along those invertible 2-cells carries the naturality 2-cell of one choice to that of another. What makes the statement possible at all is that \lemx\ref{Pure Snake Lemma} names its comparison and pins it down up to a unique invertible 2-cell.
\end{remark}

\section{Two-dimensional models}\label{S:Model}

\exax\ref{Ex DiExact} settles the consistency of the hypotheses of Section~\ref{S:Snake}, but it settles it in dimension one, where the Snake Lemma is not news. This section exhibits a model in which the 2-cells do something. The candidate that suggests itself is the 2-category \(\AbCat\) of abelian categories, exact functors and natural transformations, whose 2-kernels are the Serre subcategories and whose 2-cokernels are the Serre quotients; and the first thing to be said about it is that it is \emph{not} 2-di-exact. Condition \textup{(DI2)} reduces there to a question with no two-dimensional content left in it---whether the essential image of a Serre subcategory in a Serre quotient is again a Serre subcategory---and the answer is no, already for the finite-dimensional modules over a Nakayama algebra with three composition factors.

What does work is to turn that question into a hypothesis. Call an abelian category \emph{saturated} when the answer is yes for all of its Serre subcategories. The point of \prox\ref{P:SATclosed} below is that this costs almost nothing: saturation is inherited by Serre subcategories and by Serre quotients, and for a cheaper reason than one first writes down, since each of the two constructions comes with a correspondence theorem for Serre subcategories. The saturated abelian categories therefore form a 2-di-exact 2-category (\thex\ref{T:SatModel}). That this 2-category is populated is the content of Sections~\ref{SS:ModelAS} and~\ref{SS:ModelExamples}: saturation follows from a condition on subobjects which is a statement about supports in disguise, and that condition holds for the finitely generated modules over a commutative Noetherian ring and for the coherent sheaves on a noetherian scheme. Section~\ref{SS:ModelLattices} then exhibits a second model at the opposite end of the spectrum, where the hom-categories are posets: the complete modular lattices. There condition \textup{(DI2)} turns out to be a familiar face, Dedekind's transposition principle.

\begin{convention}\label{Conv Size}
	Fix a Grothendieck universe and call a category \dfn{small} when it is equivalent to one whose objects and morphisms form a set in that universe. All abelian categories below are assumed small in this sense, so that they, the exact functors and the natural transformations between them form a 2-category \(\AbCat\). This is a size convention; it is preserved by every construction performed below, and no argument uses it otherwise.
\end{convention}

We take for granted the theory of Serre subcategories and Serre quotients \cite[Chapter~III]{Gabriel62}. A \dfn{Serre subcategory} of an abelian category \(\onecat{A}\) is a full subcategory closed under subobjects, quotients and extensions; it is again an abelian category, and its inclusion is exact. The \dfn{Serre quotient} \(\onecat{A}/S\) by a Serre subcategory \(S\) has the objects of \(\onecat{A}\) and
\begin{eqD}{SerreHom}
	\Hom_{\onecat{A}/S}(\ov{a},\ov{b})=\varinjlim\Hom_{\onecat{A}}(a',b/b'')\text{,}
\end{eqD}
the colimit being taken over the subobjects \(a'\subseteq a\) with \(a/a'\in S\) and the subobjects \(b''\subseteq b\) with \(b''\in S\). It is an abelian category; the projection \(q\colon\onecat{A}\to\onecat{A}/S\) is exact, is the identity on objects and annihilates exactly \(S\); and composition with \(q\) is an equivalence from the category of exact functors out of \(\onecat{A}/S\) onto the category of exact functors out of \(\onecat{A}\) that annihilate \(S\). We use \dfn{Gabriel's correspondence} in the form: taking preimages along \(q\) is a bijection from the Serre subcategories of \(\onecat{A}/S\) onto the Serre subcategories of \(\onecat{A}\) containing \(S\), with inverse the essential image.

\subsection{Serre subcategories are the 2-kernels}\label{SS:ModelAbCat}

\begin{proposition}\label{P:AbCatBizero}
	The abelian category with one object is a strong bizero object of \(\AbCat\).
\end{proposition}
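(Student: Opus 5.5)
Write \(\onecat{0}\) for the abelian category with a single object \(0\), whose only morphism is \(\id{0}\). The plan is to compute the two relevant hom-categories of \(\AbCat\) directly and then check strongness.

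Maps into \(\onecat{0}\): for every abelian \(\onecat{A}\) there is exactly one functor \(\onecat{A}\to\onecat{0}\). It is exact, because every sequence in \(\onecat{0}\) is exact. Between this functor and itself there is only the identity natural transformation, since \(\onecat{0}\) has a single morphism. Hence \(\HomC{\AbCat}{\onecat{A}}{\onecat{0}}\) is isomorphic to \(\1\).

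Maps out of \(\onecat{0}\): an exact functor \(F\colon\onecat{0}\to\onecat{A}\) preserves the zero object, because exact functors between abelian categories are additive. So \(F\) picks out a zero object of \(\onecat{A}\), and any such choice is exact. Given two of them, a natural transformation is a single morphism between zero objects of \(\onecat{A}\), and there is exactly one, an isomorphism. The hom-category is therefore nonempty and indiscrete, hence equivalent to \(\1\). This shows that \(\onecat{0}\) is a bizero object.

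Strongness: let \(n,n'\colon\onecat{A}\to\onecat{B}\) both factor through \(\onecat{0}\). Each is then a functor constant at some zero object of \(\onecat{B}\), sending every morphism to the identity of that object. A natural transformation \(n\Rightarrow n'\) is a family of components, each a morphism from one zero object to another. Each component is therefore unique, and naturality holds automatically. So there is exactly one 2-cell \(n\Rightarrow n'\), and it is invertible. Being the only one, it coincides with the null 2-cell of \remx\ref{Remark Null}.

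The only step that needs a little care is that an exact functor out of \(\onecat{0}\) lands on a zero object. This follows from additivity, or directly from exactness of \(0\to 0\to 0\), which forces \(F(0)\) to have identity equal to zero.
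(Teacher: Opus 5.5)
Your proof is correct and follows the paper's argument essentially step for step. You compute \(\HomC{\AbCat}{\onecat{A}}{\onecat{0}}\) as a singleton and \(\HomC{\AbCat}{\onecat{0}}{\onecat{A}}\) as a nonempty indiscrete category of zero objects. You then obtain strongness because every component of a transformation between null functors is the unique morphism between zero objects. You also justify, via additivity or via \(\operatorname{id}_{F(0)}=\operatorname{id}_{F(0)}\comp\operatorname{id}_{F(0)}=0\), the step the paper only asserts: that an exact functor out of \(\onecat{0}\) lands on a zero object.
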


\begin{proof}
	Write \(0\) for that category. For every abelian category \(\onecat{A}\) there is exactly one functor \(\onecat{A}\to 0\), it is exact, and there is exactly one natural transformation between any two such; so \(\HomC{\AbCat}{\onecat{A}}{0}\) is the singleton category. An exact functor \(0\to\onecat{A}\) is determined up to a unique natural isomorphism by its value on the unique object, which must be a zero object of \(\onecat{A}\), so \(\HomC{\AbCat}{0}{\onecat{A}}\) is a contractible groupoid and in particular equivalent to \(\1\). Thus \(0\) is a bizero object, and a 1-cell is null exactly when it takes every object to a zero object. Given two parallel null functors \(F\), \(G\colon\onecat{A}\to\onecat{B}\), each component \(F(a)\to G(a)\) is the unique morphism between two zero objects, so there is exactly one natural transformation \(F\Rightarrow G\), and \(0\) is strong.
\end{proof}

\begin{proposition}\label{P:AbCatKernel}
	Let \(F\colon\onecat{A}\to\onecat{B}\) be an exact functor and let \(K\) be the full subcategory of \(\onecat{A}\) on the objects that \(F\) annihilates. Then \(K\) is a Serre subcategory of \(\onecat{A}\) and its inclusion \(k\colon K\to\onecat{A}\) is a 2-kernel of \(F\).
\end{proposition}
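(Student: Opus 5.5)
First I check that \(K\) is a Serre subcategory. Since \(F\) is exact, a short exact sequence \(0\to a'\to a\to a''\to 0\) in \(\onecat{A}\) is sent to a short exact sequence \(0\to Fa'\to Fa\to Fa''\to 0\) in \(\onecat{B}\). If \(Fa\iso 0\), then \(Fa'\) and \(Fa''\) are a subobject and a quotient of a zero object, hence zero; and if \(Fa'\iso 0\iso Fa''\), then \(Fa\) is an extension of zero by zero, hence zero. So \(K\) is closed under subobjects, quotients and extensions. Its inclusion \(k\) is therefore exact, and \(K\) is abelian. By \prox\ref{P:AbCatBizero} a 1-cell is null exactly when it sends every object to a zero object. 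Hence \(F\comp k\) is null, and an invertible 2-cell \(\kappa\colon F\comp k\iso 0\) exists: its components are the unique isomorphisms between zero objects.

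Next I verify condition~(2) of \defx\ref{Def 2-kernel}. By \prox\ref{Prop Kernel Is Mono} this amounts to \(k\) being a fully faithful arrow of \(\AbCat\). Let \(u\), \(v\colon Z\to K\) be exact functors and \(\lambda\colon k\comp u\aR{}k\comp v\) a natural transformation. Each component \(\lambda_x\colon ku(x)\to kv(x)\) is a morphism of \(\onecat{A}\) between objects of \(K\). Because \(K\) is a \emph{full} subcategory, \(\lambda_x\) is already a morphism of \(K\), and naturality is unaffected. This gives a unique \(\mu\) with \(k\star\mu=\lambda\); uniqueness holds because \(k\) is faithful.

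For condition~(1), let \(z\colon Z\to\onecat{A}\) be exact with an invertible 2-cell \(F\comp z\iso 0\). Then every \(Fz(x)\) is a zero object, so \(z\) takes values in \(K\). It therefore corestricts to a functor \(u\colon Z\to K\) with \(k\comp u=z\) on the nose, and I take \(\gamma\) to be the identity. The functor \(u\) is exact because exactness in \(K\) is computed in \(\onecat{A}\): the inclusion of a Serre subcategory creates kernels and cokernels. By \remx\ref{Rem Kernel Property}, no compatibility between \(\gamma\), \(\beta\) and \(\kappa\) needs to be checked.

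The only point needing any care is the claim that \(K\) is abelian and that \(k\) preserves and reflects exactness. This is the standard fact that kernels and cokernels in a Serre subcategory are computed in the ambient category, since \(K\) is closed under subobjects and quotients; I quote it from \cite[Chapter~III]{Gabriel62}. Everything else is immediate from fullness of \(K\) and the description of null functors.
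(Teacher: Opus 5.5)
Your proof is correct and follows the paper's own argument: you get closure of \(K\) under subobjects, quotients and extensions from exactness of \(F\), condition~(1) by corestricting \(z\) through the full subcategory \(K\), and condition~(2) from fullness of the inclusion \(k\). Your added justification that the corestriction is exact, because kernels and cokernels in a Serre subcategory are computed in \(\onecat{A}\), makes explicit a step the paper simply asserts.
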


\begin{proof}
	That \(K\) is closed under subobjects, quotients and extensions is exactness of \(F\), the zero objects of \(\onecat{B}\) having those three closure properties. By construction \(F\comp k\) takes every object to a zero object, so it is null and carries an invertible 2-cell \(\kappa\colon F\comp k\iso 0\); by \lemx\ref{L:UniqueNull} it is the only one.

	For condition~\((1)\) of \defx\ref{Def 2-kernel}, let \(G\colon\onecat{X}\to\onecat{A}\) be exact with an invertible 2-cell \(F\comp G\iso 0\). Then \(F(G(x))\) is a zero object for every \(x\), so \(G\) takes its values in \(K\) and factors as \(k\comp u\) for a unique exact \(u\colon\onecat{X}\to K\). Condition~\((2)\) says that whiskering with \(k\) is a bijection on 2-cells, and this holds because \(k\) is fully faithful: a natural transformation between two functors landing in \(K\) is the same thing computed in \(K\) as computed in \(\onecat{A}\).
\end{proof}

\begin{proposition}\label{P:AbCatCokernel}
	Let \(F\colon\onecat{A}\to\onecat{B}\) be an exact functor and let \(S\) be the smallest Serre subcategory of \(\onecat{B}\) containing the objects \(F(a)\). Then the projection \(q\colon\onecat{B}\to\onecat{B}/S\) is a 2-cokernel of \(F\).
\end{proposition}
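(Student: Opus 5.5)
We verify the three parts of \defx\ref{Def 2-cokernel} for \(q\colon\onecat{B}\to\onecat{B}/S\), relying throughout on the universal property of the Serre quotient recalled above. Composition with \(q\) is an equivalence from exact functors out of \(\onecat{B}/S\) onto exact functors out of \(\onecat{B}\) that annihilate \(S\).

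First, the null 2-cell. The composite \(q\comp F\) sends each \(a\) to \(q(F(a))\). Since \(F(a)\in S\) and \(q\) annihilates exactly \(S\), this is a zero object. So \(q\comp F\) is null and carries an invertible 2-cell \(\eta\colon q\comp F\iso 0\). By \lemx\ref{L:UniqueNull} this 2-cell is unique, so no compatibility needs checking (\remx\ref{Rem Kernel Property}).

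For condition~(1), let \(G\colon\onecat{B}\to\onecat{Z}\) be exact with an invertible 2-cell \(G\comp F\iso 0\). Then \(G\) annihilates every \(F(a)\). The full subcategory of objects that \(G\) annihilates is a Serre subcategory of \(\onecat{B}\); this is the first paragraph of the proof of \prox\ref{P:AbCatKernel}. It therefore contains the smallest Serre subcategory \(S\) containing the \(F(a)\), so \(G\) annihilates \(S\). By the universal property of \(\onecat{B}/S\) there is an exact \(u\colon\onecat{B}/S\to\onecat{Z}\) with an invertible 2-cell \(u\comp q\iso G\). This is exactly condition~(1).

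For condition~(2), take exact \(u,v\colon\onecat{B}/S\to\onecat{Z}\) and a natural transformation \(\lambda\colon u\comp q\Rightarrow v\comp q\). We need a unique \(\mu\colon u\Rightarrow v\) with \(\mu\star q=\lambda\). This is the full faithfulness half of the stated equivalence of functor categories, since that equivalence is given by precomposition with \(q\). If one prefers to argue directly: \(q\) is the identity on objects, so we set \(\mu_{\ov b}=\lambda_b\). Naturality with respect to a morphism of \(\onecat{B}/S\) follows from~\eqref{SerreHom}. Such a morphism is represented by a span \(b\leftarrow b'\to c/c''\) whose outer legs are inverted by \(q\), and \(\lambda\) is natural with respect to each leg in \(\onecat{B}\). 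Uniqueness is immediate, because \(\mu\) is determined on all objects.

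The main obstacle, if it can be called one, is making sure the universal property of the Serre quotient is really used in its 2-dimensional form, that is, as an equivalence of categories and not only as a bijection on isomorphism classes of functors. That is what delivers condition~(2). Everything else reduces to the closure properties of annihilated objects and to the minimality of \(S\).
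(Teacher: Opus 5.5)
Your proposal is correct and is the paper's proof step for step: \(q\comp F\) is null, the objects annihilated by \(G\) form a Serre subcategory and so contain \(S\), and the universal property of the Serre quotient gives condition~(1) by essential surjectivity and condition~(2) by full faithfulness of precomposition with \(q\). Your optional direct check of~(2) is a harmless extra that the paper does not spell out. Strictly, a morphism of \(\onecat{B}/S\) is represented by a zig-zag \(b\leftarrow b'\to c/c''\leftarrow c\) rather than a span, but your naturality argument goes through unchanged.
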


\begin{proof}
	The functor \(q\) annihilates \(S\) and hence every \(F(a)\), so \(q\comp F\) is null. Let \(z\colon\onecat{B}\to\onecat{Z}\) be exact with an invertible 2-cell \(z\comp F\iso 0\). Then \(z\) annihilates every \(F(a)\); the objects annihilated by \(z\) form a Serre subcategory, by \prox\ref{P:AbCatKernel}, so \(z\) annihilates \(S\). By the universal property of the Serre quotient, \(z\) is naturally isomorphic to \(u\comp q\) for some exact \(u\colon\onecat{B}/S\to\onecat{Z}\), which is condition~\((1)\) of \defx\ref{Def 2-cokernel}; and the same universal property says that composition with \(q\) is fully faithful on exact functors out of \(\onecat{B}/S\), which is condition~\((2)\).
\end{proof}

\begin{corollary}\label{C:AbCatNormal}
	In \(\AbCat\), every normal 2-monomorphism is isomorphic to the inclusion of a Serre subcategory precomposed with an equivalence, and every normal 2-epimorphism is isomorphic to the projection onto a Serre quotient postcomposed with an equivalence.
\end{corollary}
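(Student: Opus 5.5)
The plan is to read a normal 2-monomorphism as a 2-kernel and a normal 2-epimorphism as a 2-cokernel, and to match these against the explicit 2-kernels and 2-cokernels of \prox\ref{P:AbCatKernel} and \prox\ref{P:AbCatCokernel} using uniqueness up to equivalence, \corx\ref{corollkeruniqueuptoequiv}.

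Let \(m\colon\onecat{M}\to\onecat{A}\) be a normal 2-monomorphism, so \(m\) is a 2-kernel of some exact \(F\colon\onecat{A}\to\onecat{B}\). By \prox\ref{P:AbCatKernel}, the inclusion \(k\colon K\to\onecat{A}\) of the Serre subcategory annihilated by \(F\) is also a 2-kernel of \(F\). The proof of \corx\ref{corollkeruniqueuptoequiv} gives more than an abstract equivalence: applying condition~(1) for \(k\) to the datum \((m,\ F\comp m\iso 0)\) produces \(u\colon\onecat{M}\to K\) together with an invertible 2-cell \(k\comp u\iso m\), and the same corollary shows \(u\) is an equivalence. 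Hence \(m\iso k\comp u\), which is the inclusion of a Serre subcategory precomposed with an equivalence.

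The dual case runs the same way. Let \(e\colon\onecat{B}\to\onecat{E}\) be a normal 2-epimorphism, a 2-cokernel of some exact \(F\colon\onecat{A}\to\onecat{B}\). By \prox\ref{P:AbCatCokernel}, the projection \(q\colon\onecat{B}\to\onecat{B}/S\), with \(S\) the Serre subcategory generated by the objects \(F(a)\), is also a 2-cokernel of \(F\). Condition~(1) for \(q\), applied to \((e,\ e\comp F\iso 0)\), gives \(v\colon\onecat{B}/S\to\onecat{E}\) with \(v\comp q\iso e\), and the dual of \corx\ref{corollkeruniqueuptoequiv} makes \(v\) an equivalence.

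No step is a real obstacle. The only point needing care is that \corx\ref{corollkeruniqueuptoequiv} is stated as "unique up to equivalence", so it must be made explicit that this equivalence is compatible with the two 1-cells, up to an invertible 2-cell. That compatibility is visible in the corollary's proof, since the comparison \(u\) is constructed precisely by the factorisation \(k\comp u\iso m\).
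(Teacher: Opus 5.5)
Your proposal is correct and follows essentially the paper's own argument: a normal 2-monomorphism (respectively 2-epimorphism) is a 2-kernel (respectively 2-cokernel) of some exact functor, you compare it with the explicit one from \prox\ref{P:AbCatKernel} (respectively \prox\ref{P:AbCatCokernel}), and \corx\ref{corollkeruniqueuptoequiv} gives the equivalence. Your extra observation is right: the comparison \(u\) in that corollary's proof is built through the factorisation \(k\comp u\iso m\). That is exactly what ensures the equivalence is compatible with the 1-cells and not merely an equivalence of domains.
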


\begin{proof}
	A normal 2-monomorphism is a 2-kernel of some exact functor, hence by \prox\ref{P:AbCatKernel} and \corx\ref{corollkeruniqueuptoequiv} differs from the inclusion of the Serre subcategory annihilated by that functor by an equivalence between their domains. The second assertion follows from \prox\ref{P:AbCatCokernel} in the same way.
\end{proof}

\subsection{The saturation, and the failure of \textup{(DI2)}}\label{SS:ModelSaturation}

\begin{definition}\label{D:Saturation}
	Let \(K\) and \(S\) be Serre subcategories of an abelian category \(\onecat{A}\). The \dfn{\(S\)-saturation} of \(K\), written \(K^{S}\), is the class of objects of \(\onecat{A}\) that become isomorphic in \(\onecat{A}/S\) to an object of \(K\); equivalently, the class of objects joined to an object of \(K\) by a span of morphisms all of whose kernels and cokernels lie in \(S\).
\end{definition}

\begin{proposition}\label{P:Saturation}
	The class \(K^{S}\) contains \(K\) and \(S\), is closed under subobjects and under quotients, and is contained in every Serre subcategory of \(\onecat{A}\) containing \(K\) and \(S\). It is therefore a Serre subcategory if and only if it is the join \(K\vee S\).
\end{proposition}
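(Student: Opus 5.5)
The plan is to use Gabriel's correspondence for the Serre quotient \(q\colon\onecat{A}\to\onecat{A}/S\). By the first description in \defx\ref{D:Saturation}, \(K^{S}\) is the preimage along \(q\) of the essential image \(\ov{K}\) of \(K\) in \(\onecat{A}/S\). It contains \(K\) trivially. It contains \(S\) because every object of \(S\) becomes a zero object in \(\onecat{A}/S\), hence isomorphic to \(q(0)\) with \(0\in K\).

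For closure under subobjects and quotients, I first show that \(\ov{K}\) is closed under subobjects and quotients in \(\onecat{A}/S\). Then taking preimages along the exact functor \(q\) preserves this, because a subobject \(x'\subseteq x\) in \(\onecat{A}\) maps to a subobject \(q(x')\subseteq q(x)\). For \(\ov{K}\), let \(y\subseteq q(k)\) with \(k\in K\). By the standard description of subobjects in a Serre quotient (Gabriel, from \eqref{SerreHom}), every subobject of \(q(k)\) is isomorphic to \(q(k')\) for some subobject \(k'\subseteq k\) in \(\onecat{A}\). Since \(K\) is closed under subobjects, \(k'\in K\), so \(y\in\ov{K}\). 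Quotients are handled dually: a quotient of \(q(k)\) is the cokernel of a subobject \(q(k')\), hence isomorphic to \(q(k/k')\), and \(k/k'\in K\). This lifting of subobjects is the one step that needs real input. If one prefers to avoid quoting it, the fallback is the span description: given \(x\subseteq a\) with \(a\) joined to \(k\in K\) by \(S\)-isomorphisms, pull back or take images along the span, checking that kernels and cokernels stay in \(S\) because \(S\) is Serre.

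For minimality, let \(T\) be a Serre subcategory containing \(K\) and \(S\), and let \(x\in K^{S}\). Then \(x\) is joined to some \(k\in K\) by a span \(x\leftarrow y\to k\) whose kernels and cokernels lie in \(S\subseteq T\). From \(0\to\ker\to y\to\mathrm{im}\to0\) and \(0\to\mathrm{im}\to k\to\mathrm{cok}\to0\) we get \(\mathrm{im}\in T\) as a subobject of \(k\), then \(y\in T\) by extension closure. Similarly \(x\) is an extension of its cokernel term by the image of \(y\), a quotient of \(y\), so \(x\in T\). Hence \(K^{S}\subseteq T\).

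The final assertion follows formally. If \(K^{S}\) is Serre, it contains \(K\) and \(S\), so \(K\vee S\subseteq K^{S}\), and minimality gives the reverse inclusion. Conversely, \(K\vee S\) is Serre by definition. The main obstacle is the subobject-lifting step in the second paragraph; everything else is closure bookkeeping.
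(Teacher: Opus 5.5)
Your proposal is correct and follows essentially the same route as the paper. Closure under subobjects and quotients comes from the fact that every subobject of \(q(a)\) in \(\onecat{A}/S\) is of the form \(q(a')\) for a subobject \(a'\subseteq a\). Minimality comes from the two extension steps along the span, and the final assertion from \(K^{S}\subseteq K\vee S\). The paper also quotes the subobject-lifting fact from Gabriel without proof, so your span-based fallback is not needed.
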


\begin{proof}
	It contains \(K\) trivially, and it contains \(S\) because \(q\) sends an object of \(S\) to a zero object, which is \(q\) of the zero object of \(K\). Let \(X\in K^{S}\), say \(q(X)\iso q(a)\) with \(a\in K\), and let \(X'\subseteq X\). Then \(q(X')\) is a subobject of \(q(X)\iso q(a)\), and every subobject of \(q(a)\) is of the form \(q(a')\) for a subobject \(a'\subseteq a\); since \(K\) is closed under subobjects, \(a'\in K\) and \(X'\in K^{S}\). The argument for quotients is the same.

	Let \(T\) be a Serre subcategory containing \(K\) and \(S\), and let \(X\aar{g}D\aar{f}a\) be a span with \(a\in K\) and with the kernels and cokernels of \(f\) and of \(g\) in \(S\). The image of \(f\) is a subobject of \(a\), hence lies in \(T\), and the kernel of \(f\) lies in \(S\subseteq T\); so \(D\), an extension of the one by the other, lies in \(T\). Running the same argument along \(g\) puts \(X\) in \(T\). Finally, \(K\vee S\) is the smallest Serre subcategory containing \(K\) and \(S\), so that \(K^{S}\subseteq K\vee S\) always, with equality precisely when \(K^{S}\) is a Serre subcategory.
\end{proof}

\begin{lemma}\label{L:FullyFaithful}
	Let \(K\) and \(S\) be Serre subcategories of an abelian category \(\onecat{A}\). Then the exact functor \(K/(K\cap S)\to\onecat{A}/S\) induced by the inclusion of \(K\) is fully faithful.
\end{lemma}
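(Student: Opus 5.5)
Write \(F\colon K/(K\cap S)\to\onecat{A}/S\) for the induced functor. It exists by the universal property of the Serre quotient: the composite \(K\hookrightarrow\onecat{A}\aar{q}\onecat{A}/S\) is exact and annihilates \(K\cap S\). The plan is to compare the two hom-sets through the colimit formula \eqref{SerreHom} and check that the two colimits agree. Fix \(a\), \(b\in K\). In \(K/(K\cap S)\) the hom-set is \(\varinjlim\Hom_{K}(a',b/b'')\), the colimit running over \(a'\subseteq a\) with \(a/a'\in K\cap S\) and \(b''\subseteq b\) with \(b''\in K\cap S\). In \(\onecat{A}/S\) it is \(\varinjlim\Hom_{\onecat{A}}(a',b/b'')\), over \(a'\subseteq a\) with \(a/a'\in S\) and \(b''\subseteq b\) with \(b''\in S\). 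The map \(F\) on hom-sets is the map of colimits induced by the inclusion of index sets, and it is the identity on each term \(\Hom_K(a',b/b'')=\Hom_{\onecat{A}}(a',b/b'')\), because \(K\) is full in \(\onecat{A}\).

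The key step is to show that the two index systems coincide. If \(a'\subseteq a\) and \(a/a'\in S\), then \(a/a'\) is a quotient of \(a\in K\), so \(a/a'\in K\), since \(K\) is closed under quotients. Hence \(a/a'\in K\cap S\). Likewise, if \(b''\subseteq b\) and \(b''\in S\), then \(b''\) is a subobject of \(b\in K\), so \(b''\in K\cap S\). So the two diagrams are indexed by the same pairs \((a',b'')\), with the same terms and the same transition maps. The transition maps are precomposition with inclusions and postcomposition with projections, all of which live in \(K\), because \(K\) is closed under subobjects and quotients. The two colimits are therefore the same colimit, and \(F\) is bijective on each hom-set. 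Since \(F\) is the identity on objects, this is full faithfulness.

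The main obstacle is bookkeeping rather than mathematics. One has to confirm that the functor obtained from the universal property really acts on hom-sets by the map of colimits just described, and not merely up to some natural isomorphism. My plan is to take the explicit construction of the quotient as the identity on objects with hom-sets \eqref{SerreHom}, and then to observe that \(q\) restricted to \(K\) is computed by the same formula. The induced functor can then be chosen to be literally the inclusion of colimits. By the uniqueness clause of the universal property, any other choice differs from this one by a natural isomorphism, and full faithfulness is invariant under natural isomorphism.
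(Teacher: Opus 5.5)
Your proposal is correct and is essentially the paper's proof. Both compare the two filtered colimits of \eqref{SerreHom}. Both note that the index systems coincide because \(K\) is closed under subobjects and quotients, so a subobject or quotient of an object of \(K\) lies in \(S\) exactly when it lies in \(K\cap S\), and that the terms coincide because \(K\) is full. Your final paragraph, which identifies the induced functor with the canonical map of colimits up to natural isomorphism, makes explicit a point the paper leaves tacit.
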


\begin{proof}
	For \(a\) and \(b\) in \(K\), the hom-set \(\Hom_{\onecat{A}/S}(\ov{a},\ov{b})\) is the filtered colimit \eqref{SerreHom}, and \(\Hom_{K/(K\cap S)}(\ov{a},\ov{b})\) is the same colimit taken inside \(K\). Every subobject and every quotient of \(a\) or of \(b\) in \(\onecat{A}\) already lies in \(K\), that subcategory being closed under subobjects and quotients, and such a subobject lies in \(S\) if and only if it lies in \(K\cap S\); so the two index categories coincide. The terms coincide as well, \(K\) being full. Hence the two colimits agree.
\end{proof}

\begin{proposition}\label{P:DIabcat}
	Condition \textup{(DI2)} holds in \(\AbCat\) if and only if for every abelian category \(\onecat{A}\) and all Serre subcategories \(K\) and \(S\) of \(\onecat{A}\), the \(S\)-saturation \(K^{S}\) is a Serre subcategory of \(\onecat{A}\).
\end{proposition}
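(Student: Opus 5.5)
By \corx\ref{C:AbCatNormal} and \corx\ref{C:NormalTransport}, it suffices to test \textup{(DI2)} on composites \(q\comp k\colon K\to\onecat{A}\to\onecat{A}/S\), where \(k\) is the inclusion of a Serre subcategory \(K\) and \(q\) is the projection onto a Serre quotient by \(S\). The equivalences that the corollary allows on either side do not affect normality. So the claim is that \(q\comp k\) is normal if and only if \(K^{S}\) is a Serre subcategory of \(\onecat{A}\). I will describe \(\twoker(q\comp k)\) and \(\twocoker(q\comp k)\) concretely and use \lemx\ref{Normal Iff Comparison Iso}: \(q\comp k\) is normal exactly when the comparison \(\TwoCoim\to\TwoImg\) is an equivalence.

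First, by \prox\ref{P:AbCatKernel}, the 2-kernel of \(q\comp k\) is the full subcategory of \(K\) on the objects that \(q\) annihilates, namely \(K\cap S\). Hence \(\TwoCoim(q\comp k)=K/(K\cap S)\), with \(\twocoim\) the projection. Next, by \prox\ref{P:AbCatCokernel}, the 2-cokernel of \(q\comp k\) is the projection \(\onecat{A}/S\to(\onecat{A}/S)/T\), where \(T\) is the smallest Serre subcategory of \(\onecat{A}/S\) containing the objects \(q(a)\) for \(a\in K\). By Gabriel's correspondence, \(T\) corresponds to the smallest Serre subcategory of \(\onecat{A}\) containing \(S\) and \(K\), which is \(K\vee S\). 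The 2-kernel of that projection is \(T\) itself, again by \prox\ref{P:AbCatKernel}, so \(\TwoImg(q\comp k)=T\), the essential image of \(K\vee S\) in \(\onecat{A}/S\). The comparison is then the functor \(K/(K\cap S)\to T\) induced by \(q\comp k\).

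By \lemx\ref{L:FullyFaithful} this comparison is fully faithful. It is therefore an equivalence if and only if it is essentially surjective onto \(T\), that is, if and only if every object of \(K\vee S\) becomes isomorphic in \(\onecat{A}/S\) to an object of \(K\). This says exactly that \(K\vee S\subseteq K^{S}\). Since \prox\ref{P:Saturation} gives \(K^{S}\subseteq K\vee S\) always, the condition is \(K^{S}=K\vee S\), which by the same proposition holds if and only if \(K^{S}\) is a Serre subcategory. Quantifying over all \(\onecat{A}\), \(K\) and \(S\) gives both directions of the equivalence.

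The main obstacle is the bookkeeping in the reduction step. I need that every antinormal 1-cell of \(\AbCat\) is isomorphic to such a \(q\comp k\) up to equivalences on both sides. I also need to identify the 2-image as the essential image of \(K\vee S\) via Gabriel's correspondence, being careful that the Serre closure taken in \(\onecat{A}/S\) corresponds to the join taken in \(\onecat{A}\). This last point is where I would check the correspondence most carefully.
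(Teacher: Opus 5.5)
Your proposal is correct, and it reaches the result by a somewhat different route from the paper. You run everything through the comparison criterion of \lemx\ref{Normal Iff Comparison Iso}. You compute \(\TwoCoim(q\comp k)=K/(K\cap S)\) and \(\TwoImg(q\comp k)=q(K\vee S)\) explicitly, and you observe via \lemx\ref{L:FullyFaithful} that the comparison between them is always fully faithful. Both directions then come out of a single chain of equivalences. This gives a sharper picture: the only possible obstruction to normality is essential surjectivity, that is, the gap between \(K^{S}\) and \(K\vee S\).

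The price is that you must compute the 2-cokernel of \(q\comp k\). That requires identifying the smallest Serre subcategory of \(\onecat{A}/S\) containing \(q(K)\) with \(q(K\vee S)\), which is the step you flagged. It is sound: preimage and essential image are mutually inverse monotone bijections, so least elements correspond.

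The paper never computes that 2-cokernel. For sufficiency it exhibits the factorisation \(K\to K/(K\cap S)\to\onecat{A}/S\) directly. It notes that the fully faithful second factor is an equivalence onto the Serre subcategory \(q(K^{S})\), hence a normal 2-monomorphism by \corx\ref{C:NormalTransport}. For necessity it reads off that the essential image of a normal 1-cell is a Serre subcategory, because the normal 2-epimorphism part is essentially surjective. It then identifies that essential image with \(q(K^{S})\). So the paper applies Gabriel's correspondence only to the single subcategory \(K^{S}\supseteq S\), not to a join, and it needs \prox\ref{P:Saturation} only for the inclusion \(S\subseteq K^{S}\).
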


\begin{proof}
	An antinormal 1-cell of \(\AbCat\) is a composite \(e\comp m\) with \(m\) a normal 2-monomorphism into some abelian category \(\onecat{A}\) and \(e\) a normal 2-epimorphism out of it. By \corx\ref{C:AbCatNormal} we may write \(m\iso k\comp u\) and \(e\iso v\comp q\), where \(k\colon K\to\onecat{A}\) is the inclusion of a Serre subcategory, \(q\colon\onecat{A}\to\onecat{A}/S\) is the projection onto a Serre quotient, and \(u\) and \(v\) are equivalences; so \(e\comp m\iso v\comp w\comp u\) with \(w\coloneq q\comp k\), and by \corx\ref{C:NormalTransport} the 1-cell \(e\comp m\) is normal if and only if \(w\) is. By \defx\ref{D:Saturation} the essential image of \(w\) is \(q(K^{S})\), and \(K^{S}\) contains \(S\) by \prox\ref{P:Saturation}; so Gabriel's correspondence makes \(q(K^{S})\) a Serre subcategory of \(\onecat{A}/S\) if and only if \(K^{S}\) is one of \(\onecat{A}\).

	Suppose \(K^{S}\) is a Serre subcategory. Then \(w\) factorises as
	\begin{eqD*}
		K\toe K/(K\cap S)\aar{c}\onecat{A}/S\text{,}
	\end{eqD*}
	whose first factor is the 2-cokernel of the inclusion \(K\cap S\to K\), hence a normal 2-epimorphism. The second factor \(c\) is fully faithful by \lemx\ref{L:FullyFaithful}, hence a 2-monomorphism, and its essential image is the Serre subcategory \(q(K^{S})\); so \(c\) is an equivalence onto that subcategory, whose inclusion is a normal 2-monomorphism by \prox\ref{P:AbCatKernel}, and \corx\ref{C:NormalTransport} makes \(c\) a normal 2-monomorphism. Hence \(w\) is normal.

	Conversely, suppose \(w\iso m'\comp e'\) with \(e'\) a normal 2-epimorphism and \(m'\) a normal 2-monomorphism. By \corx\ref{C:AbCatNormal} the functor \(e'\) is a Serre quotient projection composed with an equivalence, hence essentially surjective, and \(m'\) is the inclusion of a Serre subcategory \(T\) of \(\onecat{A}/S\) composed with an equivalence. The essential image of \(w\) is then that of \(m'\), which is \(T\); so \(q(K^{S})=T\) is a Serre subcategory of \(\onecat{A}/S\), and \(K^{S}\) is one of \(\onecat{A}\).
\end{proof}

Closure of \(K^{S}\) under subobjects and under quotients came for free, so only closure under extensions is at stake---and localisation is exactly what can destroy it.

\begin{proposition}\label{P:AbCatFails}
	The 2-category \(\AbCat\) is not 2-di-exact.
\end{proposition}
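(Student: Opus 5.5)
By \prox\ref{P:DIabcat} it is enough to exhibit one abelian category \(\onecat{A}\) with two Serre subcategories \(K\) and \(S\) whose \(S\)-saturation \(K^{S}\) is not a Serre subcategory. By \prox\ref{P:Saturation}, \(K^{S}\) is always closed under subobjects and quotients and always lies in \(K\vee S\). So it suffices to produce a single object of \(K\vee S\) that is not isomorphic in \(\onecat{A}/S\) to any object of \(K\). I will do this in a finite-length category, where Serre subcategories are determined by sets of simple objects and the join is transparent.

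Take \(\onecat{A}\) to be the finite-dimensional modules over the path algebra \(\Lambda\) of the linearly oriented quiver \(1\to2\to3\) over a field. This is a Nakayama algebra with simples \(S_1,S_2,S_3\). The choices are as follows.
\begin{itemize}
\item Let \(S\) be the Serre subcategory of modules all of whose composition factors are \(S_2\).
\item Let \(K\) be the Serre subcategory of modules whose composition factors lie in \(\{S_1,S_3\}\). Since there is no arrow between \(1\) and \(3\), we have \(\Ext^1(S_1,S_3)=0=\Ext^1(S_3,S_1)\), so \(K\) consists exactly of the semisimple modules \(S_1^{a}\oplus S_3^{b}\).
\end{itemize}
Then \(K\vee S\) contains all three simples and is therefore all of \(\onecat{A}\). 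The witness is the indecomposable projective \(P\) with top \(S_1\) and socle \(S_3\), which is uniserial of length three with factors \(S_1,S_2,S_3\) (for the appropriate choice of left or right modules).

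It remains to show that \(q(P)\) is not isomorphic in \(\onecat{A}/S\) to any \(q(S_1^{a}\oplus S_3^{b})\). The cleanest route is the standard identification \(\onecat{A}/S\simeq\operatorname{mod}(e\Lambda e)\) for the idempotent \(e=e_1+e_3\), induced by the exact functor \(\Hom_\Lambda(\Lambda e,-)\), whose kernel is precisely \(S\). Here \(e\Lambda e\) is the path algebra of \(1\to3\), the arrow being the length-two path through \(2\). Under this functor:
\begin{itemize}
\item \(q(S_1)\) and \(q(S_3)\) go to the two simples;
\item \(q(P)\) goes to \(eP\), a module of length two that is uniserial, because the path \(1\to2\to3\) acts nonzero from the \(e_1\)-part to the \(e_3\)-part.
\end{itemize}
So \(q(P)\) is an indecomposable object of length two. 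Every object \(q(S_1^{a}\oplus S_3^{b})\) is semisimple, so none of them is isomorphic to \(q(P)\). Hence \(P\notin K^{S}\), \(K^{S}\neq K\vee S\), and \(K^{S}\) is not Serre. Indeed \(K^{S}\) fails to be closed under the extension \(0\to\operatorname{rad}P\to P\to S_1\to 0\), whose outer terms lie in \(K^{S}\).

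The main obstacle is justifying the description of the Serre quotient, or equivalently the non-splitting of \(q(P)\). If I prefer not to quote the idempotent description, I would argue directly from \eqref{SerreHom}. A splitting of \(q(P)\) would give a morphism \(q(S_1)\to q(P)\) in \(\onecat{A}/S\) whose composite with the projection onto the top is nonzero. By \eqref{SerreHom} such a morphism is represented by a map \(S_1\to P/P''\) with \(P''\in S\). But \(P\) is uniserial with socle \(S_3\notin S\), so \(P''=0\), and no nonzero map \(S_1\to P\) exists. This contradicts the splitting, and so shows directly that \(q(P)\) is indecomposable.
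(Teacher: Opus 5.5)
Your proof is correct, and it uses the same reduction as the paper. \prox\ref{P:DIabcat} and \prox\ref{P:Saturation} reduce everything to one object of \(K\vee S\) outside \(K^{S}\), found as a uniserial projective of length three over a Nakayama algebra whose middle composition factor is killed. The difference lies in the example and in how non-membership in \(K^{S}\) is checked.

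The paper uses the cyclic quiver \(1\to2\to1\) with \(\operatorname{rad}^{3}=0\). Its witness \(\Lambda e_{1}\) has top and socle both \(S_{1}\), so \(\Hom_{\onecat{A}/S}(\ov{S_{1}},\ov{M})=\Bbbk\) does not vanish. The paper therefore has to compare endomorphism algebras, computing everything from \eqref{SerreHom} alone. Your linear quiver \(1\to2\to3\) gives a witness with distinct top and socle, so the single vanishing \(\Hom_{\onecat{A}/S}(q(S_{1}),q(P))=\Hom_{\Lambda}(S_{1},P)=0\) already suffices. For this you need to note that \(q(S_{1})\) and \(q(S_{3})\) are simple. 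Then an object isomorphic to some \(q(S_{1}^{a}\oplus S_{3}^{b})\) is semisimple, and the nonzero epimorphism \(q(P)\to q(S_{1})\) would split; that is what your word ``splitting'' should mean. Your main route instead identifies \(\onecat{A}/S\) outright with \(\operatorname{mod}(e\Lambda e)\), at the price of quoting that standard fact.

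Your example also gives something the paper's cannot, because it is not symmetric in \(K\) and \(S\). The same idempotent argument with \(e=e_{2}\) gives \(\onecat{A}/K\simeq\operatorname{mod}\Bbbk\), so \(S^{K}=\onecat{A}\) is a Serre subcategory. Hence the antinormal pair \((K\rightarrowtail\onecat{A},\;\onecat{A}\twoheadrightarrow\onecat{A}/S)\) has a non-normal composite and a normal dinversion, and your example refutes \(\DPN\) as well. By contrast, \remx\ref{Rem NSD Symmetric} explains why the paper's symmetric example cannot do this, and \prox\ref{P:AbCatNotDPN} needs a separate five-dimensional algebra. The paper's example, for its part, shows the mechanism of \remx\ref{Rem Ext} in its starkest form: localisation creates a self-extension of a single simple, whereas yours creates an extension between two distinct simples.
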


\begin{proof}
	Let \(\Lambda\) be the Nakayama algebra over a field \(\Bbbk\) on the cyclic quiver \(1\to2\to1\) with \(\operatorname{rad}^{3}=0\), let \(\onecat{A}\) be its category of finite-dimensional modules and let \(S_{1}\), \(S_{2}\) be the two simple modules. Put
	\begin{align*}
		K & \coloneq\{\text{modules all of whose composition factors are \(\iso S_{1}\)}\}\text{,} \\
		S & \coloneq\{\text{modules all of whose composition factors are \(\iso S_{2}\)}\}\text{,}
	\end{align*}
	both plainly Serre subcategories. Let \(M\) be the uniserial module of length~3 with top \(S_{1}\), middle \(S_{2}\) and socle \(S_{1}\)---that is, the indecomposable projective \(\Lambda e_{1}\)---and let \(U\subset M\) be its submodule of length~2, with socle \(S_{1}\) and top \(S_{2}\).

	Then \(U\in K^{S}\): the inclusion \(S_{1}\subseteq U\) has zero kernel and cokernel \(S_{2}\in S\), so \(\ov{U}\iso\ov{S_{1}}\) with \(S_{1}\in K\). Also \(M/U\iso S_{1}\) lies in \(K\). But \(M\notin K^{S}\), and this is a computation of hom-sets rather than an inspection of filtrations. The only submodule of \(M\) lying in \(S\) is \(0\), the only submodule \(M'\subseteq M\) with \(M/M'\in S\) is \(M\), and the only submodule \(X'\subseteq S_{1}\) with \(S_{1}/X'\in S\) is \(S_{1}\); so the colimits \eqref{SerreHom} collapse to a single term:
	\begin{eqD*}
		\Hom_{\onecat{A}/S}(\ov{S_{1}},\ov{M})=\Hom_{\onecat{A}}(S_{1},M)=\Bbbk\text{,}\qquad\Hom_{\onecat{A}/S}(\ov{S_{1}},\ov{S_{1}}^{n})=\Bbbk^{n}\text{.}
	\end{eqD*}
	Hence \(\ov{M}\iso\ov{S_{1}}^{n}\) would force \(n=1\); but \(\End_{\onecat{A}/S}(\ov{M})=\End_{\Lambda}(M)=e_{1}\Lambda e_{1}\) has dimension~2, being spanned by \(e_{1}\) and the path of length~2, whereas \(\End(\ov{S_{1}})=\Bbbk\). So \(\ov{M}\) is isomorphic to no object of \(K\). The short exact sequence \(0\to U\to M\to M/U\to 0\) therefore has both of its outer terms in \(K^{S}\) and its middle term outside, so \(K^{S}\) is not closed under extensions, and \prox\ref{P:DIabcat} applies.
\end{proof}

\begin{remark}\label{Rem Ext}
	The mechanism is that localisation creates \(\Ext\). Indeed \(\Ext^{1}_{\onecat{A}}(S_{1},S_{1})=0\) in the example above, the quiver having no loop at the first vertex; the self-extension of \(\ov{S_{1}}\) witnessed by \(\ov{M}\) comes into being only once \(S_{2}\) has been killed, as the Yoneda composite of the two extensions between \(S_{1}\) and \(S_{2}\). Conversely, if for all \(a\), \(b\) in \(K\) the comparison
	\begin{eqD*}
		\Ext^{1}_{\onecat{A}}(b,a)\too\Ext^{1}_{\onecat{A}/S}(q(b),q(a))
	\end{eqD*}
	is surjective, then \(K^{S}\) is a Serre subcategory: an extension of \(q(b)\) by \(q(a)\) in \(\onecat{A}/S\) is the image of one in \(\onecat{A}\), whose middle term lies in \(K\). That happens for flat localisations, and whenever \(q\) admits a fully faithful right adjoint, so that the quotient sits inside \(\onecat{A}\) as a full subcategory and can manufacture nothing. It also says where not to look: the categories of finite length, of artinian modules and of finite-dimensional modules over a finite-dimensional algebra are all ruled out, and ruled out for having too much \(\Ext\) rather than too little.
\end{remark}

\subsection{Saturated abelian categories}\label{SS:ModelSAT}

\begin{definition}\label{D:SAT}
	An abelian category \(\onecat{A}\) is \dfn{saturated} when for all Serre subcategories \(K\) and \(S\) of \(\onecat{A}\) the \(S\)-saturation \(K^{S}\) is again a Serre subcategory of \(\onecat{A}\). We write \(\Sat\) for the full sub-2-category of \(\AbCat\) on the saturated abelian categories.
\end{definition}

\begin{proposition}\label{P:SATclosed}
	Saturation is inherited by Serre subcategories and by Serre quotients.
\end{proposition}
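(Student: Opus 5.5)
The plan is to prove each half by a correspondence theorem that carries the saturation of one pair of Serre subcategories in the smaller category to the saturation of a related pair in \(\onecat{A}\), where \(\onecat{A}\) is saturated by hypothesis. Closure under subobjects and quotients is free by \prox\ref{P:Saturation}, so in each case only closure under extensions has to be checked.

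\emph{Serre subcategories.} Let \(T\) be a Serre subcategory of a saturated \(\onecat{A}\), and let \(K\), \(S\) be Serre subcategories of \(T\). A Serre subcategory of \(T\) is already a Serre subcategory of \(\onecat{A}\): subobjects and quotients in \(\onecat{A}\) of objects of \(T\) lie in \(T\), and an extension in \(\onecat{A}\) of two objects of \(T\) lies in \(T\). So \(K^{S}\), computed in \(\onecat{A}\), is a Serre subcategory of \(\onecat{A}\), and by \prox\ref{P:Saturation} it equals the join \(K\vee S\), which lies inside \(T\) because \(T\) is Serre. It then suffices to show that the \(S\)-saturation computed in \(T\) coincides with the one computed in \(\onecat{A}\). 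For this I would use the span description of \defx\ref{D:Saturation} together with the argument of \prox\ref{P:Saturation}. A span in \(\onecat{A}\) from \(X\in T\) to \(a\in K\), with kernels and cokernels in \(S\), has its middle object \(D\) in \(T\), since \(D\) is an extension of a subobject of \(a\) by an object of \(S\). Hence the span lives in \(T\), and the two saturations agree. Since \(T\) is closed under extensions in \(\onecat{A}\), the saturation in \(T\) is a Serre subcategory of \(T\).

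\emph{Serre quotients.} Let \(q\colon\onecat{A}\to\onecat{A}/S_0\), and let \(K'\), \(S'\) be Serre subcategories of \(\onecat{A}/S_0\). By Gabriel's correspondence these are \(q(K)\) and \(q(S)\) for Serre subcategories \(K\), \(S\supseteq S_0\) of \(\onecat{A}\), namely their preimages. I would then identify \((\onecat{A}/S_0)/q(S)\) with \(\onecat{A}/S\) by the universal property of Serre quotients, since an exact functor out of \(\onecat{A}\) that kills \(S\) kills \(S_0\). Under this identification, \(X\in\onecat{A}/S_0\) lies in \(q(K)^{q(S)}\) exactly when its preimage lies in \(K^{S}\). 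That set is Serre in \(\onecat{A}\) by saturation and contains \(S\supseteq S_0\), so Gabriel's correspondence makes its image \(q(K^{S})=q(K)^{q(S)}\) a Serre subcategory of \(\onecat{A}/S_0\).

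The main obstacle is the quotient half, specifically the identification of the iterated quotient and the check that the saturation is compatible with taking preimages along \(q\). I would handle this purely through the universal properties and the statement that Gabriel's correspondence is a lattice isomorphism, rather than through hom-colimit computations. The Serre-subcategory half is routine bookkeeping.
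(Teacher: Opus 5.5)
Your proof is correct, and in both halves it takes a different route from the paper's.

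\emph{Serre subcategories.} The paper works with the isomorphism description of \(K^{S}\). It invokes \lemx\ref{L:FullyFaithful} to see that \(\onecat{B}/S\to\onecat{A}/S\) is fully faithful and therefore reflects isomorphisms. Hence the saturation computed in \(\onecat{B}\) is the trace on \(\onecat{B}\) of the one computed in \(\onecat{A}\).

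You reach the same identification from the span description alone. The apex of any witnessing span is an extension of a subobject of \(a\) by an object of \(S\), so it lies in \(T\). Kernels and cokernels computed in \(T\) agree with those in \(\onecat{A}\), so the whole span lives in \(T\). This is more elementary, with no hom-colimits. It does rest entirely on the equivalence of the two descriptions in \defx\ref{D:Saturation}, which the paper also takes for granted.

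\emph{Serre quotients.} The paper never identifies the iterated quotient. It pushes witnessing spans forward along the exact functor \(q\), which gives the single inclusion \(q(K^{T})\subseteq(K')^{T'}\). It then sandwiches \((K')^{T'}\) between \(K'\vee T'\subseteq q(K^{T})\) and the upper bound of \prox\ref{P:Saturation}.

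Your identification \((\onecat{A}/S_0)/q(S)\simeq\onecat{A}/S\), compatible with the projections up to natural isomorphism, costs a little more machinery. The paper uses the same identification in \prox\ref{P:AbCatComposition}. In return you get an exact statement: the preimage of \(q(K)^{q(S)}\) is precisely \(K^{S}\). So the saturation downstairs is literally the image of the saturation upstairs, not merely squeezed onto the join.
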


\begin{proof}
	Let \(\onecat{A}\) be saturated and let \(\onecat{B}\) be a Serre subcategory of it. The Serre subcategories of \(\onecat{B}\) are exactly the Serre subcategories of \(\onecat{A}\) contained in \(\onecat{B}\), that subcategory being closed in \(\onecat{A}\) under subobjects, quotients and extensions; let \(K\) and \(S\) be two of them. By \lemx\ref{L:FullyFaithful}, applied to \(\onecat{B}\) and \(S\) inside \(\onecat{A}\), the induced functor \(\onecat{B}/S\to\onecat{A}/S\) is fully faithful, and a fully faithful functor reflects isomorphisms; so an object of \(\onecat{B}\) becomes isomorphic to an object of \(K\) in \(\onecat{B}/S\) exactly when it does in \(\onecat{A}/S\). The \(S\)-saturation of \(K\) computed in \(\onecat{B}\) is therefore the intersection with \(\onecat{B}\) of the one computed in \(\onecat{A}\). The latter is a Serre subcategory of \(\onecat{A}\) by saturation, and it is contained in \(K\vee S\) by \prox\ref{P:Saturation}, hence in \(\onecat{B}\); so the two coincide, and being a Serre subcategory of \(\onecat{A}\) contained in \(\onecat{B}\), it is a Serre subcategory of \(\onecat{B}\).

	Let now \(S\) be a Serre subcategory of \(\onecat{A}\), write \(q\colon\onecat{A}\to\onecat{A}/S\), and let \(K'\) and \(T'\) be Serre subcategories of \(\onecat{A}/S\). Put \(K\coloneq q^{-1}(K')\) and \(T\coloneq q^{-1}(T')\), which are Serre subcategories of \(\onecat{A}\) containing \(S\), the functor \(q\) being exact and annihilating \(S\). By saturation the \(T\)-saturation \(K^{T}\) is a Serre subcategory of \(\onecat{A}\); it contains \(T\) and hence \(S\), so by Gabriel's correspondence \(q(K^{T})\) is a Serre subcategory of \(\onecat{A}/S\). It contains \(K'\) and \(T'\), hence their join. It is also contained in \((K')^{T'}\): applying the exact functor \(q\) to a span witnessing membership in \(K^{T}\) produces a span witnessing membership in \((K')^{T'}\), the kernels and cokernels of the images being the images of the kernels and cokernels. So \((K')^{T'}\) contains \(K'\vee T'\); it is contained in that join by \prox\ref{P:Saturation}, and being equal to it, it is a Serre subcategory.
\end{proof}

\begin{theorem}\label{T:SatModel}
	The 2-category \(\Sat\) of saturated abelian categories, exact functors and natural transformations is 2-di-exact.
\end{theorem}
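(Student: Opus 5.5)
The plan is to verify the conditions of \defx\ref{Def:DiExact} for \(\Sat\) by transporting everything already proved for \(\AbCat\) into the full sub-2-category, with \prox\ref{P:SATclosed} keeping each construction inside \(\Sat\).

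First, the strong bizero object. The one-object abelian category \(0\) is saturated, since its only Serre subcategory is itself. The argument of \prox\ref{P:AbCatBizero} only looks at hom-categories, and these are the same in \(\Sat\) as in \(\AbCat\), so \(0\) is a strong bizero object of \(\Sat\).

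For \textup{(DI1)}, let \(F\colon\onecat{A}\to\onecat{B}\) be a 1-cell of \(\Sat\).
\begin{itemize}
\item By \prox\ref{P:AbCatKernel}, the 2-kernel of \(F\) in \(\AbCat\) is the inclusion of the Serre subcategory \(K\) annihilated by \(F\). By \prox\ref{P:SATclosed}, \(K\) is saturated.
\item By \prox\ref{P:AbCatCokernel}, the 2-cokernel of \(F\) in \(\AbCat\) is the projection \(\onecat{B}\to\onecat{B}/S\). By \prox\ref{P:SATclosed}, \(\onecat{B}/S\) is saturated.
\item \(\Sat\) is full on 1-cells and 2-cells, so the test objects \(Z\) in \defx\ref{Def 2-kernel} and \defx\ref{Def 2-cokernel} range over a subclass of those for \(\AbCat\). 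The universal properties therefore restrict, and these are a 2-kernel and a 2-cokernel in \(\Sat\).
\end{itemize}
It follows that the normal 2-monomorphisms and normal 2-epimorphisms of \(\Sat\) are exactly those of \(\AbCat\) between saturated categories. Indeed, a 2-kernel in \(\Sat\) is equivalent to the \(\AbCat\) one, by \corx\ref{corollkeruniqueuptoequiv} applied in \(\Sat\), and conversely. So \corx\ref{C:AbCatNormal} holds verbatim in \(\Sat\).

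For \textup{(DI2)}, rerun the proof of \prox\ref{P:DIabcat} inside \(\Sat\). An antinormal 1-cell reduces, via \corx\ref{C:NormalTransport}, to \(w=q\comp k\), where \(k\colon K\to\onecat{A}\) is a Serre inclusion and \(q\colon\onecat{A}\to\onecat{A}/S\) is a projection, with \(\onecat{A}\) saturated. By saturation \(K^{S}\) is a Serre subcategory, so the \enquote{suppose} direction of that proof factorises \(w\) as
\[
K\toe K/(K\cap S)\aar{c}\onecat{A}/S .
\]
It remains to check that each factor is a normal 1-cell in \(\Sat\), not merely in \(\AbCat\):
\begin{itemize}
\item \(K/(K\cap S)\) is a Serre quotient of the saturated \(K\), so it is saturated. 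The first factor is then a 2-cokernel in \(\Sat\) by the previous paragraph.
\item The essential image \(q(K^{S})\) of \(c\) is a Serre subcategory of the saturated \(\onecat{A}/S\), so it is saturated. Its inclusion is a 2-kernel in \(\Sat\), namely of the projection onto the corresponding quotient. Hence \(c\), an equivalence onto it by \lemx\ref{L:FullyFaithful}, is a normal 2-monomorphism of \(\Sat\) by \corx\ref{C:NormalTransport}.
\end{itemize}

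The main point of care is exactly this bookkeeping: every intermediate object must be saturated, and normality must be witnessed within \(\Sat\) rather than in \(\AbCat\). Once \prox\ref{P:SATclosed} supplies closure under both constructions, nothing else should obstruct.
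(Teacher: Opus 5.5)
Your proposal is correct and follows the paper's proof. The strong bizero object and \textup{(DI1)} are transported from \(\AbCat\) using the fullness of \(\Sat\) and \prox\ref{P:SATclosed}, and \textup{(DI2)} is the first half of the proof of \prox\ref{P:DIabcat}, with saturation supplying that \(K^{S}\) is a Serre subcategory. Your explicit check that \(K\cap S\), \(K/(K\cap S)\) and \(q(K^{S})\) are saturated, so that both factors are normal within \(\Sat\), simply spells out what the paper compresses into the phrase that the factorisation lies in \(\Sat\).
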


\begin{proof}
	The abelian category with one object is saturated, and it is a strong bizero object of \(\Sat\) by \prox\ref{P:AbCatBizero}, whose proof takes place inside any full sub-2-category of \(\AbCat\) containing it.

	Condition \textup{(DI1)}. Let \(F\colon\onecat{A}\to\onecat{B}\) be an exact functor between saturated abelian categories. Its 2-kernel in \(\AbCat\) is the inclusion of the Serre subcategory \(K\) of the objects that \(F\) annihilates, by \prox\ref{P:AbCatKernel}, and its 2-cokernel is the projection onto \(\onecat{B}/S\) for a Serre subcategory \(S\) of \(\onecat{B}\), by \prox\ref{P:AbCatCokernel}. Both \(K\) and \(\onecat{B}/S\) are saturated by \prox\ref{P:SATclosed}; and since \(\Sat\) is a full sub-2-category of \(\AbCat\), a 2-kernel or a 2-cokernel computed in \(\AbCat\) whose vertex lies in \(\Sat\) is one computed in \(\Sat\). The same remark makes \corx\ref{C:AbCatNormal} available in \(\Sat\).

	Condition \textup{(DI2)}. By the reduction carried out at the start of the proof of \prox\ref{P:DIabcat} it suffices to show that the composite \(w=q\comp k\) of the inclusion of a Serre subcategory \(K\) into a saturated abelian category \(\onecat{A}\) with the projection onto a Serre quotient \(\onecat{A}/S\) is normal. This is the first half of the proof of \prox\ref{P:DIabcat}, whose hypothesis that \(K^{S}\) be a Serre subcategory is now supplied by saturation; and the intermediate category \(K/(K\cap S)\), being a Serre quotient of a Serre subcategory of \(\onecat{A}\), is itself saturated by \prox\ref{P:SATclosed}, so that the factorisation obtained there lies in \(\Sat\).
\end{proof}

By \prox\ref{P:DiExactHSD} the 2-category \(\Sat\) is homologically self-dual, so everything proved in Sections~\ref{S:Exact} to~\ref{S:Naturality} holds in it: the Pure Snake Lemma, the Snake Lemma of \thex\ref{Snake General 2D}, and its 2-naturality.

\subsection{A criterion on subobjects}\label{SS:ModelAS}

It remains to produce saturated abelian categories, and for that the three-step filtration implicit in \defx\ref{D:Saturation} can be shortened.

\begin{proposition}\label{P:TwoStep}
	Let \(K\) and \(S\) be Serre subcategories of an abelian category \(\onecat{A}\). If every object of \(K\vee S\) admits a subobject in \(S\) with quotient in \(K\), or a subobject in \(K\) with quotient in \(S\), then \(K^{S}\) is a Serre subcategory.
\end{proposition}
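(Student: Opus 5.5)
By \prox\ref{P:Saturation} it suffices to show \(K\vee S\subseteq K^{S}\); then \(K^{S}=K\vee S\), which is a Serre subcategory. So the plan is to take an arbitrary object \(X\) of \(K\vee S\) and exhibit a span (here a single morphism) joining it to an object of \(K\), with kernel and cokernel in \(S\). The hypothesis splits this into two cases, according to which of the two filtrations \(X\) admits.

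In the first case \(X\) has a subobject \(s\colon X''\to X\) with \(X''\in S\) and \(X/X''\in K\). The projection \(X\to X/X''\) has kernel \(X''\in S\) and zero cokernel. Its target lies in \(K\), so \(q(X)\iso q(X/X'')\) in \(\onecat{A}/S\), and hence \(X\in K^{S}\). In the second case \(X\) has a subobject \(a\colon X'\to X\) with \(X'\in K\) and \(X/X'\in S\). The inclusion \(X'\to X\) has zero kernel and cokernel \(X/X'\in S\), so it is inverted by \(q\) and again \(X\in K^{S}\). In either case we use the span whose other leg is an identity, which \defx\ref{D:Saturation} allows.

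With \(K\vee S\subseteq K^{S}\) established, the reverse inclusion \(K^{S}\subseteq K\vee S\) from \prox\ref{P:Saturation} gives equality, and \(K\vee S\) is a Serre subcategory by definition. The only point needing care is the fact that a morphism whose kernel and cokernel lie in \(S\) becomes an isomorphism in \(\onecat{A}/S\). This is standard Serre-quotient theory, since \(q\) is exact and annihilates exactly \(S\); in any case it is already built into the span formulation of \defx\ref{D:Saturation}. I therefore expect no real obstacle. The substance of the statement lies in verifying its hypothesis in examples, not in this deduction.
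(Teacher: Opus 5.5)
Your proof is correct and is the paper's argument: in each case a single morphism with kernel and cokernel in \(S\) puts \(X\) in \(K^{S}\). That morphism is the projection onto \(X/X''\), respectively the inclusion of \(X'\). This gives \(K\vee S\subseteq K^{S}\), and the reverse containment from Proposition~\ref{P:Saturation} yields equality with the Serre subcategory \(K\vee S\).
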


\begin{proof}
	Let \(M\) be an object of \(K\vee S\) and let \(M_{1}\subseteq M\) be a subobject as in the hypothesis. In the first case the projection \(M\toe M/M_{1}\) has kernel \(M_{1}\in S\) and zero cokernel, so that \(\ov{M}\iso\ov{M/M_{1}}\) with \(M/M_{1}\in K\); in the second the inclusion \(M_{1}\to M\) has zero kernel and cokernel \(M/M_{1}\in S\), so that \(\ov{M}\iso\ov{M_{1}}\) with \(M_{1}\in K\). Either way \(M\in K^{S}\), so \(K\vee S\subseteq K^{S}\); the reverse containment is \prox\ref{P:Saturation}.
\end{proof}

Call an abelian category \dfn{noetherian} when each of its objects satisfies the ascending chain condition on subobjects.

\begin{definition}\label{D:AS}
	An abelian category \(\onecat{A}\) satisfies \(\AS\) when for every Serre subcategory \(T\) of \(\onecat{A}\) and every object \(X\): if every nonzero subobject of \(X\) has a nonzero subobject lying in \(T\), then \(X\) lies in \(T\).
\end{definition}

The converse implication holds in every abelian category, a Serre subcategory being closed under subobjects, so \(\AS\) asserts that the two properties of \(X\) coincide. It is a genuine restriction. In the category of all \(\mathbb{Z}\)-modules the finitely generated ones form a Serre subcategory \(T\), and every nonzero submodule of \(\mathbb{Q}\) contains a copy of \(\mathbb{Z}\) and so meets \(T\), while \(\mathbb{Q}\) itself does not lie in \(T\); what fails there is not room for specialisation but finiteness, which is why the noetherian hypothesis appears in the next proposition.

\begin{proposition}\label{P:ASimplies}
	A noetherian abelian category satisfying \(\AS\) is saturated.
\end{proposition}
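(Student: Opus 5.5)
The plan is to verify the hypothesis of \prox\ref{P:TwoStep} in its first form: given Serre subcategories \(K\) and \(S\) of a noetherian abelian category \(\onecat{A}\) satisfying \(\AS\), I will show that every object \(M\) of \(K\vee S\) has a subobject in \(S\) with quotient in \(K\). Then \(K^{S}\) is a Serre subcategory, and \(\onecat{A}\) is saturated.

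The first step is a concrete description of the join. I will show that \(K\vee S\) is the class of objects admitting a finite filtration \(0=M_0\subseteq M_1\subseteq\dots\subseteq M_n=M\) each of whose factors \(M_i/M_{i-1}\) lies in \(K\) or in \(S\).
\begin{itemize}
\item This class is contained in every Serre subcategory containing \(K\) and \(S\), since such a subcategory is closed under extensions.
\item It is closed under extensions, by concatenating filtrations.
\item It is closed under subobjects: for \(M'\subseteq M\), the induced filtration \(M'\cap M_i\) has factors that embed into the \(M_i/M_{i-1}\).
\item It is closed under quotients: for \(M\toe M''\), the image filtration has factors that are quotients of the \(M_i/M_{i-1}\).
\end{itemize}
So the class is a Serre subcategory containing \(K\) and \(S\), and therefore it is \(K\vee S\). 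As a consequence, every nonzero object of \(K\vee S\) has a nonzero subobject lying in \(K\) or in \(S\): take the first nonzero term \(M_i\) of a filtration, which equals the factor \(M_i/M_{i-1}\) since \(M_{i-1}=0\).

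The second step uses the noetherian hypothesis to extract the largest subobject \(M_S\subseteq M\) lying in \(S\).
\begin{itemize}
\item The sum of two subobjects in \(S\) is a quotient of their direct sum, so it lies in \(S\).
\item Hence the subobjects of \(M\) lying in \(S\) form an upward directed family.
\item By the ascending chain condition this family has a maximal member, and directedness makes it the largest one.
\end{itemize}
Put \(N\coloneq M/M_S\). Then \(N\) lies in \(K\vee S\), being a quotient of \(M\). Moreover \(N\) has no nonzero subobject in \(S\): the preimage in \(M\) of such a subobject would be an extension of it by \(M_S\), hence would lie in \(S\) and strictly contain \(M_S\), which is impossible.

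The third step applies \(\AS\) with \(T=K\) to conclude \(N\in K\). Let \(Y\) be a nonzero subobject of \(N\). Then \(Y\) lies in \(K\vee S\), so by the first step it has a nonzero subobject in \(K\) or in \(S\). The second alternative is excluded, because a nonzero subobject of \(Y\) lying in \(S\) would be a nonzero subobject of \(N\) lying in \(S\). So every nonzero subobject of \(N\) has a nonzero subobject in \(K\), and \(\AS\) gives \(N\in K\). Thus \(M_S\subseteq M\) is a subobject in \(S\) with quotient in \(K\), and \prox\ref{P:TwoStep} finishes the argument.

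The main obstacle I expect is the first step, the filtration description of \(K\vee S\). The rest is formal once it is available, and its only delicate point is checking closure under subobjects and quotients through the induced filtrations. A second point to treat with care is the existence of the largest subobject in \(S\), which rests on noetherianity exactly as the \(\mathbb{Z}\)-module example preceding the statement suggests.
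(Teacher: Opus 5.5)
Your proof is correct and follows the paper's argument step by step. You take a maximal subobject of \(M\) lying in \(S\) (by noetherianity), note that the quotient has no nonzero subobject in \(S\), use the filtration description of \(K\vee S\) to show that every nonzero subobject of that quotient contains a nonzero subobject in \(K\), and conclude with \(\AS\) for \(T=K\) and \prox\ref{P:TwoStep}. The differences are cosmetic: you prove the filtration description of the join where the paper states it in passing, and you upgrade the maximal subobject in \(S\) to a largest one, which is harmless but not needed.
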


\begin{proof}
	Let \(K\) and \(S\) be Serre subcategories and let \(M\) lie in \(K\vee S\). By \prox\ref{P:TwoStep} it is enough to present \(M\) as an extension of an object of \(K\) by an object of \(S\).

	Among the subobjects of \(M\) that lie in \(S\), choose one that is maximal; the zero subobject is available and \(M\) is noetherian. Call it \(M_{1}\) and put \(N\coloneq M/M_{1}\). Then \(N\) has no nonzero subobject lying in \(S\): the preimage in \(M\) of such a subobject is an extension of an object of \(S\) by \(M_{1}\), hence lies in \(S\), and it contains \(M_{1}\) strictly.

	Every nonzero subobject \(W\) of \(N\) has a nonzero subobject lying in \(K\). Indeed \(W\), being a subobject of a quotient of \(M\), lies in \(K\vee S\); and \(K\vee S\) is the class of objects carrying a finite filtration whose successive subquotients lie in \(K\) or in \(S\), that class being closed under subobjects, quotients and extensions. The first nonzero step of such a filtration on \(W\) is a nonzero subobject of \(W\) lying in \(K\) or in \(S\), and the second case is excluded, since it would produce a nonzero subobject of \(N\) lying in \(S\).

	By \(\AS\), applied with \(T=K\) to the object \(N\), we conclude that \(N\) lies in \(K\). The epimorphism \(M\toe N\) has kernel \(M_{1}\) in \(S\), which is the presentation \prox\ref{P:TwoStep} asks for.
\end{proof}

\begin{remark}\label{R:Atoms}
	Condition \(\AS\) is the classical statement that the support of an object is the specialisation-closure of its associated points, written with neither supports nor associated points, and Kanda's atom spectrum \cite[Definitions~2.1, 3.1, 3.4 and~3.7]{Kanda12} is the dictionary. Call a nonzero object \(H\) \emph{monoform} when no nonzero subobject of \(H\) embeds in a proper quotient of \(H\), and call two monoform objects \emph{atom-equivalent} when they share a nonzero subobject; the atom spectrum \(\ASpec\onecat{A}\) is the class of equivalence classes. Then \(\ASupp M\) is the class of atoms having a representative that is a subquotient of \(M\), and \(\AAss M\) the class of those having a representative that is a subobject of \(M\); and a subclass \(\Phi\) is \emph{open} when every atom in it has a representative \(H\) with \(\ASupp H\subseteq\Phi\). For a noetherian and skeletally small \(\onecat{A}\), the assignment \(T\mapsto\ASupp T\) is a bijection from the Serre subcategories onto the open subclasses, and every nonzero object has a monoform subobject \cite[Theorems~4.3 and~2.9]{Kanda12}. Since every nonzero subobject of \(X\) then contains a monoform subobject, the hypothesis of \defx\ref{D:AS} says exactly that \(\AAss X\subseteq\ASupp T\) and its conclusion exactly that \(\ASupp X\subseteq\ASupp T\); so \(\AS\) says that \(\ASupp X\) is contained in every open subclass containing \(\AAss X\). For the finitely generated modules over a commutative Noetherian ring this returns \(\ASpec\onecat{A}=\Spec R\), \(\ASupp=\Supp\), \(\AAss=\Ass\), and as open subclasses the specialisation-closed subsets.

	Two traps make \defx\ref{D:AS} the better formulation. Kanda's topology on \(\ASpec\onecat{A}\) is the Hochster dual of the Zariski topology \cite[Remark~7.4]{Kanda12}, so his ``open'' means ``specialisation-closed'' and his closure operator is generisation; and \(\ASupp H\) genuinely depends on the choice of monoform representative \(H\)---which is why openness has to quantify over representatives---so that the equality \(\ASupp X=\bigcup_{\alpha\in\AAss X}\ov{\{\alpha\}}\) one first writes down is not even well posed. Nothing below uses this dictionary.
\end{remark}

\subsection{Modules and coherent sheaves}\label{SS:ModelExamples}

\begin{proposition}\label{P:ModAS}
	Let \(R\) be a commutative Noetherian ring. Then the category of finitely generated \(R\)-modules is a noetherian abelian category satisfying \(\AS\).
\end{proposition}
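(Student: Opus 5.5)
The claim has two parts. The first, that the category \(\onecat{A}\) of finitely generated \(R\)-modules is a noetherian abelian category, is standard: over a Noetherian ring, submodules and quotients of finitely generated modules are finitely generated, so \(\onecat{A}\) is an abelian subcategory of all modules. A finitely generated module over a Noetherian ring is a noetherian module, which is the ascending chain condition on subobjects. The content is therefore \(\AS\). I would prove it by translating Serre subcategories into supports.

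First I would recall Gabriel's classification: the Serre subcategories \(T\) of \(\onecat{A}\) correspond to the specialisation-closed subsets \(\Phi\subseteq\Spec R\), via \(T=\{X : \Supp X\subseteq\Phi\}\). To stay self-contained I would prove only the direction needed. Given \(T\), put \(\Phi\coloneq\{\mathfrak p : R/\mathfrak p\in T\}\).
\begin{itemize}
\item \(\Phi\) is specialisation-closed, since \(\mathfrak p\subseteq\mathfrak q\) makes \(R/\mathfrak q\) a quotient of \(R/\mathfrak p\).
\item Every \(X\) with \(\Supp X\subseteq\Phi\) lies in \(T\). The reason is that \(X\) has a prime filtration whose factors are \(R/\mathfrak p_i\) with each \(\mathfrak p_i\in\Supp X\), and \(T\) is closed under extensions.
\end{itemize}

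Now let \(X\) be as in the hypothesis of \defx\ref{D:AS}: every nonzero submodule of \(X\) has a nonzero submodule in \(T\). Take \(\mathfrak p\in\Ass X\), so that \(R/\mathfrak p\hookrightarrow X\). The image of this embedding has a nonzero submodule \(Y\in T\). A nonzero submodule of \(R/\mathfrak p\) is isomorphic to a nonzero ideal \(J\) of the domain \(R/\mathfrak p\). For any nonzero \(x\in J\), multiplication by \(x\) embeds \(R/\mathfrak p\) into \(J\), since \(R/\mathfrak p\) is a domain. Hence \(R/\mathfrak p\) is a subobject of \(Y\), so it lies in \(T\), and \(\mathfrak p\in\Phi\). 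Thus \(\Ass X\subseteq\Phi\).

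Finally, \(\Supp X\) is the specialisation closure of \(\Ass X\): every prime in the support contains a minimal prime of the support, and minimal primes of the support are associated. Since \(\Phi\) is specialisation-closed, \(\Supp X\subseteq\Phi\), and therefore \(X\in T\) by the second bullet above.

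The main obstacle is the embedding step. A nonzero submodule of \(R/\mathfrak p\) need not itself be isomorphic to \(R/\mathfrak p\), so one must check that it contains a copy of \(R/\mathfrak p\); multiplication by a nonzero element of the domain gives exactly this. The rest consists of standard facts about associated primes and prime filtrations, which I would cite rather than reprove.
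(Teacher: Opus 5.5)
Your proof is correct and follows the paper's argument step for step: multiplication by a nonzero element of the domain \(R/\mathfrak p\) embeds it into the given submodule in \(T\), the conclusion is transferred from associated primes to the whole support, and a prime filtration together with closure of \(T\) under extensions finishes the proof. The differences are only cosmetic. You package the argument through the specialisation-closed set \(\Phi\), whereas the paper never names it. You also pass from the support to associated primes via minimal primes of the support, where the paper localises at \(\mathfrak p\) and pulls back an associated prime of \(M_{\mathfrak p}\).
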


\begin{proof}
	Only \(\AS\) is at issue. Let \(T\) be a Serre subcategory and let \(M\) be a module every nonzero submodule of which has a nonzero submodule in \(T\).

	Let \(p\) be an associated prime of \(M\), so that \(R/p\) embeds in \(M\). By hypothesis \(R/p\) has a nonzero submodule \(J/p\) lying in \(T\); choose a nonzero \(x\) in \(J/p\). Multiplication by \(x\) is injective, \(R/p\) being a domain, so \(R/p\) is isomorphic to the submodule \(x\cdot(R/p)\) of \(J/p\) and therefore lies in \(T\).

	Let now \(p\) be any prime in \(\Supp M\). Then \(M_{p}\neq0\), so \(M_{p}\) has an associated prime, and pulling it back along \(R\to R_{p}\) gives an associated prime \(q\) of \(M\) with \(q\subseteq p\). By the previous paragraph \(R/q\) lies in \(T\); and \(R/p\) is a quotient of \(R/q\), so it lies in \(T\) as well.

	Finally, \(M\) carries a filtration \(0=M_{0}\subset M_{1}\subset\dots\subset M_{n}=M\) whose successive quotients are of the form \(R/p_{i}\) for primes \(p_{i}\) \cite[Theorem~6.4]{Mat89}. Each \(R/p_{i}\) is a subquotient of \(M\), so that \(\Supp(R/p_{i})\subseteq\Supp M\) and in particular \(p_{i}\in\Supp M\). By the previous paragraph every \(R/p_{i}\) lies in \(T\), and \(T\) is closed under extensions, so \(M\) lies in \(T\).
\end{proof}

Nothing in that proof classifies the Serre subcategories of the category, or even mentions specialisation-closed subsets: the hypothesis is applied once, to the submodule \(R/p\), and everything after that is closure of \(T\) under subobjects, quotients and extensions.

For coherent sheaves one classical fact has to be added, since \(\AS\) is a hypothesis about subobjects while the associated points of a sheaf, unlike the submodule \(R/p\) above, are visible only locally.

\begin{lemma}\label{L:OneAss}
	Let \(X\) be a noetherian scheme, let \(\mathcal{F}\) be a coherent sheaf on \(X\) and let \(x\in\Ass\mathcal{F}\). Then \(\mathcal{F}\) has a coherent subsheaf \(\mathcal{G}\) with \(\Ass\mathcal{G}=\{x\}\).
\end{lemma}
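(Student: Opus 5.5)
The plan is to reduce to the affine case and then extend a subsheaf from an open set. Fix an affine open \(U=\Spec R\) containing \(x\); the point \(x\) corresponds to a prime \(p\) of \(R\), and \(M=\mathcal{F}(U)\) is a finitely generated \(R\)-module with \(p\in\Ass M\). In \(M\) I would take the largest submodule \(N\) whose associated primes are all equal to \(p\). The cleaner choice is to let \(N\) be a submodule maximal among those with \(\Ass N=\{p\}\). Such submodules exist, since the image of an embedding \(R/p\to M\) is one, and a maximal one exists because \(M\) is noetherian. Any nonzero submodule of such an \(N\) still has \(\Ass=\{p\}\), which is what makes the construction stable.

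To pass to \(X\), I would apply the standard extension of coherent subsheaves on a noetherian scheme. This is a coherent subsheaf of \(\mathcal{F}|_U\), and it extends to a coherent subsheaf \(\mathcal{G}'\subseteq\mathcal{F}\) with \(\mathcal{G}'|_U=\widetilde{N}\) (EGA~I, 9.4.7, or Hartshorne Exercise~II.5.15). However, \(\mathcal{G}'\) may acquire extra associated points outside \(U\).

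The main step is to remove those extra points, and I would do this by taking the subsheaf of sections supported on the closure of \(x\). Set \(Z=\ov{\{x\}}\) and let \(\mathcal{G}=\mathcal{H}^{0}_{Z}(\mathcal{G}')\). This is coherent, being the sheaf of sections of \(\mathcal{G}'\) annihilated by a power of the ideal of \(Z\), which is coherent on a noetherian scheme. Its associated points are the associated points of \(\mathcal{G}'\) that lie in \(Z\). On \(U\) these are the points corresponding to associated primes of \(N\), namely only \(x\). An associated point \(y\in Z\setminus U\) would be a specialisation of \(x\) lying outside \(U\), so it is not yet excluded. I expect this to be the main obstacle.

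I would therefore try a second operation: take the largest subsheaf \(\mathcal{G}\subseteq\mathcal{G}'\) all of whose associated points are generic in the sense that they equal \(x\). Equivalently, I would kill the subsheaf \(\mathcal{K}\) of sections whose support has no point generising to \(x\). Concretely, I would let \(\mathcal{K}\) be the kernel of \(\mathcal{G}'\to j_*j^*\mathcal{G}'\), where \(j\) is the inclusion of the generic point, or of \(\Spec\mathcal{O}_{X,x}\). Then \(\mathcal{G}=\mathcal{G}'/\mathcal{K}\) will not work, because it is a quotient and not a subsheaf.

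The fallback, which I would actually adopt, is the following. Let \(\mathcal{G}\) be the subsheaf of \(\mathcal{F}\) consisting of the sections \(s\) of \(\mathcal{H}^0_Z(\mathcal{F})\) whose germ at every point \(y\) is, at \(y\), either zero or has stalk-level associated prime only \(x\). More simply, take \(\mathcal{G}\) to be the kernel of
\[
\mathcal{H}^0_Z(\mathcal{F})\to\textstyle\prod_{y\in\Ass\mathcal{F}\cap Z,\ y\neq x}(i_y)_*\bigl(\mathcal{F}_y/\mathcal{H}^0_{\ov{\{x\}}}\ \text{part}\bigr)
\]
I would make this precise stalkwise as the largest submodule of \(\mathcal{F}_y\) supported on \(Z\) whose associated primes are just \(x\). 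Its coherence follows from the existence of the maximal submodule \(N\) above on each affine of a finite cover. Those local maximal submodules glue by their uniqueness, since a sum of two submodules with \(\Ass=\{p\}\) again has \(\Ass=\{p\}\). The result is nonzero at \(x\) because \(x\in\Ass\mathcal{F}\). The gluing and uniqueness argument is the crux.
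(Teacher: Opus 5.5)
There is a genuine gap, and it sits exactly at the step you call the crux. Your claim that a sum of two submodules with \(\Ass=\{p\}\) again has \(\Ass=\{p\}\) is false. The sum \(N_1+N_2\) is a quotient of \(N_1\oplus N_2\), not a submodule of it, and associated primes are not inherited by quotients.

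For a counterexample, take \(R=\Bbbk[s,t]\), \(M=R/(s^{2},st)\) and \(p=(s)\). Both \(R\bar t\) and \(R(\bar s+\bar t)\) are isomorphic to \(R/(s)\), so each has \(\Ass=\{p\}\). Their sum, however, contains \(\bar s\neq 0\), whose annihilator is the maximal ideal \((s,t)\).

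Consequences:
\begin{itemize}
\item \(M\) has no largest submodule with associated primes only \(p\), and its maximal such submodules are not unique.
\item The same happens after localising at \((s,t)\), so your stalkwise version fares no better.
\item Your local choices therefore have no reason to glue.
\item The set of sections described in your fallback is not even closed under subtraction: \(\bar t\) and \(\bar s+\bar t\) qualify, but their difference \(\bar s\) does not.
\item The displayed kernel is not well defined as written.
\end{itemize}

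What is missing is a way to discard the unwanted associated points by passing to a \emph{subsheaf} rather than to a quotient. The affine reduction and the extension step are then unnecessary.

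Start from \(\mathcal{G}_0=\mathcal{H}^0_Z(\mathcal{F})\) with \(Z=\ov{\{x\}}\), or equally from your \(\mathcal{H}^0_Z(\mathcal{G}')\). Its associated points other than \(x\) are finitely many points of \(Z\) different from its generic point. So the union \(W'\) of their closures is a closed set not containing \(x\). Let \(\mathcal{J}\) be an ideal sheaf of \(W'\), and let \(\mathcal{A}\subseteq\mathcal{G}_0\) be the coherent subsheaf of sections supported in \(W'\); it is killed by some \(\mathcal{J}^{c}\).

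The Artin--Rees lemma, applied on each member of a finite affine cover, gives \(k\) with \(\mathcal{J}^{n}\mathcal{G}_0\cap\mathcal{A}\subseteq\mathcal{J}^{n-k}\mathcal{A}\) for \(n\geq k\). This intersection vanishes once \(n\geq k+c\). Put \(\mathcal{G}=\mathcal{J}^{n}\mathcal{G}_0\) for such \(n\). It embeds in the quotient \(\mathcal{G}_0/\mathcal{A}\) you were reaching for, while remaining a subsheaf of \(\mathcal{F}\).

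Now check \(\Ass\mathcal{G}=\{x\}\):
\begin{itemize}
\item \(\mathcal{G}\) agrees with \(\mathcal{G}_0\) at \(x\), because \(\mathcal{J}_x=\mathcal{O}_{X,x}\), so \(\mathcal{G}\) is nonzero.
\item Its associated points are among those of \(\mathcal{G}_0\).
\item None of them lies in \(W'\), since such a point would yield a nonzero section of \(\mathcal{G}\cap\mathcal{A}\).
\end{itemize}
This is the paper's argument.
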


\begin{proof}
	Write \(W=\ov{\{x\}}\) and let \(\mathcal{G}_{0}\subseteq\mathcal{F}\) be the subsheaf of sections supported in \(W\), which is coherent because \(X\) is noetherian; it has \(x\) as an associated point and its support lies in \(W\). Let \(W'\) be the union of the closures of the associated points of \(\mathcal{G}_{0}\) other than \(x\). Each of these is a proper closed subset of \(W\), because \(x\) is the generic point of \(W\), so \(W'\) is a closed subset of \(X\) not containing \(x\). Let \(\mathcal{J}\) be the ideal sheaf of \(W'\) and let \(\mathcal{A}\subseteq\mathcal{G}_{0}\) be the subsheaf of sections supported in \(W'\); being coherent and supported in \(W'\), it is annihilated by a power of \(\mathcal{J}\). By the Artin--Rees lemma, applied on each member of a finite affine cover, \(\mathcal{J}^{n}\mathcal{G}_{0}\cap\mathcal{A}=0\) for \(n\) large; put \(\mathcal{G}=\mathcal{J}^{n}\mathcal{G}_{0}\) for such an \(n\). Then \(\mathcal{G}\) is nonzero, since \(\mathcal{J}_{x}=\mathcal{O}_{X,x}\), and it has no nonzero section supported in \(W'\). Its associated points are associated points of \(\mathcal{G}_{0}\), and one of them other than \(x\) would lie in \(W'\) and would give such a section.
\end{proof}

\begin{proposition}\label{P:CohAS}
	For a noetherian scheme \(X\), the category \(\Coh(X)\) of coherent sheaves is a noetherian abelian category satisfying \(\AS\).
\end{proposition}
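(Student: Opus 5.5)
Coh(X) is abelian, and it is noetherian because X is a noetherian scheme: a coherent sheaf satisfies the ascending chain condition on coherent subsheaves, since on each member of a finite affine cover this is the ascending chain condition for finitely generated modules over a Noetherian ring. So, as in \prox\ref{P:ModAS}, only \(\AS\) is at issue. Let \(T\) be a Serre subcategory of \(\Coh(X)\), and let \(\mathcal{F}\) be a coherent sheaf each of whose nonzero coherent subsheaves has a nonzero coherent subsheaf lying in \(T\). I would follow the three steps of \prox\ref{P:ModAS}, with \lemx\ref{L:OneAss} replacing the submodule \(R/p\).

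\emph{Step 1: associated points.} Let \(x\in\Ass\mathcal{F}\). By \lemx\ref{L:OneAss} there is a subsheaf \(\mathcal{G}\subseteq\mathcal{F}\) with \(\Ass\mathcal{G}=\{x\}\). By hypothesis \(\mathcal{G}\) has a nonzero subsheaf \(\mathcal{G}'\in T\). Then \(\Ass\mathcal{G}'\) is nonempty and contained in \(\{x\}\), so \(x\in\Supp\mathcal{G}'\). I would then show that every coherent sheaf whose support lies in \(W=\ov{\{x\}}\) belongs to \(T\). The natural route is to treat \(\mathcal{O}_{W}\), the structure sheaf of the reduced closed subscheme, as playing the role of \(R/p\). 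Since \(\mathcal{G}'\) is supported in \(W\), it is annihilated by \(\mathcal{I}_W^n\) for some \(n\); after passing to a subquotient in \(T\) we may assume it is an \(\mathcal{O}_W\)-module. Its stalk at \(x\) is then a nonzero vector space over the function field. From here one wants to get \(\mathcal{O}_W\) itself, or at least \(\mathcal{I}\mathcal{O}_W\) for some ideal \(\mathcal{I}\) nonvanishing at \(x\), into \(T\) as a subquotient of copies of \(\mathcal{G}'\).

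This is the main obstacle. On a non-affine \(X\) there is no global element \(x\) to multiply by. I would try the following instead. Take the torsion-free quotient of \(\mathcal{G}'|_W\) over the integral scheme \(W\); it lies in \(T\) as a quotient. Then use that, generically, a torsion-free coherent sheaf of rank \(r\) contains a subsheaf isomorphic to \(\mathcal{L}\subseteq\mathcal{O}_W\), i.e.\ an ideal sheaf nonzero at \(x\), which one can produce by choosing a local generator and clearing denominators via a sufficiently large power of an ideal defining the non-free locus. This gives some ideal sheaf \(\mathcal{I}\mathcal{O}_W\in T\) with \(x\notin V(\mathcal{I})\).

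Having that, one shows that every coherent sheaf supported in \(W\) is in \(T\) by noetherian induction on closed subsets of \(W\). Such a sheaf \(\mathcal{E}\) is filtered with successive quotients that are \(\mathcal{O}_W\)-modules, or are supported on proper closed subsets. An \(\mathcal{O}_W\)-module is, up to pieces supported on \(V(\mathcal{I})\cap W\), a quotient of sums of copies of \(\mathcal{I}\mathcal{O}_W\), after twisting by powers of \(\mathcal{I}\), which is harmless since \(\mathcal{I}^k\mathcal{E}\subseteq\mathcal{E}\). The pieces on the smaller closed set need their own generic points to lie in \(T\). I expect to arrange this by running the whole argument as a noetherian induction on \(\Supp\mathcal{F}\).

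\emph{Step 2: support points.} For \(y\in\Supp\mathcal{F}\), I localise at \(y\) to find an associated point \(x\) of \(\mathcal{F}\) generising \(y\). Then \(\ov{\{y\}}\subseteq\ov{\{x\}}\), so by Step 1 every coherent sheaf supported in \(\ov{\{y\}}\), for instance \(\mathcal{O}_{\ov{\{y\}}}\), lies in \(T\).

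\emph{Step 3: conclusion.} \(\mathcal{F}\) has a finite filtration whose successive quotients are coherent sheaves supported on the closures of points of \(\Supp\mathcal{F}\). For example, one can filter by the subsheaves \(\mathcal{I}_Z^k\mathcal{F}\) for \(Z\) the reduced support, then split off irreducible components. By Step 2 each successive quotient lies in \(T\), and closure under extensions puts \(\mathcal{F}\) in \(T\).
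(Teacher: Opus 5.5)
Your opening of Step~1 is exactly the paper's argument. \lemx\ref{L:OneAss} gives \(\mathcal{G}\subseteq\mathcal{F}\) with \(\Ass\mathcal{G}=\{x\}\), the hypothesis gives a nonzero \(\mathcal{G}'\subseteq\mathcal{G}\) in \(T\), and so \(x\in\Supp\mathcal{G}'\). The paper finishes from there in two lines using Gabriel's classification \cite[Proposition~VI.2.4]{Gabriel62}: \(T=T_{Z}\) for a specialisation-closed \(Z\), so \(x\in Z\), whence \(\Ass\mathcal{F}\subseteq Z\) and \(\Supp\mathcal{F}\subseteq Z\). You instead set out to prove by hand that every coherent sheaf supported in \(W=\ov{\{x\}}\) lies in \(T\). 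That is the substance of the classification, and there your sketch has a genuine gap.

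The step you flag as the main obstacle is actually fine: extend a local generator to a coherent subsheaf of the torsion-free quotient and clear denominators. What fails is what comes after it.

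First, your claim that an \(\mathcal{O}_{W}\)-module is, up to pieces on \(V(\mathcal{I})\), a quotient of sums of copies of \(\mathcal{I}\mathcal{O}_{W}\) after twisting is false off the affine case. On \(W=\mathbb{P}^{1}\) a nonzero ideal sheaf is isomorphic to \(\mathcal{O}(-n)\) for some \(n\geq 0\). For \(m>n\) there is no nonzero map from \(\mathcal{O}(-n)\) into \(\mathcal{O}(-m)\), nor into any \(\mathcal{I}^{k}\mathcal{O}(-m)\iso\mathcal{O}(-m-kn)\).

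Second, and more seriously, the pieces supported on proper closed subsets of \(W\) are not subsheaves of \(\mathcal{F}\). This already applies to \(\mathcal{O}_{W}/\mathcal{J}\) for the ideal \(\mathcal{J}\in T\) you produce. A noetherian induction on \(\Supp\mathcal{F}\) driven by the \(\AS\) hypothesis on \(\mathcal{F}\) therefore says nothing about them. Your sentence \emph{I expect to arrange this} marks exactly where the proof is missing.

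The missing idea is that membership in \(T\) propagates to specialisations through \(T\) itself, not through \(\mathcal{F}\). Let \(\mathcal{E}\in T\), let \(y\in\Supp\mathcal{E}\), and give \(Y=\ov{\{y\}}\) its reduced structure. Then \(\mathcal{E}/\mathcal{I}_{Y}\mathcal{E}\) is a quotient of \(\mathcal{E}\), hence lies in \(T\). It is an \(\mathcal{O}_{Y}\)-module, and its stalk \(\mathcal{E}_{y}/\mathfrak{m}_{y}\mathcal{E}_{y}\) is nonzero by Nakayama.

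With this, run noetherian induction on integral closed subschemes \(W\) for the statement: \(\mathcal{O}_{W}\in T\) as soon as the generic point of \(W\) lies in the support of some member of \(T\).
\begin{enumerate}
\item The quotient just described, its torsion-free quotient and your clearing of denominators give a nonzero ideal \(\mathcal{J}\subseteq\mathcal{O}_{W}\) in \(T\).
\item Every integral closed \(W'\subsetneq W\) has its generic point in \(W=\Supp\mathcal{J}\), so \(\mathcal{O}_{W'}\in T\) by induction.
\item The d\'evissage of \(\mathcal{O}_{W}/\mathcal{J}\) into pushforwards of ideal sheaves on such \(W'\), each a subobject of \(\mathcal{O}_{W'}\), puts \(\mathcal{O}_{W}/\mathcal{J}\) in \(T\).
\item \(\mathcal{O}_{W}\) is an extension of \(\mathcal{O}_{W}/\mathcal{J}\) by \(\mathcal{J}\), so it lies in \(T\).
\end{enumerate}
Each \(\mathcal{O}_{W'}\) with \(W'\subseteq W\) is then a quotient of \(\mathcal{O}_{W}\). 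The same d\'evissage therefore puts every coherent sheaf supported in \(W\) into \(T\), which is all your Steps~2 and~3 need.

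This repairs the argument and keeps it independent of Gabriel's theorem. Citing the theorem instead, as the paper does, makes everything after \(x\in\Supp\mathcal{G}'\) unnecessary.
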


\begin{proof}
	Only \(\AS\) is at issue. By Gabriel's classification \cite[Proposition~VI.2.4]{Gabriel62} the Serre subcategories of \(\Coh(X)\) are the \(T_{Z}=\{\mathcal{F}\;:\;\Supp\mathcal{F}\subseteq Z\}\) for \(Z\subseteq X\) specialisation-closed. Let \(\mathcal{F}\) be coherent and suppose that every nonzero subsheaf of \(\mathcal{F}\) has a nonzero subsheaf lying in \(T_{Z}\).

	Let \(x\in\Ass\mathcal{F}\) and take \(\mathcal{G}\subseteq\mathcal{F}\) as in \lemx\ref{L:OneAss}. By hypothesis \(\mathcal{G}\) has a nonzero subsheaf \(\mathcal{H}\) with \(\Supp\mathcal{H}\subseteq Z\). Now \(\Ass\mathcal{H}\) is nonempty and contained in \(\Ass\mathcal{G}=\{x\}\), so \(x\in\Supp\mathcal{H}\subseteq Z\). Hence \(\Ass\mathcal{F}\subseteq Z\); and since \(\Supp\mathcal{F}\) is the union of the closures of the points of \(\Ass\mathcal{F}\) \cite[Tag~05AL]{Stacks} and \(Z\) is specialisation-closed, \(\Supp\mathcal{F}\subseteq Z\), which is to say that \(\mathcal{F}\) lies in \(T_{Z}\).
\end{proof}

\begin{corollary}\label{C:Populated}
	The 2-category \(\Sat\) contains the category of finitely generated modules over every commutative Noetherian ring, and the category \(\Coh(X)\) of coherent sheaves on every noetherian scheme \(X\).
\end{corollary}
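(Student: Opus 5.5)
The corollary follows by combining \prox\ref{P:ASimplies} with \prox\ref{P:ModAS} and \prox\ref{P:CohAS}. Since \(\Sat\) is by \defx\ref{D:SAT} the full sub-2-category of \(\AbCat\) on the saturated abelian categories, membership of a given abelian category in \(\Sat\) amounts to its being small in the sense of \convx\ref{Conv Size} and saturated. So for each of the two families I would check these two things and nothing else.

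For a commutative Noetherian ring \(R\), \prox\ref{P:ModAS} says that the finitely generated \(R\)-modules form a noetherian abelian category satisfying \(\AS\). \prox\ref{P:ASimplies} then makes it saturated. Smallness holds because every finitely generated module is a quotient of some \(R^{n}\), so the isomorphism classes form a set. Hence the category is an object of \(\Sat\).

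For a noetherian scheme \(X\), \prox\ref{P:CohAS} gives that \(\Coh(X)\) is noetherian and satisfies \(\AS\), and \prox\ref{P:ASimplies} again yields saturation. Smallness holds because a coherent sheaf is determined up to isomorphism by its restrictions to a fixed finite affine cover, where it is given by finitely generated modules, together with the gluing data; this is a set's worth of choices. So \(\Coh(X)\) also lies in \(\Sat\).

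There is no real obstacle, since the substantive work was done in the three cited propositions. The only points to record explicitly are that the noetherian hypothesis in \prox\ref{P:ASimplies} is part of what \prox\ref{P:ModAS} and \prox\ref{P:CohAS} assert, and that the size convention is met.
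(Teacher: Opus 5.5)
Your proposal is correct and is exactly the paper's proof, which consists of \prox\ref{P:ModAS} and \prox\ref{P:CohAS}, each followed by \prox\ref{P:ASimplies}. Your extra verification of smallness does no harm; the paper does not check it and instead leaves it to Convention~\ref{Conv Size}, which simply assumes that every abelian category under consideration is small.
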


\begin{proof}
	This is \prox\ref{P:ModAS} and \prox\ref{P:CohAS}, each followed by \prox\ref{P:ASimplies}.
\end{proof}

\begin{remark}\label{Rem WhereNot}
	Condition \(\AS\) is exactly what the category of \prox\ref{P:AbCatFails} violates. Take for \(T\) the Serre subcategory \(K\) of the \(S_{1}\)-modules and for \(X\) the module \(M=\Lambda e_{1}\). Every nonzero submodule of \(M\) contains the socle, which is \(S_{1}\) and lies in \(K\); but \(M\) does not lie in \(K\), having \(S_{2}\) among its composition factors. More generally, in a category of finite length the condition holds only for the semisimple ones, since it forces every simple subquotient of an object to be a subobject of it. That is the dividing line: the theory wants categories with room for specialisation, and finite length leaves none.
\end{remark}

\begin{remark}\label{Rem Model Shape}
	The class of saturated abelian categories is cut out by a property rather than exhibited by a construction, so \thex\ref{T:SatModel} is an existence statement about a large 2-category rather than a small worked example; what \corx\ref{C:Populated} adds is that the 2-category is not one of the trivial ones. The classical inputs divide unevenly between the two families. The module case uses only the standard theory of associated primes and the filtration by primes. The geometric case uses Gabriel's classification for \(\Coh(X)\), the description of the support of a coherent sheaf by its associated points, and the Artin--Rees lemma, which is the price of \lemx\ref{L:OneAss}.
\end{remark}

\subsection{A locally ordered model: modular lattices}\label{SS:ModelLattices}

The model of the previous subsections lives at the categorical end of the spectrum, its hom-categories being those of \(\AbCat\). This closing subsection exhibits a model at the opposite end, where each hom-category is a partially ordered set. Call a 2-category \dfn{locally ordered} when between two parallel 1-cells there is at most one 2-cell, and parallel 1-cells carrying 2-cells in both directions are equal. Every locally discrete 2-category is locally ordered, but not conversely.

Write \(\Sup\) for the following locally ordered 2-category. Objects are the complete lattices, small with respect to the universe of Convention~\ref{Conv Size}; 1-cells \(L\to M\) are the maps preserving all joins, the empty one included, so that \(\bot\) goes to \(\bot\); and there is a 2-cell \(f\aR{}g\) exactly when \(f\leq g\) pointwise. A monotone map between complete lattices preserves all joins if and only if it is a left adjoint, so \(\Sup\) is equally the 2-category of complete lattices and Galois connections.

\begin{remark}\label{Rem Locally Ordered}
	In a locally ordered 2-category the two-dimensional layer of the theory simplifies. An invertible 2-cell \(f\iso g\) consists of \(f\leq g\) and \(g\leq f\), so it is an identity; hence isomorphic 1-cells are equal, a factorisation up to an invertible 2-cell is a factorisation on the nose, and an equivalence is an isomorphism. The 2-cells themselves do not disappear: the hom-categories of \(\Sup\) are far from discrete, and the two-dimensional universal properties of \defx\ref{Def 2-kernel} and \defx\ref{Def 2-cokernel} quantify over them. Everything below therefore happens on the nose, and what remains two-dimensional is the order.
\end{remark}

\begin{proposition}\label{P:SupBizero}
	The one-element lattice is a strong bizero object of \(\Sup\).
\end{proposition}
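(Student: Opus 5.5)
The plan is to check the definition of a bizero object directly, then strongness, following the pattern of \prox\ref{P:AbCatBizero}. Write \(\1\) for the one-element lattice \(\{\ast\}\), in which \(\ast=\bot=\top\).

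For every complete lattice \(L\) there is exactly one map \(L\to\1\), the constant map. It preserves all joins, the empty join included, since every value is \(\ast=\bot\). Between it and itself there is only the identity 2-cell, so \(\HomC{\Sup}{L}{\1}\) is the singleton category.

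A map \(\1\to L\) must preserve the empty join. It therefore sends \(\ast\) to \(\bot_L\), and so there is exactly one such map. It does preserve all joins: any family in \(\1\) is either empty or consists of copies of \(\ast\), and its join is \(\ast\) in either case; the image family has join \(\bot_L\) because every member is \(\bot_L\). Again the only 2-cell is the identity, so \(\HomC{\Sup}{\1}{L}\) is the singleton category. Both hom-categories are thus not merely equivalent but isomorphic to the singleton category.

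It follows that the null morphisms \(L\to M\) are exactly the composites \(L\to\1\to M\). There is only one such composite, the constant map at \(\bot_M\). For strongness, given two parallel morphisms that factor through \(\1\), both equal this constant map. The pointwise order relates it to itself by reflexivity, and \(\Sup\) is locally ordered, so there is exactly one 2-cell between them: the identity. This is then necessarily the null 2-cell of \remx\ref{Remark Null}.

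I expect no real obstacle. The only point needing care is that 1-cells preserve the empty join, which is what forces maps out of \(\1\) to land at \(\bot\). Without that requirement, maps out of \(\1\) could pick out any element of \(L\), and \(\1\) would fail to be initial.
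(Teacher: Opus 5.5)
Your proof is correct and follows the paper's argument essentially step for step. There is exactly one map into the one-element lattice. Preservation of the empty join forces the unique map out of it to be constant at \(\bot\). Strongness follows because the only null 1-cell \(L\to M\) is the constant map at \(\bot_M\), so parallel null 1-cells coincide and the only 2-cell between them is the identity. You add one detail the paper leaves implicit, namely the check that this constant map out of the one-element lattice does preserve all joins.
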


\begin{proof}
	Write \(1\) for it. For every complete lattice \(L\) there is exactly one map \(L\to 1\), and a join-preserving map \(1\to L\) must send the empty join to the empty join, so the only one is the constant map at \(\bot\); both hom-posets are singleton categories, and \(1\) is a bizero object. A 1-cell \(L\to M\) is null precisely when it is the constant map at \(\bot\), so two parallel null 1-cells are equal and the unique 2-cell between them is their identity; thus \(1\) is strong.
\end{proof}

\begin{proposition}\label{P:SupKernels}
	The 2-category \(\Sup\) is 2-z-exact. Explicitly, for a join-preserving map \(f\colon L\to M\), a 2-kernel of \(f\) is the inclusion of the down-segment \(\dseg{a}=\{x\in L\mid x\leq a\}\) for \(a=\bigvee\{x\in L\mid f(x)=\bot\}\), and a 2-cokernel of \(f\) is the map \(q_{s}\colon M\to\useg{s}=\{y\in M\mid s\leq y\}\colon y\mapsto y\vee s\) for \(s=f(\top)\).
\end{proposition}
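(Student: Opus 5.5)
By \prox\ref{P:SupBizero}, \(\Sup\) already has a strong bizero object. What remains for 2-z-exactness is to check that the two displayed maps are a 2-kernel and a 2-cokernel. Because \(\Sup\) is locally ordered (\remx\ref{Rem Locally Ordered}), every clause of \defx\ref{Def 2-kernel} and \defx\ref{Def 2-cokernel} can be verified on the nose. A 2-cell is a pointwise inequality, and a 1-cell is null exactly when it is constantly \(\bot\).

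For the 2-kernel I would proceed as follows.
\begin{enumerate}
	\item Since \(f\) preserves all joins, \(f(a)=\bigvee\{f(x)\mid f(x)=\bot\}=\bot\). Hence \(\{x\mid f(x)=\bot\}=\dseg{a}\): if \(x\leq a\), then \(f(x)\leq f(a)=\bot\).
	\item The down-segment \(\dseg{a}\) is a complete lattice. Its joins are computed as in \(L\), including the empty join \(\bot\). So the inclusion \(k\colon\dseg{a}\to L\) is a 1-cell, and \(f\comp k\) is null.
	\item For condition~(1), suppose \(z\colon Z\to L\) preserves joins and \(f\comp z\) is null. Then every \(z(t)\) lies in \(\dseg{a}\). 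Thus \(z\) corestricts to a join-preserving \(u\) with \(k\comp u=z\).
	\item Condition~(2) holds because \(k\) is an order embedding. For \(u,v\) into \(\dseg{a}\), we have \(k\comp u\leq k\comp v\) iff \(u\leq v\). Uniqueness of the 2-cell is automatic in a locally ordered 2-category.
\end{enumerate}

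For the 2-cokernel I would proceed dually.
\begin{enumerate}
	\item The up-segment \(\useg{s}\) is a complete lattice. Its bottom is \(s\), and its nonempty joins are computed in \(M\).
	\item The map \(q_{s}\colon y\mapsto y\vee s\) preserves all joins. It sends the empty join \(\bot\) to \(s\), which is the bottom of \(\useg{s}\). It also preserves nonempty joins, since \(\bigvee_i(y_i\vee s)=(\bigvee_i y_i)\vee s\).
	\item Since \(f(x)\leq f(\top)=s\), we get \(q_{s}(f(x))=s\) for all \(x\). So \(q_{s}\comp f\) is null.
	\item For condition~(1), suppose \(z\colon M\to Z\) preserves joins and \(z\comp f\) is null. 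Then \(z(s)=z(f(\top))=\bot\). Let \(u\) be the restriction of \(z\) to \(\useg{s}\). It preserves nonempty joins, and it preserves the empty join because \(u(s)=\bot\). Moreover \(u(q_{s}(y))=z(y)\vee z(s)=z(y)\), so \(u\comp q_s=z\).
	\item For condition~(2), suppose \(u\comp q_{s}\leq v\comp q_{s}\). Evaluating at \(y\in\useg{s}\), where \(q_{s}(y)=y\), gives \(u\leq v\). The converse is whiskering.
\end{enumerate}

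The step I expect to need the most care is keeping track of the empty join in the cokernel. One has to check that \(q_{s}\) is a 1-cell even though it does not send \(\bot\) to \(\bot\) of \(M\). One also has to check that the factorisation \(u\) preserves the bottom of \(\useg{s}\). Both points reduce to the fact that the bottom of \(\useg{s}\) is \(s\), together with \(z(s)=\bot\). Beyond this, every step is elementary lattice bookkeeping.
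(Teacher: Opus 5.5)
Your proposal is correct and follows the paper's proof essentially step for step. Both arguments verify the universal properties on the nose by local orderedness: the 2-kernel comes from \(f(a)=\bot\) and corestriction into \(\dseg{a}\), and the 2-cokernel from \(z(s)=z(f(\top))=\bot\) and restriction of \(z\) to \(\useg{s}\), with condition~(2) given by \(k\) being an order embedding and by \(q_{s}\) fixing \(\useg{s}\) pointwise, including your handling of the empty join via the bottom \(s\) of \(\useg{s}\).
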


\begin{proof}
	A down-segment is closed under all joins of \(L\), so it is a complete lattice and its inclusion \(k\) preserves them. An up-segment is a complete lattice whose nonempty joins are those of \(M\) and whose empty join is \(s\), and \(q_{s}\) preserves joins because \((\bigvee y_{i})\vee s=\bigvee(y_{i}\vee s)\).

	Since \(f\) preserves joins, \(f(a)=\bigvee\{f(x)\mid f(x)=\bot\}=\bot\), so \(f(x)=\bot\) for every \(x\leq a\), and \(f\comp k\) is the null 1-cell. For condition~\((1)\) of \defx\ref{Def 2-kernel}, let \(z\colon Z\to L\) carry an invertible 2-cell \(f\comp z\iso 0\), so that \(f\comp z=0\) by \remx\ref{Rem Locally Ordered}. Then \(z(w)\leq a\) for every \(w\in Z\), and \(z\) corestricts to a join-preserving \(u\colon Z\to\dseg{a}\) with \(k\comp u=z\). For condition~\((2)\), a 2-cell \(k\comp u\aR{}k\comp v\) says that \(u(w)\leq v(w)\) in \(L\) for every \(w\), which is the same statement in \(\dseg{a}\); whiskering with \(k\) is therefore a bijection on 2-cells, both sides having at most one element.

	For the 2-cokernel, \(s=f(\top)=\bigvee_{x\in L}f(x)\) is the largest value of \(f\), so \(q_{s}\comp f\) is the constant map at \(s=\bot_{\useg{s}}\), the null 1-cell. Let \(z\colon M\to Z\) satisfy \(z\comp f=0\); then \(z(s)=z(f(\top))=\bot\). The restriction \(u\colon\useg{s}\to Z\) of \(z\) preserves nonempty joins because \(z\) does, and the empty one because \(u(s)=\bot\); and \(u\comp q_{s}=z\) because \(z(y\vee s)=z(y)\vee z(s)=z(y)\). Condition~\((2)\) holds because \(q_{s}\) is surjective: \(q_{s}(y)=y\) for \(y\in\useg{s}\), so a 2-cell \(u\comp q_{s}\aR{}v\comp q_{s}\) gives \(u\leq v\) outright.
\end{proof}

\begin{corollary}\label{C:SupNormal}
	For every element \(a\) of a complete lattice \(L\), the inclusion \(\dseg{a}\to L\) is a 2-kernel of \(q_{a}\colon L\to\useg{a}\), and \(q_{a}\) is a 2-cokernel of that inclusion. Consequently, in \(\Sup\) every normal 2-monomorphism is a down-segment inclusion precomposed with an isomorphism, every normal 2-epimorphism is a map \(q_{a}\) postcomposed with an isomorphism, and both classes are closed under composition.
\end{corollary}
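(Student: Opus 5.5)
The corollary has three parts, and I will prove them in order: first the two identifications for a fixed element \(a\), then the description of the normal classes, then closure under composition. Throughout I will work on the nose, as \remx\ref{Rem Locally Ordered} allows.

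For the first part, apply \prox\ref{P:SupKernels} to \(q_a\colon L\to\useg{a}\). The bottom of \(\useg{a}\) is \(a\), and \(q_a(x)=x\vee a\) equals \(a\) exactly when \(x\leq a\). So the join of the elements annihilated by \(q_a\) is \(a\), and the 2-kernel of \(q_a\) is the inclusion of \(\dseg{a}\). Dually, apply \prox\ref{P:SupKernels} to the inclusion \(k\colon\dseg{a}\to L\). Its top value is \(k(a)=a\), so its 2-cokernel is \(q_a\). Together these say that \((k,q_a)\) is a short 2-exact sequence.

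For the second part, take a normal 2-monomorphism, which is a 2-kernel of some \(f\). By \prox\ref{P:SupKernels} and \corx\ref{corollkeruniqueuptoequiv} it differs from some \(\dseg{a}\to L\) by an equivalence of domains. An equivalence is an isomorphism here, and isomorphic 1-cells are equal, so the 2-monomorphism is literally that inclusion precomposed with an isomorphism. Normal 2-epimorphisms are handled dually using \(q_s\). For the converse inclusions, the first part shows every down-segment inclusion and every \(q_a\) is normal. \corx\ref{C:NormalTransport} then shows that composing with isomorphisms preserves normality.

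For closure under composition, start with two down-segment inclusions \(\dseg{b}\to\dseg{a}\to L\), where \(b\leq a\). Their composite is the inclusion \(\dseg{b}\to L\), since the down-segment of \(b\) in \(\dseg{a}\) is the down-segment of \(b\) in \(L\). Intermediate isomorphisms need care. For an isomorphism \(\phi\colon\dseg{a}\to\dseg{a'}\) composed with an inclusion \(\dseg{c}\to\dseg{a}\), note that \(\phi\) restricts to an isomorphism \(\dseg{c}\iso\dseg{\phi(c)}\), with \(\phi\) carrying \(\dseg{c}\) into the inclusion \(\dseg{\phi(c)}\to\dseg{a'}\). This lets every isomorphism be pushed to the far right, and the composite becomes a down-segment inclusion precomposed with an isomorphism.

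The epimorphism side is dual. Here \(q_b\comp q_a\), with \(q_b\) taken on \(\useg{a}\) and \(b\geq a\), sends \(x\) to \(x\vee a\vee b=x\vee b\), so it equals \(q_b\) on \(L\). An isomorphism of up-segments likewise carries \(q_c\) to \(q_{\phi(c)}\). The only step requiring thought is this commutation of isomorphisms past segments. I expect it to be routine, because an order isomorphism maps principal down-sets and up-sets to principal ones and preserves binary joins.
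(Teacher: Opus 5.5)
Your proof is correct and follows the paper's own argument. You read off the 2-kernel of \(q_{a}\) and the 2-cokernel of the inclusion from the formulas of \prox\ref{P:SupKernels}, describe the normal classes via \corx\ref{corollkeruniqueuptoequiv} and \remx\ref{Rem Locally Ordered}, and check closure on bare segments, using that \(\dseg{b}\) inside \(\dseg{a}\) is \(\dseg{b}\) in \(L\) and that \((x\vee a)\vee b=x\vee b\). The one difference is cosmetic: you push the intermediate isomorphisms past the segments by hand, whereas the paper disposes of them in one line by \corx\ref{C:NormalTransport}.
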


\begin{proof}
	The formulas of \prox\ref{P:SupKernels}, applied to \(q_{a}\), return \(\bigvee\{x\mid x\vee a=a\}=a\); applied to the inclusion, they return \(s=a\). A normal 2-monomorphism is a 2-kernel of some 1-cell, so by \prox\ref{P:SupKernels} and \corx\ref{corollkeruniqueuptoequiv} it differs from a down-segment inclusion by an equivalence, which is an isomorphism by \remx\ref{Rem Locally Ordered}; dually for normal 2-epimorphisms. For the closure, a down-segment of a down-segment is a down-segment, since \(\dseg{b}\) computed in \(\dseg{a}\) is \(\dseg{b}\) computed in \(L\) when \(b\leq a\); and for \(b\leq c\) the composite of \(q_{b}\colon L\to\useg{b}\) with \(q_{c}\colon\useg{b}\to\useg{c}\) is \(q_{c}\colon L\to\useg{c}\), since \((x\vee b)\vee c=x\vee c\). The general case follows by \corx\ref{C:NormalTransport}.
\end{proof}

\begin{proposition}\label{P:SupAntinormal}
	Up to composition with isomorphisms on either side, the antinormal morphisms of \(\Sup\) are the maps
	\[
		c_{a,b}\colon\dseg{a}\too\useg{b}\colon x\mapsto x\vee b
	\]
	for elements \(a\), \(b\) of a complete lattice \(L\); and \(c_{a,b}\) is normal if and only if the transposition
	\[
		[a\wedge b,a]\too[b,a\vee b]\colon y\mapsto y\vee b
	\]
	is an isomorphism.
\end{proposition}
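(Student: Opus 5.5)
First, I identify the antinormal morphisms. By \corx\ref{C:SupNormal}, a normal 2-monomorphism is, up to an isomorphism, an inclusion \(\dseg{a}\to L\), and a normal 2-epimorphism out of \(L\) is, up to an isomorphism, some \(q_{b}\colon L\to\useg{b}\). By \remx\ref{Rem Locally Ordered}, invertible 2-cells are identities and equivalences are isomorphisms. So an antinormal morphism \(e\comp m\) equals \(v\comp q_{b}\comp k_{a}\comp u\) for isomorphisms \(u\), \(v\), and \(q_{b}\comp k_{a}\) is exactly \(c_{a,b}\). By \corx\ref{C:NormalTransport}, normality of \(e\comp m\) is equivalent to normality of \(c_{a,b}\). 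That settles the first assertion, and reduces the second to \(c_{a,b}\) itself.

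Next, I compute the normal image factorisation candidates of \(c=c_{a,b}\) using \prox\ref{P:SupKernels}. Its 2-kernel is the down-segment of \(\dseg{a}\) below \(\bigvee\{x\leq a\mid x\vee b=b\}=a\wedge b\), that is, \(\dseg{(a\wedge b)}\). Then \(\twocoim(c)\) is \(q_{a\wedge b}\colon\dseg{a}\to[a\wedge b,a]\), by \corx\ref{C:SupNormal} applied inside \(\dseg{a}\). Dually, the 2-cokernel of \(c\) is \(q_{s}\) on \(\useg{b}\) for \(s=c(a)=a\vee b\), whose 2-kernel is the inclusion of \(\dseg{(a\vee b)}\) in \(\useg{b}\), namely \([b,a\vee b]\). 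So \(\twoimg(c)\) is that inclusion.

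Then I apply \lemx\ref{Normal Iff Comparison Iso}: \(c\) is normal if and only if the comparison \(j_c\colon[a\wedge b,a]\to[b,a\vee b]\) is an equivalence, that is, an isomorphism. I identify \(j_c\) with the transposition map \(y\mapsto y\vee b\). This map lands in \([b,a\vee b]\). On the nose, \(\twoimg(c)\comp j\comp q_{a\wedge b}(x)=(x\vee(a\wedge b))\vee b=x\vee b=c(x)\), using \(a\wedge b\leq b\). The map is also join-preserving as a map of complete lattices: the empty join \(a\wedge b\) goes to \(b\), and nonempty joins are preserved. The uniqueness clause of \lemx\ref{Normal Iff Comparison Iso}, where 2-cells are identities, then gives \(j_c=\) the transposition.

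The main obstacle I expect is only bookkeeping. I have to check that the bottom elements of the intervals are treated correctly, so that the transposition really is a 1-cell of \(\Sup\) and the 2-kernel and 2-cokernel formulas of \prox\ref{P:SupKernels} are applied to the right sublattices, \(\dseg{a}\) and \(\useg{b}\), rather than to \(L\). Once those are in place, the equivalence follows directly.
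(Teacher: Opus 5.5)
Your proof is correct, but it takes a different route from the paper's.

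\textbf{The paper's route.} The paper never invokes \lemx\ref{Normal Iff Comparison Iso}.
\begin{itemize}
\item For the forward direction, it writes down the factorisation \(c_{a,b}=n\comp(\phi\comp q_{a\wedge b})\) through \([a\wedge b,a]\) directly.
\item For the converse, it starts from an arbitrary normal factorisation \(c_{a,b}=m\comp e\). It uses the 2-kernel \(\dseg{(a\wedge b)}\) and \prox\ref{CoKernel of Composite} to bring \(e\) to the form \(\psi\comp q_{a\wedge b}\). It then applies \corx\ref{C:SupNormal} to write \(m\comp\psi\) as an isomorphism \(\chi\) onto some down-segment \([b,d]\) of \(\useg{b}\), followed by its inclusion. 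Finally it evaluates, obtaining \(d=a\vee b\) and \(\chi(y)=y\vee b\).
\end{itemize}

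\textbf{Your route.} You compute the canonical 2-coimage \(q_{a\wedge b}\) and the 2-image \([b,a\vee b]\rightarrowtail\useg{b}\) once, by \prox\ref{P:SupKernels}. You then identify the transposition as the comparison \(j_{c_{a,b}}\) through the uniqueness clause. After that, \lemx\ref{Normal Iff Comparison Iso} handles both directions at once.

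\textbf{What each buys.} Your version is shorter and explains the statement. The transposition is not just some map whose invertibility happens to detect normality; it is the canonical coimage-to-image comparison, so the proposition becomes an instance of the general criterion. The paper's version stays with the concrete classification of normal 2-monomorphisms and 2-epimorphisms in \corx\ref{C:SupNormal}. In effect it reproves, by hand, the instance of the comparison lemma it needs, and this requires analysing an arbitrary factorisation rather than only the canonical one.

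Your check that the empty join \(a\wedge b\) goes to \(b\) is exactly what makes the transposition a 1-cell of \(\Sup\). That check is what lets \lemx\ref{Normal Iff Comparison Iso} and \remx\ref{Rem Locally Ordered} (equivalence means isomorphism) apply.
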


\begin{proof}
	An antinormal morphism is a normal 2-monomorphism followed by a normal 2-epimorphism, so by \corx\ref{C:SupNormal} it is, up to isomorphisms on either side, of the form \(q_{b}\comp k\colon\dseg{a}\to\useg{b}\) with \(k\) the inclusion, which is \(c_{a,b}\); and \corx\ref{C:NormalTransport} makes normality insensitive to the isomorphisms.

	Suppose that the transposition is an isomorphism \(\phi\). The interval \([a\wedge b,a]\) is the up-segment \(\useg{(a\wedge b)}\) of \(\dseg{a}\), and \([b,a\vee b]\) is the down-segment \(\dseg{(a\vee b)}\) of \(\useg{b}\). Writing \(q\colon\dseg{a}\to[a\wedge b,a]\) for the 2-cokernel \(q_{a\wedge b}\) and \(n\colon[b,a\vee b]\to\useg{b}\) for the 2-kernel inclusion, both provided by \corx\ref{C:SupNormal}, we find \(n(\phi(q(x)))=(x\vee(a\wedge b))\vee b=x\vee b=c_{a,b}(x)\), so that \(c_{a,b}=n\comp(\phi\comp q)\) is a normal 2-epimorphism followed by a normal 2-monomorphism---\corx\ref{C:NormalTransport} again---and hence normal.

	Suppose conversely that \(c_{a,b}\) is normal, say \(c_{a,b}=m\comp e\) with \(e\) a normal 2-epimorphism and \(m\) a normal 2-monomorphism, the factorisation being on the nose by \remx\ref{Rem Locally Ordered}. The 2-kernel of \(c_{a,b}\) is \(\dseg{(a\wedge b)}\), since for \(x\leq a\) we have \(x\vee b=b\) if and only if \(x\leq b\); by \prox\ref{CoKernel of Composite} it is also the 2-kernel of \(e\). By \corx\ref{C:SupNormal} we may write \(e=\psi\comp q_{t}\) with \(q_{t}\colon\dseg{a}\to[t,a]\) and \(\psi\) an isomorphism; the 2-kernel of \(e\) is \(\dseg{t}\), so \(t=a\wedge b\). Replacing \(m\) by \(m\comp\psi\), a normal 2-monomorphism by \corx\ref{C:NormalTransport}, we may assume that \(c_{a,b}=m\comp q_{a\wedge b}\) with \(m\colon[a\wedge b,a]\to\useg{b}\) a normal 2-monomorphism; and by \corx\ref{C:SupNormal} again, \(m=n\comp\chi\) with \(\chi\colon[a\wedge b,a]\to[b,d]\) an isomorphism onto a down-segment \([b,d]\) of \(\useg{b}\). Evaluating at \(y\in[a\wedge b,a]\) gives \(\chi(y)=c_{a,b}(y)=y\vee b\), so \(d=\chi(a)=a\vee b\), and \(\chi\) is the transposition.
\end{proof}

\begin{proposition}\label{P:SupModular}
	For a complete lattice \(L\) the following conditions are equivalent:
	\begin{tfae}
		\item \(L\) is modular;
		\item for all \(a\), \(b\in L\), the antinormal morphism \(c_{a,b}\) is normal.
	\end{tfae}
	In particular, the 2-category \(\Sup\) is not 2-di-exact.
\end{proposition}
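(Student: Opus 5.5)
By \prox\ref{P:SupAntinormal}, condition~(ii) is equivalent to asking that for all \(a\), \(b\in L\) the transposition \(\tau_{a,b}\colon[a\wedge b,a]\to[b,a\vee b]\colon y\mapsto y\vee b\) be a lattice isomorphism. So the plan is to show that this holds for all \(a\), \(b\) exactly when \(L\) is modular. That is Dedekind's transposition principle, and I would prove it directly.

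For (i)\(\Rightarrow\)(ii), the candidate inverse is \(\sigma\colon[b,a\vee b]\to[a\wedge b,a]\colon z\mapsto z\wedge a\). Both maps are monotone and land in the stated intervals. For \(y\in[a\wedge b,a]\), modularity applied with \(y\leq a\) gives \((y\vee b)\wedge a=y\vee(b\wedge a)=y\). For \(z\in[b,a\vee b]\), modularity applied with \(b\leq z\) gives \((z\wedge a)\vee b=z\wedge(a\vee b)=z\). Hence \(\tau_{a,b}\) is a monotone bijection with monotone inverse, so an isomorphism of posets and therefore of complete lattices. This makes \(c_{a,b}\) normal by \prox\ref{P:SupAntinormal}.

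For (ii)\(\Rightarrow\)(i), suppose \(x\leq z\); I must show \(x\vee(y\wedge z)=(x\vee y)\wedge z\). The inequality \(\leq\) holds in any lattice. For the other direction I apply the hypothesis with \(a=z\) and \(b=y\), so that \(\tau\colon[y\wedge z,z]\to[y,y\vee z]\) is an isomorphism. Both \(u\coloneq x\vee(y\wedge z)\) and \(v\coloneq(x\vee y)\wedge z\) lie in \([y\wedge z,z]\), and \(u\leq v\). Their images are \(\tau(u)=x\vee y\) and \(\tau(v)=((x\vee y)\wedge z)\vee y\), which lies between \(y\) and \(x\vee y\). It is also at least \(x\vee y\), since \(x\leq(x\vee y)\wedge z\). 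So \(\tau(u)=\tau(v)\), and injectivity gives \(u=v\). The step needing care is only checking that \(u\) and \(v\) lie in the domain interval, which uses \(x\leq z\).

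For the final assertion, \(\Sup\) has a strong bizero object and is 2-z-exact by \prox\ref{P:SupBizero} and \prox\ref{P:SupKernels}. By \prox\ref{P:SupAntinormal}, (DI2) then amounts to (ii) holding for every complete lattice. The pentagon \(N_5\) is a finite, hence complete, lattice that is not modular, so (DI2) fails and \(\Sup\) is not 2-di-exact.
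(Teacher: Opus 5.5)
Your proof is correct and follows the paper's. In \((i)\Rightarrow(ii)\) you use the same Dedekind computation, with \(z\mapsto z\wedge a\) as inverse. In \((ii)\Rightarrow(i)\) the paper takes a pentagon from Birkhoff's theorem and observes that the transposition collapses two of its elements; your elements \(u\leq v\), together with \(y\), are exactly such a pentagon whenever \(u<v\), so you have simply inlined that citation. The final assertion via \(N_{5}\) is also handled in the same way.
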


\begin{proof}
	\((i)\Rightarrow(ii)\) In a modular lattice, the transposition \(y\mapsto y\vee b\colon[a\wedge b,a]\to[b,a\vee b]\) and the map \(z\mapsto z\wedge a\) in the other direction are mutually inverse: for \(y\in[a\wedge b,a]\) the modular law gives \((y\vee b)\wedge a=y\vee(b\wedge a)=y\), and for \(z\in[b,a\vee b]\) it gives \((z\wedge a)\vee b=z\wedge(a\vee b)=z\). Both maps are monotone, so the transposition is an order isomorphism, and an order isomorphism of complete lattices preserves all joins. This is Dedekind's transposition principle \cite[Chapter~I, Theorem~13]{Birkhoff}, and \prox\ref{P:SupAntinormal} concludes.

	\((ii)\Rightarrow(i)\) If \(L\) is not modular then it contains a pentagon \cite[Chapter~I, Theorem~12]{Birkhoff}: elements \(u<w\) and \(v\) with \(u\vee v=w\vee v\) and \(u\wedge v=w\wedge v\). Both \(u\) and \(w\) lie in the interval \([w\wedge v,w]\), and the transposition to \([v,w\vee v]\) carries both to \(w\vee v\), so it is not injective, and \(c_{w,v}\) is not normal by \prox\ref{P:SupAntinormal}.

	The pentagon \(N_{5}\) itself is a finite, hence complete, lattice, so \(\Sup\) contains an object with a non-normal antinormal morphism, and condition \textup{(DI2)} fails.
\end{proof}

Write \(\Supmod\) for the full sub-2-category of \(\Sup\) on the modular lattices.

\begin{theorem}\label{T:LatticeModel}
	The locally ordered 2-category \(\Supmod\) of complete modular lattices and join-preserving maps is 2-di-exact.
\end{theorem}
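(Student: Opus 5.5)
The plan is to transfer to \(\Supmod\) everything already established for \(\Sup\), by checking that the relevant constructions do not leave the class of modular lattices. First, the one-element lattice is modular. Since \(\Supmod\) is a full sub-2-category, the proof of \prox\ref{P:SupBizero} shows without change that it is a strong bizero object of \(\Supmod\).

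For \textup{(DI1)}, we take a join-preserving map \(f\colon L\to M\) between complete modular lattices and use the 2-kernel \(\dseg{a}\to L\) and the 2-cokernel \(q_{s}\colon M\to\useg{s}\) from \prox\ref{P:SupKernels}. A down-segment of a modular lattice is a sublattice for binary meets and joins, so it is modular. An up-segment \(\useg{s}\) of \(M\) is likewise a sublattice for binary meets and joins; its empty join differs from that of \(M\), but this is irrelevant to the modular law. Both are complete, so both lie in \(\Supmod\). A 2-kernel or 2-cokernel computed in \(\Sup\) whose vertex lies in a full sub-2-category is also one there, because every test object and every 2-cell of \(\Supmod\) is one of \(\Sup\). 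Hence \(\Supmod\) is 2-z-exact. The same fullness argument shows that \corx\ref{C:SupNormal} holds verbatim in \(\Supmod\): the normal 2-monomorphisms and normal 2-epimorphisms are the down-segment inclusions and the maps \(q_{a}\), each up to isomorphism.

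For \textup{(DI2)}, the reduction in the first paragraph of the proof of \prox\ref{P:SupAntinormal} carries over unchanged. Every antinormal morphism of \(\Supmod\) is, up to isomorphisms on either side, some \(c_{a,b}\colon\dseg{a}\to\useg{b}\) inside a complete modular lattice \(L\), and by \corx\ref{C:NormalTransport} it suffices to show that \(c_{a,b}\) is normal in \(\Supmod\). The direction \((i)\Rightarrow(ii)\) of \prox\ref{P:SupModular} factors \(c_{a,b}\) as \(n\comp(\phi\comp q_{a\wedge b})\), where \(\phi\) is the transposition isomorphism given by Dedekind's principle. We must check that this factorisation lives in \(\Supmod\). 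The intermediate object \([a\wedge b,a]\) is an interval of \(L\), hence modular. The factor \(q_{a\wedge b}\) is a 2-cokernel in \(\Supmod\) by the previous paragraph. The factor \(n\phi\) is an isomorphism followed by the inclusion of the down-segment \([b,a\vee b]\) of \(\useg{b}\), which is a 2-kernel in \(\Supmod\). So \(c_{a,b}\) is normal in \(\Supmod\).

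The main obstacle is the bookkeeping that keeps normality internal to \(\Supmod\) rather than inherited from \(\Sup\). The factorisation is witnessed by 2-kernels and 2-cokernels of \(\Supmod\) only because every interval of a modular lattice is modular, and because universal properties in a full sub-2-category are tested against fewer objects. Nothing else is needed, since \remx\ref{Rem Locally Ordered} already makes all factorisations strict.
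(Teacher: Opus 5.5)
Your proposal is correct and follows the paper's proof essentially step by step. Like the paper, you note that down-segments, up-segments and intervals of a modular lattice are modular, use the fullness of \(\Supmod\) to transfer the 2-kernels, 2-cokernels and \corx\ref{C:SupNormal}, and then check that the Dedekind factorisation of \(c_{a,b}\) through the interval \([a\wedge b,a]\) stays inside \(\Supmod\); your explicit check that the one-element lattice remains a strong bizero object is a point the paper leaves implicit at this step.
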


\begin{proof}
	Down-segments, up-segments and intervals of a lattice are sublattices, and modularity is inherited by sublattices. So the 2-kernels and 2-cokernels of \prox\ref{P:SupKernels}, computed in \(\Sup\) for a 1-cell of \(\Supmod\), have their vertices in \(\Supmod\); and since \(\Supmod\) is a full sub-2-category, a 2-kernel or 2-cokernel computed in \(\Sup\) whose vertex lies in \(\Supmod\) is one computed in \(\Supmod\). This is condition \textup{(DI1)}, and the same remark transfers \corx\ref{C:SupNormal} and \prox\ref{P:SupAntinormal}. For condition \textup{(DI2)}, an antinormal morphism of \(\Supmod\) is, up to isomorphisms, of the form \(c_{a,b}\) for elements \(a\), \(b\) of a modular lattice \(L\), hence normal by \prox\ref{P:SupModular}; and the middle vertex \([a\wedge b,a]\) of the factorisation constructed in \prox\ref{P:SupAntinormal} is an interval of \(L\), so the factorisation lies in \(\Supmod\).
\end{proof}

\begin{proposition}\label{P:SupNotDPN}
	The 2-category \(\Sup\) is homologically self-dual, and its normal 2-monomorphisms and its normal 2-epimorphisms are closed under composition; but it does not satisfy condition \(\DPN\) of \defx\ref{Def:DPN}.
\end{proposition}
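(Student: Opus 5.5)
The plan is to prove the three assertions separately. Each time, I reduce to the explicit description of normal and antinormal morphisms in \(\Sup\) given by \corx\ref{C:SupNormal} and \prox\ref{P:SupAntinormal}. Closure of normal 2-monomorphisms and of normal 2-epimorphisms under composition is already contained in \corx\ref{C:SupNormal}, so nothing further is needed there. For the other two assertions I first compute dinversions in \(\Sup\). Take an antinormal pair given, up to isomorphisms, by the inclusion \(k\colon\dseg{a}\to L\) and \(q_{b}\colon L\to\useg{b}\). By \corx\ref{C:SupNormal} the 2-kernel of \(q_{b}\) is the inclusion of \(\dseg{b}\), and the 2-cokernel of \(k\) is \(q_{a}\). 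So the dinversion is \(q_{a}\comp(\dseg{b}\hookrightarrow L)=c_{b,a}\colon\dseg{b}\to\useg{a}\). By \corx\ref{C:NormalTransport}, the isomorphisms on either side affect neither normality of the composite nor normality of the dinversion. Hence every question reduces to pairs \((c_{a,b},c_{b,a})\).

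For homological self-duality, \(c_{a,b}\) is null exactly when \(x\vee b=b\) for all \(x\leq a\), that is, when \(a\leq b\). Its dinversion is then \(c_{b,a}\). By \prox\ref{P:SupAntinormal}, this is normal if and only if the transposition \([a\wedge b,b]\to[a,a\vee b]\), \(y\mapsto y\vee a\), is an isomorphism. When \(a\leq b\), both intervals are \([a,b]\) and the map is \(y\mapsto y\vee a=y\), the identity. So every antinormal decomposition of the zero map has a normal dinversion, and \(\Sup\) is homologically self-dual in the sense of \defx\ref{Def:HSD}.

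For the failure of \(\DPN\), I read the condition of \defx\ref{Def:DPN} as follows: whenever an antinormal composite is normal, so is the dinversion of its pair. In the present terms this says that \(c_{a,b}\) normal implies \(c_{b,a}\) normal. Equivalently, if \([a\wedge b,a]\to[b,a\vee b]\) is an isomorphism, then so is \([a\wedge b,b]\to[a,a\vee b]\). A witness should come from the pentagon \(N_{5}=\{0<u<w<1,\ 0<v<1\}\), with \(v\) incomparable to \(u\) and \(w\). Take \(a=v\) and \(b=w\). Then \([0,v]=\{0,v\}\) maps to \([w,1]=\{w,1\}\) by \(0\mapsto w\), \(v\mapsto 1\), which is an isomorphism. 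So \(c_{v,w}\) is normal. The dinversion \(c_{w,v}\), however, corresponds to \([0,w]=\{0,u,w\}\to[v,1]=\{v,1\}\). This map sends both \(u\) and \(w\) to \(1\), so it is not injective. By \prox\ref{P:SupAntinormal}, \(c_{w,v}\) is therefore not normal.

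The main obstacle I expect is only bookkeeping. I need to confirm that the dinversion of \((k,q_{b})\) is computed exactly as \(c_{b,a}\), with no hidden isomorphism that could reverse the roles of \(a\) and \(b\). I also need to match the formulation of \(\DPN\) in \defx\ref{Def:DPN} with the normality implication used above. Once those two checks are made, the five-element computation settles the claim.
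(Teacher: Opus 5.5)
Your proposal is correct and matches the paper's proof step for step: closure is read off from \corx\ref{C:SupNormal}, self-duality comes from \(a\leq b\) making the transposition of the dinversion \(c_{b,a}\) the identity of \([a,b]\), and \(\DPN\) is refuted on the pentagon. The only difference is that your pair \((\dseg{v}\to N_{5},q_{w})\) is the dinverse of the paper's pair. You therefore violate the implication from normal composite to normal dinversion, while the paper violates the converse; since dinversion is involutive this is the same witness, and either failure breaks the biconditional of \defx\ref{Def:DPN}.
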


\begin{proof}
	Closure under composition is part of \corx\ref{C:SupNormal}. For self-duality, let \((m,e)\) be an antinormal decomposition of the zero map in \(\Sup\). Up to isomorphisms, which change nothing by \corx\ref{C:NormalTransport} and \corx\ref{corollkeruniqueuptoequiv}, \(m\) is the inclusion \(\dseg{a}\to L\) and \(e=q_{b}\); and that \(e\comp m=c_{a,b}\) is null says that \(x\vee b=b\) for all \(x\leq a\), that is, \(a\leq b\). The dinverse pair is \((\twoker(e),\twocoker(m))=(\dseg{b}\to L,q_{a})\), so the dinversion is \(c_{b,a}\colon\dseg{b}\to\useg{a}\), whose transposition is \([b\wedge a,b]=[a,b]\to[a,b\vee a]=[a,b]\colon y\mapsto y\vee a\), the identity of \([a,b]\); by \prox\ref{P:SupAntinormal} the dinversion is normal, which is \defx\ref{Def:HSD}.

	For \(\DPN\), take the pentagon \(N_{5}=\{\bot,x,z,y,\top\}\) with \(\bot<x<z<\top\), the element \(y\) incomparable to \(x\) and \(z\), and \(x\vee y=z\vee y=\top\), \(x\wedge y=z\wedge y=\bot\). The antinormal pair \((\dseg{z}\to N_{5},q_{y})\) has composite \(c_{z,y}\), whose transposition \([\bot,z]\to[y,\top]\) goes from a three-element chain to a two-element one, so it is no isomorphism and \(c_{z,y}\) is not normal. The dinversion is \(c_{y,z}\), whose transposition \([\bot,y]\to[z,\top]\) is an isomorphism of two-element chains, so \(c_{y,z}\) is normal, and the biconditional of \defx\ref{Def:DPN} fails.
\end{proof}

\begin{remark}\label{Rem Lattice Profile}
	\prox\ref{P:SupNotDPN} gives \(\Sup\) the exact profile of \(\AbCat\) established in Section~\ref{SS:NSDAbCat}---homologically self-dual, both composition properties, not \(\DPN\)---so each separation recorded in \corx\ref{C:HSDstrict} has a five-element witness. The resonance with dimension one is no accident. In \cite[Example~4.2.1]{Afsa-24} the failure of \(\DPN\) in the category of commutative monoids is exhibited on a pentagonal semilattice, and \cite[Theorem~4.3.4]{Afsa-24} shows that in a di-exact category the lattice of normal subobjects of every object is modular. \prox\ref{P:SupModular} closes the circle: read on the 2-category of complete lattices themselves, condition \textup{(DI2)} is not merely implied by modularity but equal to it. The level in between is inhabited as well: Section~\ref{SS:NSDHilbert} exhibits, in the Hilbert lattices, complete lattices that are not modular and yet support condition \(\DPN\).
\end{remark}

\begin{remark}\label{Rem Lattice Snake}
	\thex\ref{Snake General 2D}, read in \(\Supmod\), is a Snake Lemma for modular lattices, and it is worth spelling out what it says. By \corx\ref{C:SupNormal}, a 1-cell \(g\colon L\to M\) is normal precisely when, writing \(n\) for the largest element of \(L\) with \(g(n)=\bot\) and \(m=g(\top)\), the map \(g\) kills \(\dseg{n}\) and restricts to an isomorphism \([n,\top]\to\dseg{m}\). The rows of the ladder are, up to isomorphism, the sequences \(\dseg{u}\to L\to\useg{u}\) and \(\dseg{v}\to M\to\useg{v}\) of \corx\ref{C:SupNormal}, for elements \(u\in L\) and \(v\in M\), and the squares commute on the nose by \remx\ref{Rem Locally Ordered}, so the outer verticals are the restriction \(f\colon\dseg{u}\to\dseg{v}\) of \(g\), which forces \(g(u)\leq v\), and the induced map \(h\colon\useg{u}\to\useg{v}\colon y\mapsto g(y)\vee v\). Their normality comes for free: each factors as a normal 2-epimorphism, followed by a composite of one transposition with a restriction of the isomorphism above, followed by a normal 2-monomorphism, and the transpositions are invertible because \(L\) and \(M\) are modular. The input of the theorem thus reduces to a normal 1-cell \(g\) together with a pair of elements \(u\), \(v\) such that \(g(u)\leq v\). Writing \(s\) for the largest element of \(L\) with \(g(s)\leq v\), the snake sequence becomes
	\[
		\dseg{(u\wedge n)}\too\dseg{n}\too[u,s]\too[g(u),v]\too\useg{m}\too\useg{(m\vee v)}\text{,}
	\]
	whose 1-cells are, in order, the inclusion, then \(x\mapsto x\vee u\), then the connecting 1-cell \(\partial\)---which may be taken to be \(g\) itself, restricted to \([u,s]\)---then \(y\mapsto y\vee m\), then \(z\mapsto z\vee v\). Every 1-cell of the sequence is normal, and 2-exactness at each of the four middle positions amounts to a single identity between an image element and a kernel element. At \(\TwoKer(g)\) and \(\TwoCoker(g)\) the identity holds by construction; the two with content, at \(\TwoKer(h)\) and \(\TwoCoker(f)\), read
	\[
		\bigvee\{x\in[u,s]\mid g(x)=g(u)\}=u\vee n\qquad\text{and}\qquad g(s)=v\wedge m\text{,}
	\]
	and both are consequences of the isomorphism \([n,\top]\to\dseg{m}\).
\end{remark}

\begin{remark}\label{Rem Lattice Grandis}
	The normal morphisms of \(\Sup\) are, by \corx\ref{C:SupNormal}, exactly the maps that project onto an up-segment and then embed it as a down-segment of the codomain. These are the \emph{exact} Galois connections of Grandis's lattice-theoretic homological algebra \cite{Grandis-HA1,Grandis-HA2}---characterised in \cite[1.2.5]{Grandis-HA2} by precisely this factorisation---and between modular lattices they are his \emph{modular connections} \cite[1.2.8]{Grandis-HA2}. The settings differ in two respects. Grandis's lattices need not be complete, whereas on complete lattices a monotone map preserves all joins if and only if it is the covariant part of a Galois connection, so that his connections between complete lattices are exactly the 1-cells of \(\Sup\); and his category \(\Mlc\) of modular lattices and modular connections keeps only the normal morphisms, where \(\Supmod\) keeps every join-preserving map and normality singles the connections out. His kernels, cokernels and short exact sequences agree with those of \prox\ref{P:SupKernels} and \corx\ref{C:SupNormal} \cite[1.2.3]{Grandis-HA2}, and \(\Mlc\) is a p-exact category \cite[1.2.8]{Grandis-HA2}, so the one-dimensional content of \remx\ref{Rem Lattice Snake}---the six-term exact sequence of modular connections---is available in his framework: the connecting morphism exists in any of his homological categories \cite[Lemma~3.3.3]{Grandis-HA2}, and the six-term sequence is exact under modularity conditions on the diagram \cite[Proposition~3.3.4]{Grandis-HA2} which hold automatically for modular connections. Part~(d) of the latter is the one-dimensional counterpart of the reduction in \remx\ref{Rem Lattice Snake}: exactness of the middle vertical alone already gives exactness at the two central objects. What the two-dimensional reading adds is the derivation. The connections are not chosen but forced, as the normal 1-cells among all join-preserving maps, by universal properties whose 2-cells are the order; and modularity, which for Grandis delimits \(\Mlc\) by hypothesis, enters here as an instance of condition \textup{(DI2)}, by \prox\ref{P:SupModular}. The model is populated by the lattices on which classical homological algebra acts: submodule lattices of modules and, more generally, normal-subobject lattices in any semi-abelian category are complete modular lattices.
\end{remark}

\begin{remark}[noetherian spectra]\label{Rem Lattice Spectra}
	The two models of Section~\ref{S:Model} meet over the same commutative algebra. Call a subset of \(\Spec R\) \dfn{specialisation-closed} when it contains, with every prime, all primes containing it. Such subsets are closed in the power set under arbitrary unions and arbitrary intersections, so they form a complete lattice \(\Scl(R)\) whose joins are unions; both operations being computed in the power set, that lattice is distributive, hence modular, and \(\Scl(R)\) is an object of \(\Supmod\). Its 1-cells come from geometry. A ring homomorphism \(\phi\colon R\to S\) makes \(\Spec\phi\colon\Spec S\to\Spec R\) continuous, and a continuous map preserves specialisation, so that \((\Spec\phi)^{-1}\) carries specialisation-closed subsets to specialisation-closed subsets and preserves arbitrary unions: it is a 1-cell \(\Scl(R)\to\Scl(S)\) of \(\Supmod\), and a 2-cell between two such is an inclusion of subsets.

	These are the lattices of Section~\ref{SS:ModelExamples}, read one dimension down. By the classification invoked in the proof of \prox\ref{P:CohAS}, the Serre subcategories of the category of finitely generated \(R\)-modules are the \(T_{Z}=\{M\;:\;\Supp M\subseteq Z\}\) for \(Z\subseteq\Spec R\) specialisation-closed \cite[Proposition~VI.2.4]{Gabriel62}, so \(\Scl(R)\) is the lattice of Serre subcategories of that object of \(\Sat\), which by \prox\ref{P:AbCatKernel} is its lattice of 2-kernels. The same noetherian spectrum is therefore read by \corx\ref{C:Populated} as an object of \(\Sat\) and by the present subsection as an object of \(\Supmod\): the two models do not merely coexist, they overlap, and no reference beyond those already used in Section~\ref{SS:ModelExamples} is needed to see it.

	The family is honest but undemanding. Distributivity gives modularity for nothing, so these lattices satisfy \textup{(DI2)} for a reason without content, and by \remx\ref{Rem Lattice Snake} the Snake Lemma reads over them as two identities between specialisation-closed subsets. What makes \prox\ref{P:SupModular} say something are the modular lattices that are not distributive, of which the submodule lattices named at the end of \remx\ref{Rem Lattice Grandis} are the standard supply; the spectra show instead that the objects Section~\ref{SS:ModelExamples} produces are not foreign to the present one.
\end{remark}

\begin{remark}\label{Rem Lattice Sharp}
	In the locally discrete abelian reading, the hypothesis of \thex\ref{Snake General 2D} that the three verticals be normal is invisible: in an abelian category every morphism is normal. In \(\Supmod\) it has bite, and it cannot be dropped. Let \(L=\{\bot<x<\top\}\) and \(M=\{\bot<\top\}\) be chains---distributive, so certainly modular---and let \(g\colon L\to M\) send \(\bot\) to \(\bot\) and the other two elements to \(\top\): a join-preserving map with trivial 2-kernel which is not a 2-monomorphism, the freedom that abelian categories lack, and which normality of \(g\) would remove. Take the rows determined by \(u=x\) and \(v=\top\). Both squares commute on the nose, and the outer verticals are normal: \(f\colon\dseg{x}\to\dseg{\top}=M\) is an isomorphism of two-element chains, and \(h\colon\useg{x}\to\useg{\top}\) is null, hence normal by \prox\ref{P:NullNormal}. Yet no 1-cell \(\partial\) makes the snake sequence 2-exact at \(\TwoKer(h)\): the lattice \(\TwoCoker(f)\) is trivial, so \(\partial\) is null and its 2-kernel is all of \(\TwoKer(h)=[x,\top]\), while \(\ov{b}\colon\TwoKer(g)=\dseg{\bot}\to[x,\top]\) is null, with trivial 2-image. Indeed the first exactness identity displayed in \remx\ref{Rem Lattice Snake} fails: here \(s=\top\), so its left-hand side is \(\top\) while \(u\vee n=x\). In the form \(\bigvee\{x'\in L\mid g(x')\leq g(x)\}=x\vee n\), valid for normal \(g\) and violated here at \(x\), this is exactly the identity which Grandis isolates as the lattice-theoretic substitute for the subtraction step of the classical diagram chase \cite[2.3.5]{Grandis-HA2}.
\end{remark}

\begin{remark}\label{Rem Lattice Scope}
	The two models are complementary. In \(\Supmod\) every invertible 2-cell is an identity, so the coherence layer of the theory is invisible there, while the order-theoretic 2-cells over which the two-dimensional universal properties quantify are everywhere; in \(\Sat\) it is the invertible 2-cells that do the work. Neither model is locally discrete, and each trivialises exactly what the other exercises.
\end{remark}

\section{The Snake Lemma without self-duality}\label{S:NonSelfDual}

Condition \textup{(DI2)} is self-dual: it asks every antinormal morphism to be normal, and that demand is unchanged by passage to the opposite 2-category. The classical proof of the Snake Lemma given in \cite{Borceux-Bourn}---see also \cite{PVdL3}---does not ask for less than this; it asks for something else. It runs instead on two hypotheses that are not self-dual, and neither of them is 2-di-exactness weakened: the first constrains only part of what \textup{(DI2)} constrains, and is strictly weaker than it (\remx\ref{Rem NSD Strict}), whereas the second asks for something \textup{(DI2)} does not ask for at all, namely that normal 2-epimorphisms be closed under composition. The two sets of hypotheses are not comparable, and \remx\ref{Rem NSD Discrete} separates them in both directions already in dimension one. This section carries that argument over to two dimensions. Nothing in Sections~\ref{S:Preliminaries} to~\ref{S:Bipullbacks} changes; what changes is the input to Section~\ref{S:Snake}, and with it the shape of the construction.

\subsection{The two hypotheses}\label{SS:NSDHypotheses}

\begin{definition}[dinversion preserves normality]\label{Def:DPN}
	A 2-z-exact 2-category satisfies condition \(\DPN\)---\dfn{dinversion preserves normality}---when for every antinormal pair \((m,e)\) the composite \(e\comp m\) is normal if and only if the dinversion \(w=\twocoker(m)\comp\twoker(e)\) is normal.
\end{definition}

Any two 2-kernels of \(e\) differ by an equivalence, and likewise any two 2-cokernels of \(m\), so \corx\ref{C:NormalTransport} makes the condition independent of those choices.

\begin{proposition}\label{P:DPNPlace}
	A 2-di-exact 2-category satisfies \(\DPN\); and a 2-z-exact 2-category satisfying \(\DPN\) is homologically self-dual.
\end{proposition}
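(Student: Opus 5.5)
The plan is to prove both assertions directly from the definitions, using \prox\ref{P:DiExactHSD} for the first and \prox\ref{P:NullNormal} together with \prox\ref{P:NullDinversion} for the second.

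For the first assertion, let \((m,e)\) be an antinormal pair in a 2-di-exact 2-category. The composite \(e\comp m\) is antinormal, so (DI2) makes it normal. By \prox\ref{P:DiExactHSD} the dinversion \(w\) of every antinormal pair is normal, and this holds without the null hypothesis, as \remx\ref{Rem Bridge Cost} stresses. So both sides of the biconditional in \defx\ref{Def:DPN} are always true, and the biconditional holds.

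For the second assertion, let \((m,e)\) be an antinormal decomposition of the zero map, so that \(e\comp m\iso 0\). By \prox\ref{P:NullNormal} the null morphism is normal. Because normality is invariant under invertible 2-cells (\defx\ref{Def Normal} is stated up to isomorphism), the composite \(e\comp m\) is then normal. \(\DPN\) now gives that the dinversion \(w\) is normal. This is exactly \defx\ref{Def:HSD}.

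I expect no serious obstacle. The one point needing care is that the dinversion in \defx\ref{Def:DPN} is formed from chosen 2-kernels and 2-cokernels. The remark after that definition, via \corx\ref{C:NormalTransport}, makes normality of \(w\) independent of these choices, so the dinversion named in \defx\ref{Def:HSD} is covered as well. It is also worth noting that the first part uses (DI2) for two different antinormal morphisms, \(e\comp m\) and \(w\), which is why it holds for every antinormal pair and not only for decompositions of the zero map.
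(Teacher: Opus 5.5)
Your proof is correct and is the paper's own argument: for the first assertion (DI2) makes both \(e\comp m\) and its dinversion normal, as in \prox\ref{P:DiExactHSD}, so the biconditional of \DPN holds trivially; for the second, \prox\ref{P:NullNormal} makes the null composite normal and \DPN then makes the dinversion normal. One small correction: \prox\ref{P:NullDinversion}, which you list in your plan, is never used in the argument and can be dropped.
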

\begin{proof}
	Under \textup{(DI2)} both sides of the biconditional hold outright: the composite \(e\comp m\) is antinormal by definition, and so is the dinversion \(w\), being the normal 2-monomorphism \(\twoker(e)\) followed by the normal 2-epimorphism \(\twocoker(m)\); so both are normal, as observed in \prox\ref{P:DiExactHSD}.

	For the second assertion, let \((m,e)\) be an antinormal decomposition of the zero map. A null morphism is normal, by \prox\ref{P:NullNormal}, so \(e\comp m\) is normal, and \(\DPN\) makes \(w\) normal. That is homological self-duality.
\end{proof}

Condition \(\DPN\) therefore lies between 2-di-exactness and homological self-duality, and everything proved in Section~\ref{S:Exact} is available under it: the Pure Snake Lemma, \prox\ref{Criteria HSD}, \prox\ref{Third Iso} and \prox\ref{Exactness via Homology}. The second hypothesis is a closure condition, and it is the one that is not self-dual.

\begin{definition}\label{Def:NEC}
	A 2-category satisfies condition \(\NEC\) when the composite of two normal 2-epimorphisms is a normal 2-epimorphism.
\end{definition}

\begin{remark}\label{Rem NSD Strict}
	Both implications of \prox\ref{P:DPNPlace} are strict. That the second is, Section~\ref{SS:NSDAbCat} shows: \(\AbCat\) is homologically self-dual and satisfies \(\NEC\) and its dual, and does not satisfy \(\DPN\) (\corx\ref{C:HSDstrict}); in dimension one the category \(\CMon\) of commutative monoids does the same \cite[Example~4.2.1]{Afsa-24}, and the 2-category \(\Sup\) of complete lattices does the same with a five-element witness, the pentagon (\prox\ref{P:SupNotDPN}). That the first is, dimension one already shows: for a field \(F\), the category of short exact sequences of \(F\)-vector spaces satisfies \(\DPN\) and is not di-exact \cite[Example~4.3.8]{Afsa-24}, so under the dictionary of Section~\ref{SS:Discretisation} it is a 2-category that satisfies \(\DPN\) and not \textup{(DI2)}. Section~\ref{SS:NSDHilbert} strengthens that witness to a 2-category which is not locally discrete: the 2-category of Hilbert lattices satisfies \(\DPN\) and not \textup{(DI2)} (\thex\ref{T:HilbertModel}).
\end{remark}

\begin{remark}\label{Rem NSD Discrete}
	The two \emph{sets} of hypotheses---\textup{(DI2)} on the one side, \(\DPN\) together with \(\NEC\) on the other---are incomparable, and dimension one already shows it. The proof of \cite{Borceux-Bourn} that Section~\ref{SS:NSDTheorem} lifts is carried out in a homological category in the sense of that book: pointed, regular and protomodular. There the normal epimorphisms are the regular epimorphisms, so \(\NEC\) holds; the normal monomorphisms are not closed under composition, since normality of subgroups is not transitive---in the dihedral group \(D_{4}=\langle r,s\mid r^{4}=s^{2}=1,\;srs=r^{-1}\rangle\), the subgroup \(\langle s\rangle\) is normal in \(\langle s,r^{2}\rangle\), which is normal in \(D_{4}\), whereas \(\langle s\rangle\) is not. Di-exactness is exactly the further demand that turns that setting into the semi-abelian one: a homological category with binary coproducts is semi-abelian if and only if it is di-exact \cite[Theorem~5.7.3 with Definition~5.1.1]{PVdL3}; compare with the ``old'' axiom (SA*6) in the foundational article~\cite{Janelidze-Marki-Tholen}.

	The separation runs in both directions. On the one hand \(\Grp\) is semi-abelian, hence di-exact, and di-exactness is self-dual, so \(\Grp\op\) is di-exact as well; but \(\NEC\) in \(\Grp\op\) asks that normal monomorphisms of \(\Grp\) be closed under composition, which the subgroups above refute. On the other hand the category \(\SES(\VectF)\) of short exact sequences of vector spaces over a field \(F\) satisfies \(\DPN\) and is not di-exact \cite[Example~4.3.8]{Afsa-24}, and it satisfies \(\NEC\): identify it with the category of pairs \((V,W)\) of a vector space and a subspace, in which kernels are computed componentwise while cokernels are computed componentwise in the category of morphisms of vector spaces and then reflected along the image factorisation. The normal epimorphisms out of \((V,W)\) are then exactly the morphisms
	\[
		(V,W)\longrightarrow\bigl(V/U,\;(W+U)/U\bigr)\text{,}
	\]
	one for each subspace \(U\subseteq V\), being the cokernels of the normal monomorphisms \((U,U\cap W)\rightarrowtail(V,W)\); and the composite of the one for \(U\) with the one for \(U'/U\) is the one for \(U'\). Read as locally discrete 2-categories, \(\Grp\op\) and \(\SES(\VectF)\) therefore separate the hypotheses of this section from the hypothesis of Section~\ref{S:Snake}, in both directions at once.
\end{remark}

\subsection{Why \texorpdfstring{\(\DPN\)}{(DPN)} alone is not enough}\label{SS:NSDCircular}

The construction of Section~\ref{S:Snake} uses \textup{(DI2)} exactly twice: in \lemx\ref{L:ImageOfBBar}, to make \(b\comp\twoker(g)\) normal, and in \lemx\ref{L:ImgMapNormal}, to make \(\twocoim(g)\comp a\) normal. Neither can be obtained from the other under \(\DPN\), and the reason is precise.

\begin{proposition}\label{P:TwoSites}
	Let \(A\aar{a}B\aar{b}C\) be a short 2-exact sequence in a 2-z-exact 2-category satisfying \(\DPN\), and let \(g\colon B\to Y\) be a morphism. Then \(b\comp\twoker(g)\) is normal if and only if \(\twocoim(g)\comp a\) is.
\end{proposition}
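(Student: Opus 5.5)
The plan is to recognise the two morphisms in the statement as the antinormal composite and the dinversion of a single antinormal pair, and then to apply \(\DPN\) once.

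Consider the pair \((m,e)\coloneq(\twoker(g),b)\). Here \(\twoker(g)\colon\TwoKer(g)\to B\) is a 2-kernel, so a normal 2-monomorphism. Since \(A\aar{a}B\aar{b}C\) is short 2-exact, \(b\) is a 2-cokernel of \(a\), so a normal 2-epimorphism. Thus \((m,e)\) is an antinormal pair in the sense of \defx\ref{Def:Dinversion}, and its antinormal composite is \(e\comp m=b\comp\twoker(g)\), the first morphism in the statement.

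Next compute the dinversion \(w=\twocoker(m)\comp\twoker(e)\) of this pair.
\begin{itemize}
	\item By definition \(\twocoker(\twoker(g))=\twocoim(g)\), as in \lemx\ref{Normal Iff Comparison Iso}.
	\item Short 2-exactness says that \(a\) is a 2-kernel of \(b\), so we may take \(\twoker(e)=a\).
\end{itemize}
Hence \(w\iso\twocoim(g)\comp a\), the second morphism in the statement. Condition \(\DPN\) of \defx\ref{Def:DPN}, applied to \((\twoker(g),b)\), then says exactly that \(b\comp\twoker(g)\) is normal if and only if \(\twocoim(g)\comp a\) is.

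The only point needing care is the choice of representatives. The dinversion is defined through some 2-kernel of \(e\) and some 2-cokernel of \(m\), and these differ from \(a\) and from the chosen \(\twocoim(g)\) by equivalences, by \corx\ref{corollkeruniqueuptoequiv}. By \corx\ref{C:NormalTransport}, composing with equivalences on either side does not affect normality; this is also the remark after \defx\ref{Def:DPN} that \(\DPN\) is independent of these choices. No hypothesis on \(g\) is needed, and the 2-z-exactness assumption serves only to guarantee that \(\twoker(g)\) and \(\twocoim(g)\) exist.
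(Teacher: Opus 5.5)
Your proof is correct and is the paper's own argument: the pair \((\twoker(g),b)\) is antinormal with composite \(b\comp\twoker(g)\) and dinversion \(\twocoim(g)\comp a\), and \DPN\ is applied once. Your remark that the choice of 2-kernels and 2-cokernels does not matter, by \corx\ref{C:NormalTransport}, is the same point the paper makes right after \defx\ref{Def:DPN}.
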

\begin{proof}
	The pair \((\twoker(g),b)\) is antinormal, \(\twoker(g)\) being a normal 2-monomorphism and \(b\) a normal 2-epimorphism, and its composite is \(b\comp\twoker(g)\). Its dinversion is \(\twocoker(\twoker(g))\comp\twoker(b)\), that is \(\twocoim(g)\comp a\), since \(a=\twoker(b)\) and \(\twocoim(g)\) is by definition a 2-cokernel of \(\twoker(g)\). Now apply \defx\ref{Def:DPN}.
\end{proof}

So \(\DPN\) turns each of the two sites into the other and settles neither. What breaks the circle is \(\NEC\), and it does so at a single point, \lemx\ref{L:NSDEpi} below; after that the two hypotheses are used only through the normality of two further morphisms, \prox\ref{P:NSDcbar} and \prox\ref{P:NSDbbar}.

\subsection{Two normal morphisms}\label{SS:NSDNormal}

\emph{Throughout this subsection and the next, \(\L\) is a 2-z-exact 2-category with a strong bizero object satisfying \(\DPN\) and \(\NEC\), and we are given a diagram \eqref{Snake Ladder} whose rows are short 2-exact---so that \(a=\twoker(b)\) and \(d=\twocoker(c)\)---and whose verticals \(f\), \(g\) and \(h\) are normal.}

\begin{lemma}\label{L:NSDEpi}
	The composite \(e\coloneq\twocoim(h)\comp b\colon B\to\TwoImg(h)\) is a normal 2-epimorphism; the composite \(e\comp\twoker(g)\) is null; and there is a normal 2-epimorphism \(t\colon\TwoImg(g)\to\TwoImg(h)\) with
	\[
		t\comp\twocoim(g)\iso e\qquad\text{and}\qquad\twoimg(h)\comp t\iso d\comp\twoimg(g)\text{.}
	\]
\end{lemma}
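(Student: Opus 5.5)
The plan has three steps, in this order: the normality of \(e\), the vanishing of \(e\comp\twoker(g)\), and the construction and normality of \(t\).

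\emph{Normality of \(e\).} Here \(b\) is a normal 2-epimorphism, being a 2-cokernel of \(a\). Since \(h\) is normal, \(\twocoim(h)\) is a 2-cokernel of \(\twoker(h)\) by \lemx\ref{Normal Iff Comparison Iso}, identifying \(\TwoCoim(h)\) with \(\TwoImg(h)\) through the comparison as in Section~\ref{SS:Construction}, and \corx\ref{C:NormalTransport} absorbs that equivalence. So \(e=\twocoim(h)\comp b\) is a composite of two normal 2-epimorphisms, and \(\NEC\) makes it a normal 2-epimorphism. This is the only use of \(\NEC\), and it is the point at which the circle of \prox\ref{P:TwoSites} is broken.

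\emph{Vanishing of \(e\comp\twoker(g)\).} Using \(\psi\) and \(h\iso\twoimg(h)\comp\twocoim(h)\),
\[
	\twoimg(h)\comp e\comp\twoker(g)\iso h\comp b\comp\twoker(g)\iso d\comp g\comp\twoker(g)\iso 0\text{.}
\]
Since \(\twoimg(h)\) is a 2-monomorphism, it reflects null morphisms by \prox\ref{prop2monoreflectsnull}, so \(e\comp\twoker(g)\iso 0\).

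\emph{Construction of \(t\).} Because \(g\) is normal, \(\twocoim(g)\) is a 2-cokernel of \(\twoker(g)\). The null composite just obtained therefore gives, by condition~(1) of \defx\ref{Def 2-cokernel}, a 1-cell \(t\colon\TwoImg(g)\to\TwoImg(h)\) with \(t\comp\twocoim(g)\iso e\). The dual of part~\textup{(ii)} of \prox\ref{Composites of Normal Monos} then applies: \(\twocoim(g)\) is a 2-epimorphism and \(e\) is a normal 2-epimorphism, so \(t\) is a normal 2-epimorphism. For the second 2-cell, whisker both candidates with the 2-epimorphism \(\twocoim(g)\):
\[
	\twoimg(h)\comp t\comp\twocoim(g)\iso\twoimg(h)\comp e\iso h\comp b\iso d\comp g\iso d\comp\twoimg(g)\comp\twocoim(g)\text{.}
\]
Cancelling \(\twocoim(g)\) by the dual of \remx\ref{rem2monoismonouptoiso} gives \(\twoimg(h)\comp t\iso d\comp\twoimg(g)\).

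None of these steps is deep. The only point needing care is the first: one has to confirm that \(\twocoim(h)\), taken up to the comparison equivalence, is genuinely a normal 2-epimorphism, so that \(\NEC\) applies. That rests on the normality of \(h\) through \lemx\ref{Normal Iff Comparison Iso} together with \corx\ref{C:NormalTransport}.
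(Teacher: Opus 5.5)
Your proposal is correct and follows the paper's proof step for step. The paper likewise uses \(\NEC\) once, for the normality of \(e\). It obtains \(e\comp\twoker(g)\iso 0\) by reflecting null morphisms along \(\twoimg(h)\). It then builds \(t\) from the universal property of \(\twocoim(g)\) and gets its normality from the dual of part~\textup{(ii)} of \prox\ref{Composites of Normal Monos}. Finally, it obtains the second 2-cell by cancelling the 2-epimorphism \(\twocoim(g)\).

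One harmless inaccuracy: \(\twocoim(g)\) is a 2-cokernel of \(\twoker(g)\) by definition, not because \(g\) is normal. The normality of \(g\), like that of \(h\), is only needed to identify \(\TwoCoim\) with \(\TwoImg\).
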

\begin{proof}
	The morphism \(b\) is a normal 2-epimorphism, being a 2-cokernel of \(a\), and \(\twocoim(h)\) is one because \(h\) is normal; so \(e\) is a normal 2-epimorphism by \(\NEC\). This is the only use of that hypothesis.

	Next, \(h\comp b\comp\twoker(g)\iso d\comp g\comp\twoker(g)\iso 0\), by \(\psi\) and the structure 2-cell of \(\twoker(g)\). Since \(h\iso\twoimg(h)\comp\twocoim(h)\) and \(\twoimg(h)\) reflects null morphisms, by \prox\ref{prop2monoreflectsnull}, we get \(e\comp\twoker(g)\iso 0\). As \(\twocoim(g)\) is a 2-cokernel of \(\twoker(g)\), this induces \(t\) with \(t\comp\twocoim(g)\iso e\); and \(t\) is a normal 2-epimorphism by the dual of part~\textup{(ii)} of \prox\ref{Composites of Normal Monos}, the 2-epimorphism \(\twocoim(g)\) cancelling off the normal 2-epimorphism \(e\).

	Finally
	\[
		\twoimg(h)\comp t\comp\twocoim(g)\iso\twoimg(h)\comp\twocoim(h)\comp b\iso h\comp b\iso d\comp g\iso d\comp\twoimg(g)\comp\twocoim(g)\text{,}
	\]
	and \(\twocoim(g)\) is a 2-epimorphism, so \(\twoimg(h)\comp t\iso d\comp\twoimg(g)\).
\end{proof}

\begin{proposition}\label{P:NSDKappa}
	There is a morphism \(\kappa\colon\TwoKer(t)\to X\), essentially unique with the property \(c\comp\kappa\iso\twoimg(g)\comp\twoker(t)\), and it is a 2-kernel of \(\twocoker(g)\comp c\). In particular \(\kappa\) is a normal 2-monomorphism.
\end{proposition}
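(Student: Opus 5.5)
The plan is to build \(\kappa\) from the universal property of \(c=\twoker(d)\), and then to check the 2-kernel property of \(\kappa\) directly, in the same way as the proof of \prox\ref{P:NormalMonoBipullback}. That proposition is the model: read with \(p_1=\kappa\), \(g=\twoimg(g)\) and \(w=\twocoker(g)\), it is the statement to be proved.

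\emph{Construction.} By \lemx\ref{L:NSDEpi} we have
\[
	d\comp\twoimg(g)\comp\twoker(t)\iso\twoimg(h)\comp t\comp\twoker(t)\iso 0\text{.}
\]
Since \(c\) is a 2-kernel of \(d\), condition~(1) of \defx\ref{Def 2-kernel} yields \(\kappa\colon\TwoKer(t)\to X\) with \(c\comp\kappa\iso\twoimg(g)\comp\twoker(t)\). Essential uniqueness follows from \remx\ref{rem2monoismonouptoiso}, because \(c\) is a 2-monomorphism. The right-hand side is a composite of two 2-monomorphisms, hence a 2-monomorphism, and \(c\) is faithful. Part~(i) of \prox\ref{Composites of Normal Monos} therefore makes \(\kappa\) a 2-monomorphism. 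This settles condition~(2) of \defx\ref{Def 2-kernel} for \(\kappa\) as a 2-kernel of \(\twocoker(g)\comp c\).

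\emph{Condition (1).} First, \(\kappa\) is annihilated:
\[
	\twocoker(g)\comp c\comp\kappa\iso\twocoker(g)\comp\twoimg(g)\comp\twoker(t)\iso 0\text{,}
\]
because \(\twoimg(g)=\twoker(\twocoker(g))\). Now let \(z\colon Z\to X\) satisfy \(\twocoker(g)\comp c\comp z\iso 0\).
\begin{itemize}
	\item Since \(g\) is normal, \(\twoimg(g)\) is a 2-kernel of \(\twocoker(g)\). So \(c\comp z\iso\twoimg(g)\comp y\) for some \(y\colon Z\to\TwoImg(g)\).
	\item Then
	\[
		\twoimg(h)\comp t\comp y\iso d\comp\twoimg(g)\comp y\iso d\comp c\comp z\iso 0\text{.}
	\]
	Since \(\twoimg(h)\) reflects null morphisms by \prox\ref{prop2monoreflectsnull}, we get \(t\comp y\iso 0\). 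Hence \(y\iso\twoker(t)\comp u\) for some \(u\).
	\item Finally
	\[
		c\comp\kappa\comp u\iso\twoimg(g)\comp\twoker(t)\comp u\iso\twoimg(g)\comp y\iso c\comp z\text{,}
	\]
	and reflecting along the 2-monomorphism \(c\) gives \(\kappa\comp u\iso z\).
\end{itemize}
By \remx\ref{Rem Kernel Property}, no coherence between the 2-cells needs to be checked. So \(\kappa\) is a 2-kernel of \(\twocoker(g)\comp c\), and in particular a normal 2-monomorphism.

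\emph{Alternative route and main obstacle.} One could instead verify that the square formed by \(\kappa\), \(\twoker(t)\), \(c\) and \(\twoimg(g)\) is a bipullback, and then quote \prox\ref{P:NormalMonoBipullback}. The factorisation part of that verification is the argument above, and condition~(2) would follow from faithfulness of \(\twoker(t)\) and of \(c\). The direct argument avoids the pasting bookkeeping, so it is the one to write out. The step that needs care is the first factorisation of \(c\comp z\): it uses the normality of \(g\), so that \(\twoimg(g)\) really is the 2-kernel of \(\twocoker(g)\). The only genuinely two-dimensional input is the reflection of nullity along \(\twoimg(h)\), and this needs the bizero object to be strong.
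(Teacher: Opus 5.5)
Your proof is correct. Its first half is the paper's: the construction of \(\kappa\) from \(c=\twoker(d)\), its essential uniqueness, and part~(i) of \prox\ref{Composites of Normal Monos} for the 2-monomorphism property.

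For condition~(1) the paper takes a detour that you avoid. It regards \((\kappa,\twoimg(g),\twoimg(h))\) as a morphism of short 2-exact sequences out of \(\TwoKer(t)\to\TwoImg(g)\to\TwoImg(h)\). It then uses \prox\ref{Mono Implies Left Pullback}, the right vertical \(\twoimg(h)\) being a 2-monomorphism, to make the left square a bipullback. Finally it quotes \prox\ref{P:NormalMonoBipullback} with \(w=\twocoker(g)\).

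Your three bullets are exactly two factorisation steps inlined: that of \prox\ref{P:NormalMonoBipullback} (your first bullet), followed by that of \prox\ref{Mono Implies Left Pullback} (your second and third). No bipullback is ever formed or recognised.

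\begin{itemize}
\item The paper's route buys reuse of two general lemmas, and the extra fact that the square is a bipullback.
\item Yours buys a proof from the universal properties of 2-kernels and the reflection of null morphisms alone. It shows that the bipullback singled out in \remx\ref{Rem NSD Sites} as the one genuine use of Section~\ref{S:Bipullbacks} can be bypassed here.
\item It also makes plain that the proposition never uses that \(t\) is a normal 2-epimorphism. Only the relation \(\twoimg(h)\comp t\iso d\comp\twoimg(g)\) and the 2-kernel of \(t\) enter, so \NEC\ plays no role at this step.
\end{itemize}

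One slip in your aside on the alternative route: condition~(2) for the bipullback needs \emph{full} faithfulness of \(\twoker(t)\) (or of \(\kappa\)) to produce the 2-cell. Faithfulness only gives uniqueness. Since you do not take that route, nothing depends on it.
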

\begin{proof}
	By \lemx\ref{L:NSDEpi}, \(d\comp\twoimg(g)\comp\twoker(t)\iso\twoimg(h)\comp t\comp\twoker(t)\iso 0\); as \(c=\twoker(d)\), this produces \(\kappa\) with \(c\comp\kappa\iso\twoimg(g)\comp\twoker(t)\), essentially unique because \(c\) is a 2-monomorphism. The diagram
	\begin{equation*}
		\xymatrix@R=5ex@C=4em{
		\TwoKer({t}) \ar@{{ |>}->}[r]^-{\twoker(t)} \ar[d]_{\kappa} &
		\TwoImg({g}) \ar[d]_{\twoimg(g)} \ar@{-{ >>}}[r]^-{t} &
		\TwoImg({h}) \ar[d]^{\twoimg(h)} \\
		X \ar@{{ |>}->}[r]_-{c} & Y \ar@{-{ >>}}[r]_-{d} & Z}
	\end{equation*}
	is then a morphism of short 2-exact sequences: its upper row is short 2-exact because \(t\) is a normal 2-epimorphism, hence a 2-cokernel of its 2-kernel by \prox\ref{kernel is kernel of its cokernel}; its lower row is the lower row of \eqref{Snake Ladder}; its left square commutes by the choice of \(\kappa\) and its right square by \lemx\ref{L:NSDEpi}; and no coherence condition arises, by \prox\ref{P:CoherenceFree}.

	Its right vertical \(\twoimg(h)\) is a 2-monomorphism, so \prox\ref{Mono Implies Left Pullback} makes the left square a bipullback. The morphism \(\kappa\) is a 2-monomorphism, since \(c\comp\kappa\iso\twoimg(g)\comp\twoker(t)\) is a composite of two 2-monomorphisms and \(c\) is a 2-monomorphism, so that part~\textup{(i)} of \prox\ref{Composites of Normal Monos} applies. Finally \(\twoimg(g)\) is a normal 2-monomorphism, hence the 2-kernel of its own 2-cokernel, and that 2-cokernel is \(\twocoker(g)\) by \prox\ref{CoKernel of Composite}, the morphism \(\twocoim(g)\) being a 2-epimorphism. So \prox\ref{P:NormalMonoBipullback}, applied to the left square with \(w=\twocoker(g)\), exhibits \(\kappa\) as a 2-kernel of \(\twocoker(g)\comp c\).
\end{proof}

\begin{proposition}\label{P:NSDcbar}
	The composite \(\twocoker(g)\comp c\) is normal; its normal image factorisation may be written \(\mu\comp\twocoker(\kappa)\) with \(\mu\colon\TwoCoker(\kappa)\to\TwoCoker(g)\) a normal 2-monomorphism. The morphism \(\underline{c}\) is normal.
\end{proposition}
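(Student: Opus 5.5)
The plan has three steps: show that \(\twocoker(g)\comp c\) is normal by applying \(\DPN\) to a suitable antinormal pair, read off its normal image factorisation from \prox\ref{P:NSDKappa}, and then cancel to reach \(\underline{c}\).

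\emph{Step 1: normality of \(\twocoker(g)\comp c\).} The pair \((c,\twocoker(g))\) is antinormal: \(c\) is a 2-kernel of \(d\), and \(\twocoker(g)\) is a 2-cokernel. Its composite is \(\twocoker(g)\comp c\). Since \(d=\twocoker(c)\), its dinversion is \(d\comp\twoker(\twocoker(g))\iso d\comp\twoimg(g)\). By \lemx\ref{L:NSDEpi}, \(d\comp\twoimg(g)\iso\twoimg(h)\comp t\). Here \(t\) is a normal 2-epimorphism and \(\twoimg(h)\) is a normal 2-monomorphism, so the dinversion is normal. Condition \(\DPN\) then makes \(\twocoker(g)\comp c\) normal; by \corx\ref{C:NormalTransport}, the choices of 2-kernel and 2-cokernel do not matter.

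\emph{Step 2: the image factorisation.} Let \(\twocoker(g)\comp c\iso m\comp p\) be a normal image factorisation. By \prox\ref{CoKernel of Composite}, \(\twoker(p)\simeq\twoker(\twocoker(g)\comp c)\), and the latter is \(\kappa\) by \prox\ref{P:NSDKappa}. By \prox\ref{kernel is kernel of its cokernel}, \(p\) is a 2-cokernel of its 2-kernel, so \(p\simeq\twocoker(\kappa)\). Absorbing the resulting equivalence into \(m\) gives \(\mu\colon\TwoCoker(\kappa)\to\TwoCoker(g)\) with \(\twocoker(g)\comp c\iso\mu\comp\twocoker(\kappa)\). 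By \corx\ref{C:NormalTransport}, \(\mu\) is still a normal 2-monomorphism.

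\emph{Step 3: normality of \(\underline{c}\).} By definition, \(\underline{c}\comp\twocoker(f)\iso\twocoker(g)\comp c\). The right-hand side is normal by Step 1, and \(\twocoker(f)\) is a 2-epimorphism, so the dual half of \prox\ref{P:NormalCancel} makes \(\underline{c}\) normal.

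The main obstacle is Step 1: one has to recognise the dinversion of \((c,\twocoker(g))\) as \(d\comp\twoimg(g)\) and identify it with \(\twoimg(h)\comp t\). That identification is the only place where \(\NEC\) enters, through \lemx\ref{L:NSDEpi}. Steps 2 and 3 are formal.
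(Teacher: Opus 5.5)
Your proof is correct and follows the paper's own argument essentially step for step. It applies \(\DPN\) to the antinormal pair \((c,\twocoker(g))\), whose dinversion is \(d\comp\twoimg(g)\iso\twoimg(h)\comp t\); it identifies the coimage part with \(\twocoker(\kappa)\) via \prox\ref{P:NSDKappa}; and it obtains the normality of \(\underline{c}\) from the dual of \prox\ref{P:NormalCancel}.
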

\begin{proof}
	The pair \((c,\twocoker(g))\) is antinormal: \(c\) is a normal 2-monomorphism, being a 2-kernel of \(d\), and \(\twocoker(g)\) is a normal 2-epimorphism. Its dinversion is \(\twocoker(c)\comp\twoker(\twocoker(g))\). Here \(\twocoker(c)\simeq d\), the lower row of \eqref{Snake Ladder} being short 2-exact, and \(\twoker(\twocoker(g))\simeq\twoimg(g)\), as in the proof of \prox\ref{P:NSDKappa}. So the dinversion is \(d\comp\twoimg(g)\iso\twoimg(h)\comp t\), by \lemx\ref{L:NSDEpi}: a normal 2-epimorphism followed by a normal 2-monomorphism, hence normal. Condition \(\DPN\) now makes \(\twocoker(g)\comp c\) normal.

	By \prox\ref{P:NSDKappa} its 2-kernel is \(\kappa\). In any normal image factorisation the 2-epimorphism part is a 2-cokernel of that 2-kernel---this is the first step of the proof of \prox\ref{Image Factorisation of Normal Map is Unique}---so we may take it to be \(\twocoker(\kappa)\), and write \(\mu\) for the accompanying normal 2-monomorphism.

	Finally, \(\underline{c}\) is characterised by \(\underline{c}\comp\twocoker(f)\iso\twocoker(g)\comp c\), and \(\twocoker(f)\) is a 2-epimorphism; so \(\underline{c}\) is normal by the dual of \prox\ref{P:NormalCancel}.
\end{proof}

\begin{proposition}\label{P:NSDbbar}
	The morphism \(\ov{b}\) is normal.
\end{proposition}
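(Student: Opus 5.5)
By the dual of \prox\ref{P:NormalCancel} it is enough to prove that \(b\comp\twoker(g)\) is normal. Indeed, \(\ov{b}\) is characterised by \(\twoker(h)\comp\ov{b}\iso b\comp\twoker(g)\), and \(\twoker(h)\) is a 2-monomorphism, so \prox\ref{P:NormalCancel} transfers normality from \(b\comp\twoker(g)\) to \(\ov{b}\). By \prox\ref{P:TwoSites} this is in turn equivalent to the normality of \(\twocoim(g)\comp a\). So the task is to break the circle of \prox\ref{P:TwoSites} from one side, as \prox\ref{P:NSDcbar} did for \(\underline{c}\). The difficulty is that the literal mirror of that argument would need the dual of \(\NEC\), which we do not have. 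I would therefore not dualise \lemx\ref{L:NSDEpi}; instead I would reuse the normal 2-epimorphism \(e=\twocoim(h)\comp b\) that it already provides.

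\emph{Step 1.} Put \(k_e\coloneq\twoker(e)\). The pair \((k_e,b)\) is antinormal. Since \(e\) is a normal 2-epimorphism, \prox\ref{kernel is kernel of its cokernel} gives \(\twocoker(k_e)\simeq e\). Together with \(\twoker(b)\simeq a\), this shows that the dinversion of \((k_e,b)\) is \(e\comp a=\twocoim(h)\comp b\comp a\iso 0\). A null morphism is normal by \prox\ref{P:NullNormal}, so \(\DPN\) makes \(b\comp k_e\) normal. Write its normal image factorisation as \(n\comp p\).

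\emph{Step 2.} Since \(e\comp\twoker(g)\iso 0\) by \lemx\ref{L:NSDEpi}, we get \(\twoker(g)\iso k_e\comp j\) for an essentially unique \(j\colon\TwoKer(g)\to\TwoKer(e)\). This \(j\) is a normal 2-monomorphism by part~\textup{(ii)} of \prox\ref{Composites of Normal Monos}. Hence \(b\comp\twoker(g)\iso n\comp(p\comp j)\), and it remains to see that \(p\comp j\) is normal. Then \(n\comp(p\comp j)\) will be normal because the normal 2-monomorphism part of \(p\comp j\) can be absorbed after cancellation through \(n\): I would show directly that \(n\comp m'\) is a 2-kernel, using that \(n\) is itself the 2-image inside \(\TwoKer(h)\), via \prox\ref{Composites of Normal Monos}(ii) applied against \(\twoker(h)\).

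\emph{Step 3.} The morphism \(p\comp j\) is the composite of an antinormal pair, so by \(\DPN\) it is enough to show that its dinversion \(\twocoker(j)\comp\twoker(p)\) is normal. I expect to identify \(\twoker(p)\simeq\twoker(b\comp k_e)\) with the corestriction of \(a\) through \(k_e\), using \prox\ref{CoKernel of Composite} and \(e\comp a\iso 0\). I also expect to identify \(\TwoCoker(j)\) with \(\TwoKer(t)\), by the Third Isomorphism Property (\prox\ref{Third Iso}, available since \(\DPN\) implies homological self-duality). The dinversion then becomes the restriction of \(\twocoim(g)\comp a\) into \(\TwoKer(t)\). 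By \prox\ref{P:NSDKappa}, this restriction composed with \(\kappa\) is, up to isomorphism, \(f\) corestricted along \(\kappa\), which is normal by \prox\ref{P:NormalCancel}.

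The main obstacle is Step~3: identifying the dinversion of \((j,p)\) with a morphism already known to be normal. This requires checking that \(\TwoCoker(j)\simeq\TwoKer(t)\) compatibly with \(\twocoim(g)\). If that identification resists, I would instead try to show directly that \(\twocoim(g)\comp a\iso\twoker(t)\comp f'\), with \(f'\) the corestriction of \(f\) through \(\kappa\), and to prove normality of \(\twoker(t)\comp f'\) by applying \(\DPN\) to the antinormal pair \((\twoker(t),\twocoker(f'))\).
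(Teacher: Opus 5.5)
There is a genuine gap, in Step~2, and your first sentence sets it up. Normality of \(b\comp\twoker(g)\) is stronger than what is asked. It is the first of the two sites at which Section~\ref{S:Snake} used \textup{(DI2)}, and \prox\ref{P:TwoSites} together with \remx\ref{Rem NSD NotSwap} explains why \DPN{} and \NEC{} do not deliver it. Since \(b\comp\twoker(g)\iso\twoker(h)\comp\ov{b}\), its 2-image would be \(\twoker(h)\) followed by the 2-image of \(\ov{b}\), and your \(n\comp m'\) is, up to equivalence, exactly that composite. Asking it to be a normal 2-monomorphism is asking that a composite of two normal 2-monomorphisms be normal. That is the dual of \NEC{}, and it is not assumed. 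Part~\textup{(ii)} of \prox\ref{Composites of Normal Monos} cannot supply it: it is a cancellation statement, passing from normality of a composite to normality of a factor, never the other way. Your fallback hits the same wall. Normality of \(\twocoim(g)\comp a\) is the other site, equivalent to the first by \prox\ref{P:TwoSites}, and unwinding it again requires \(\twoker(t)\) composed with the 2-image of your \(f'\) to be a normal 2-monomorphism. (Also, the opening cancellation is \prox\ref{P:NormalCancel} itself, not its dual.)

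The detour is unnecessary, because your Step~3 is already the heart of the paper's proof. With \(j\) the paper's \(m_1\), the dinversion of \((j,p)\) is \(\rho\comp\alpha\), and \(\kappa\comp\rho\comp\alpha\iso f\) makes it normal by \prox\ref{P:NormalCancel}. Identifying \(\twocoker(j)\) through \prox\ref{Third Iso}, applied to \(B\to\TwoImg(g)\to\TwoImg(h)\), is a legitimate substitute for the paper's second use of \lemx\ref{Pure Snake Lemma}.

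What you are missing is that \(n\simeq\twoker(h)\), i.e.\ that \(b\comp\twoker(e)\) has all of \(\TwoKer(h)\) as its 2-image. The paper gets this from \lemx\ref{Pure Snake Lemma} applied to the rows \(A\to B\to C\) and \(\TwoKer(e)\to B\to\TwoImg(h)\), with verticals \(\alpha\) and \(\twocoim(h)\). This yields a normal 2-epimorphism \(e_1\colon\TwoKer(e)\to\TwoKer(h)\) with \(\twoker(h)\comp e_1\iso b\comp\twoker(e)\).

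It also follows from your Step~1. If \(u\comp b\comp\twoker(e)\iso 0\), then \(u\comp b\) factors through \(e=\twocoker(\twoker(e))\), and cancelling the 2-epimorphism \(b\) factors \(u\) through \(\twocoim(h)\). So \(\twocoim(h)\) is a 2-cokernel of \(b\comp\twoker(e)\), whence \(n\simeq\twoker(\twocoim(h))\simeq\twoker(h)\).

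Either way, cancel the 2-monomorphism \(\twoker(h)\) from \(\twoker(h)\comp\ov{b}\iso b\comp\twoker(e)\comp j\). This gives \(\ov{b}\iso e_1\comp j\) up to an equivalence, which is itself an antinormal composite. Then \DPN{} with your Step~3 finishes the proof, without \(b\comp\twoker(g)\) ever needing to be normal.
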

\begin{proof}
	We produce an antinormal decomposition of \(\ov{b}\) whose dinversion is visibly normal, and appeal to \(\DPN\).

	Since \(e\comp a\iso\twocoim(h)\comp b\comp a\iso 0\), the morphism \(a\) factors as \(a\iso\twoker(e)\comp\alpha\) for an essentially unique \(\alpha\colon A\to\TwoKer(e)\), and \(\alpha\) is a normal 2-monomorphism by part~\textup{(ii)} of \prox\ref{Composites of Normal Monos}. Since \(e\comp\twoker(g)\iso 0\), by \lemx\ref{L:NSDEpi}, the morphism \(\twoker(g)\) factors likewise as \(\twoker(g)\iso\twoker(e)\comp m_1\) with \(m_1\colon\TwoKer(g)\to\TwoKer(e)\) a normal 2-monomorphism.

	The rows \(A\aar{a}B\aar{b}C\) and \(\TwoKer(e)\aar{\twoker(e)}B\aar{e}\TwoImg(h)\) are short 2-exact and share the middle object \(B\); with verticals \(\alpha\) and \(\twocoim(h)\) they form a morphism of short 2-exact sequences with identity middle component. By \lemx\ref{Pure Snake Lemma} its comparison \(\TwoCoker(\alpha)\to\TwoKer(\twocoim(h))\) is an equivalence, and \(\TwoKer(\twocoim(h))\simeq\TwoKer(h)\) by \prox\ref{CoKernel of Composite}. Write
	\[
		e_1\colon\TwoKer(e)\longrightarrow\TwoKer(h)
	\]
	for the composite of \(\twocoker(\alpha)\) with that equivalence. It is a 2-cokernel of \(\alpha\) followed by an equivalence, hence a normal 2-epimorphism with 2-kernel \(\alpha\); and the characterisation of the comparison in \lemx\ref{Pure Snake Lemma} reads \(\twoker(h)\comp e_1\iso b\comp\twoker(e)\).

	The rows \(\TwoKer(g)\aar{\twoker(g)}B\aar{\twocoim(g)}\TwoImg(g)\) and \(\TwoKer(e)\aar{\twoker(e)}B\aar{e}\TwoImg(h)\) are short 2-exact and share the middle object \(B\) as well; their verticals are \(m_1\) and \(t\), by \lemx\ref{L:NSDEpi}. The same lemma supplies an equivalence \(\TwoCoker(m_1)\simeq\TwoKer(t)\); write
	\[
		\rho\colon\TwoKer(e)\longrightarrow\TwoKer(t)
	\]
	for the composite of \(\twocoker(m_1)\) with it, a 2-cokernel of \(m_1\), with \(\twoker(t)\comp\rho\iso\twocoim(g)\comp\twoker(e)\).

	Now \(\ov{b}\iso e_1\comp m_1\). Indeed
	\[
		\twoker(h)\comp e_1\comp m_1\iso b\comp\twoker(e)\comp m_1\iso b\comp\twoker(g)\iso\twoker(h)\comp\ov{b}\text{,}
	\]
	and \(\twoker(h)\) is a 2-monomorphism. So \((m_1,e_1)\) is an antinormal pair with composite \(\ov{b}\), and its dinversion is \(\rho\comp\alpha\), up to the equivalences just used and hence up to \corx\ref{C:NormalTransport}.

	It remains to see that \(\rho\comp\alpha\) is normal, and for that we compose with \(\kappa\):
	\[
		c\comp\kappa\comp\rho\comp\alpha\iso\twoimg(g)\comp\twoker(t)\comp\rho\comp\alpha\iso\twoimg(g)\comp\twocoim(g)\comp\twoker(e)\comp\alpha\iso g\comp a\iso c\comp f\text{,}
	\]
	so that \(\kappa\comp\rho\comp\alpha\iso f\), the morphism \(c\) being a 2-monomorphism. Now \(f\) is normal and \(\kappa\) is a 2-monomorphism, so \(\rho\comp\alpha\) is normal by \prox\ref{P:NormalCancel}. By \(\DPN\), the composite \(\ov{b}\iso e_1\comp m_1\) is normal.
\end{proof}

\begin{remark}\label{Rem NSD Sites}
	The two normality statements \prox\ref{P:NSDcbar} and \prox\ref{P:NSDbbar} are what Section~\ref{S:Snake} extracted from \textup{(DI2)} at its two sites, and they are obtained here at very different prices. The first is immediate once \(t\) exists, because the dinversion of \((c,\twocoker(g))\) \emph{is} \(\twoimg(h)\comp t\); the second needs the morphism \(\kappa\) of \prox\ref{P:NSDKappa}, and hence the bipullback argument of Section~\ref{S:Bipullbacks}. This is the only place in the paper where a bipullback is used for anything beyond Section~\ref{S:Bipullbacks} itself.
\end{remark}

\subsection{The connecting 1-cell}\label{SS:NSDConnecting}

\begin{proposition}\label{P:NSDLambda}
	There is a normal 2-monomorphism \(\lambda\colon\TwoImg(f)\to\TwoKer(t)\) with \(\lambda\comp\twocoim(f)\iso\rho\comp\alpha\) and \(\kappa\comp\lambda\iso\twoimg(f)\).
\end{proposition}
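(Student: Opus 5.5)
The construction of \(\lambda\) follows from the computation at the end of the proof of \prox\ref{P:NSDbbar}. That computation gives \(\kappa\comp\rho\comp\alpha\iso f\), and \(f\iso\twoimg(f)\comp\twocoim(f)\). So the morphism \(\rho\comp\alpha\colon A\to\TwoKer(t)\) is normal by \prox\ref{P:NormalCancel}. Composing it with the 2-monomorphism \(\kappa\) gives a normal morphism whose normal image factorisation is \(\twoimg(f)\comp\twocoim(f)\). I will take a normal image factorisation \(\rho\comp\alpha\iso n\comp p\), with \(p\) a normal 2-epimorphism and \(n\) a normal 2-monomorphism, and compare it with the one for \(f\).

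The key step is the comparison. The morphism \(\kappa\comp n\) is a 2-monomorphism, being a composite of two (\(\kappa\) is a 2-monomorphism by \prox\ref{P:NSDKappa}), and \(f\iso(\kappa\comp n)\comp p\). So \(f\) factors as a 2-epimorphism followed by a 2-monomorphism. The second half of \prox\ref{Image Factorisation of Normal Map is Unique} then makes \(\kappa\comp n\) a normal 2-monomorphism and \(p\) a normal 2-epimorphism. Its first half gives an equivalence \(v\colon\TwoImg(f)\to I\) between the two middle objects, with \(v\comp\twocoim(f)\iso p\) and \((\kappa\comp n)\comp v\iso\twoimg(f)\). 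I then set \(\lambda\coloneq n\comp v\). It is a normal 2-monomorphism by \corx\ref{C:NormalTransport}, and the two required invertible 2-cells follow: \(\lambda\comp\twocoim(f)\iso n\comp p\iso\rho\comp\alpha\), and \(\kappa\comp\lambda\iso\twoimg(f)\).

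A shorter route avoids \prox\ref{P:NormalCancel}. First, \(\twoker(f)\) is a 2-kernel of \(\rho\comp\alpha\), since \(\kappa\) reflects null morphisms (\prox\ref{prop2monoreflectsnull}) and \(\kappa\comp\rho\comp\alpha\iso f\). The universal property of \(\twocoim(f)=\twocoker(\twoker(f))\) then yields \(\lambda\) with \(\lambda\comp\twocoim(f)\iso\rho\comp\alpha\). Composing with \(\kappa\) and cancelling the 2-epimorphism \(\twocoim(f)\) gives \(\kappa\comp\lambda\iso\twoimg(f)\). Finally, part~(ii) of \prox\ref{Composites of Normal Monos}, with \(\kappa\) a 2-monomorphism and \(\twoimg(f)\) a normal 2-monomorphism, makes \(\lambda\) a normal 2-monomorphism. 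I would use this shorter route.

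The main obstacle is bookkeeping rather than a real difficulty. The morphism \(\rho\) is defined only up to the equivalence supplied by \lemx\ref{Pure Snake Lemma}. The cancellation of \(\twocoim(f)\) is legitimate because it is a 2-epimorphism, by the dual of \remx\ref{rem2monoismonouptoiso}. So every 2-cell above is invertible and determined up to the choices already fixed.
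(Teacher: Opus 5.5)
Your shorter route is exactly the paper's proof: reflect \(\kappa\comp\rho\comp\alpha\comp\twoker(f)\iso f\comp\twoker(f)\iso 0\) along the 2-monomorphism \(\kappa\), induce \(\lambda\) through the 2-cokernel \(\twocoim(f)\) of \(\twoker(f)\), cancel the 2-epimorphism \(\twocoim(f)\) to get \(\kappa\comp\lambda\iso\twoimg(f)\), and conclude with part~(ii) of \prox\ref{Composites of Normal Monos}. Only \(\rho\comp\alpha\comp\twoker(f)\iso 0\) is actually needed there (your stronger claim that \(\twoker(f)\) is a 2-kernel of \(\rho\comp\alpha\) is true, by \prox\ref{CoKernel of Composite}), and your first route, through \prox\ref{P:NormalCancel} and the uniqueness of normal image factorisations, is also valid, just longer.
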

\begin{proof}
	From \(\kappa\comp\rho\comp\alpha\comp\twoker(f)\iso f\comp\twoker(f)\iso 0\) and the fact that the 2-monomorphism \(\kappa\) reflects null morphisms (\prox\ref{prop2monoreflectsnull}) we get \(\rho\comp\alpha\comp\twoker(f)\iso 0\). As \(\twocoim(f)\) is a 2-cokernel of \(\twoker(f)\), this induces \(\lambda\) with \(\lambda\comp\twocoim(f)\iso\rho\comp\alpha\). Then
	\[
		\kappa\comp\lambda\comp\twocoim(f)\iso\kappa\comp\rho\comp\alpha\iso f\iso\twoimg(f)\comp\twocoim(f)\text{,}
	\]
	and \(\twocoim(f)\) is a 2-epimorphism, so \(\kappa\comp\lambda\iso\twoimg(f)\). Since \(\twoimg(f)\) is a normal 2-monomorphism and \(\kappa\) is a 2-monomorphism, part~\textup{(ii)} of \prox\ref{Composites of Normal Monos} makes \(\lambda\) a normal 2-monomorphism.
\end{proof}

We can now assemble the connecting 1-cell. Consider the rows
\[
	\xymatrix{0 \ar[r] & \TwoImg({f}) \ar[d]_-{\lambda} \ar@{{ |>}->}[r]^-{\twoimg(f)} & X \ar@{=}[d] \ar@{-{ >>}}[r]^-{\twocoker(f)} & \TwoCoker({f}) \ar[d]^{v} \ar[r]& 0\\
	0 \ar[r] & \TwoKer({t}) \ar@{{ |>}->}[r]_-{\kappa} & X \ar@{-{ >>}}[r]_-{\twocoker(\kappa)} & \TwoCoker({\kappa}) \ar[r]&  0}
\]
through the shared object \(X\). The upper row is short 2-exact because \(\twoimg(f)\) is a normal 2-monomorphism, its 2-cokernel being \(\twocoker(f)\) by \prox\ref{CoKernel of Composite}; the lower row is short 2-exact by \prox\ref{P:NSDKappa}; the left square commutes by \prox\ref{P:NSDLambda}; and the right vertical \(v\) is induced by \(\twocoker(\kappa)\comp\twoimg(f)\iso\twocoker(\kappa)\comp\kappa\comp\lambda\iso 0\), so that \(v\comp\twocoker(f)\iso\twocoker(\kappa)\). By \lemx\ref{Pure Snake Lemma} the comparison
\[
	z_2\colon\TwoCoker(\lambda)\longrightarrow\TwoKer(v)
\]
is an equivalence. Three further identifications turn this into the equivalence we want.

\begin{lemma}\label{L:NSDChain}
	There are equivalences
	\[
		\TwoCoker(\ov{b})\simeq\TwoCoker(\rho\comp\alpha)\simeq\TwoCoker(\lambda)\text{,}\qquad\TwoKer(v)\simeq\TwoKer(\underline{c})\text{.}
	\]
\end{lemma}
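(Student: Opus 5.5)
The three equivalences are proved separately. Each one reduces to a statement already established: \prox\ref{CoKernel of Composite}, which lets us discard a 2-monomorphism on the left or a 2-epimorphism on the right, and \corx\ref{C:NormalTransport} and \corx\ref{corollkeruniqueuptoequiv}, which let us ignore equivalences.

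\emph{First equivalence.} In the proof of \prox\ref{P:NSDbbar} we have \(\ov{b}\iso e_1\comp m_1\), and the dinversion of the antinormal pair \((m_1,e_1)\) is \(\rho\comp\alpha\), up to equivalences. \lemx\ref{L:DinversionCoker} gives \(\TwoCoker(e_1\comp m_1)\simeq\TwoCoker(w)\) for that dinversion \(w\). The dinversion is formed with \(\twoker(e_1)\) and \(\twocoker(m_1)\). Here \(\twoker(e_1)\simeq\alpha\), because \(e_1\) is a 2-cokernel of \(\alpha\) followed by an equivalence and \(\alpha\) is a normal 2-monomorphism. Also \(\twocoker(m_1)\) is \(\rho\) up to an equivalence. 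So \(w\) differs from \(\rho\comp\alpha\) only by equivalences on either side, and its 2-cokernel is unchanged by \corx\ref{corollkeruniqueuptoequiv}.

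\emph{Second equivalence.} \prox\ref{P:NSDLambda} gives \(\rho\comp\alpha\iso\lambda\comp\twocoim(f)\), where \(\twocoim(f)\) is a 2-epimorphism. The dual half of \prox\ref{CoKernel of Composite} then gives \(\TwoCoker(\rho\comp\alpha)\simeq\TwoCoker(\lambda)\).

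\emph{Third equivalence.} By \prox\ref{P:NSDcbar}, the normal image factorisation \(\twocoker(g)\comp c\iso\mu\comp\twocoker(\kappa)\) holds with \(\mu\) a normal 2-monomorphism. By definition of \(\underline{c}\) and of \(v\),
\[
	\underline{c}\comp\twocoker(f)\iso\twocoker(g)\comp c\iso\mu\comp\twocoker(\kappa)\iso\mu\comp v\comp\twocoker(f)\text{.}
\]
Since \(\twocoker(f)\) is a 2-epimorphism, cofull faithfulness (dual of \remx\ref{rem2monoismonouptoiso}) yields \(\underline{c}\iso\mu\comp v\). Then \prox\ref{CoKernel of Composite}, applied with the 2-monomorphism \(\mu\), gives \(\TwoKer(\underline{c})\simeq\TwoKer(v)\).

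The main obstacle is the first equivalence. We must check carefully that the dinversion which \lemx\ref{L:DinversionCoker} produces from \((m_1,e_1)\) really is \(\rho\comp\alpha\) up to equivalences. This means identifying \(\twoker(e_1)\) with \(\alpha\), using that \(e_1\) is a 2-cokernel of \(\alpha\) followed by an equivalence and invoking \prox\ref{kernel is kernel of its cokernel}. It also means identifying \(\twocoker(m_1)\) with \(\rho\), using that \(\rho\) is \(\twocoker(m_1)\) composed with the Pure Snake equivalence. After that, the general invariance of 2-cokernels under equivalences finishes the step.
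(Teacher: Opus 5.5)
Your proposal is correct and follows the paper's proof step for step: \lemx\ref{L:DinversionCoker} applied to \((m_1,e_1)\) gives the first equivalence; \prox\ref{CoKernel of Composite} with the 2-epimorphism \(\twocoim(f)\) gives the second; and for the third, cancelling \(\twocoker(f)\) gives \(\underline{c}\iso\mu\comp v\), after which \prox\ref{CoKernel of Composite} applies. Your identification of \(\twoker(e_1)\) with \(\alpha\) and of \(\twocoker(m_1)\) with \(\rho\) only makes explicit what the paper asserts in passing, namely that the dinversion of \((m_1,e_1)\) is \(\rho\comp\alpha\).
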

\begin{proof}
	The first is \lemx\ref{L:DinversionCoker}, applied to the antinormal pair \((m_1,e_1)\) of \prox\ref{P:NSDbbar}, whose composite is \(\ov{b}\) and whose dinversion is \(\rho\comp\alpha\). The second is \prox\ref{CoKernel of Composite}: \(\rho\comp\alpha\iso\lambda\comp\twocoim(f)\) with \(\twocoim(f)\) a 2-epimorphism. For the third, \(\mu\comp v\comp\twocoker(f)\iso\mu\comp\twocoker(\kappa)\iso\twocoker(g)\comp c\iso\underline{c}\comp\twocoker(f)\) by \prox\ref{P:NSDcbar}, and \(\twocoker(f)\) is a 2-epimorphism, so \(\underline{c}\iso\mu\comp v\); as \(\mu\) is a 2-monomorphism, \prox\ref{CoKernel of Composite} gives \(\TwoKer(\underline{c})\simeq\TwoKer(v)\).
\end{proof}

Writing \(z\colon\TwoCoker(\ov{b})\to\TwoKer(\underline{c})\) for the composite of the four equivalences of \lemx\ref{L:NSDChain} and \(z_2\), we define the \dfn{connecting 1-cell} to be
\begin{equation}\label{Def Partial NSD}
	\partial\coloneq\twoker(\underline{c})\comp z\comp\twocoker(\ov{b})\colon\TwoKer(h)\longrightarrow\TwoCoker(f)\text{,}
\end{equation}
exactly as in \eqref{Def Partial}. As in \remx\ref{Rem Partial Choices}, it depends on the choices made only up to an invertible 2-cell.

\subsection{The theorem}\label{SS:NSDTheorem}

\begin{theorem}[The Snake Lemma, non-self-dual version]\label{T:SnakeNonSelfDual}
	Let \(\L\) be a 2-z-exact 2-category with a strong bizero object which satisfies \(\DPN\) and \(\NEC\). Then the conclusion of \thex\ref{Snake General 2D} holds: for every diagram \eqref{Snake Ladder} with 2-exact rows whose two squares commute up to invertible 2-cells and whose verticals \(f\), \(g\) and \(h\) are normal, there is a 1-cell \(\partial\) making the sequence \eqref{Snake Sequence} 2-exact; and if \(a=\twoker(b)\) then \(\ov{a}=\twoker(\ov{b})\), dually for \(\underline{d}\).

	The same holds if \(\NEC\) is replaced by its dual, that normal 2-monomorphisms be closed under composition.
\end{theorem}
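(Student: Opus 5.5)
I would follow the architecture of Section~\ref{S:Snake}, first proving the special case in which both rows are short 2-exact and then reducing the general case to it as in Section~\ref{SS:GeneralCase}. For the special case, the standing hypotheses of Sections~\ref{SS:NSDNormal} and~\ref{SS:NSDConnecting} are exactly those of the theorem. So the connecting 1-cell is \eqref{Def Partial NSD}, with \(z\) the composite of the equivalences of \lemx\ref{L:NSDChain} and \(z_2\). \prox\ref{P:Shape} then gives 2-exactness at \(\TwoCoker(f)\) outright. It gives 2-exactness at \(\TwoKer(h)\) once \(\ov{b}\) is normal, and that is \prox\ref{P:NSDbbar}. At \(\TwoKer(g)\), \prox\ref{P:KerBarA} gives \(\ov{a}=\twoker(\ov{b})\), using only that \(c\) is a 2-monomorphism. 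Dually, \(\underline{d}=\twocoker(\underline{c})\), and \prox\ref{P:NSDcbar} makes \(\underline{c}\) normal, so \prox\ref{Exactness Self-Dual} yields exactness at \(\TwoCoker(g)\). Normality of \(\ov{a}\) and \(\underline{d}\) comes for free, since they are a 2-kernel and a 2-cokernel of normal morphisms. Normality of \(\partial\) is visible from its definition. None of these steps uses \textup{(DI2)}: \prox\ref{P:KerBarA} and \prox\ref{P:Shape} are stated without it.

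For the general case I would take the normal image factorisations \(a\iso a'\comp\twocoim(a)\) and \(d\iso\twoimg(d)\comp d'\), and form \(f'\) and \(h'\) via \lemx\ref{L:Restriction}. That lemma uses only \prox\ref{prop2monoreflectsnull} and \prox\ref{Composites of Normal Monos}. The middle ladder of Figure~\ref{Fig Snake General} has short 2-exact rows, so the special case applies to it. The identifications \(\TwoKer(h')\simeq\TwoKer(h)\) and \(\TwoCoker(f')\simeq\TwoCoker(f)\) come from \prox\ref{CoKernel of Composite} and transport the conclusion. Exactness at \(\TwoKer(g)\) and at \(\TwoCoker(g)\) then follows from \prox\ref{P:KerComparison}, which needs only the Pure Snake Lemma. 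That lemma is available because \(\DPN\) implies homological self-duality by \prox\ref{P:DPNPlace}. I still have to check that \(\ov{a}=\ov{a}'\comp\ov{a}''\) with \(\ov{a}'=\twoker(\ov{b})\) gives exactness. The image of \(\ov{a}\) is \(\ov{a}'\) because \(\ov{a}''\) is a normal 2-epimorphism, and the dual argument handles \(\underline{d}\). The final clauses, \(\ov{a}=\twoker(\ov{b})\) when \(a=\twoker(b)\) and dually, are again \prox\ref{P:KerBarA}.

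For the variant with the dual of \(\NEC\), I would pass to \(\L\op\). There \(\DPN\) is self-dual, because dinversion is involutive and the biconditional is symmetric under duality, and the dual of \(\NEC\) becomes \(\NEC\). Reversing the ladder of \eqref{Snake Ladder} swaps the roles of its rows and its 2-kernels and 2-cokernels, and turns the snake sequence into the snake sequence of the dual ladder. So the first part, applied in \(\L\op\), gives the conclusion.

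I expect the main obstacle to be the general-case bookkeeping, in particular the 2-exactness at \(\TwoKer(g)\). That argument needs exactness of a composite \(\ov{a}'\comp\ov{a}''\) whose second factor is only known to be a normal 2-epimorphism. I would first try \prox\ref{Image Factorisation of Normal Map is Unique} to identify \(\twoimg(\ov{a})\simeq\ov{a}'\) directly.
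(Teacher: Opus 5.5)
Your proposal is correct and follows the paper's proof essentially step for step. The special case runs through \prox\ref{P:Shape}, \prox\ref{P:NSDbbar}, \prox\ref{P:NSDcbar}, \prox\ref{P:KerBarA} and \prox\ref{Exactness Self-Dual}; the general case uses the reduction of Section~\ref{SS:GeneralCase}, with the Pure Snake Lemma supplied by \prox\ref{P:DPNPlace}; and the variant is obtained by passing to \(\L\op\), where \(\DPN\) is self-dual. Your extra check, that \(\ov{a}\iso\ov{a}'\comp\ov{a}''\) is a normal image factorisation so that \(\twoimg(\ov{a})\simeq\twoker(\ov{b})\), is exactly what the paper leaves implicit when it says that \prox\ref{P:KerComparison} supplies 2-exactness at \(\TwoKer(g)\) and \(\TwoCoker(g)\).
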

\begin{proof}
	Consider first the special case in which \(a=\twoker(b)\) and \(d=\twocoker(c)\), the standing hypothesis of Sections~\ref{SS:NSDNormal} and~\ref{SS:NSDConnecting}. By \prox\ref{P:DPNPlace} the 2-category is homologically self-dual, so the whole of Section~\ref{S:Exact} is available, as it was in Section~\ref{S:Snake}.

	The morphism \(\ov{b}\) is normal by \prox\ref{P:NSDbbar} and \(\underline{c}\) is normal by \prox\ref{P:NSDcbar}. Applying \prox\ref{P:Shape} to \eqref{Def Partial NSD}---with \(q'=\twocoker(\ov{b})\), with the equivalence \(z\) of \lemx\ref{L:NSDChain} and with \(m=\twoker(\underline{c})\)---gives 2-exactness at \(\TwoCoker(f)\) and, \(\ov{b}\) being normal, at \(\TwoKer(h)\). By \prox\ref{P:KerBarA} we have \(\ov{a}=\twoker(\ov{b})\), so the pair \((\ov{a},\ov{b})\) is 2-exact at \(\TwoKer(g)\); dually \(\underline{d}=\twocoker(\underline{c})\), and since \(\underline{c}\) is normal the pair \((\underline{c},\underline{d})\) is 2-exact at \(\TwoCoker(g)\), by \prox\ref{Exactness Self-Dual}. The six 1-cells of \eqref{Snake Sequence} are normal: \(\ov{a}\) and \(\underline{d}\) are a 2-kernel and a 2-cokernel, \(\ov{b}\) and \(\underline{c}\) are normal as just recalled, and \(\partial\) is a normal 2-epimorphism followed by a normal 2-monomorphism by construction.

	The general case reduces to this one exactly as in Section~\ref{SS:GeneralCase}. That reduction uses \lemx\ref{L:Restriction}, \prox\ref{CoKernel of Composite} and \prox\ref{P:KerComparison}, and by \remx\ref{Rem General Cost} none of them needs more than the Pure Snake Lemma, which \prox\ref{P:DPNPlace} makes available here.

	For the last assertion, note that \(\DPN\) is self-dual: passing to the opposite 2-category interchanges normal 2-monomorphisms with normal 2-epimorphisms and 2-kernels with 2-cokernels, so it carries an antinormal pair to an antinormal pair, its composite to the composite and its dinversion to the dinversion. A 2-category satisfying \(\DPN\) in which normal 2-monomorphisms are closed under composition therefore has an opposite satisfying \(\DPN\) and \(\NEC\), to which the theorem just proved applies; and the snake sequence of the opposite is the snake sequence of the original read backwards.
\end{proof}

\begin{remark}\label{Rem NSD NotSwap}
	The two proofs are not related by exchanging a hypothesis. Section~\ref{S:Snake} builds \(\partial\) from a pure configuration whose middle object is \(C\), and for that it needs the 2-image \(i\) of \(b\comp\twoker(g)\) as a \emph{normal} 2-monomorphism. Knowing that \(\ov{b}\) is normal does not supply it: \(b\comp\twoker(g)\iso\twoker(h)\comp\ov{b}\), so passing from one to the other composes two normal 2-monomorphisms, which is the hypothesis dual to \(\NEC\). This is why the construction above uses pure configurations with middle objects \(B\) and \(X\) instead, and why \prox\ref{P:NSDKappa} has to be proved at all.
\end{remark}

\subsection{\texorpdfstring{\(\AbCat\)}{AbCat} is homologically self-dual but does not satisfy \texorpdfstring{\(\DPN\)}{(DPN)}}\label{SS:NSDAbCat}

Section~\ref{S:Model} shows that \(\AbCat\) is not 2-di-exact, and the natural question is whether the weaker hypotheses of this section rescue it. They do not, and the failure is sharp: of the three conditions \(\DPN\), \(\NEC\) and its dual, \(\AbCat\) satisfies the last two and fails the first. We use the reduction of \prox\ref{P:DIabcat} in the pointwise form its proof establishes: for a single pair of Serre subcategories \(K\) and \(S\) of an abelian category \(\onecat{A}\), the antinormal 1-cell \(K\rightarrowtail\onecat{A}\twoheadrightarrow\onecat{A}/S\) is normal if and only if \(K^{S}\) is a Serre subcategory of \(\onecat{A}\).

\begin{proposition}\label{P:AbCatComposition}
	In \(\AbCat\), normal 2-monomorphisms are closed under composition, and so are normal 2-epimorphisms.
\end{proposition}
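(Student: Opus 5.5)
By \corx\ref{C:AbCatNormal} and \corx\ref{C:NormalTransport}, it suffices to treat Serre inclusions and Serre quotient projections, since equivalences can be absorbed at either end. For normal 2-monomorphisms, let \(K\rightarrowtail\onecat{B}\rightarrowtail\onecat{A}\) be Serre inclusions, up to equivalences on the outside. As recalled in the proof of \prox\ref{P:SATclosed}, a Serre subcategory of a Serre subcategory \(\onecat{B}\) of \(\onecat{A}\) is a Serre subcategory of \(\onecat{A}\): closure under subobjects, quotients and extensions in \(\onecat{B}\) transfers to \(\onecat{A}\), because \(\onecat{B}\) is itself closed under these in \(\onecat{A}\). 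The composite is then the inclusion of a Serre subcategory of \(\onecat{A}\), which is a 2-kernel by \prox\ref{P:AbCatKernel}, namely the 2-kernel of \(\onecat{A}\to\onecat{A}/K\). If equivalences sit between the two inclusions, I would first use \corx\ref{C:NormalTransport} to move them outward, so that the composite reduces to this case.

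For normal 2-epimorphisms, let \(q\colon\onecat{A}\to\onecat{A}/S\) and \(q'\colon\onecat{A}/S\to(\onecat{A}/S)/S'\) be Serre projections. Set \(T\coloneq q^{-1}(S')\). This is a Serre subcategory of \(\onecat{A}\) containing \(S\), by Gabriel's correspondence, and it is exactly the class of objects annihilated by the exact functor \(q'\comp q\). I claim \(q'\comp q\) is a 2-cokernel of the inclusion \(T\rightarrowtail\onecat{A}\), so that \((\onecat{A}/S)/S'\simeq\onecat{A}/T\) compatibly with the projections. The argument would check the two conditions of \defx\ref{Def 2-cokernel} by iterating the universal property of Serre quotients quoted before \prox\ref{P:AbCatBizero}.

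For condition~(1), take an exact \(z\colon\onecat{A}\to\onecat{Z}\) that annihilates \(T\). It annihilates \(S\subseteq T\), so \(z\iso u\comp q\) with \(u\) exact. Then \(u\) annihilates \(S'\): every object of \(S'\) is isomorphic to \(q(x)\) for some \(x\in T\), by the essential-image half of Gabriel's correspondence, and \(u(q(x))\iso z(x)\iso 0\). Hence \(u\iso u'\comp q'\).

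For condition~(2), note that precomposition with \(q\), and then with \(q'\), is fully faithful on exact functors, so precomposition with \(q'\comp q\) is fully faithful as well. The one thing still to secure is that \(q'\comp q\) annihilates exactly \(T\), so that it is a 2-cokernel of \(T\)'s inclusion. This holds by the definition of \(T\) together with the fact that \(q'\) annihilates exactly \(S'\). So \(q'\comp q\) is a normal 2-epimorphism.

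The main obstacle is the essential-surjectivity step in condition~(1): showing that \(u\) kills all of \(S'\) rather than only the objects of the form \(q(x)\) with \(x\in T\). This is precisely where the essential-image description of Gabriel's correspondence is needed, and I would invoke it there explicitly. Everything else is formal bookkeeping of equivalences through \corx\ref{C:NormalTransport}.
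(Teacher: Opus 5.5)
Your proof is correct. The monomorphism half, meaning the reduction through \corx\ref{C:AbCatNormal} and \corx\ref{C:NormalTransport} followed by the observation that a Serre subcategory of a Serre subcategory is a Serre subcategory, is the paper's argument.

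For normal 2-epimorphisms you take a different route. The paper quotes from Gabriel the identification \((\onecat{A}/S)/q(T)\simeq\onecat{A}/T\) and asserts that, under it, the composite of the two projections becomes the projection onto \(\onecat{A}/T\). You instead check directly that \(q'\comp q\) is a 2-cokernel of the inclusion of \(T\), using only the universal property of Serre quotients recalled before \prox\ref{P:AbCatBizero}. The paper's version is shorter and also identifies the codomain. Yours is self-contained: it never has to match the composite with a particular projection under an equivalence, because it exhibits the 2-cokernel property outright.

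Your route in fact needs less than you invoke. The step you call the main obstacle is immediate. Since \(q\) is the identity on objects, any object \(y\) of \(S'\) equals \(q(x)\) for some \(x\) with \(q(x)\in S'\). By the definition of the preimage this says \(x\in T\), so the essential-image half of Gabriel's correspondence plays no role. Likewise, \(T\) is a Serre subcategory because it is the class of objects annihilated by the exact functor \(q'\comp q\), by \prox\ref{P:AbCatKernel}. For the structure 2-cell you only need that \(q'\comp q\) annihilates \(T\), not that it annihilates exactly \(T\).

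So your argument dispenses with Gabriel's correspondence altogether. It uses nothing about Serre quotients beyond their universal property and the fact that the projection is the identity on objects.
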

\begin{proof}
	By \corx\ref{C:AbCatNormal} and \corx\ref{C:NormalTransport} it is enough to treat the inclusions of Serre subcategories and the projections onto Serre quotients. If \(K\) is a Serre subcategory of \(L\) and \(L\) one of \(\onecat{A}\), then \(K\) is one of \(\onecat{A}\), since \(L\) is closed in \(\onecat{A}\) under subobjects, quotients and extensions. Dually, a Serre subcategory of \(\onecat{A}/S\) is \(q(T)\) for a unique Serre subcategory \(T\) of \(\onecat{A}\) containing \(S\), and \((\onecat{A}/S)/q(T)\simeq\onecat{A}/T\) by Gabriel's correspondence \cite[Chapter~III]{Gabriel62}; under that equivalence the composite of the two projections is the projection onto \(\onecat{A}/T\).
\end{proof}

\begin{proposition}\label{P:AbCatHSD}
	The 2-category \(\AbCat\) is homologically self-dual.
\end{proposition}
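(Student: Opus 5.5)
The plan is to reduce homological self-duality to the saturation criterion of \prox\ref{P:DIabcat} in its pointwise form, and then to verify it in the one degenerate case that the vanishing of \(e\comp m\) forces. Let \((m,e)\) be an antinormal decomposition of the zero map in \(\AbCat\). By \corx\ref{C:AbCatNormal}, \corx\ref{C:NormalTransport} and \corx\ref{corollkeruniqueuptoequiv}, we may take \(m\) to be the inclusion of a Serre subcategory \(K\) of an abelian category \(\onecat{A}\), and \(e\) to be the projection \(q\colon\onecat{A}\to\onecat{A}/S\) onto a Serre quotient; changing \(m\) and \(e\) by equivalences changes the dinversion only by equivalences. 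The invertible 2-cell \(q\comp k\iso 0\) says that \(q\) annihilates every object of \(K\). Since \(q\) annihilates exactly \(S\), this gives \(K\subseteq S\).

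Next we identify the dinversion. By \prox\ref{P:AbCatKernel}, \(\twoker(q)\) is the inclusion of \(S\), since \(q\) annihilates exactly \(S\). By \prox\ref{P:AbCatCokernel}, \(\twocoker(k)\) is the projection \(\onecat{A}\to\onecat{A}/K\), because \(K\) is already Serre and so is the smallest Serre subcategory containing the objects of \(K\). The dinversion is therefore the antinormal 1-cell \(S\rightarrowtail\onecat{A}\twoheadrightarrow\onecat{A}/K\). By the pointwise form of \prox\ref{P:DIabcat} recorded just above, it is normal if and only if the \(K\)-saturation \(S^{K}\) is a Serre subcategory of \(\onecat{A}\).

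It remains to show that \(S^{K}\) is a Serre subcategory, using \(K\subseteq S\). We claim \(S^{K}=S=S\vee K\). By \prox\ref{P:Saturation}, \(S^{K}\) contains \(S\) and is contained in every Serre subcategory containing \(S\) and \(K\), in particular in \(S\) itself. Hence \(S^{K}=S\), which is Serre. Alternatively, one could use \prox\ref{P:TwoStep}: every object of \(S\vee K=S\) has the subobject \(0\in K\) with quotient in \(S\).

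The main obstacle is bookkeeping in the first step: the reduction must really be pointwise, so that normality of the single dinversion is equivalent to \(S^{K}\) being Serre for this particular pair, with the roles of the two subcategories interchanged relative to \((m,e)\). That equivalence is exactly what the first two paragraphs of the proof of \prox\ref{P:DIabcat} establish for an arbitrary pair. The only remaining care is to check that replacing \(m\) and \(e\) by the standard inclusion and projection does not affect normality of the dinversion, which follows from \corx\ref{C:NormalTransport}.
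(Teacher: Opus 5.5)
Your proof is correct and follows essentially the paper's own argument. You reduce to a Serre inclusion \(K\rightarrowtail\onecat{A}\) and a Serre projection \(\onecat{A}\twoheadrightarrow\onecat{A}/S\), read off \(K\subseteq S\), identify the dinversion as \(S\rightarrowtail\onecat{A}\twoheadrightarrow\onecat{A}/K\), and conclude from \prox\ref{P:Saturation} that \(S^{K}=S\) is a Serre subcategory; your explicit appeals to \prox\ref{P:AbCatKernel}, \prox\ref{P:AbCatCokernel} and the alternative via \prox\ref{P:TwoStep} only spell out steps the paper leaves implicit.
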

\begin{proof}
	Let \((m,e)\) be an antinormal decomposition of the zero map; by \corx\ref{C:AbCatNormal} and \corx\ref{C:NormalTransport} we may take \(m\) to be the inclusion of a Serre subcategory \(K\) of an abelian category \(\onecat{A}\) and \(e\) the projection \(q\colon\onecat{A}\to\onecat{A}/S\). That \(e\comp m\) is essentially null says exactly that every object of \(K\) becomes zero in \(\onecat{A}/S\), that is, \(K\subseteq S\).

	The dinversion is \(S\rightarrowtail\onecat{A}\twoheadrightarrow\onecat{A}/K\), which is normal if and only if the \(K\)-saturation \(S^{K}\) is a Serre subcategory. By \prox\ref{P:Saturation} we have \(S\subseteq S^{K}\subseteq S\vee K\), and \(S\vee K=S\) because \(K\subseteq S\); so \(S^{K}=S\), which is a Serre subcategory.
\end{proof}

\begin{remark}\label{Rem NSD Symmetric}
	The counterexample of \prox\ref{P:AbCatFails} cannot also refute \(\DPN\), and the reason points at what does. The cyclic quiver \(1\to2\to1\) admits the rotation exchanging its two vertices, and \(\operatorname{rad}^{3}=0\) is preserved by it, so \(\Lambda\) has an automorphism carrying \(S_{1}\) to \(S_{2}\) and hence \(K\) to \(S\). The two saturations \(K^{S}\) and \(S^{K}\) are therefore carried into one another, and both fail to be Serre subcategories; the biconditional of \defx\ref{Def:DPN} holds vacuously at that pair. What is wanted is an algebra on the same quiver whose relations break the rotation.
\end{remark}

\begin{proposition}\label{P:AbCatNotDPN}
	The 2-category \(\AbCat\) does not satisfy \(\DPN\).
\end{proposition}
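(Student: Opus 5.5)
The plan is to exhibit one antinormal pair \((m,e)\) in \(\AbCat\) whose antinormal composite is normal while its dinversion is not. I will use the pointwise form of \prox\ref{P:DIabcat} stated at the start of this subsection. Take \(m\) to be the inclusion of a Serre subcategory \(K\) of an abelian category \(\onecat{A}\) and \(e=q\colon\onecat{A}\to\onecat{A}/S\). Then \(e\comp m\) is normal if and only if \(K^{S}\) is a Serre subcategory. The dinverse pair is \((S\rightarrowtail\onecat{A},\ \onecat{A}\twoheadrightarrow\onecat{A}/K)\), by \prox\ref{P:AbCatKernel} and \prox\ref{P:AbCatCokernel}, so the dinversion is normal if and only if \(S^{K}\) is a Serre subcategory. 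It therefore suffices to find \(\onecat{A}\), \(K\), \(S\) with \(K^{S}\) Serre and \(S^{K}\) not.

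Following \remx\ref{Rem NSD Symmetric}, I break the rotation of the quiver \(1\to2\to1\). Let \(\Lambda\) be the Nakayama algebra on that cyclic quiver with Kupisch series such that the projective \(P_{1}\) is uniserial of length~2 (top \(S_{1}\), socle \(S_{2}\)) and \(P_{2}\) is uniserial of length~3 (\(S_{2},S_{1},S_{2}\)). This means the path of length~2 starting at \(1\) is zero and \(\operatorname{rad}^{3}=0\). Its indecomposable modules are the uniserials \(S_{1}\), \(S_{2}\), \([S_{1};S_{2}]\), \([S_{2};S_{1}]\) and \(P_{2}\). Let \(\onecat{A}\) be the finite-dimensional modules, and let \(K\) and \(S\) be the modules all of whose composition factors are \(S_{1}\), respectively \(S_{2}\).

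First step: \(S^{K}\) is not Serre. This copies \prox\ref{P:AbCatFails} with the roles of the vertices swapped. The sequence \(0\to S_{2}\to P_{2}\to[S_{2};S_{1}]\to0\) has both outer terms in \(S^{K}\): the projection \([S_{2};S_{1}]\toe S_{2}\) has kernel \(S_{1}\in K\). The middle term \(P_{2}\) lies outside \(S^{K}\). To see this, note that the only submodule of \(P_{2}\) in \(K\) is \(0\), since its socle is \(S_{2}\). Hence the colimits \eqref{SerreHom} collapse, giving \(\Hom_{\onecat{A}/K}(\ov{S_{2}},\ov{P_{2}})=\Bbbk\) and \(\End_{\onecat{A}/K}(\ov{P_{2}})=e_{2}\Lambda e_{2}\), which is \(2\)-dimensional. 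So \(\ov{P_{2}}\not\iso\ov{S_{2}}^{n}\) for any \(n\).

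Second step: \(K^{S}\) is Serre, and in fact equals all of \(\onecat{A}\). Every indecomposable has at most one composition factor \(S_{1}\). Each indecomposable with an \(S_{1}\) factor becomes isomorphic to \(\ov{S_{1}}\) in \(\onecat{A}/S\), witnessed by a span whose kernels and cokernels lie in \(S\): for \([S_{1};S_{2}]\) use the quotient, for \([S_{2};S_{1}]\) the inclusion of the socle, and for \(P_{2}\) the composite of both. Each indecomposable without one becomes zero. Hence every object of \(\onecat{A}\) lies in \(K^{S}\), and \(K^{S}=\onecat{A}\) is trivially a Serre subcategory. \prox\ref{P:Saturation} is consistent with this, since \(K\vee S=\onecat{A}\).

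The step I expect to need most care is the classification of indecomposables together with the spans in the second step. These rest on the standard description of modules over a Nakayama algebra as direct sums of uniserials. Combining the two steps, the biconditional of \defx\ref{Def:DPN} fails at the pair \((K\rightarrowtail\onecat{A},\ \onecat{A}\twoheadrightarrow\onecat{A}/S)\).
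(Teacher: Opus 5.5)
Your proposal is correct. It is the paper's own witness with the two vertices of the quiver interchanged: the paper kills the path \(2\to1\to2\), so its uniserial projective of length~3 is \(\Lambda e_{1}\) and the saturation that fails is \(K^{S}\); under the relabelling, your pair \((K\rightarrowtail\onecat{A},\ \onecat{A}\twoheadrightarrow\onecat{A}/S)\) is precisely the dinverse of the paper's pair. The differences lie only in the verifications. You show \(P_{2}\notin S^{K}\) by the endomorphism-dimension argument of \prox\ref{P:AbCatFails}, and \(K^{S}=\onecat{A}\) via the classification of indecomposable modules over a Nakayama algebra. The paper instead rules out directly every filtration of the projective with subquotients in \(S\), \(K\), \(S\), and exhibits, for an arbitrary representation, a filtration with subquotients in \(K\), \(S\), \(K\), using no finiteness.
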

\begin{proof}
	Let \(\Bbbk\) be a field and let \(\onecat{A}\) be the category of representations
	\[
		\xymatrix@1@C=4em{V_{1} \ar@<.5ex>[r]^-{a} & V_{2} \ar@<.5ex>[l]^-{b}}
	\]
	of finite-dimensional \(\Bbbk\)-vector spaces subject to the single relation \(a\comp b=0\); no condition is imposed on \(b\comp a\). This is the category of finite-dimensional modules over \(\Lambda\coloneq\Bbbk Q/(\alpha\beta)\), where \(Q\) is the cyclic quiver \(1\xrightarrow{\alpha}2\xrightarrow{\beta}1\) and \(\alpha\beta\) is the path of length~2 from~2 to~2; so \(\onecat{A}\) is abelian, with kernels and cokernels formed componentwise. The algebra \(\Lambda\) is five-dimensional, a quotient of the six-dimensional Nakayama algebra of \prox\ref{P:AbCatFails}: the relation forces every path of length~3 to vanish, and it is exactly the rotation symmetry of \remx\ref{Rem NSD Symmetric} that it destroys. Put
	\[
		K\coloneq\{V\mid V_{2}=0\}\text{,}\qquad S\coloneq\{V\mid V_{1}=0\}\text{,}
	\]
	the objects all of whose composition factors are \(\iso S_{1}\), respectively \(\iso S_{2}\); both are Serre subcategories, and \(K\vee S=\onecat{A}\) since every object has finite length.

	\emph{The \(K\)-saturation of \(S\) is a Serre subcategory.} Let \(V\) be any object. Then \(N\coloneq(\ker a,0)\) is a subobject of \(V\), because \(a(\ker a)=0\) and \(b(0)\subseteq\ker a\), and it lies in \(K\). In the quotient \(V/N\) the map induced by \(a\) is injective and the relation still holds, so \(a\comp b=0\) forces \(b=0\) on \(V/N\); hence \((0,V_{2})\) is a subobject of \(V/N\) lying in \(S\), and the remaining quotient has zero first component, so lies in \(K\). This is a filtration of \(V\) with successive subquotients in \(K\), \(S\) and \(K\), so \(S^{K}=\onecat{A}=K\vee S\), and \prox\ref{P:Saturation} applies. Note that no finiteness was used.

	\emph{The \(S\)-saturation of \(K\) is not.} Let \(M\) be the representation with \(M_{1}=\langle x,z\rangle\), \(M_{2}=\langle y\rangle\), \(a(x)=y\), \(a(z)=0\) and \(b(y)=z\); the relation holds, since \(a(b(y))=a(z)=0\), whereas \(b(a(x))=z\neq0\). A subobject \((W_{1},W_{2})\) with \(y\in W_{2}\) contains \(z=b(y)\), and if it contains \(x\) as well it is all of \(M\); a subobject with \(W_{2}=0\) has \(W_{1}\subseteq\ker a=\langle z\rangle\). So \(M\) is uniserial of length~3,
	\[
		0\subset(\langle z\rangle,0)\subset(\langle z\rangle,M_{2})\subset M\text{,}
	\]
	with successive subquotients \(S_{1}\), \(S_{2}\), \(S_{1}\); it is the indecomposable projective \(\Lambda e_{1}\). Let \(X_{1}\subseteq X_{2}\subseteq M\) be a filtration with \(X_{1}\in S\), \(X_{2}/X_{1}\in K\) and \(M/X_{2}\in S\). The three proper subobjects listed have first components \(0\), \(\langle z\rangle\) and \(\langle z\rangle\), of which only the first is zero, so \(X_{1}=0\); then \(X_{2}\in K\) leaves \(X_{2}=0\) or \(X_{2}=(\langle z\rangle,0)\), and in either case \(M/X_{2}\) has nonzero first component and so does not lie in \(S\). By \defx\ref{D:Saturation} and \prox\ref{P:Saturation}, \(M\notin K^{S}\) and \(K^{S}\) is not a Serre subcategory.

	Taking \(m\) to be the inclusion of \(K\) and \(e\) the projection onto \(\onecat{A}/S\), the pair \((m,e)\) is antinormal, its dinversion \(S\rightarrowtail\onecat{A}\twoheadrightarrow\onecat{A}/K\) is normal and the composite \(e\comp m\) is not.
\end{proof}

\begin{corollary}\label{C:HSDstrict}
	Homological self-duality does not imply \(\DPN\). Consequently the Pure Snake Lemma holds in \(\AbCat\) while neither \thex\ref{Snake General 2D} nor \thex\ref{T:SnakeNonSelfDual} applies to it.
\end{corollary}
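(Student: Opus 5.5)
The first assertion will be obtained by putting together \prox\ref{P:AbCatHSD} and \prox\ref{P:AbCatNotDPN}. The first makes \(\AbCat\) homologically self-dual. The second exhibits an antinormal pair \((m,e)\) whose dinversion is normal while \(e\comp m\) is not. So \(\AbCat\) is a 2-z-exact 2-category that is homologically self-dual but fails \(\DPN\). We need the 2-z-exactness of \(\AbCat\), and it comes from \prox\ref{P:AbCatBizero}, \prox\ref{P:AbCatKernel} and \prox\ref{P:AbCatCokernel}. These supply a strong bizero object together with all 2-kernels and 2-cokernels, so \defx\ref{Def:HSD} and \defx\ref{Def:DPN} are both meaningful in \(\AbCat\). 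That settles the first sentence; by \prox\ref{P:DPNPlace} it also shows that the second inclusion there is strict.

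For the consequences we argue as follows. The Pure Snake Lemma, \lemx\ref{Pure Snake Lemma}, is stated for an arbitrary homologically self-dual 2-category, and \prox\ref{P:AbCatHSD} places \(\AbCat\) in that class, so the lemma holds there.

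\thex\ref{Snake General 2D} needs 2-di-exactness, which \prox\ref{P:AbCatFails} rules out for \(\AbCat\). The failure also follows from the first part: by \prox\ref{P:DPNPlace}, 2-di-exactness would imply \(\DPN\).

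\thex\ref{T:SnakeNonSelfDual} needs \(\DPN\) together with either \(\NEC\) or its dual. \prox\ref{P:AbCatComposition} gives both closure conditions, so the only hypothesis that fails is \(\DPN\), and \prox\ref{P:AbCatNotDPN} shows that it does fail.

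No step is a real obstacle; the work was done in the cited propositions. One point needs care. The conclusion is that the two theorems do not \emph{apply}, not that their conclusions are false in \(\AbCat\), and the corollary should be worded that way. Claiming that the snake sequence actually fails in \(\AbCat\) would require a counterexample ladder, which we do not attempt to build.
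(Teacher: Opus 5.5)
Your argument is correct and is the same as the paper's, which derives the corollary directly from the cited propositions. Proposition~\ref{P:AbCatHSD} and Proposition~\ref{P:AbCatNotDPN} show that the 2-z-exact 2-category \(\AbCat\) is homologically self-dual but fails \DPN, so the Pure Snake Lemma applies there. Theorem~\ref{Snake General 2D} is ruled out by the failure of \textup{(DI2)}, or of \DPN\ via Proposition~\ref{P:DPNPlace}. Theorem~\ref{T:SnakeNonSelfDual} is ruled out by the failure of \DPN\ alone, since Proposition~\ref{P:AbCatComposition} supplies both closure conditions.
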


\begin{remark}\label{Rem NSD AbCatWhich}
	It is worth being explicit about which hypothesis gives way, since \prox\ref{P:AbCatComposition} disposes of the other one and of its dual. What fails in \(\AbCat\) is the hypothesis that carries the mathematics, not the bookkeeping one; and by \prox\ref{P:TwoSites} it fails at both sites at once. The models of Section~\ref{S:Model} are therefore not a luxury: they are the only two-dimensional 2-categories we know in which \thex\ref{Snake General 2D} applies; Section~\ref{SS:NSDHilbert} adds a two-dimensional 2-category in which the Snake Lemma holds, but only in the form of \thex\ref{T:SnakeNonSelfDual}.
\end{remark}

\subsection{A two-dimensional model: Hilbert lattices}\label{SS:NSDHilbert}

The hypotheses of this section have so far been separated from 2-di-exactness in dimension one only: the witness in \remx\ref{Rem NSD Strict} is a locally discrete 2-category. This closing subsection produces a separation that is genuinely two-dimensional, within the lattice framework of Section~\ref{SS:ModelLattices}: the lattices of closed subspaces of complex Hilbert spaces, the lattices with which quantum logic began \cite{BirVNeu36}. Where 2-di-exactness selects the modular lattices (\thex\ref{T:LatticeModel}), the hypotheses of \thex\ref{T:SnakeNonSelfDual} select a wider class, and what places the Hilbert lattices inside it is a theorem of Mackey from functional analysis.

The construction of Section~\ref{SS:ModelLattices} is not tied to modularity. Call a complete lattice \(L\) \dfn{transposition-symmetric} when for all \(a\), \(b\in L\) the transposition \([a\wedge b,a]\to[b,a\vee b]\colon y\mapsto y\vee b\) of \prox\ref{P:SupAntinormal} is invertible if and only if the transposition \([a\wedge b,b]\to[a,a\vee b]\colon y\mapsto y\vee a\) is. Every modular lattice is transposition-symmetric, both transpositions being invertible by Dedekind's principle, and the pentagon is not, as the proof of \prox\ref{P:SupNotDPN} shows.

\begin{proposition}\label{P:SupClass}
	Let \(\mathcal{C}\) be a class of complete lattices, closed under isomorphism, containing a one-element lattice, and such that every interval of a member of \(\mathcal{C}\) belongs to \(\mathcal{C}\). The full sub-2-category \(\Sup_{\mathcal{C}}\) of \(\Sup\) on \(\mathcal{C}\) is then 2-z-exact with a strong bizero object; \corx\ref{C:SupNormal} and \prox\ref{P:SupAntinormal} describe its normal 2-monomorphisms, its normal 2-epimorphisms and its antinormal morphisms, and both closure properties of \corx\ref{C:SupNormal} hold in it. Moreover, \(\Sup_{\mathcal{C}}\) satisfies condition \textup{(DI2)} if and only if every member of \(\mathcal{C}\) is modular, and condition \(\DPN\) if and only if every member of \(\mathcal{C}\) is transposition-symmetric.
\end{proposition}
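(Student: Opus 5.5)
The plan is to transport the arguments of Section~\ref{SS:ModelLattices} into \(\Sup_{\mathcal{C}}\), using the closure of \(\mathcal{C}\) under intervals in place of the inheritance of modularity by sublattices. The one-element lattice lies in \(\mathcal{C}\), and the proof of \prox\ref{P:SupBizero} takes place inside any full sub-2-category containing it, so it is a strong bizero object of \(\Sup_{\mathcal{C}}\). For a 1-cell \(f\colon L\to M\) between members of \(\mathcal{C}\), \prox\ref{P:SupKernels} gives the 2-kernel \(\dseg{a}=[\bot,a]\) and the 2-cokernel \(\useg{s}=[s,\top]\). Both are intervals, hence in \(\mathcal{C}\). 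Fullness of \(\Sup_{\mathcal{C}}\) then makes them a 2-kernel and a 2-cokernel there, exactly as in the proof of \thex\ref{T:LatticeModel}; this is \textup{(DI1)}. Down-segments of down-segments and up-segments of up-segments are again intervals, so \corx\ref{C:SupNormal} transfers verbatim, including both closure properties. So does \prox\ref{P:SupAntinormal}, whose factorisation passes through the interval \([a\wedge b,a]\).

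For \textup{(DI2)} I will argue as in \thex\ref{T:LatticeModel} and \prox\ref{P:SupModular}. If every member is modular, each antinormal \(c_{a,b}\) is normal by Dedekind's principle, with middle vertex an interval, so the factorisation lives in \(\Sup_{\mathcal{C}}\). Conversely, suppose some \(L\in\mathcal{C}\) is not modular. Then \(L\) contains a pentagon \(u<w\), \(v\), and \(c_{w,v}\colon\dseg{w}\to\useg{v}\) is an antinormal 1-cell of \(\Sup_{\mathcal{C}}\) whose transposition is not injective. By \prox\ref{P:SupAntinormal}, which holds in \(\Sup_{\mathcal{C}}\) because normality there is normality in \(\Sup\) with vertices in \(\mathcal{C}\), this 1-cell is not normal.

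For \(\DPN\), take an antinormal pair, which up to isomorphism is \((\dseg{a}\to L,\,q_b)\) with composite \(c_{a,b}\). Its dinverse pair is \((\dseg{b}\to L,\,q_a)\), with dinversion \(c_{b,a}\colon\dseg{b}\to\useg{a}\); this is the computation in the proof of \prox\ref{P:SupNotDPN}, which used no modularity. By \prox\ref{P:SupAntinormal}, \(c_{a,b}\) is normal if and only if \([a\wedge b,a]\to[b,a\vee b]\) is invertible, and \(c_{b,a}\) is normal if and only if \([a\wedge b,b]\to[a,a\vee b]\) is. So \(\DPN\) at this pair is exactly transposition-symmetry at \((a,b)\). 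Every pair of elements of every member arises this way, which gives both directions. \corx\ref{C:NormalTransport} handles the isomorphisms throughout.

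The main obstacle I expect is the converse directions. There one must check that \prox\ref{P:SupAntinormal}, and in particular its ``only if'' half, is valid inside \(\Sup_{\mathcal{C}}\) rather than only in \(\Sup\). That half uses \prox\ref{CoKernel of Composite} and the description of normal morphisms, so I will verify that a normal factorisation in \(\Sup_{\mathcal{C}}\) is one in \(\Sup\). This follows because 2-kernels and 2-cokernels computed in \(\Sup\) have vertices in \(\mathcal{C}\) and therefore coincide with those of \(\Sup_{\mathcal{C}}\).
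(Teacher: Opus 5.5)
Your proposal is correct and follows the paper's own proof essentially step by step. The first assertions come from transporting \prox\ref{P:SupBizero}, \prox\ref{P:SupKernels}, \corx\ref{C:SupNormal} and \prox\ref{P:SupAntinormal}, using fullness and the closure of \(\mathcal{C}\) under intervals. Condition \textup{(DI2)} follows from Dedekind's principle in one direction and the pentagon of \prox\ref{P:SupModular} in the other, and \(\DPN\) follows from the computation, taken from the proof of \prox\ref{P:SupNotDPN}, that the dinversion of \(c_{a,b}\) is \(c_{b,a}\). Your explicit check that normality in \(\Sup_{\mathcal{C}}\) coincides with normality in \(\Sup\) for 1-cells between members of \(\mathcal{C}\) is exactly what the paper compresses into the phrase that these results ``transfer, as they did for \(\Supmod\)''.
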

\begin{proof}
	The first assertions are proved exactly as \thex\ref{T:LatticeModel}. Down-segments, up-segments and intervals of a member of \(\mathcal{C}\) are intervals, so lie in \(\mathcal{C}\) up to isomorphism; hence the 2-kernels and 2-cokernels of \prox\ref{P:SupKernels}, computed in \(\Sup\), have their vertices in \(\Sup_{\mathcal{C}}\), and since the sub-2-category is full, they are 2-kernels and 2-cokernels there. A one-element lattice is a strong bizero object as in \prox\ref{P:SupBizero}, and \corx\ref{C:SupNormal} and \prox\ref{P:SupAntinormal} transfer, as they did for \(\Supmod\).

	For \textup{(DI2)}: if every member of \(\mathcal{C}\) is modular, the argument of \thex\ref{T:LatticeModel} applies verbatim; if some member \(L\) is not modular, the proof of \prox\ref{P:SupModular} produces elements \(w\), \(v\) of \(L\) for which \(c_{w,v}\) is antinormal in \(\Sup_{\mathcal{C}}\)---its two constituents have their vertices \(\dseg{w}\) and \(\useg{v}\) in \(\mathcal{C}\)---and not normal.

	For \(\DPN\): up to isomorphisms, which change nothing by \corx\ref{C:NormalTransport}, an antinormal pair of \(\Sup_{\mathcal{C}}\) is of the form \((\dseg{a}\to L,q_{b})\) for elements \(a\), \(b\) of a member \(L\), with composite \(c_{a,b}\), and as in the proof of \prox\ref{P:SupNotDPN} its dinversion is \(c_{b,a}\). By \prox\ref{P:SupAntinormal}, whose factorisation passes through the interval \([a\wedge b,a]\) of \(L\), which lies in \(\mathcal{C}\), the composite is normal precisely when the transposition at \((a,b)\) is invertible, and the dinversion is normal precisely when the transposition at \((b,a)\) is. The biconditional of \defx\ref{Def:DPN} is therefore exactly transposition-symmetry of the members of \(\mathcal{C}\).
\end{proof}

Transposition-symmetry has a classical name. Following \cite{MaeMae70}, in the notation of \cite[\S2]{Schreiner66}, a pair \((a,b)\) of elements of a lattice is a \dfn{modular pair}, written \(\mathrm{M}(a,b)\), when \((c\vee a)\wedge b=c\vee(a\wedge b)\) for every \(c\leq b\), and a \dfn{dual modular pair}, written \(\mathrm{M}^{*}(a,b)\), when \((c\wedge a)\vee b=c\wedge(a\vee b)\) for every \(c\geq b\); a lattice is modular precisely when all of its pairs are modular, and \dfn{\(\mathrm{M}\)-symmetric} when \(\mathrm{M}(a,b)\) implies \(\mathrm{M}(b,a)\).

\begin{lemma}\label{L:ModularPairs}
	For elements \(a\), \(b\) of a complete lattice, the transposition \([a\wedge b,a]\to[b,a\vee b]\) is invertible if and only if \(\mathrm{M}(b,a)\) and \(\mathrm{M}^{*}(a,b)\) both hold. In particular, a complete lattice in which both the relation \(\mathrm{M}\) and the relation \(\mathrm{M}^{*}\) are symmetric is transposition-symmetric.
\end{lemma}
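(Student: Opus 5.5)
The plan is to analyse the transposition \(\tau\colon[a\wedge b,a]\to[b,a\vee b]\colon y\mapsto y\vee b\) against its candidate inverse \(\sigma\colon z\mapsto z\wedge a\). Both maps are monotone. For every \(y\) in the domain we have \(\sigma\tau(y)=(y\vee b)\wedge a\geq y\), and for every \(z\) in the codomain we have \(\tau\sigma(z)=(z\wedge a)\vee b\leq z\). So \(\tau\) is left adjoint to \(\sigma\) between the two intervals, and this adjunction is what the argument will run on.

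The first step is to show that \(\tau\) is invertible if and only if \(\sigma\tau=\id{}\) and \(\tau\sigma=\id{}\). The converse direction is immediate. For the forward direction, suppose \(\tau\) is a bijection; it is then an order isomorphism, because it preserves joins. Its inverse is a right adjoint of \(\tau\), and right adjoints are unique, so the inverse equals \(\sigma\).

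The second step is to translate each of the two identities into a modular-pair condition.
\begin{itemize}
\item Read with \(c=y\), the identity \(\sigma\tau=\id{}\) says \((y\vee b)\wedge a=y\) for all \(y\) with \(a\wedge b\leq y\leq a\). For such \(y\) we have \(y=y\vee(b\wedge a)\). Hence the identity says exactly that \((c\vee b)\wedge a=c\vee(b\wedge a)\) for all \(c\) with \(a\wedge b\leq c\leq a\).
\item By definition, \(\mathrm{M}(b,a)\) asks the same equation for all \(c\leq a\). To pass from the interval to all \(c\leq a\), replace \(c\) by \(c'=c\vee(a\wedge b)\), which lies in \([a\wedge b,a]\). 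Then \(c'\vee b=c\vee b\), and \(c'\vee(b\wedge a)=c\vee(b\wedge a)\), so the equation for \(c'\) is the equation for \(c\). Hence \(\sigma\tau=\id{}\) is equivalent to \(\mathrm{M}(b,a)\).
\item Dually, \(\tau\sigma=\id{}\) says \((z\wedge a)\vee b=z\) for \(b\leq z\leq a\vee b\), that is, \((z\wedge a)\vee b=z\wedge(a\vee b)\) on that interval. Replacing an arbitrary \(c\geq b\) by \(c\wedge(a\vee b)\) extends this to all \(c\geq b\), which is \(\mathrm{M}^{*}(a,b)\).
\end{itemize}

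The last step deduces the consequence. Suppose \(\mathrm{M}\) and \(\mathrm{M}^{*}\) are both symmetric, and suppose the transposition at \((a,b)\) is invertible. Then \(\mathrm{M}(b,a)\) and \(\mathrm{M}^{*}(a,b)\) hold, so by symmetry \(\mathrm{M}(a,b)\) and \(\mathrm{M}^{*}(b,a)\) hold as well. By the first part with \(a\) and \(b\) exchanged, the transposition at \((b,a)\) is invertible. The reverse implication follows by the same argument.

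I expect the main obstacle to be bookkeeping rather than ideas. The translation from the interval to the unrestricted quantifier must land exactly on the paper's conventions: \(\mathrm{M}(x,y)\) quantifies over \(c\leq y\), and \(\mathrm{M}^{*}(x,y)\) over \(c\geq y\). I will check the argument order in each case so that \(\sigma\tau=\id{}\) gives \(\mathrm{M}(b,a)\) and not \(\mathrm{M}(a,b)\).
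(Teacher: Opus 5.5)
Your proof is correct and follows the paper's argument essentially step for step. Both rest on the adjunction between \(y\mapsto y\vee b\) and \(z\mapsto z\wedge a\), and both characterise invertibility as the two composites being identities, using uniqueness of adjoints. You also use the same substitution \(c\mapsto c\vee(a\wedge b)\), and its dual \(c\mapsto c\wedge(a\vee b)\), to pass from the interval conditions to \(\mathrm{M}(b,a)\) and \(\mathrm{M}^{*}(a,b)\), and the same exchange of \(a\) and \(b\) for the final assertion. Your shortcut that a join-preserving bijection is an order isomorphism is sound: \(\tau(x)\leq\tau(y)\) gives \(\tau(x\vee y)=\tau(y)\), so \(x\vee y=y\) by injectivity.
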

\begin{proof}
	For \(x\in[a\wedge b,a]\) and \(z\in[b,a\vee b]\) we have \(x\vee b\leq z\) if and only if \(x\leq z\wedge a\), so the transposition and the monotone map \(z\mapsto z\wedge a\) in the opposite direction form an adjunction. The transposition is invertible if and only if the two composites are identities: in one direction because an invertible monotone map is left adjoint to its inverse and adjoints are unique, in the other because mutually inverse monotone bijections between complete lattices are isomorphisms of \(\Sup\).

	The first composite is the identity when \((x\vee b)\wedge a=x\) for all \(x\in[a\wedge b,a]\), which is \(\mathrm{M}(b,a)\): for \(c\leq a\) the element \(x=c\vee(a\wedge b)\) lies in \([a\wedge b,a]\) and satisfies \(x\vee b=c\vee b\), so the former condition yields \((c\vee b)\wedge a=c\vee(a\wedge b)\), and conversely. The second composite is the identity when \((z\wedge a)\vee b=z\) for all \(z\in[b,a\vee b]\), which is \(\mathrm{M}^{*}(a,b)\) by the dual translation. The final assertion holds since \(\mathrm{M}(b,a)\wedge\mathrm{M}^{*}(a,b)\) and \(\mathrm{M}(a,b)\wedge\mathrm{M}^{*}(b,a)\) are then each equivalent to \(\mathrm{M}(a,b)\wedge\mathrm{M}^{*}(a,b)\).
\end{proof}

Now let \(H\) be a complex Hilbert space and \(\Lat(H)\) its lattice of closed subspaces, ordered by inclusion, with \(A\wedge B=A\cap B\), with \(A\vee B=\overline{A+B}\) and with the orthocomplementation \(A\mapsto A^{\perp}\): a complete lattice. Every interval of \(\Lat(H)\) is again a Hilbert lattice: for closed subspaces \(P\subseteq Q\), the assignments \(X\mapsto X\cap P^{\perp}\) and \(Y\mapsto Y\vee P\) are mutually inverse isomorphisms between \([P,Q]\) and \(\Lat(Q\cap P^{\perp})\), because every closed \(X\) with \(P\subseteq X\) decomposes orthogonally as \(X=P\oplus(X\cap P^{\perp})\), and the sum of two mutually orthogonal closed subspaces is closed. The class of complete lattices isomorphic to some \(\Lat(H)\) therefore satisfies the hypotheses of \prox\ref{P:SupClass}; we write \(\Suphil\) for the resulting full sub-2-category of \(\Sup\) and call it the \dfn{2-category of Hilbert lattices}.

\begin{proposition}\label{P:HilbertSymmetric}
	For closed subspaces \(A\), \(B\) of a Hilbert space \(H\), the transposition \([A\wedge B,A]\to[B,A\vee B]\) of \(\Lat(H)\) is invertible if and only if both \(A+B\) and \(A^{\perp}+B^{\perp}\) are closed. In particular, every Hilbert lattice is transposition-symmetric.
\end{proposition}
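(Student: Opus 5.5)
The plan is to reduce the statement to a known characterisation of modular pairs in \(\Lat(H)\) via \lemx\ref{L:ModularPairs}. That lemma says the transposition \([A\wedge B,A]\to[B,A\vee B]\) is invertible if and only if \(\mathrm{M}(B,A)\) and \(\mathrm{M}^{*}(A,B)\) both hold. It therefore suffices to establish two facts in \(\Lat(H)\):
\begin{enumerate}
\item \(\mathrm{M}(X,Y)\) holds if and only if \(X+Y\) is closed;
\item \(\mathrm{M}^{*}(X,Y)\) holds if and only if \(X^{\perp}+Y^{\perp}\) is closed.
\end{enumerate}
Applying the first with \((X,Y)=(B,A)\) and the second with \((X,Y)=(A,B)\), and using that \(B+A=A+B\), gives the stated criterion directly.

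For the first fact I would invoke Mackey's theorem on pairs of closed subspaces, in the form recorded by Maeda and Maeda \cite{MaeMae70}: two closed subspaces form a modular pair in \(\Lat(H)\) exactly when their algebraic sum is closed. If a self-contained argument were wanted, I would argue as follows.
\begin{itemize}
\item If \(X+Y\) is closed, then for closed \(C\subseteq Y\) one has \(C\vee X=\overline{C+X}\). Closedness of \(X+Y\) makes the induced map \(Y/(X\cap Y)\to(X+Y)/X\) a topological isomorphism, by the open mapping theorem, and hence \(C+X\) is closed. The identity \((C\vee X)\wedge Y=C\vee(X\wedge Y)\) then becomes the algebraic modular law for subspaces.
\item Conversely, if \(X+Y\) is not closed, one picks a vector in \(\overline{X+Y}\setminus(X+Y)\) and builds a closed \(C\subseteq Y\) witnessing failure of the identity. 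This is the classical construction from \cite{MaeMae70}.
\end{itemize}
The converse direction is the step I expect to be the main obstacle, and I would simply cite it rather than reprove it.

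The second fact follows from the first by orthocomplementation. The map \(Z\mapsto Z^{\perp}\) is an order-reversing involution of \(\Lat(H)\) that exchanges \(\wedge\) and \(\vee\). It therefore carries the defining identity of \(\mathrm{M}^{*}(X,Y)\), quantified over \(C\geq Y\), to the identity of \(\mathrm{M}(X^{\perp},Y^{\perp})\), quantified over \(C^{\perp}\leq Y^{\perp}\). Hence \(\mathrm{M}^{*}(X,Y)\) holds if and only if \(\mathrm{M}(X^{\perp},Y^{\perp})\) holds, which by the first fact happens if and only if \(X^{\perp}+Y^{\perp}\) is closed.

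Finally, transposition-symmetry follows from the symmetry of these closedness conditions. The condition on \(X+Y\) is symmetric in \(X\) and \(Y\), so the relation \(\mathrm{M}\) is symmetric on \(\Lat(H)\); likewise the condition on \(X^{\perp}+Y^{\perp}\) is symmetric, so \(\mathrm{M}^{*}\) is symmetric as well. The final assertion of \lemx\ref{L:ModularPairs} then gives transposition-symmetry of \(\Lat(H)\). Since transposition-symmetry is invariant under lattice isomorphism, it holds for every Hilbert lattice.
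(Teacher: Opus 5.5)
You have interchanged \(\mathrm{M}\) and \(\mathrm{M}^{*}\) when invoking Mackey. His theorem, which has an elementary proof, concerns \emph{dual} modular pairs: \(\mathrm{M}^{*}(X,Y)\) holds if and only if \(X+Y\) is closed. Its converse is witnessed by \(C=Y+\langle x\rangle\) for some \(x\in\ov{X+Y}\setminus(X+Y)\), which is a subspace \emph{above} \(Y\), as \(\mathrm{M}^{*}(X,Y)\) requires. Your orthocomplementation step is correct, and it then gives: \(\mathrm{M}(X,Y)\) holds if and only if \(X^{\perp}+Y^{\perp}\) is closed.

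Your Fact~1 is true in \(\Lat(H)\), but it is not Mackey's theorem. Given Mackey and orthocomplementation, it is equivalent to the duality ``\(X+Y\) is closed if and only if \(X^{\perp}+Y^{\perp}\) is closed'' of \cite[Theorem~IV.4.8]{Kato}. The paper mentions that duality but does not use it. So your proof rests on a deeper result than the one you cite, and your sketch does not supply it:
\begin{itemize}
\item In the forward direction, ``hence \(C+X\) is closed'' is false. Take \(Y=H\), and for \(X\) and \(C\) take two closed subspaces with non-closed sum, such as the \(A\) and \(B\) of the proof of \thex\ref{T:HilbertModel}. Then \(X+Y\) is closed and \(C\subseteq Y\), yet \(C+X\) is not closed.
\item In the converse, a vector of \(\ov{X+Y}\setminus(X+Y)\) produces Mackey's witness above \(Y\) against \(\mathrm{M}^{*}(X,Y)\). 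It does not produce a witness \(C\subseteq Y\) against \(\mathrm{M}(X,Y)\). Obtaining the latter from non-closedness of \(X+Y\) is, once more, the duality.
\end{itemize}

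The simplest repair is to swap the roles, which is exactly the paper's proof. Mackey applied to \(\mathrm{M}^{*}(A,B)\) gives closedness of \(A+B\). Orthocomplementation turns \(\mathrm{M}(B,A)\) into \(\mathrm{M}^{*}(B^{\perp},A^{\perp})\), that is, into closedness of \(A^{\perp}+B^{\perp}\). Nothing beyond Mackey's elementary theorem is used, and this is why the criterion carries two sums rather than one.

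Alternatively, your open-mapping idea can be saved. Instead of claiming that \(C+X\) is closed, use the bounded inverse of \(Y/(X\cap Y)\to(X+Y)/X\). If \(c_{n}+x_{n}\to z\in Y\) with \(c_{n}\in C\) and \(x_{n}\in X\), this forces \([c_{n}]\to[z]\) in \(Y/(X\cap Y)\), whence \(z\in C\vee(X\wedge Y)\). Combined with Mackey and orthocomplementation, this gives one half of the duality for all pairs. Passing to \(X^{\perp}\), \(Y^{\perp}\) gives the other half, and with it all of Fact~1. Your concluding symmetry argument is fine in either version.
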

\begin{proof}
	By \lemx\ref{L:ModularPairs}, invertibility is the conjunction of \(\mathrm{M}(B,A)\) and \(\mathrm{M}^{*}(A,B)\). A theorem of Mackey identifies the dual modular pairs of \(\Lat(H)\): the condition \(\mathrm{M}^{*}(A,B)\) holds if and only if the vector sum \(A+B\) is closed \cite[Theorem~III-6]{Mackey45}; see also \cite{Schreiner66}. The direction we shall use at \thex\ref{T:HilbertModel} is elementary: if \(A+B\) is closed then \(A\vee B=A+B\), and for a closed \(C\supseteq B\), any \(x\in C\cap(A+B)\) is \(p+q\) with \(p\in A\) and \(q\in B\subseteq C\), so that \(p=x-q\in C\cap A\) and \(x\in(C\cap A)+B\subseteq(C\wedge A)\vee B\); the reverse inclusion holds in every lattice, and \(\mathrm{M}^{*}(A,B)\) follows. The converse is elementary as well, and is Mackey's own argument: if \(A+B\) is not closed, choose \(x\in(A\vee B)\setminus(A+B)\) and test \(\mathrm{M}^{*}(A,B)\) at \(C=B+\langle x\rangle\), which is closed because a closed subspace and a line span a closed subspace; a nonzero multiple of \(x\) in \(C\cap A\) would return \(x\) to \(A+B\), so \(C\cap A\subseteq B\) and \((C\wedge A)\vee B=B\neq C\).

	Orthocomplementation is an anti-automorphism of \(\Lat(H)\), turning joins into meets and reversing the order, so it carries the condition \(\mathrm{M}(B,A)\) to \(\mathrm{M}^{*}(B^{\perp},A^{\perp})\) \cite[Theorem~5(i)]{Schreiner66}, which by Mackey's theorem holds if and only if \(B^{\perp}+A^{\perp}\) is closed. The stated criterion follows, and it is symmetric in \(A\) and \(B\) because vector sums are. By a classical duality the two conditions are in fact equivalent to one another \cite[Theorem~IV.4.8]{Kato}, so that invertibility amounts to closedness of \(A+B\) alone; but the symmetry of the two-condition form is all we use.
\end{proof}

\begin{theorem}\label{T:HilbertModel}
	The 2-category \(\Suphil\) of Hilbert lattices is 2-z-exact with a strong bizero object, its normal 2-monomorphisms and its normal 2-epimorphisms are closed under composition, and it satisfies condition \(\DPN\); but it is not 2-di-exact. The Snake Lemma holds in \(\Suphil\) in the form of \thex\ref{T:SnakeNonSelfDual}, while \thex\ref{Snake General 2D} does not apply to it.
\end{theorem}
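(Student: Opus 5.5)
Most of the work is already done by \prox\ref{P:SupClass}, which reduces every clause to a lattice-theoretic property of the class of Hilbert lattices. I would first check that this class meets the hypotheses of that proposition. It is closed under isomorphism by definition. It contains a one-element lattice, namely \(\Lat(0)\). It is closed under intervals by the orthogonal decomposition recorded just before \prox\ref{P:HilbertSymmetric}, which identifies \([P,Q]\) with \(\Lat(Q\cap P^{\perp})\). \prox\ref{P:SupClass} then yields 2-z-exactness, the strong bizero object, and closure of normal 2-monomorphisms and normal 2-epimorphisms under composition. Condition \(\DPN\) follows from the second biconditional of \prox\ref{P:SupClass}, since every Hilbert lattice is transposition-symmetric by \prox\ref{P:HilbertSymmetric}. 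At this point \thex\ref{T:SnakeNonSelfDual} applies directly, with \(\NEC\) being the closure of normal 2-epimorphisms.

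For the negative half, I would use the first biconditional of \prox\ref{P:SupClass}: (DI2) fails as soon as some \(\Lat(H)\) is not modular. Since \(\Lat(H)\) is modular when \(H\) is finite-dimensional, I have to exhibit non-modularity for an infinite-dimensional \(H\), and I expect this to be the main obstacle. By \lemx\ref{L:ModularPairs} it suffices to find closed subspaces \(A\), \(B\) whose transposition is not invertible. By \prox\ref{P:HilbertSymmetric}, this happens as soon as \(A+B\) is not closed.

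The standard construction is the following. Take a separable infinite-dimensional \(H=K\oplus K\) with an injective compact operator \(T\) on \(K\) of dense, non-closed range, for instance the diagonal operator \(e_n\mapsto e_n/n\). Put \(A=K\oplus 0\) and \(B=\{(x,Tx)\}\), the graph of \(T\), which is closed. Then \(A\cap B=0\), and \(A+B=K\oplus\operatorname{ran}T\) is dense but not closed. I would check these facts directly. To stay safely with the elementary direction of Mackey's theorem given in the proof of \prox\ref{P:HilbertSymmetric}, I would instead show non-invertibility explicitly, by testing \(\mathrm{M}^{*}(A,B)\) at \(C=B+\langle x\rangle\) for some \(x\in(A\vee B)\setminus(A+B)\), exactly as in Mackey's argument. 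Since \(\Suphil\) contains this lattice, (DI2) fails, so \(\Suphil\) is not 2-di-exact and \thex\ref{Snake General 2D} does not apply. By \remx\ref{Rem Truncation}-style reasoning this is genuinely 2-dimensional, because \(\Suphil\) is locally ordered but not locally discrete.
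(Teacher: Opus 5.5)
Your proposal is correct and is essentially the paper's proof: the positive clauses come from \prox\ref{P:SupClass} combined with \prox\ref{P:HilbertSymmetric}, after which \thex\ref{T:SnakeNonSelfDual} applies. Your counterexample is the paper's own in different notation. The paper splits an orthonormal sequence into even and odd indices, so that \(A=\overline{\operatorname{span}}\{e_{2n}\}\) is the first summand and \(B\) is the graph of \(e_{2n}\mapsto\tfrac1n e_{2n-1}\); it then refutes \(\mathrm{M}^{*}(A,B)\) directly at \(Z=B\vee\langle v\rangle\) with \(v=\sum_{n}\tfrac1n e_{2n-1}\), which is exactly your Mackey test at \(B+\langle x\rangle\).
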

\begin{proof}
	Everything except the failure of \textup{(DI2)} is \prox\ref{P:SupClass} combined with \prox\ref{P:HilbertSymmetric}. For the failure, let \(H\) be infinite-dimensional, take an orthonormal sequence \((e_{n})_{n\geq1}\), and consider the closed subspaces
	\[
		A=\overline{\operatorname{span}}\,\{e_{2n}\mid n\geq1\}\qquad\text{and}\qquad B=\overline{\operatorname{span}}\,\{b_{n}\mid n\geq1\}\text{,}\qquad b_{n}=e_{2n}+\tfrac{1}{n}\,e_{2n-1}\text{.}
	\]
	The vectors \(b_{n}\) are pairwise orthogonal, so \(B\) consists of the sums \(\sum_{n}c_{n}b_{n}\) with square-summable coefficients, and \(A\wedge B=0\), since a vector of \(B\) lying in \(A\) has all coefficients \(\tfrac{c_{n}}{n}\) of the \(e_{2n-1}\) zero. The subspace \(A+B\) contains every \(e_{2n}\) and every \(e_{2n-1}=n(b_{n}-e_{2n})\), so \(v\coloneq\sum_{n}\tfrac{1}{n}\,e_{2n-1}\) lies in \(A\vee B\); but \(v\notin A+B\), since \(v=a+\sum_{n}c_{n}b_{n}\) forces \(c_{n}=1\) for every \(n\), which is not square-summable. Now \(Z\coloneq B\vee\langle v\rangle=B+\langle v\rangle\) is closed, being the sum of a closed subspace and a line, and \(Z\wedge A=0\): if \(\sum_{n}c_{n}b_{n}+tv\in A\), then comparing coefficients of \(e_{2n-1}\) gives \(c_{n}=-t\) for every \(n\), so \(t=0\), and the vector lies in \(B\wedge A=0\). Hence \((Z\wedge A)\vee B=B\), whereas \(Z\wedge(A\vee B)=Z\), which contains \(v\notin B\): the subspace \(Z\) witnesses the failure of \(\mathrm{M}^{*}(A,B)\). So the transposition at \((A,B)\) is not invertible by \lemx\ref{L:ModularPairs}, the antinormal morphism \(c_{A,B}\) of \(\Suphil\) is not normal by \prox\ref{P:SupAntinormal}, and \textup{(DI2)} fails; by \prox\ref{P:SupModular}, \(\Lat(H)\) is not modular either. Finally, \thex\ref{T:SnakeNonSelfDual} applies by \prox\ref{P:DPNPlace} and the above, whereas the hypothesis of \thex\ref{Snake General 2D} fails in \(\Suphil\).
\end{proof}

\begin{remark}\label{Rem NSD Hilbert}
	\remx\ref{Rem NSD Strict} separated \(\DPN\) from \textup{(DI2)} by a locally discrete 2-category; \thex\ref{T:HilbertModel} separates them by a 2-category whose 2-cells, the inequalities, carry the universal properties. The trade the two conditions embody is here concrete: condition \textup{(DI2)} would ask every sum of closed subspaces to be closed, which is false, while \(\DPN\) asks only that the two sums attached to an antinormal pair and to its dinversion be closed together, which is Mackey's symmetry. Once more the failure of \textup{(DI2)} is a categorical construction creating something new---here the closure of a sum, in \(\AbCat\) the extensions which a localisation creates (\remx\ref{Rem Ext}).

	In \(\Suphil\), \remx\ref{Rem Lattice Snake} reads almost word for word: the input of the Snake Lemma reduces to a normal 1-cell \(g\) together with closed subspaces \(U\), \(V\) such that \(g(U)\leq V\), the six terms of the snake sequence are again Hilbert lattices, and \(\partial\) may be taken to be \(g\) itself, restricted. The one difference is that the normality of the two outer verticals, which modularity provided for free, is now a genuine hypothesis---by \prox\ref{P:HilbertSymmetric} a pair of closed-sum conditions---and \remx\ref{Rem Lattice Sharp} shows that it cannot be dropped.

	The introduction of \cite{Grandis-HA2} observes that the homological categories of Banach and of Hilbert spaces are not modular---their lattices of normal subobjects are lattices of closed subspaces, ``just orthomodular'' \cite[2.3.2]{Grandis-HA2}---so that their connected sequences of homology functors fail to be exact, and remarks that a deeper study of such settings could be of interest \cite[0.4]{Grandis-HA2}. \thex\ref{T:HilbertModel} may be read as a step in that study, taken from the two-dimensional side: what a Snake Lemma asks of \(\Lat(H)\) is not the modular law but Mackey's symmetry, at the price of the non-self-dual pair of hypotheses. The projection lattices of von Neumann algebras stratify along the same line: for a finite factor the projection lattice is a continuous geometry \cite{vNeu60}, hence modular and within \(\Supmod\), while \(\Lat(H)\) is the projection lattice of the type \(\mathrm{I}\) factor \(\mathcal{B}(H)\); whether the projection lattice of an arbitrary von Neumann algebra is transposition-symmetric we do not know.
\end{remark}

\begin{proposition}\label{P:FiniteLength}
	A lattice of finite length which is transposition-symmetric is modular. Hence if every member of a class \(\mathcal{C}\) as in \prox\ref{P:SupClass} is of finite length, then \(\Sup_{\mathcal{C}}\) satisfies \(\DPN\) if and only if it satisfies \textup{(DI2)}.
\end{proposition}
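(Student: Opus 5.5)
The second assertion follows from the first together with \prox\ref{P:SupClass}. That proposition shows \(\Sup_{\mathcal{C}}\) satisfies \textup{(DI2)} if and only if every member is modular, and \(\DPN\) if and only if every member is transposition-symmetric. Modular lattices are transposition-symmetric, as observed before \prox\ref{P:SupClass}. So the whole weight lies on the first assertion: if \(L\) has finite length and is not modular, we must exhibit a pair \((a,b)\) at which exactly one of the two transpositions is invertible.

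Since \(L\) is not modular, \cite[Chapter~I, Theorem~12]{Birkhoff} gives a pentagon \(u<w\), \(v\) with \(u\vee v=w\vee v\) and \(u\wedge v=w\wedge v\). The transposition \([w\wedge v,w]\to[v,w\vee v]\) is then not injective, as in the proof of \prox\ref{P:SupModular}. If the transposition at \((v,w)\), namely \([w\wedge v,v]\to[w,w\vee v]\), is invertible, we are done. Otherwise we use finite length to move to a smaller pentagon. My plan is to pick, among all pentagons in \(L\), one minimising the length of the interval \([u\wedge v,u\vee v]\), and then to argue that in a minimal pentagon the transposition at \((v,w)\) is invertible.

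The natural first try is to take \(v\) of minimal height inside the interval. If \([w\wedge v,v]\to[w,w\vee v]\) fails to be invertible, then by \lemx\ref{L:ModularPairs} either \(\mathrm{M}(w,v)\) or \(\mathrm{M}^{*}(v,w)\) fails. Either failure supplies an element \(c\), with \(c\leq v\) or \(c\geq w\) respectively, such that \(c\), \(c\vee(v\wedge w)\) or \((c\wedge v)\vee w\), and a third element form a new pentagon. This is exactly the standard way a failure of a modular-pair identity produces an \(N_5\). The new pentagon should sit in a strictly shorter interval, contradicting minimality. Finite length guarantees that the minimum exists; this is the only place it enters.

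The main obstacle is the strict decrease of the interval. The pentagon produced by a failing \(\mathrm{M}(w,v)\) lives below \(v\vee(w\wedge v)\)-type bounds and could a priori span the same interval. If the height-minimisation does not close, I would fall back on a Jordan--Dedekind-style counting argument. In finite length, the transposition at \((a,b)\) being an isomorphism forces \(\ell[a\wedge b,a]=\ell[b,a\vee b]\). Choosing the pentagon so that the two chains \([w\wedge v,w]\) and \([w\wedge v,v]\) have lengths as unequal as possible should force one transposition to be an isomorphism while the other cannot be. Failing both, I would induct on the length of \(L\), applying the inductive hypothesis to proper intervals, which are again transposition-symmetric of smaller length, and concluding that all proper intervals are modular.
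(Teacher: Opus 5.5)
Your reduction of the second assertion to the first via \prox\ref{P:SupClass} is correct. So is the remark that a pentagon \((u,w,v)\) makes the transposition at \((w,v)\) non-injective. The gap is the other half: nothing in the proposal ever certifies that a transposition is \emph{invertible}, and the claim your main plan rests on is false.

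\begin{itemize}
\item\textbf{Minimality alone fails.} Take the lattice formed by the chains \(\bot<u<w<\top\) and \(\bot<v_{1}<v<\top\) glued at their ends. Every proper interval is a chain, so \((u,w,v)\) is a pentagon of minimal interval length. Yet the transposition at \((v,w)\) maps the three-element chain \([\bot,v]\) to the two-element chain \([w,\top]\), and the one at \((w,v)\) fails in the same way.
\item\textbf{Minimal height of \(v\) does not help.} Take the lattice formed by \(\bot<u<w<w_{2}<\top\) and \(\bot<v<\top\). The pentagon \((u,w,v)\) again has minimal interval and \(v\) is an atom, but \([\bot,v]\to[w,\top]\) maps a two-element chain to a three-element one.
\item\textbf{The descent stalls where you feared.} Write \(\bot=w\wedge v\) and \(\top=w\vee v\). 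A witness \(c\) to the failure of \(\mathrm{M}^{*}(v,w)\), respectively of \(\mathrm{M}(w,v)\), inside the interval gives a pentagon on a strictly smaller interval only if \(c\wedge v>\bot\), respectively \(c\vee w<\top\). Nothing you propose excludes the other case.
\item\textbf{The fallbacks do not close the gap.} The length count is only a necessary condition for invertibility, never a sufficient one. The induction on length only reduces to lattices all of whose proper intervals are modular. Both examples above are of that kind, so that is exactly where the difficulty lies.
\end{itemize}

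The missing idea is that the one cheap certificate of invertibility is a transposition between two-element chains. Suppose that in the interval of a pentagon the single element \(a\) is an atom and the upper element \(B\) of the chain is a coatom. Then the transposition at \((a,B)\) is \(\{\bot,a\}\to\{B,\top\}\), an isomorphism for free, while the pentagon makes the one at \((B,a)\) non-injective. Producing such a normalised pentagon is the content of the paper's proof.

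The paper first reduces to semimodularity. It uses Birkhoff's theorem that a lattice of finite length which is semimodular and dually semimodular is modular. It also uses that transposition-symmetry passes to intervals and to the opposite lattice; you would need the latter fact too. Then, given \(a\succ a\wedge b\) with \(a\vee b\not\succ b\), it works in \([a\wedge b,a\vee b]\):
\begin{itemize}
\item The transposition at \((a,b)\) is not invertible, but its unit is trivial. So its counit fails at some \(w\in[b,a\vee b]\), and this forces \(w\wedge a=a\wedge b\).
\item Ascending from \(w\) to a coatom \(B\) keeps \(B\wedge a=a\wedge b\) and \(B\vee a=a\vee b\).
\item Hence \((b,B,a)\) is a pentagon of exactly the normalised shape above, and transposition-symmetry fails at the pair \(a\), \(B\).
\end{itemize}
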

\begin{proof}
	Transposition-symmetry is inherited by intervals, the transpositions of a pair computed in an interval agreeing with those computed in the ambient lattice, and it is invariant under dualisation: the transposition of \(L\op\) at \((a,b)\) is the right adjoint \(z\mapsto z\wedge b\colon[a,a\vee b]\to[a\wedge b,b]\) of the transposition of \(L\) at \((b,a)\), and an adjoint is invertible precisely when its partner is. Call \(L\) \dfn{semimodular} when \(a\succ a\wedge b\) implies \(a\vee b\succ b\) for all \(a\), \(b\), where \(x\succ y\) means that \(x\) covers \(y\); a lattice of finite length which is semimodular and dually semimodular is modular \cite[Chapter~II, Theorem~16]{Birkhoff}. By the invariance under dualisation it therefore suffices to prove that \(L\) is semimodular.

	Let \(a\succ a\wedge b\) and suppose, for a contradiction, that \(a\vee b\not\succ b\). We work in the interval \([a\wedge b,a\vee b]\) and write \(\bot\) and \(\top\) for its bounds; thus \(a\) is an atom there, \(a\wedge b=\bot\) and \(a\vee b=\top\), while \(\top\not\succ b\). The transposition at \((a,b)\) goes from the two-element chain \([\bot,a]\) to the interval \([b,\top]\), which has at least three elements, so it is not invertible; its unit is nevertheless trivial, since \((\bot\vee b)\wedge a=\bot\) and \((a\vee b)\wedge a=a\), so by the proof of \lemx\ref{L:ModularPairs} its counit fails: there is \(w\in[b,\top]\) with \((w\wedge a)\vee b<w\). Were \(w\wedge a=a\), then \(w\geq a\vee b=\top\) and the counit would hold at \(w\); so \(w\wedge a=\bot\), and \(b<w<\top\). Every \(t\) with \(w\leq t<\top\) satisfies \(t\wedge a=\bot\)---otherwise \(a\leq t\), whence \(t\geq a\vee w\geq a\vee b=\top\)---as well as \(t\vee a=\top\); since the length is finite, we may ascend from \(w\) to a coatom \(B\), with \(b<B\), \(B\wedge a=\bot\) and \(B\vee a=\top\). The transposition at \((B,a)\), from \([\bot,B]\) to \([a,\top]\), is not invertible, its unit failing at \(b\): \((b\vee a)\wedge B=\top\wedge B=B\neq b\). By transposition-symmetry, the transposition at \((a,B)\) is not invertible either. But it is the map \(\{\bot,a\}\to\{B,\top\}\) sending \(\bot\) to \(B\) and \(a\) to \(\top\), an isomorphism of two-element chains---a contradiction.
\end{proof}

\begin{remark}\label{Rem NSD Infinite}
	The separation of \(\DPN\) from \textup{(DI2)} within \(\Sup\) is thus an intrinsically infinite phenomenon: no lattice of finite length, indeed no class of them, can witness it. The proof of \prox\ref{P:FiniteLength} locates the tension. An atom \(a\) and a coatom \(B\) with \(a\not\leq B\) always transpose invertibly from \(a\) to \(B\), so transposition-symmetry forces \([\bot,B]\iso[a,\top]\), a strong homogeneity which a finite non-modular lattice cannot sustain. The Hilbert lattices can, because the sum of a closed subspace with a finite-dimensional one is always closed: closedness of sums, and with it invertibility of transpositions, only fails in infinite dimensions, out of reach of the covering argument.
\end{remark}

\bibliographystyle{amsplain-nodash}
\bibliography{tim}

\providecommand{\noopsort}[1]{}
\providecommand{\bysame}{\leavevmode\hbox to3em{\hrulefill}\thinspace}
\providecommand{\MR}{\relax\ifhmode\unskip\space\fi MR }
\providecommand{\MRhref}[2]{%
  \href{http://www.ams.org/mathscinet-getitem?mr=#1}{#2}
}
\providecommand{\href}[2]{#2}
\begin{thebibliography}{10}

\bibitem{Afsa-24}
F.~Afsa, \emph{Di-exact categories and lattices of normal subobjects}, preprint
  {\texttt{arXiv:2411.18333}}, 2024.

\bibitem{Birkhoff}
G.~Birkhoff, \emph{Lattice theory}, third ed., Amer. Math. Soc. Colloq. Publ.,
  vol.~25, Amer. Math. Soc., 1967.

\bibitem{BirVNeu36}
G.~Birkhoff and J.~von Neumann, \emph{The logic of quantum mechanics}, Ann. of
  Math. (2) \textbf{37} (1936), 823--843.

\bibitem{Borceux-Bourn}
F.~Borceux and D.~Bourn, \emph{Mal'cev, protomodular, homological and
  semi-abelian categories}, Math. Appl., vol. 566, Kluwer Acad. Publ., 2004.

\bibitem{CavJanMes25}
E.~Caviglia, Z.~Janelidze, and L.~Mesiti, \emph{Fibrational approach to
  {G}randis exactness for 2-categories}, preprint {\texttt{arXiv:2504.01011}},
  2025.

\bibitem{CavJanMesRei26}
E.~Caviglia, Z.~Janelidze, L.~Mesiti, and {\"U}.~Reimaa, \emph{Exactness of the
  2-categories of abelian and triangulated categories}, in preparation, 2026.

\bibitem{Dup08}
M.~Dupont, \emph{Abelian categories in dimension 2}, Ph.D. thesis,
  Universit\'{e} catholique de Louvain, 2008.

\bibitem{DupVit03}
M.~Dupont and E.~M. Vitale, \emph{Proper factorization systems in
  2-categories}, J.~Pure Appl. Algebra \textbf{179} (2003), no.~1--2, 65--86.

\bibitem{Gabriel62}
P.~Gabriel, \emph{Des cat\'egories ab\'eliennes}, Bull. Soc. Math. France
  \textbf{90} (1962), 323--448.

\bibitem{Grandis-HA1}
M.~Grandis, \emph{Homological algebra: The interplay of homology with
  distributive lattices and orthodox semigroups}, World Scientific, Singapore,
  2012.

\bibitem{Grandis-HA2}
M.~Grandis, \emph{Homological algebra: In strongly non-abelian settings}, World
  Scientific, Singapore, 2013.

\bibitem{Janelidze-Marki-Tholen}
G.~Janelidze, L.~M{\'a}rki, and W.~Tholen, \emph{Semi-abelian categories},
  J.~Pure Appl. Algebra \textbf{168} (2002), no.~2--3, 367--386.

\bibitem{Kanda12}
R.~Kanda, \emph{Classifying {S}erre subcategories via atom spectrum}, Adv.
  Math. \textbf{231} (2012), no.~3--4, 1572--1588.

\bibitem{Kato}
T.~Kato, \emph{Perturbation theory for linear operators}, Classics in
  Mathematics, Springer, 1995, Reprint of the 1980 edition.

\bibitem{Lac10}
S.~Lack, \emph{A 2-categories companion}, Towards Higher Categories (J.~C. Baez
  and J.~P. May, eds.), IMA Vol. Math. Appl., vol. 152, Springer, 2010,
  pp.~105--191.

\bibitem{Mackey45}
G.~W. Mackey, \emph{On infinite-dimensional linear spaces}, Trans. Amer. Math.
  Soc. \textbf{57} (1945), 155--207.

\bibitem{MaeMae70}
F.~Maeda and S.~Maeda, \emph{Theory of symmetric lattices}, Grundlehren math.
  Wiss., vol. 173, Springer, 1970.

\bibitem{Mat89}
H.~Matsumura, \emph{Commutative ring theory}, second ed., Cambridge Studies in
  Advanced Mathematics, vol.~8, Cambridge University Press, 1989.

\bibitem{PVdL3}
G.~Peschke and T.~Van~der Linden, \emph{A homological view of categorical
  algebra}, preprint {\texttt{arXiv:2404.15896v2}}, 2024.

\bibitem{Schreiner66}
E.~A. Schreiner, \emph{Modular pairs in orthomodular lattices}, Pacific J.
  Math. \textbf{19} (1966), no.~3, 519--528.

\bibitem{Str82b}
R.~Street, \emph{Characterization of bicategories of stacks}, Category Theory
  (Gummersbach, 1981) (K.~H. Kamps, D.~Pumpl\"{u}n, and W.~Tholen, eds.),
  Lecture Notes in Math., vol. 962, Springer, 1982, pp.~282--291.

\bibitem{Stacks}
{The {S}tacks {P}roject Authors}, \emph{The {S}tacks {P}roject}, 2025,
  \url{https://stacks.math.columbia.edu}.

\bibitem{Verdier}
J.-L. Verdier, \emph{Des cat{\'e}gories d{\'e}riv{\'e}es des cat{\'e}gories
  ab{\'e}liennes}, Ph.D. thesis, Universit{\'e} de Paris, 1967.

\bibitem{vNeu60}
J.~von Neumann, \emph{Continuous geometry}, Princeton Math. Series, vol.~25,
  Princeton University Press, 1960.

\end{thebibliography}

\end{document}